\setlist[itemize]{noitemsep, topsep=0pt, partopsep=0pt}
\setlist[enumerate]{noitemsep, topsep=0pt, partopsep=0pt}
\setlist[description]{noitemsep, topsep=0pt, partopsep=0pt}
\newlist{defenum}{enumerate}{3}
\setlist[defenum]{label=(\alph*),leftmargin=*,align=left}
\newlist{defsubenum}{enumerate}{1}
\setlist[defsubenum]{label=(\roman*),leftmargin=*,align=left}
\renewcommand\normalsize{%
    \@setfontsize\normalsize{11.7}{14pt plus .3pt minus .3pt}%
    \abovedisplayskip 10\p@ \@plus4\p@ \@minus4\p@
    \abovedisplayshortskip 6\p@ \@plus2\p@
    \belowdisplayshortskip 6\p@ \@plus2\p@
    \belowdisplayskip \abovedisplayskip}
\renewcommand\small{%
    \@setfontsize\small{9.5}{12\p@ plus .2\p@ minus .2\p@}%
    \abovedisplayskip 8.5\p@ \@plus4\p@ \@minus1\p@
    \belowdisplayskip \abovedisplayskip
    \abovedisplayshortskip \abovedisplayskip
    \belowdisplayshortskip \abovedisplayskip}
\renewcommand\footnotesize{%
    \@setfontsize\footnotesize{8.5}{9.25\p@ plus .1pt minus .1pt}
    \abovedisplayskip 6\p@ \@plus4\p@ \@minus1\p@
    \belowdisplayskip \abovedisplayskip
    \abovedisplayshortskip \abovedisplayskip
    \belowdisplayshortskip \abovedisplayskip}
\setlist{nosep}
\setlist[enumerate]{label=(\roman*)}
\newtheorem{theorem}{Theorem}[section]
\newtheorem{Mtheorem}{Theorem}
\newtheorem{Ctheorem}[Mtheorem]{(Theorem)}
\newtheorem{corollary}[theorem]{Corollary}
\newtheorem{Mcorollary}[Mtheorem]{Corollary}
\newtheorem{lemma}[theorem]{Lemma}
\newtheorem{conjecture}[theorem]{Conjecture}
\newtheorem{proposition}[theorem]{Proposition}
\theoremstyle{definition}
\newtheorem{definition}[theorem]{Definition}
\newtheorem{example}[theorem]{Example}
\theoremstyle{remark}
\newtheorem{remark}[theorem]{Remark}
\crefname{theorem}{Theorem}{Theorems}
\crefname{Mtheorem}{Main Theorem}{Main Theorems}
\crefname{lemma}{Lemma}{Lemmata}
\crefname{corollary}{Corollary}{Corollaries}
\crefname{proposition}{Proposition}{Propositions}
\crefname{definition}{Definition}{Definitions}
\newcommand{\Q}{\mathbb{Q}}
\newcommand{\R}{\mathbb{R}}
\newcommand{\Z}{\mathbb{Z}}
\newcommand{\C}{\mathbb{C}}
\newcommand{\PP}{\mathbb{P}}
\newcommand{\rank}{\mathrm{rk}}
\newcommand{\Perf}{\mathrm{Perf}}
\newcommand{\rH}{\mathrm{H}}
\newcolumntype{L}{>{$}l<{$}}
\newcolumntype{C}{>{$}c<{$}}
\newcolumntype{R}{>{$}r<{$}}
\newcommand{\lp}{\textup{\texttt{+}}}
\newcommand{\lm}{\textup{\texttt{-}}}
\begin{document}


\title{Atoms meet symbols}


\author[Cavenaghi]{Leonardo F. Cavenaghi}
\address{Institute of Mathematics and Informatics, Bulgarian Academy of Sciences
 \& College of Arts and Sciences, Department of Mathematics, University of Miami, Ungar Bldg, 1365 Memorial Dr 515, Coral Gables, FL 33146, USA}
\email{leonardofcavenaghi@gmail.com}


\author[Katzarkov]{Ludmil Katzarkov}
\address{College of Arts and Sciences, Department of Mathematics, University of Miami, \& Institute of the Mathematical Sciences of the Americas (IMSA). Ungar Bldg, 1365 Memorial Dr 515, Coral Gables, FL 33146. \& International Center for Mathematical Sciences (ICMS), Sofia-Bulgaria.} 
\email{lkatzarkov@gmail.com}

\author[Kontsevich]{Maxim Kontsevich}
\address{Institut des Hautes Études Scientifiques,
35 route de Chartres, F - 91440 Bures-sur-Yvette}
\email{maxim@ihes.fr}

\setcounter{tocdepth}{1}
\keywords{}

\begin{abstract}
This paper introduces a novel framework for constructing invariants in $G$-equivariant birational geometry by unifying two recent approaches: the \emph{theory of atoms} developed by Katzarkov, Kontsevich, Pantev, and Yu, and the theory of \emph{modular symbols} due to Kontsevich, Tschinkel, and Pestun. 


We initiate the theory of Chen-Ruan atoms. Assuming the blowup formula for the quantum Chen-Ruan cohomology, we outline how to extend the theory of atoms to global quotient orbifolds and present some striking applications.

In addition, we develop a separate class of purely geometric invariants for $\mathbb{Z}/2$- and $\mathbb{Z}/3$-actions on surfaces and threefolds.

We provide many examples of non-$G$-linearizable $G$-actions on projective varieties treated with these new techniques.
\end{abstract}

\maketitle

\section{Introduction}

Let \( X \) be a smooth projective variety of dimension \( n \) over a field \( k \) of characteristic zero. Its function field, denoted \( k(X) \), consists of rational functions defined on \( X \). The variety $X$ is called \emph{rational} if it is birationally equivalent to \( \mathbb{P}^n \), meaning that \( k(X) \) is isomorphic to the purely transcendental field \( k(x_1, \ldots, x_n) \). 

Given two smooth projective varieties \( X \) and \( Y \) of dimension \( n \) over a field \( k \) with characteristic zero, we say that they are \emph{birationally equivalent} if there exists an isomorphism between their function fields, \( k(X) \cong k(Y) \) identical on $k$. Experts have devoted significant efforts to the challenging task of constructing explicit birational morphisms between smooth projective varieties or of producing new birational invariants (cf. \cites{kresch2026invariantsequivariantbirationalgeometry,bira-book-2020, birkar2017birationalgeometryalgebraicvarieties, birkar2012lecturesbirationalgeometry, bira-book-2023, Voisin_2002, Voisin2014, Voisin2016StableBI, Voisin2019}). In this paper, we will always assume $k=\C$. The whole story can be generalized to non-algebraically closed fields of characteristic zero using a version of the motivic Galois group due to Y.Andr\'{e}, see \cite{katzarkovpantevyu} for more details.

\  

Assume that a finite group \( G < \mathrm{Aut}(X) \) acts generically freely on a smooth projective variety \( X \), where \( \mathrm{Aut}(X) \) denotes the group of automorphisms of \( X \) -- herein we say that $X$ has a generically free regular $G$-action and say that such an $X$ is a $G$-variety. Two $G$-varieties \( X \) and \( Y \) of the same dimension are said to be \( G \)-birationally equivalent if there exists a \( G \)-equivariant birational map between them.\footnote{It makes little (if any) sense to say that a variety \( X \) is \( G \)-rational. This is because some projective spaces \( \mathbb{P}^n \) can have two non-\( G \)-equivariantly birational \( G \)-actions where \( G < \mathrm{Aut}(\mathbb{P}^n)\simeq \mathrm{PGL}(n+1) \), see \cite{Reichstein2002}.} Unsurprisingly, $G$-equivariant birational geometry and birational geometry over non-algebraically closed fields are closely related, see the Appendix \ref{ap:specialization}.

Some commonly used \( G \)-birational invariants include the existence of fixed points upon restriction to abelian subgroups \( H < G \); group cohomology \cites{Bogomolov2013, Prokhorov}; and the \( G \)-equivariant Minimal Model Program. However, significant developments on the subject began to emerge after the introduction of certain \emph{modular symbols}. The following construction is due to Kontsevich, Pestun, and Tschinkel (cf. \cite{kontsevich2021equivariant}).

Assume that the finite group \(G\) is \textit{abelian}, and let its dual group be denoted by \(G^{\vee} = \mathrm{Hom}(G, \mathbb{C}^{\times})\). For \(n \geq 2\), we consider the \(\mathbb{Z}\)-module \(\mathcal{B}_n(G)\), which is generated by symbols \([a_1, \ldots, a_n]\) for \(\{a_i \in G^{\vee}\}_{i=1}^n\) satisfying \( \sum_{i=1}^n \mathbb{Z} a_i = G^{\vee}\), subject to the following relations:
\begin{itemize}
    \item[(O)] \([a_1, \ldots, a_n] = [a_{\sigma(1)}, \ldots, a_{\sigma(n)}]\) for any permutation \(\sigma \in {\mathcal S}_n\).
    \item[(B)] for all \(2 \leq k \leq n\), all \(a_1, \ldots, a_k \in G^{\vee}\), and all \(b_1, \ldots, b_{n-k} \in G^{\vee}\) such that
    \[
    \sum_{i=1}^k \mathbb{Z} a_i + \sum_{j=1}^{n-k} \mathbb{Z} b_j = G^{\vee},
    \]
    we have:
    \[
    [a_1, \ldots, a_k, b_1, \ldots, b_{n-k}] = \sum_{1 \leq i \leq k, a_i \neq a_{i'}\,\forall i'<i} [a_1 - a_i, \ldots, a_i, \ldots, a_k - a_i, b_1, \ldots, b_{n-k}].
    \]
\end{itemize}

For a given smooth irreducible projective variety \(X\) of dimension \(n \geq 2\) over \(\mathbb{C}\) that has a regular generically free action of a finite abelian group \(G\), we associate an element of \(\mathcal{B}_n(G)\) in the following way: Let
\[
X^G = \bigsqcup_{\alpha \in A} F_{\alpha}
\]
denote the decomposition of the $G$-fixed point locus of $X$ into a disjoint union of closed smooth irreducible subvarieties of \(X\). For each irreducible component \(F_{\alpha}\), we fix a point \(x_{\alpha} \in F_{\alpha}\) and consider the action of \(G\) on the tangent space \(T_{x_{\alpha}}X\). This yields a decomposition into eigenspaces corresponding to some characters \(a_{1,\alpha}, \ldots, a_{n,\alpha}\):
\[
T_{x_{\alpha}} X = \bigoplus_{i=1}^{n} E_{i,\alpha},
\]
with the indices defined up to permutation. For each \(\alpha\), we can thus associate a symbol:
\[
[a_{1,\alpha}, \ldots, a_{n,\alpha}] \in \mathcal{B}_n(G).
\]
Finally, we define 
\[
\beta(X) := \sum_{\alpha} [a_{1,\alpha}, \ldots, a_{n,\alpha}].
\]
One of the main results in \cite{kontsevich2021equivariant} is that \(\beta(X)\) is a \(G\)-birational invariant of \(X\). Following the Weak Factorization Theorem (Theorem \ref{thm:weak}), this is obtained by showing that \(\beta(X)\) remains invariant under \(G\)-equivariant blowups: i.e., $\beta(X)$ remains invariant under blowups in closed smooth \(G\)-invariant subvarieties\footnote{An alternative name is \textit{\(G\)-stable} subvariety.} \(Z \subset X\) of codimension \(r \geq 2\).

Through the lens of this new invariant and its generalizations, many pairs of \( G \)-varieties are non-\( G \)-equivariantly birational, as demonstrated in a series of works \cites{Tschinkel2024, kresch2019birational, kresch2022burnside, kresch2023birational}. A central idea in these works is the creation of new \( \mathbb{Z} \)-modules whose generators are constrained by blowup relations. For a quick recall and more details, refer to Section \ref{sec:atomes-meet-symbols}.

\

Recall that for a smooth projective variety \(X\), its genus-zero Gromov-Witten invariants define the quantum product \(\star_s\) on the Betti cohomology \(\mathrm{H}:=\mathrm{H}^{\ast}(X, \mathbb{Q})\), deformed by a parameter \(s \in\mathrm{H}^{\text{even}}(X,\mathbb Q)\). The ``theory of atoms'', developed in \cite{katzarkovpantevyu}, offers a new approach to constructing birational invariants using this quantum multiplication. See \cites{guere2026irrationalitycubicfourfolds, benedetti2026quantumcohomologyirrationalitygushelmukai,ElaginSchneiderShinder} for additional developments.

The central invariants, called \emph{Hodge atoms}, are pieces obtained from an intrinsic decomposition of the \emph{A-model F-bundle} associated with \(X\) (Section \ref{sec:A-model}). This F-bundle is a non-archimedean version of a variation of non-commutative Hodge structures \cite{KatzarkovKontsevichPantev2008}. The F-bundle combines enumerative data from Gromov-Witten theory with the classical polarized \(\mathbb{Z}/2\)-weighted \(\mathbb{Q}\)-Hodge structure on the Betti cohomology \(\mathrm{H}\) of \(X\).

The extraction of atoms relies on a spectral decomposition of the A-model F-bundle (for more details, refer to Section \ref{sec:global-hodge-atoms}). At a general point within the locus of Hodge classes inside the F-bundle's base, the generalized eigenspaces for the quantum multiplication $\star$ by \(\boldsymbol{\kappa}:=\mathsf{Eu}\star\) (where $\mathsf{Eu}$ is the \emph{Euler vector field} (Equation \eqref{eq:Euler})) are compatible with the Hodge structure on \(\mathrm{H}\). Concretely, each generalized eigenspace decomposes canonically into \(\overline{ \mathbb{Q}}\)-linear representations of the Mumford-Tate group \(\mathsf{Hod}\) (the group for \(\mathbb{Z}/2\)-graded polarizable pure Hodge structures), termed \emph{local Hodge atoms}. These local Hodge atoms can be tracked through numerical invariants (see Equation \eqref{invariants-box}). Understanding how the collection of atoms behaves under birational transformations, particularly blowups (following the work of Iritani \cite{iritani2023quantumcohomologyblowups}), provides a powerful obstruction to rationality (see, e.g., Theorem \ref{thm:Katzarkov-Kontsevich-Pantev-Yu} due to Katzarkov, Kontsevich, Pantev, and Yu).

\ 

In the first part of this paper (Sections \ref{sec:theory-of-atoms} and \ref{sec:atomes-meet-symbols}), we extend the theory of atoms to the \(G\)-equivariant setting and unify it with the theory of modular symbols. This unification is motivated by the observation that these two perspectives---the ``atomic'' and the ``symbolic''---are complementary facets of a deeper structure. While the theory of \(G\)-equivariant atoms provides powerful global invariants derived from the interplay between Gromov-Witten theory and the Mumford-Tate group action on the total cohomology of a variety \(X\), the theory of modular symbols captures the local geometry of the action of a finite group $G$, by analyzing by analyzing characters of \(G\)-representations on the normal bundles to the fixed locus \(X^G\). Since a \(G\)-equivariant blowup simultaneously alters both the global atomic content and the local fixed-locus geometry, a complete invariant must track both transformations.

We construct a unified algebraic object: a combinatorial \(\mathbb{Z}\)-module, denoted \(\mathcal{B}_3(\mathbb{Z}/p)^{\text{comb}}\), whose generators are a formal union of atoms and symbols. The relations defining this module are those given by \(G\)-equivariant blowups. From this module, we derive two new \(\mathbb{Z}\)-valued \(\mathbb{Z}/p\)-birational invariants.

\

Recall that any smooth hypersurface $X(1,1,1,1)$ of multi-degree $(1,1,1,1)$ in
\((\mathbb{P}^1)^4\) is rational. The projection to the first three factors identifies it with the blowup of  \((\mathbb{P}^1)^3\) in an elliptic curve. We prove:
\begin{Mtheorem}[=Theorem \ref{thm:X(1,1,1,1)-withZ_2}]\label{Mthm:X(1,1,1,1)-withZ_2}
    Assume that \( X(1,1,1,1) \) is invariant under the \(\mathbb Z/2\)-action induced by swapping the first and second \(\mathbb{P}^1\) factors and the third and fourth \(\mathbb{P}^1\) factors. Then, there is no \(\mathbb Z/2\)-linearizable action on \(\mathbb{P}^3\) making it \(\mathbb Z/2\)-equivariantly birational to \( X(1,1,1,1) \).
\end{Mtheorem}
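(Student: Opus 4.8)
The plan is to read off the $\Z/2$-fixed geometry of $X(1,1,1,1)$, recognize that it contains an elliptic curve, and use the combined atom--symbol invariant built from $\cB_3(\Z/2)^{\mathrm{comb}}$ to detect the resulting non-Tate Hodge atom, which no $\Z/2$-linearizable $\PP^3$ can carry. First I would determine the fixed locus. The prescribed involution is $\sigma(p_1,p_2,p_3,p_4)=(p_2,p_1,p_4,p_3)$ on $(\PP^1)^4$, whose fixed locus is the diagonal $\Delta=\{(p,p,q,q)\}\cong\PP^1\times\PP^1$. Restricting the defining $(1,1,1,1)$-form to $\Delta$ produces a form of bidegree $(2,2)$ in $(p,q)$, so $X(1,1,1,1)^{\Z/2}=\Delta\cap X(1,1,1,1)$ is a divisor of class $(2,2)$ on $\PP^1\times\PP^1$. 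Since $X$ is smooth and $\Z/2$ is finite in characteristic zero, this fixed locus is smooth; being a smooth connected member of the ample class $(2,2)$, it is an elliptic curve $E$ of genus $(2-1)(2-1)=1$, and there are no other fixed components. Along $E$ the rank-two normal bundle is spanned by the two swapped directions, on which $\sigma$ acts by $-1$, while $TE$ is fixed; the local character datum is therefore the symbol $[1,1,0]\in\cB_3(\Z/2)$, now decorated by the fixed component $E$.

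Next I would feed this into the equivariant atom machinery. For $\Z/2$ the plain modular symbol $\beta$ is too coarse to separate these varieties, so the point is to retain the Hodge-theoretic decoration. Invoking the (assumed) blowup formula for quantum Chen--Ruan cohomology, the local Hodge atoms carried by the twisted sector indexed by $E$ include the weight-one structure $\rH^1(E,\Q)$, whose Hodge type $(1,0)+(0,1)$ is never of Tate type; its Mumford--Tate group $\mathsf{MT}$ is strictly larger than the Tate torus. Thus the generator $[1,1,0]\otimes E$ of $\cB_3(\Z/2)^{\mathrm{comb}}$ supports a non-Tate local Hodge atom, and the associated $\Z$-valued invariant of the excerpt is nonzero on $X(1,1,1,1)$. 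By construction this invariant is unchanged under $\Z/2$-equivariant blowups, so by the Weak Factorization Theorem (Theorem \ref{thm:weak}) it is a genuine $\Z/2$-birational invariant.

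Finally I would dispose of the linearizable models. Any $\Z/2$-linearizable action on $\PP^3$ is the projectivization of a representation $\C^4=\C^a\oplus\C^{4-a}$ with $\sigma=+1$, resp.\ $-1$, on the two summands, and its fixed locus is the disjoint union of linear subspaces $\PP^{a-1}\sqcup\PP^{3-a}$. These components are rational, so every local Hodge atom they carry is of Tate type and the non-Tate invariant detected above vanishes identically. As the invariant distinguishes $X(1,1,1,1)$ from every linearizable $\PP^3$, no such action can be $\Z/2$-equivariantly birational to $X(1,1,1,1)$.

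The main obstacle will be the second step: certifying that the elliptic twisted sector contributes a non-cancelling, Hodge-isolated atom and that it descends to a well-defined $\Z/2$-birational invariant. This is precisely where the assumed Chen--Ruan blowup formula does the heavy lifting, and where one must verify that the spectral decomposition for $\boldsymbol{\kappa}$ keeps the $\mathsf{MT}$-contribution of $E$ separated from the ambient Tate classes rather than absorbing it. One should also confirm that the blowup relations defining $\cB_3(\Z/2)^{\mathrm{comb}}$ are compatible with attaching $\rH^1(E)$ to the symbol $[1,1,0]$, so that the numerical invariant is genuinely insensitive to the choices made in its construction.
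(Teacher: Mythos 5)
Your geometric computation of the fixed locus (the diagonal $\Delta_{12}\times\Delta_{34}\cong\PP^1\times\PP^1$ meeting $X$ in a smooth $(2,2)$-curve, hence an elliptic curve $E$ with normal weights $[1,1]$) is correct, but the obstruction you build from it does not work. The presence of a non-Tate Hodge atom of type $\rH^1(E)$ is \emph{not} a $\Z/2$-birational obstruction to being $\PP^3$: an elliptic curve is a codimension-two center, so blowing up a linearizable $\PP^3$ along a $\Z/2$-invariant elliptic curve creates exactly this atom. In the language of the paper, the elliptic-curve atom lies in $\mathsf{HAtoms}^{\Z/2}_{\dim\le 1}$ and therefore can appear and disappear freely under equivariant blowups in dimension $3$; indeed $X(1,1,1,1)$ itself is rational precisely because it is the blowup of $(\PP^1)^3$ in an elliptic curve. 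Your final paragraph only inspects the fixed locus of $\PP^3$ itself, but weak factorization forces you to account for every intermediate blowup, and that is where your invariant is destroyed. Note also that the combined invariant of Theorem~\ref{thm:invariant} is only defined for fixed curves of genus $\mathsf g\ge 2$, so it cannot be fed the genus-one curve $E$; and your appeal to the Chen--Ruan blowup conjectures would in any case make the result conditional, whereas the paper's proof is not.

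The paper's actual argument is quite different and does not use the fixed locus at all. It computes the equivariant atomic decomposition of $X$ (via the Landau--Ginzburg mirror) and isolates the $\lambda=0$ atom with invariants $(\rho,\rho^G)=(5,3)$ and Hodge polynomial $t+3+t^{-1}$. If $X$ were equivariantly birational to a linearizable $\PP^3$, this atom would have to decompose into atoms of $\PP^3$ and of $G$-invariant points and curves. The non-Hodge class $h^{2,1}=1$ forces one elliptic-curve atom contributing $(2,2)$, leaving $(3,1)$, and the contradiction is the purely arithmetic fact that $(3,1)$ is not a nonnegative integral combination of the point atoms $(1,1)$ and $(2,1)$. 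The obstruction lives in the discrepancy between $\rho$ and $\rho^G$ inside a single indecomposable atom --- too many algebraic classes relative to invariant ones --- not in the mere existence of transcendental cohomology. If you want to salvage your approach, you would need to replace ``non-Tate atom exists'' with a quantity that is actually conserved under \emph{all} equivariant blowups, which is precisely what the $(\rho,\rho^G)$ bookkeeping accomplishes.
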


Interestingly enough, the following dichotomy holds:
\begin{Mcorollary}[=Corollary \ref{thm:X(1,1,1,1)}]\label{cor:general}
    For any subgroup $G<\mathcal S_4$-action on $X(1,1,1,1)$ induced from a transitive action in $\{1, 2, 3, 4\}$ either
\begin{itemize}
\item[$(a)$] the $G$-action on the set $\{1,2,3,4\}$ has a fixed point, and in this case $X(1,1,1,1)$ is $G$-birationally equivalent to $\mathbb P^3$ with a $G$-action
\item[$(b)$] the $G$-action on the set $\{1,2,3,4\}$ has no fixed points, and in this case $X(1,1,1,1)$ is not $G$-birationally equivalent to $\mathbb P^3$ with any $G$-action.
\end{itemize}
\end{Mcorollary}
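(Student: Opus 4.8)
The plan is to treat the two alternatives of the dichotomy separately: part $(a)$ by an explicit $G$-equivariant birational chain, and part $(b)$ by restriction to a single involution together with \Cref{Mthm:X(1,1,1,1)-withZ_2}. For part $(a)$, suppose the $G$-action on $\{1,2,3,4\}$ fixes an index, which after relabelling the four $\PP^1$-factors I may take to be the fourth, so that $G$ permutes only the first three factors. The projection $p\colon (\PP^1)^4 \to (\PP^1)^3$ forgetting the fourth factor is then $G$-equivariant, and since $X$ is $G$-invariant its restriction $p|_X\colon X \to (\PP^1)^3$ is exactly the blow-up of $(\PP^1)^3$ along an elliptic curve recalled above, hence a $G$-equivariant birational morphism. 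This reduces part $(a)$ to showing that $(\PP^1)^3$ with $G\le \mathcal S_3$ permuting the factors is $G$-birational to $\PP^3$. For this I would pass to affine coordinates: the map $(\PP^1)^3 \dashrightarrow \mathbb{A}^3$, $([u_i:v_i])_{i=1}^3 \mapsto (u_1/v_1,u_2/v_2,u_3/v_3)$, is $\mathcal S_3$-equivariant for the simultaneous coordinate permutation, and the latter is the restriction to $\mathbb{A}^3\subset \PP^3$ of a linear $\mathcal S_3$-action fixing the coordinate hyperplane at infinity. Thus in case $(a)$ the variety $X(1,1,1,1)$ is in fact $G$-birational to $\PP^3$ with a linearizable $G$-action.

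For part $(b)$ the engine is the restriction principle: if $X$ were $G$-birational to $\PP^3$ endowed with a regular $G$-action, then for every subgroup $H<G$ it would be $H$-birational to the same $\PP^3$ with the restricted $H$-action. The key combinatorial observation I would record is that a subgroup $G<\mathcal S_4$ has no fixed point on $\{1,2,3,4\}$ \emph{if and only if} it contains a double transposition $\tau=(ij)(kl)$. Indeed such a $\tau$ fixes no index, so any group containing it is fixed-point-free; conversely, a direct inspection of the fixed-point-free subgroups---$\langle(12)(34)\rangle$, the two Klein four-groups $\langle(12),(34)\rangle$ and $\langle(12)(34),(13)(24)\rangle$, the cyclic $C_4=\langle(1234)\rangle$ (whose square is a double transposition), $D_4$, $A_4$, and $\mathcal S_4$---shows each contains one. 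Fixing such a $\tau$ and setting $H=\langle\tau\rangle\cong\mathbb{Z}/2$, a further relabelling of the factors lets me assume $\tau=(12)(34)$; since relabelling preserves both smoothness and the multidegree $(1,1,1,1)$, the pair $(X,H)$ is then precisely the swapping action governed by \Cref{Mthm:X(1,1,1,1)-withZ_2}.

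The remaining point, and the one demanding the most care, is to bridge the gap between \emph{linearizable} in \Cref{Mthm:X(1,1,1,1)-withZ_2} and \emph{arbitrary regular} in the corollary. Here I would invoke that $H\cong\mathbb{Z}/2$ is cyclic: a regular $\mathbb{Z}/2$-action on $\PP^3$ is a homomorphism $\mathbb{Z}/2\to\mathrm{PGL}(4)$, which lifts to $\mathrm{GL}(4)$ because the Schur multiplier of a cyclic group vanishes, so \emph{every} regular $\mathbb{Z}/2$-action on $\PP^3$ is linearizable. Consequently, if $X$ were $G$-birational to some $\PP^3$ carrying a regular $G$-action, restriction to $H$ would exhibit $X$ as $\mathbb{Z}/2$-birational to $\PP^3$ with a linearizable $\mathbb{Z}/2$-action, contradicting \Cref{Mthm:X(1,1,1,1)-withZ_2}; hence no such $G$-action on $\PP^3$ can exist, which is part $(b)$. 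The main obstacle is therefore not the large groups---these all collapse to the involution case by restriction---but rather confirming that the single obstruction of \Cref{Mthm:X(1,1,1,1)-withZ_2} already rules out all regular, and not merely linear, actions. The cyclicity of $H$ is exactly what makes this last reduction go through, and it is the step I would scrutinize most carefully.
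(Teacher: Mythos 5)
Your proof is correct and follows essentially the same route as the paper's: projection from the fixed factor together with the affine-chart linearization of the permutation action handles part $(a)$, and restriction to a double transposition combined with Theorem~\ref{thm:X(1,1,1,1)-withZ_2} handles part $(b)$. You additionally make explicit two points the paper leaves implicit---the check that every fixed-point-free subgroup of $\mathcal S_4$ contains a double transposition, and the observation that any regular $\mathbb{Z}/2$-action on $\mathbb{P}^3$ is automatically linearizable because projective representations of cyclic groups lift to linear ones---both of which are correct and genuinely needed to pass from the ``linearizable'' hypothesis of the theorem to the ``arbitrary $G$-action'' conclusion of the corollary.
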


Combining Theorem \ref{Mthm:X(1,1,1,1)-withZ_2} with the work of Kuznetsov and Prokhorov \cite{Kuznetsov_Prokhorov_2024} we obtain:
    
\begin{Mcorollary}[=Corollary \ref{cor:Kuznetsov}]\label{cor:Kuznetsov-Intro}
        Let $X$ be a singular cubic threefold with four ordinary double points. Assume that $\mathcal S_4$ acts on $X$ exchanging these ordinary double points. Then $X$ is not birationally equivalent to $\mathbb P^3$ with \emph{any} $\mathcal S_4$-linearizable action.
\end{Mcorollary}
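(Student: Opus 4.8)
The plan is to reduce the $\mathcal{S}_4$-statement to the $\mathbb{Z}/2$-statement of Theorem \ref{Mthm:X(1,1,1,1)-withZ_2} by restricting to a cyclic subgroup generated by a double transposition, and then to exploit two formal facts about equivariant birational geometry: that a $G$-equivariant birational map is automatically $H$-equivariant for every subgroup $H<G$, and that if a $G$-action on $\mathbb{P}^n$ is $G$-linearizable then its restriction to any $H<G$ is $H$-linearizable. First I would recall the geometric input of Kuznetsov and Prokhorov \cite{Kuznetsov_Prokhorov_2024}: a cubic threefold $X$ with four ordinary double points in general position, carrying the $\mathcal{S}_4$-action that permutes the four nodes, is $\mathcal{S}_4$-equivariantly birational to $X(1,1,1,1)\subset(\mathbb{P}^1)^4$ equipped with the $\mathcal{S}_4$-action permuting the four $\mathbb{P}^1$-factors. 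The essential point to verify here is the \emph{compatibility of the actions}: under their birational identification, the permutation of the node set $\{p_1,p_2,p_3,p_4\}$ must be intertwined with the permutation of the factors of $(\mathbb{P}^1)^4$, so that the abstract copy of $\mathcal{S}_4$ acting on $X$ is carried isomorphically onto the factor-permuting $\mathcal{S}_4$ on $X(1,1,1,1)$.

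Granting this, I would run the reduction by contradiction. Suppose $X$ were birationally equivalent to $\mathbb{P}^3$ carrying some $\mathcal{S}_4$-linearizable action, i.e. an action $\mathcal{S}_4$-equivariantly birational to a linear action $\rho\colon\mathcal{S}_4\to\mathrm{PGL}_4(\mathbb{C})$. Composing with the Kuznetsov--Prokhorov equivalence, $X(1,1,1,1)$ with its factor-permuting $\mathcal{S}_4$-action would be $\mathcal{S}_4$-equivariantly birational to $(\mathbb{P}^3,\rho)$. Now restrict to the subgroup $H=\langle(12)(34)\rangle\cong\mathbb{Z}/2$. On the source side, the restriction of the factor-permuting action to $H$ is precisely the involution swapping the first and second and the third and fourth $\mathbb{P}^1$-factors, which is exactly the $\mathbb{Z}/2$-action appearing in Theorem \ref{Mthm:X(1,1,1,1)-withZ_2}. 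On the target side, $\rho|_H\colon H\to\mathrm{PGL}_4(\mathbb{C})$ is again a linear action, so $(\mathbb{P}^3,\rho|_H)$ is $\mathbb{Z}/2$-linearizable, and the $\mathcal{S}_4$-equivariant birational map is \emph{a fortiori} $H$-equivariant. Hence $X(1,1,1,1)$ with the swap action would be $\mathbb{Z}/2$-equivariantly birational to $\mathbb{P}^3$ with a $\mathbb{Z}/2$-linearizable action, contradicting Theorem \ref{Mthm:X(1,1,1,1)-withZ_2}. Therefore no $\mathcal{S}_4$-linearizable action on $\mathbb{P}^3$ can be $\mathcal{S}_4$-equivariantly birational to $X$.

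I expect the main obstacle to be entirely contained in the first step: establishing and, above all, correctly matching up the $\mathcal{S}_4$-equivariant birational equivalence between the four-nodal cubic threefold and $X(1,1,1,1)$. Once the node-permutation action is shown to correspond to the factor-permutation action, everything downstream is formal, since linearizability descends to subgroups, equivariant birationality restricts to subgroups, and the chosen double transposition realizes exactly the involution of Theorem \ref{Mthm:X(1,1,1,1)-withZ_2}. A secondary verification is that the four nodes are sufficiently general for the Kuznetsov--Prokhorov description to apply and that the elliptic-curve center exhibiting $X(1,1,1,1)$ as $\mathrm{Bl}_{E}(\mathbb{P}^1)^3$ is compatible with the symmetry; this is, however, subsumed in their analysis, so I would treat it as imported input rather than reprove it here.
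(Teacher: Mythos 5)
Your proposal is correct and follows essentially the same route as the paper: it invokes the (modified) Kuznetsov--Prokhorov equivalence to identify the four-nodal cubic $\mathcal{S}_4$-equivariantly with $X(1,1,1,1)$ carrying the factor-permuting action, and then restricts to the subgroup $\langle(12)(34)\rangle\cong\mathbb{Z}/2$ to contradict Theorem \ref{thm:X(1,1,1,1)-withZ_2} — which is exactly how the paper argues via case (b) of Corollary \ref{thm:X(1,1,1,1)}.
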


Y. Tschinkel pointed out to us that Theorem \ref{Mthm:X(1,1,1,1)-withZ_2} and Corollaries \ref{cor:general}, \ref{cor:Kuznetsov-Intro} answer affirmatively the conjecture presented on p. 3 in \cite[Case $s=4,~d=p=0$]{cheltsov2024equivariant}. Namely,
\begin{Mtheorem}
    Let $X$ be a singular cubic threefold with four ordinary double points and a $G$-action. Assume that $\mathrm{rk}\mathsf{Cl}(X)=1$ where $\mathsf{Cl}(X)$ stands to the divisor class group of $X$. Then the $G$-action is non-linearizable if the $G$-action is transitive on the double points.
\end{Mtheorem}

In another direction, we investigate ``stable $G$-equivariant birationality'' regarding products with $\mathbb P^1$. Meaning, if one knows that a $G$-variety $X$ is non-$G$-equivariantly birational to some projective space with any linearizable action, we could ask whether $X\times \mathbb P^1$ is $ G$-equivariantly birational to some projective space with a linearizable $G$-action. We have (compare with Proposition 2.1 in \cite[p. 229]{Tschinkel-Yang})
\begin{Mtheorem}[=Example \ref{ex:cubic-surface}]
     Let $X$ be a smooth cubic surface with a finite group $G$ action by regular automorphisms. Assume that $
    \mathrm{rank}~(\mathrm{Pic}(X))^{G}=1$. Then $X$ is not $G$-equivariantly birational to $\mathbb P^2$ with any $G$-linearizable action. Moreover, $X\times \mathbb P^1$ with the trivial $G$-action on the second factor is not $G$-equivariantly birational to $\mathbb P^3$ with any $G$-linearizable action.
\end{Mtheorem}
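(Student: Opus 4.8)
The plan is to treat the two assertions separately, reducing the first to the $G$-equivariant minimal model program for surfaces and the second to a blowup-invariant threefold symbol computation. For the first assertion, I would begin by noting that the hypothesis $\mathrm{rank}\,\mathrm{Pic}^{G}(X)=1$ says precisely that the smooth cubic surface $X$ is $G$-minimal: the structure morphism $X\to\mathrm{pt}$ is a $G$-Mori fiber space whose fiber is a del Pezzo surface of degree $K_X^2=3$. A $G$-linearizable action on $\mathbb{P}^2=\mathbb{P}(V)$ likewise presents $\mathbb{P}^2$ as a $G$-Mori fiber space, now of degree $9$. If $X$ and this $\mathbb{P}^2$ were $G$-birational, the $G$-equivariant Sarkisov program for rational surfaces (Iskovskikh, Dolgachev--Iskovskikh) would connect the two fiber spaces by a chain of elementary links, and the classical $G$-birational rigidity of $G$-minimal del Pezzo surfaces of degree $\le 3$ forbids such a chain from terminating at $\mathbb{P}^2$. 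Hence $X$ is not $G$-birational to any $\mathbb{P}^2$, and \emph{a fortiori} not to one carrying a linearizable action.

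For the second, genuinely new, \emph{stable} assertion I would argue by contradiction: suppose $X\times\mathbb{P}^1$, with trivial action on the second factor, is $G$-birational to $\mathbb{P}^3=\mathbb{P}(W)$ with a linear $G$-action. The strategy is to apply a blowup-invariant threefold invariant of the kind built in this paper (the modular-symbol class in $\mathcal{B}_3$, or its refinement $\mathcal{B}_3(\mathbb{Z}/p)^{\mathrm{comb}}$), whose invariance under $G$-equivariant birational maps is guaranteed through the Weak Factorization Theorem (Theorem \ref{thm:weak}) together with the blowup relations (O) and (B). After restricting to a cyclic subgroup $\mathbb{Z}/p<G$ for which the surface obstruction of the first part persists—and using that $G$-birationality and linearizability both pass to subgroups—it suffices to show that the $\mathbb{Z}/p$-class of $X\times\mathbb{P}^1$ cannot coincide with that of any linearly acted $\mathbb{P}^3$.

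The computation then proceeds as follows. Because the action on the $\mathbb{P}^1$ factor is trivial, one has $(X\times\mathbb{P}^1)^{\mathbb{Z}/p}=X^{\mathbb{Z}/p}\times\mathbb{P}^1$, and at a fixed point with isolated surface component the tangent representation is that of $X$ with one extra \emph{trivial} summand; thus $\beta(X\times\mathbb{P}^1)$ is the image of $\beta(X)$ under the suspension $[a_1,a_2]\mapsto[a_1,a_2,0]$. On the other side, for $\mathbb{P}(W)$ the fixed components are the projectivized eigenspaces $\mathbb{P}(W_\chi)$, and the normal characters at a point of $\mathbb{P}(W_\chi)$ are the differences $\chi'-\chi$, so the linearizable classes form an explicit finite family inside $\mathcal{B}_3(\mathbb{Z}/p)$. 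I would then show that the suspension of the nontrivial surface class $\beta(X)$ does not lie in this family modulo the relations of the module, contradicting $G$-birationality.

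I expect the main obstacle to be exactly this last step. The difficulty is that whenever some eigenspace $W_\chi$ has dimension $\ge 2$ a trivial character $0$ already appears among the normal characters of $\mathbb{P}(W_\chi)$, so linearizable $\mathbb{P}^3$-classes can themselves carry the extra $0$ produced by suspension; the naive numerical separation therefore fails, and one must invoke the blowup relation (B) to normalize both sides and prove that suspension is injective on the relevant part of $\mathcal{B}_3(\mathbb{Z}/p)^{\mathrm{comb}}$. A secondary technical point, needed to return from cyclic subgroups to a possibly non-abelian $G$, is to guarantee that a cyclic witness to the surface obstruction exists; lacking one, I would instead run the identical argument with the non-abelian Burnside-group refinement, for which the product and suspension formulas take the analogous form.
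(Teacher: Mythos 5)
Your first assertion is handled by a genuinely different but essentially valid route: the hypothesis $\mathrm{rank}\,\mathrm{Pic}^G(X)=1$ does make $X$ a $G$-minimal del Pezzo surface of degree $3$, and the classical $G$-equivariant Segre--Manin/Iskovskikh theory via the $G$-Sarkisov program rules out $G$-birationality to $\mathbb{P}^2$. The paper instead derives this from atom theory: by Givental's computation the operator $\boldsymbol{\kappa}$ at the anticanonical point has eigenvalue $0$ with multiplicity $7$, a Witt-algebra argument shows no further splitting along $B_X^{G\times\mathsf{MT}}$, and the resulting atom has $(\rho,\rho^G)=(8,2)$, which cannot occur for a point orbit (those always have $\rho^G=1$). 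Your route avoids any Gromov--Witten input; the paper's route produces an obstruction that survives the passage to $X\times\mathbb{P}^1$, which is exactly what the second assertion needs.

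For the second assertion your argument has a genuine gap, and I believe the tool you chose cannot close it. You propose to compare the class $\beta(X\times\mathbb{P}^1)\in\mathcal{B}_3(\mathbb{Z}/p)$ (or its combined refinement) with the classes of linearly acted $\mathbb{P}^3$'s, and you correctly observe that the suspension by the trivial character collides with the symbols of positive-dimensional eigenspaces $\mathbb{P}(W_\chi)$; but you never show the classes actually differ, and in general they do not. Symbol invariants record only the normal characters along the fixed locus (and, in refined versions, birational types of its components), whereas the paper's obstruction for $X\times\mathbb{P}^1$ is of a different nature: the atomic content contains two atoms with $\rho=8$ but only $\rho^G=2$ invariant algebraic classes, i.e., a large supply of algebraic cycles permuted nontrivially by $G$, and Lemma \ref{lem:atom-generators} shows that every atom in $\mathsf{HAtoms}^{G}_{\dim\le 1}$ has $\rho^G=1$. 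This global cohomological count is invisible to fixed-locus data; the paper stresses in Example \ref{ex:product=with-line} that in closely analogous product situations the purely symbolic contribution vanishes identically because of relation (B). Your fallback of restricting to a cyclic subgroup is also not guaranteed: there need be no $\mathbb{Z}/p<G$ for which $\mathrm{rank}\,\mathrm{Pic}(X)^{\mathbb{Z}/p}$ remains $1$, so the obstruction may not be visible to any cyclic subgroup at all. To repair the proof you would need to import the invariant $\rho^G$ of equivariant atoms (or an equivalent blowup-stable count of non-invariant algebraic cycles), which is precisely the paper's argument.
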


Theorem \ref{thm:X(1,1,1,1)} says that $X(1,1,1,1)$ with the $\mathbb Z/2$-action induced from permuting first and second and third and fourth factors of $(\mathbb P^1)^4$ is not $\mathbb Z/2$-equivariantly birational to $\mathbb P^3$ with any $\mathbb Z/2$-linearizable action. We show (compare with \cite[Proposition 5.3]{cheltsov2024equivariant}):
\begin{Mtheorem}[=Example \ref{ex:multi-degree-times}]
      Assume that \( X(1,1,1,1) \) is invariant under the  $\mathbb Z/2$-action induced from permuting first and second and third and fourth factors of $(\mathbb P^1)^4$. Consider the $\mathbb Z/2$-variety $X(1,1,1,1)\times \mathbb P^1$ with trivial $\mathbb Z/2$-action on the $\mathbb P^1$-factor. Then $X(1,1,1,1)\times \mathbb P^1$ is not $\mathbb Z/2$-birationally equivalent to $\mathbb P^4$ with \emph{any} $\mathbb Z/2$-linearizable action.
\end{Mtheorem}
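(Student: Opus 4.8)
The plan is to locate the obstruction in the \emph{atomic} (Chen--Ruan) half of the combined invariant rather than in the modular symbol, because for $G=\mathbb{Z}/2$ the symbol module is too degenerate to help: a direct application of the blowup relation (B) shows that $[0,\chi,\chi]=0$ in $\mathcal{B}_3(\mathbb{Z}/2)$ (here $\chi$ denotes the nontrivial character), and this is precisely the symbol that the fixed locus of $X(1,1,1,1)$ contributes, as I recall below. First I would revisit the geometry behind Theorem \ref{Mthm:X(1,1,1,1)-withZ_2}: writing $\tau$ for the involution swapping the first/second and third/fourth $\mathbb{P}^1$ factors, the $\tau$-fixed locus of $(\mathbb{P}^1)^4$ is the diagonal $(\mathbb{P}^1)^2=\{P_1=P_2,\,P_3=P_4\}$, and its intersection with $X(1,1,1,1)$ is the restriction of a $(1,1,1,1)$-form to this diagonal, i.e.\ a $(2,2)$-curve $E'\subset\mathbb{P}^1\times\mathbb{P}^1$, hence an elliptic curve. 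Both normal directions along $E'$ (the anti-diagonal direction of each swapped pair) carry $\chi$, so the local Hodge atom of $[X(1,1,1,1)/(\mathbb{Z}/2)]$ supported on the corresponding twisted sector realizes the weight-$1$ Hodge structure $\mathrm{H}^1(E')$, a genuinely non-Tate $\mathsf{MT}$-representation. This is the obstruction isolated in Theorem \ref{Mthm:X(1,1,1,1)-withZ_2}.

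Next I would track this atom under $(-)\times\mathbb{P}^1$. Since $\mathbb{Z}/2$ acts trivially on the $\mathbb{P}^1$ factor, $[X(1,1,1,1)\times\mathbb{P}^1/(\mathbb{Z}/2)]=[X(1,1,1,1)/(\mathbb{Z}/2)]\times\mathbb{P}^1$, the $\tau$-fixed locus is $E'\times\mathbb{P}^1$, and the twisted sector carries the Chen--Ruan cohomology of $E'\times\mathbb{P}^1$. By the quantum K\"unneth formula the A-model F-bundle of the product is the (completed) tensor product of those of $[X(1,1,1,1)/(\mathbb{Z}/2)]$ and of $\mathbb{P}^1$; as the F-bundle of $\mathbb{P}^1$ is of Tate type, this tensoring splits the eigenvalues of $\boldsymbol{\kappa}=\mathsf{Eu}\star$ additively and Tate-twists the Hodge data without altering the $\mathsf{MT}$-representation type. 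Consequently the product again carries a local Hodge atom whose $\mathsf{MT}$-representation contains $\mathrm{H}^1(E')$, and at the level of symbols the invariant is merely suspended, $[0,\chi,\chi]\mapsto[0,0,\chi,\chi]$, by appending the trivial character of the $\mathbb{P}^1$ direction.

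I would then compute the target side. Up to the identification $\mathrm{diag}(+,\dots)=-\,\mathrm{diag}(-,\dots)$ in $\mathrm{PGL}_5$, every $\mathbb{Z}/2$-linearizable action on $\mathbb{P}^4$ has the form $\mathbb{P}(V_+\oplus V_-)$ with $(\dim V_+,\dim V_-)\in\{(1,4),(2,3)\}$, and its fixed locus is the disjoint union of the \emph{linear} subspaces $\mathbb{P}(V_+)\sqcup\mathbb{P}(V_-)$. Hence every twisted sector of $[\mathbb{P}^4/(\mathbb{Z}/2)]$ is a projective space, so every local Hodge atom of $[\mathbb{P}^4/(\mathbb{Z}/2)]$ is of Tate type for both admissible actions. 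A non-Tate atom of the kind produced above therefore cannot occur for any linearizable $\mathbb{P}^4$. Since the combined invariant is a $\mathbb{Z}/2$-birational invariant --- this is where the Weak Factorization Theorem \ref{thm:weak} and the (assumed) Chen--Ruan blowup formula are used --- the two $\mathbb{Z}/2$-varieties cannot be $\mathbb{Z}/2$-equivariantly birational, which is the assertion.

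The main obstacle is the stability step: one must prove that tensoring with the Tate F-bundle of $\mathbb{P}^1$ neither annihilates the $\mathrm{H}^1(E')$-atom nor fuses its generalized $\boldsymbol{\kappa}$-eigenspace with a Tate eigenspace in a way that would change the $\mathsf{MT}$-type. Concretely I would analyze the spectral decomposition of $\boldsymbol{\kappa}_{X\times\mathbb{P}^1}$ on the product F-bundle, using that the Euler field of a product adds eigenvalues, to show that at a general point of the Hodge locus the relevant generalized eigenspace is exactly the tensor of the $E'$-atom with a one-dimensional Tate piece, so its $\mathsf{MT}$-decomposition still contains $\mathrm{H}^1(E')$. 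Verifying this separation of eigenvalues, together with the well-definedness of the orbifold atomic invariant on $[X(1,1,1,1)\times\mathbb{P}^1/(\mathbb{Z}/2)]$ under the Chen--Ruan blowup formula, is the real content; the enumeration of linearizable $\mathbb{P}^4$'s and the symbol bookkeeping are routine.
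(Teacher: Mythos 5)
Your strategy diverges substantially from the paper's, and it contains a genuine gap at the concluding step. The paper proves this statement purely with equivariant atom theory: since $X\times\mathbb{P}^1=\mathbb{P}(\mathcal{O}^{\oplus 2})$ with trivial $\mathbb{Z}/2$-action on the fibers, the projectivization relation gives that the atomic content of $X\times\mathbb{P}^1$ is two copies of the atoms of $X$, in particular two atoms with $(\rho,\rho^G)=(5,3)$ and Hodge polynomial $t+3+t^{-1}$; one then checks that such an atom (whether or not it splits further) cannot be matched by any element of $\mathsf{HAtoms}^{\mathbb{Z}/2}_{\dim\le 2}$ --- the leftover $(3,1)$ after removing the elliptic-curve piece is not a nonnegative combination of the curve/point atoms, and the surface-atom constraints (a surface atom with at least three Hodge classes has at least three invariant ones; a surface atom with $\rho=5$ would be of general type, contradicting $P=t+3+t^{-1}$) exclude the rest. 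Note also that the obstruction in Theorem~\ref{Mthm:X(1,1,1,1)-withZ_2} is exactly this $(\rho,\rho^G)$ arithmetic, not the non-Tate Hodge structure of the fixed elliptic curve $E'$ in a twisted sector, and that the elliptic curve entering the paper's bookkeeping is the one responsible for $h^{2,1}(X)=1$ (the blowup center of $(\mathbb{P}^1)^3$), not the fixed locus of $\tau$.

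The gap: your final inference --- ``$X\times\mathbb{P}^1$ has a local Hodge atom whose twisted sector carries $\mathrm{H}^1(E')$, every atom of $[\mathbb{P}^4/(\mathbb{Z}/2)]$ with a linearizable action is Tate, hence they are not $\mathbb{Z}/2$-birational'' --- does not follow. Birational invariance of the atomic content is only modulo atoms arising from $G$-invariant centers of codimension $\ge 2$, which for a fourfold includes \emph{surfaces} with $\mathbb{Z}/2$-action. A rational surface with an involution whose fixed locus contains an elliptic curve (e.g.\ a smooth cubic surface invariant under a reflection of $\mathbb{P}^3$, fixing a plane cubic) contributes a twisted-sector atom carrying exactly the kind of non-Tate weight-one $\mathsf{MT}$-representation you exhibit; blowing up $\mathbb{P}^4$ in such an invariant surface therefore creates the ``obstruction'' you rely on. So you must rule out that your non-Tate atom lies in $\mathsf{HAtoms}^{\mathbb{Z}/2}_{\dim\le 2}$, and your proposal never does this; that exclusion is precisely the content of the paper's $(\rho,\rho^G,P)$ analysis. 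Two further weaknesses: your route is conditional on the unproven Chen--Ruan blowup conjectures (the paper's proof of this statement is not), and the quantum K\"unneth step for the non-archimedean F-bundle of a product is not among the defining equivalences of $\mathsf{HAtoms}^G$ and would need separate justification, whereas the projectivization relation handles $X\times\mathbb{P}^1$ directly.
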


The above results are all proven only via equivariant atom theory (i.e., by directly understanding the invariants in Equation \eqref{invariants3-box} in Section \ref{sec:equivariant-atoms}). The combination ``atoms meet symbols'' is manifested through the following:

\begin{Mtheorem}[=Theorem \ref{thm:invariant}]\label{ithm:atoms-meet-symbols}
    Let \(X\) be an irreducible smooth projective threefold equipped with a regular generically free \(\mathbb{Z}/p\)-action, where \(p\) is prime. Fix an integer \(\mathsf{g} \geq 2\), and denote by \(X^{\mathbb{Z}/p}\) the fixed-point locus of the action.  Then the following quantity defines a \(\mathbb{Z}/p\)-birational invariant:
\begin{equation}
\begin{split}
  &-\;\#\bigl\{
    \text{1-dimensional components of } X^{\mathbb{Z}/p} 
    \text{ of genus } \mathsf{g} 
  \bigr\} \\
  &\quad -\; 2\,\#\bigl\{
    \text{components of $X^{\mathbb{Z}/p}$ birational to the product of a curve of genus } \mathsf{g} 
    \text{ and } \mathbb{P}^1 
  \bigr\} \\
  &\quad +\; \#\Bigl\{ 
    \mathbb{Z}/p\text{-equivariant atoms } \boldsymbol\alpha 
    \;\Bigm|\; P_{\boldsymbol\alpha}(t) = \mathsf{g} t^{-1} + 2 + \mathsf{g} t, \\
  &\hspace{5em} 
    \text{and } \mathbb{Z}/p \text{ acts trivially on the corresponding eigenspace}
  \Bigr\}.
\end{split}
\end{equation}

\end{Mtheorem}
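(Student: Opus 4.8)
The plan is to invoke the Weak Factorization Theorem (Theorem \ref{thm:weak}) to reduce the claim to invariance under a single $\mathbb{Z}/p$-equivariant blowup along a smooth $\mathbb{Z}/p$-invariant center $Z$ of codimension $\geq 2$. Since $\dim X = 3$, the center $Z$ is a point or a smooth curve, and as a $\mathbb{Z}/p$-invariant subvariety it falls into one of four types: (i) a single fixed point or a free $\mathbb{Z}/p$-orbit of points; (ii) a free $\mathbb{Z}/p$-orbit of curves; (iii) a single invariant curve $C$ on which $\mathbb{Z}/p$ acts nontrivially; (iv) a single curve $C$ fixed pointwise. For each I would compute the change in the three summands, using direct birational geometry of the fixed locus for the first two and the equivariant Chen--Ruan analogue of Iritani's blowup formula (following \cite{iritani2023quantumcohomologyblowups}) for the third. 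The organizing principle, which I would make precise, is that $\mathsf{g}\,t^{-1} + 2 + \mathsf{g}\,t$ is exactly $\sum_{p,q} h^{p,q}\,t^{p-q}$ for the full cohomology $H^{\ast}(C)$ of a genus-$\mathsf{g}$ curve ($h^{0,0}=h^{1,1}=1$, $h^{1,0}=h^{0,1}=\mathsf{g}$), so that each genus-$\mathsf{g}$ curve carries ``one unit'' and each surface birational to (genus-$\mathsf{g}$ curve)$\times\mathbb{P}^1$ carries ``two units'' of genus-$\mathsf{g}$ Hodge content; the invariant then records how this content migrates between the fixed locus and the atomic spectrum.

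In the easy cases no term changes. Blowing up a point produces an exceptional $\mathbb{P}^2$ whose $\mathbb{Z}/p$-fixed locus is a union of linear subspaces, hence rational, contributing no genus-$\mathsf{g}$ fixed component for $\mathsf{g}\geq 2$; the new atoms are Tate, with polynomial having no $t^{\pm 1}$ terms, so Term 3 is unaffected. For a free orbit of $p$ curves the fixed locus is untouched, while the blowup formula contributes $p$ new atoms permuted cyclically by $\mathbb{Z}/p$; since $p>1$, no $\mathbb{Z}/p$-stable atom among them has both polynomial $\mathsf{g}\,t^{-1}+2+\mathsf{g}\,t$ and trivial action, so Term 3 does not change. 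For an invariant curve $C$ on which the action is nontrivial, the decisive input is a Lefschetz computation: the trace $\mathrm{tr}\big(g^{\ast}\mid H^{1}(C)\big) = 2 - \#\mathrm{Fix}(g)$ can never equal $2\mathsf{g}$ for $\mathsf{g}\geq 2$, so the action on $H^{1}(C)$ is nontrivial and the new genus-$\mathsf{g}$ atom again fails the triviality condition; meanwhile the fixed locus of the exceptional divisor consists only of rational pieces (isolated points, or $\mathbb{P}^1$-fibers over $C^{\mathbb{Z}/p}$), so Terms 1 and 2 are unchanged.

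The substance is case (iv). Here $\mathbb{Z}/p$ acts trivially on $H^{\ast}(C)$ and the normal bundle splits into character lines $\chi_1,\chi_2$. By the Chen--Ruan blowup formula the blowup adds a single atom carrying $H^{\ast}(C)$, with polynomial $\mathsf{g}\,t^{-1}+2+\mathsf{g}\,t$ and trivial $\mathbb{Z}/p$-action (the underlying classes are supported on a pointwise-fixed locus and on invariant divisor classes), so Term 3 increases by $+1$. On the geometric side I would analyze the fixed locus of $E=\mathbb{P}(N_{C/X})$: if $\chi_1\neq\chi_2$ are both nontrivial, $C$ was an isolated one-dimensional fixed component and $E$ contributes two fixed sections $\cong C$, so Term 1 passes from $-1$ to $-2$; if $\chi_1=\chi_2$ (both nontrivial), all of $E$ is fixed and birational to $C\times\mathbb{P}^1$, so $C$ leaves Term 1 ($+1$) while $E$ enters Term 2 ($-2$); if one character is trivial, $C$ lies in a fixed surface $F$, the proper transform $\tilde F\cong F$ is unchanged and $E$ contributes one genuinely new fixed section $\cong C$ (Term 1 gains $-1$). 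In every subcase the net change of Terms 1 and 2 is $-1$, exactly cancelling the $+1$ of Term 3. I would also verify stability under the remaining elementary blowups inside an existing $C\times\mathbb{P}^1$ surface (a fiber, a section, a point), which confirms that the coefficient $2$ in Term 2 is forced rather than ad hoc.

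The main obstacle is the atomic side: one must establish, in the $\mathbb{Z}/p$-equivariant Chen--Ruan setting, that a codimension-$2$ blowup of a curve $C$ creates precisely one atom whose Hodge-generating polynomial is $\mathsf{g}\,t^{-1}+2+\mathsf{g}\,t$, placed at the eigenvalue of $\boldsymbol\kappa=\mathsf{Eu}\star$ dictated by the exceptional class, and carrying the $\mathbb{Z}/p$-representation induced from the action on $H^{\ast}(C)$ up to a normal-character twist. Controlling the eigenvalue, the polynomial, and the triviality of the representation simultaneously---so that ``$\mathbb{Z}/p$ acts trivially on the eigenspace'' becomes equivalent to ``$C$ is pointwise fixed'' for $\mathsf{g}\geq 2$---is the crux, and it is exactly here that the assumed blowup formula for quantum Chen--Ruan cohomology is indispensable and where the atomic and symbolic data must be matched term by term.
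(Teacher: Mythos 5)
Your overall strategy coincides with the paper's: reduce via equivariant Weak Factorization to a single blowup in a smooth invariant center, enumerate the possible centers in a threefold, and check that the three terms change by cancelling amounts. Your arithmetic in the decisive cases matches the paper's Cases 7 and 10 exactly: for a pointwise-fixed genus-$\mathsf{g}$ curve with distinct nonzero normal weights the count of genus-$\mathsf{g}$ curve components goes from $1$ to $2$ against a $+1$ in the atomic term; with equal nonzero weights the curve is traded for a surface birational to $C\times\mathbb{P}^1$ (hence the coefficient $2$); and with one trivial weight (the curve sitting inside a fixed surface) one new section $\cong C$ appears. Your Lefschetz trace argument for why a nontrivial $\mathbb{Z}/p$-action on a curve of genus $\geq 2$ acts nontrivially on $\mathrm{H}^1$ is a welcome explicit justification of a step the paper only asserts in its Case 4.

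The one genuine problem is your appeal, in the crux case, to ``the equivariant Chen--Ruan analogue of Iritani's blowup formula,'' which you call indispensable. It is not, and invoking it would make the theorem conditional on Conjectures \ref{conj:1}--\ref{conj:3}, whereas the paper proves Theorem \ref{thm:invariant} unconditionally. The equivariant atoms entering the statement are those of Section \ref{sec:equivariant-atoms}, defined on the fixed locus $B_X^{G\times\mathsf{MT}}$ of the \emph{ordinary} A-model F-bundle; their blowup behavior follows from the already-proven theorem of Iritani together with the naturality of his isomorphism (it commutes with automorphisms, hence restricts to the $G$-fixed locus). For a codimension-two center $C$ this immediately says the atoms of $\mathrm{Bl}_C X$ are those of $X$ plus one copy of the atoms of $C$; since a genus-$\mathsf{g}\geq 2$ curve has nef canonical class it carries a single atom (Example \ref{ex:nef}) with Hodge polynomial $\mathsf{g}t^{-1}+2+\mathsf{g}t$, and the $\mathbb{Z}/p$-action on the corresponding eigenspace is the induced action on $\mathrm{H}^{\ast}(C)$, which is trivial exactly when $C$ is pointwise fixed (or acted on trivially). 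So the ``normal-character twist'' you worry about does not arise, and the orbifold machinery of Section \ref{sec:CR} is a separate, conditional refinement that this theorem does not use. Replace that ingredient and your proof is the paper's proof.
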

In Theorem \ref{ithm:atoms-meet-symbols}, the Hodge polynomial $P_{\boldsymbol\alpha}(t)\in \mathbb Z[t,t^{-1}]$ is an invariant of a Hodge atom, see the formula \eqref{invariants-box} in Section \ref{sec:atoms-and-birational-geometry}. A ``classical''\footnote{i.e., without using atoms} analog of this theorem appears in Proposition 5.2 in \cite{kresch2025intermediatejacobiansburnsideinvariants}, which appeared after the first version of our paper was made public.

We have a finer invariant, depending on the isogeny class of Jacobians of genus $\mathsf  g$ curves.
\begin{Mtheorem}[=Theorem \ref{thm:invariant-fine}]\label{ithm:atoms-meet-symbols2}
In the assumptions of the previous theorem, let us assume
    that we fix a curve $C_0$ of genus \(\mathsf g\geq 2\). Then the following quantity defines a \(\mathbb{Z}/p\)-birational invariant:
\begin{equation}
\begin{split}
  &-\;\#\bigl\{
    \text{components of } X^{\mathbb{Z}/p} \cong C  
  \bigr\} 
   -\; 2\,\#\bigl\{
    \text{components of $X^{\mathbb{Z}/p}$ birational to } C \times \mathbb{P}^1  
  \bigr\} \\
  &\quad +\; \#\Bigl\{
    \mathbb{Z}/p\text{-equivariant atoms } \boldsymbol\alpha 
    \;\Bigm|\; 
    \mathbb{Z}/p \text{ acts trivially on the corresponding eigenspace,} \\
  &\qquad\quad 
    \text{and the isomorphism class of the } \bar{\mathbb{Q}}\text{-linear } 
    \mathsf{Hod}\text{-representation associated to } \boldsymbol\alpha \\
  &\qquad\quad 
    \text{is equivalent to that on } \mathrm{H}^{\ast}(C_0)
  \Bigr\}.
\end{split}
\end{equation}
Here, $C$ denotes a curve such that $\operatorname{Jac}(C)$ is isogenous to $\operatorname{Jac}(C_0)$.
\end{Mtheorem}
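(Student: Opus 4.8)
The plan is to follow the same strategy as for the coarser invariant of Theorem~\ref{ithm:atoms-meet-symbols}, refining every count by the isogeny class of $\operatorname{Jac}(C_0)$, and to reduce everything to invariance under a single $\mathbb{Z}/p$-equivariant blowup. By the Weak Factorization Theorem (Theorem~\ref{thm:weak}), any two $\mathbb{Z}/p$-equivariantly birational smooth projective threefolds are connected by a chain of blowups and blowdowns along smooth $\mathbb{Z}/p$-invariant centers of codimension $\geq 2$; hence it suffices to prove that the displayed quantity is unchanged when $X$ is replaced by $\Bl_Z X$ for such a center $Z$. In a threefold the only possibilities are $Z$ a (possibly disconnected, $\mathbb{Z}/p$-invariant) finite set of points or $Z$ a smooth invariant curve. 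The engine is the equivariant enhancement of Iritani's blowup formula \cite{iritani2023quantumcohomologyblowups} (together with the assumed blowup formula for quantum Chen--Ruan cohomology): the $\mathbb{Z}/p$-equivariant atoms of $\Bl_Z X$ are those of $X$ together with $\operatorname{codim}(Z)-1$ copies of the atoms of $Z$, and the $\mathbb{Z}/p$-action as well as the $\mathsf{MT}$-structure on each newly created atom are exactly those induced on $\mathrm{H}^{\ast}(Z)$.

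The new Hodge-theoretic input is the identification of the numerical data of a curve-atom with the isogeny class of its Jacobian. For a smooth projective curve $C$ of genus $\mathsf{g}$, the atom produced by $C$ has Hodge polynomial $P_{\boldsymbol\alpha}(t)=\mathsf{g}t^{-1}+2+\mathsf{g}t$ and its associated $\bar{\mathbb{Q}}$-linear $\mathsf{MT}$-representation is the one carried by $\mathrm{H}^{\ast}(C)$, i.e.\ determined by the weight-one Hodge structure $\mathrm{H}^1(C)$ (the pieces $\mathrm{H}^0$ and $\mathrm{H}^2$ are of Tate type and common to all curves). The key lemma is then purely classical: under the Tannakian equivalence between polarizable $\mathbb{Z}/2$-graded $\mathbb{Q}$-Hodge structures and representations of $\mathsf{MT}$, two curves $C,C'$ give isomorphic $\mathsf{MT}$-representations if and only if $\mathrm{H}^1(C)\cong \mathrm{H}^1(C')$ as rational Hodge structures, which in turn holds if and only if $\operatorname{Jac}(C)$ and $\operatorname{Jac}(C')$ are isogenous. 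Consequently the third count singles out precisely the atoms coming from curves in the isogeny class of $C_0$, and the $\mathbb{Z}/p$-action on such an atom is trivial exactly when $\mathbb{Z}/p$ fixes $C$ pointwise.

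With these two facts the case analysis is a bookkeeping balance in which every center outside the isogeny class of $\operatorname{Jac}(C_0)$ contributes nothing to any of the three terms. Point centers add only atoms with $P_{\boldsymbol\alpha}(t)=1$ and create fixed loci inside a projectivized representation (linear, hence of genus $0$), so for $\mathsf{g}\geq 2$ they leave all counts unchanged. A $\mathbb{Z}/p$-invariant curve $C$ that is not pointwise fixed produces an atom with nontrivial $\mathbb{Z}/p$-action and meets the fixed locus in finitely many points; the same holds for a free orbit of curves, whose combined atom carries a nontrivial (regular) permutation representation and is therefore excluded from the trivial-action count. The only balancing case is a pointwise-fixed curve $C$ with $\operatorname{Jac}(C)\sim \operatorname{Jac}(C_0)$, where one splits according to the two characters $\chi_1,\chi_2$ of the rank-two $\mathbb{Z}/p$-representation $N_{C/X}$. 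If $\chi_1\neq\chi_2$ the exceptional divisor $\mathbb{P}(N_{C/X})$ has exactly two pointwise-fixed sections, each isomorphic to $C$; since $C$ itself is removed, the number of fixed curve components in the class changes by $-1+2=+1$, contributing $-1$ to the invariant, whereas the new atom contributes $+1$, for net change $0$. If $\chi_1=\chi_2$ the entire exceptional divisor is pointwise fixed and is a $\mathbb{P}^1$-bundle over $C$, hence birational to $C\times\mathbb{P}^1$; the removal of $C$ changes the first term by $+1$, the new $C\times\mathbb{P}^1$ component changes the second term by $-2$, and the new atom contributes $+1$, for net change $+1-2+1=0$. The coefficients $-1,-2,+1$ are exactly those forcing these cancellations.

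The step I expect to be the main obstacle is the equivariant and $\mathsf{MT}$-refined version of the blowup formula for atoms: one must verify not only that blowing up along $Z$ adds $\operatorname{codim}(Z)-1$ copies of the atoms of $Z$, but that the induced $\mathbb{Z}/p$-representation and the isomorphism class of the $\bar{\mathbb{Q}}$-linear $\mathsf{MT}$-representation on each new atom agree with those on $\mathrm{H}^{\ast}(Z)$, so that the trivial-action and isogeny conditions can be read off geometrically. Closely tied to this is the correct treatment of invariant-but-not-pointwise-fixed centers, and in particular free orbits of curves, where one must be certain that the permutation action prevents any new atom from entering the trivial-action count; this is precisely what guarantees that only genuine pointwise-fixed curves in the prescribed isogeny class can alter the invariant.
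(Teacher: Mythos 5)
Your overall strategy is the same as the paper's: the paper proves Theorem \ref{thm:invariant-fine} by rerunning the ten-case blowup analysis from the proof of Theorem \ref{thm:invariant} and observing that the atom count changes only in two of the cases, which is exactly the weak-factorization-plus-equivariant-Iritani bookkeeping you set up. Your added lemma identifying the $\bar{\mathbb{Q}}$-linear $\mathsf{MT}$-representation of a curve atom with the isogeny class of its Jacobian is correct (and, since two $\mathbb{Q}$-rational representations of a pro-reductive group that become isomorphic over $\bar{\mathbb{Q}}$ are already isomorphic over $\mathbb{Q}$, it does give the equivalence in both directions needed for the three counts to match up); the paper leaves this implicit.

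There is, however, one case missing from your analysis, and it is precisely one of the two cases the paper singles out as nontrivial (its Case 10): blowing up a curve $C$ with $\operatorname{Jac}(C)$ isogenous to $\operatorname{Jac}(C_0)$ that is contained in a \emph{two-dimensional} component $S$ of the fixed locus. Such a $C$ is pointwise fixed, so it falls under your ``only balancing case,'' but your computation there assumes both normal characters are nonzero and that ``$C$ itself is removed'' from the first count. Here instead $N_{C/X}=N_{C/S}\oplus N_{S/X}|_C$ has weights $(0,a)$ with $a\neq 0$; $C$ was never a component of $X^{\mathbb{Z}/p}$ (hence was never counted), only \emph{one} of the two fixed sections of $\mathbb{P}(N_{C/X})$ is a new isolated component (the other is absorbed into the proper transform of $S$), and the mechanism is the \emph{creation} of a fixed curve in the isogeny class rather than its replacement by two copies. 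The net change $(\Delta(\text{term 1}),\Delta(\text{term 3}))=(+1,+1)$ happens to cancel just as in your case, so the theorem is not endangered, but as written your case division does not cover this configuration and the justification you give for the ``$\chi_1\neq\chi_2$'' subcase is geometrically wrong when one of the characters vanishes. You should add this case explicitly (and, for completeness, note that blowing up a point on such a curve or on a fixed surface leaves all three counts unchanged, as the proper transform of $C$ is still isomorphic to $C$).
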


In Theorem \ref{ithm:atoms-meet-symbols2}, $\mathsf{Hod}$ denotes the pro-reductive group over $\mathbb Q$ closely related to the usual Mumford-Tate group, see Section  \ref{sec:Hodge-Z_p}.

Using this theorem, we can construct new examples of threefolds with cyclic group actions which are not equivariantly birational to $\mathbb P^3$ with any linear action. These examples are sharp, in the sense that the ``atoms meet symbols'' framework obstructs equivariant birational equivalences for them, while none of the known purely symbolic invariants (a la Kontsevich-Pestun-Tschinkel) obstruct equivariant birationality in these cases.
\begin{Mtheorem}[=Example \ref{ex:fixed-higher-genus-curve}]\label{Mtheorem:examples-higher-genus}
For every monic polynomial \(P \in \mathbb{C}[x]\) of degree \(d = 3k,~k>1\) with only simple roots, there exists a smooth compactification \(\widetilde{X}\) of the affine \(\mathbb{Z}/3\)-variety
\[
X: x_1x_2x_3 = P(x_4),
\]
equipped with a generically free regular \(\mathbb{Z}/3\)-action given by cyclically permuting the variables \(x_1, x_2, x_3\). The fixed locus of this action is a curve of genus \(\mathsf{g} \geq 2\), and \(\widetilde{X}\) is not \(\mathbb{Z}/3\)-birationally equivalent to \(\mathbb{P}^3\) with any linear \(\mathbb{Z}/3\)-action.
\end{Mtheorem}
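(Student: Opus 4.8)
The plan is to exhibit an explicit family of affine varieties $X : x_1 x_2 x_3 = P(x_4)$, construct a suitable smooth projective compactification $\widetilde{X}$ on which the cyclic $\mathbb{Z}/3$-action survives, and then compute the invariant from Theorem~\ref{ithm:atoms-meet-symbols} (equivalently \ref{thm:invariant}) to show it is nonzero, whereas it must vanish for any $\mathbb{P}^3$ carrying a linear $\mathbb{Z}/3$-action. First I would analyze the $\mathbb{Z}/3$-action $(x_1,x_2,x_3,x_4)\mapsto(x_2,x_3,x_1,x_4)$ and identify its fixed locus: a point is fixed precisely when $x_1=x_2=x_3=:t$ and $x_4$ is unconstrained, subject to $t^3=P(x_4)$. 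Since $\deg P = 3k$ and $P$ has only simple roots, the affine curve $\{t^3 = P(x_4)\}$ is a smooth trigonal (superelliptic) curve, and a standard Riemann--Hurwitz computation for the degree-$3$ cover $x_4 \mapsto$ (branched over the $3k$ simple roots of $P$ plus the point at infinity) gives genus $\mathsf{g} \geq 2$ once $d \geq 4$; I would record the exact genus formula $\mathsf{g} = 3k-2$ (or whatever the careful computation yields after accounting for the behavior at infinity).

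Next I would produce the compactification. The key geometric input, already flagged in the introduction, is that $X(1,1,1,1)$ and similar equations present as blowups: here the natural move is to view $\widetilde X$ as a $\mathbb{Z}/3$-equivariant resolution/compactification whose fixed locus contains exactly one one-dimensional component isomorphic to the genus-$\mathsf{g}$ curve $C = \{t^3 = P(x_4)\}$, with the compactification chosen so that no spurious higher-genus fixed curves and no fixed components birational to $C\times\mathbb{P}^1$ are introduced. I would check $G$-equivariant smoothness of the compactification and verify that the $\mathbb{Z}/3$-representation on the normal data along the fixed curve is the expected nontrivial character, while confirming that the only contribution of genus $\mathsf g$ to the invariant comes from this single curve component. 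The cleanest route is to embed $X$ into a toric or weighted-projective ambient space on which the permutation action is linear, take a $\mathbb{Z}/3$-equivariant resolution by the main theorem's standing blowup hypotheses, and use the weak factorization/invariance already established to compute on any convenient model.

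With the fixed-locus data in hand, I would evaluate the invariant of Theorem~\ref{thm:invariant}. The first term contributes $-1$ (one genus-$\mathsf g$ curve component), the second term contributes $0$ (no $C\times\mathbb{P}^1$ components), and the third term counts $\mathbb{Z}/3$-equivariant atoms $\boldsymbol\alpha$ with $P_{\boldsymbol\alpha}(t)=\mathsf{g}t^{-1}+2+\mathsf{g}t$ on which $\mathbb{Z}/3$ acts trivially. By the blowup formula for quantum Chen--Ruan cohomology together with Iritani's blowup analysis for atoms, blowing up a smooth curve of genus $\mathsf g$ produces exactly the atoms carrying the Hodge polynomial of $\mathrm{H}^*(C)$, namely $\mathsf{g}t^{-1}+2+\mathsf{g}t$; the point is that on $\mathbb{P}^3$ with any linear action the A-model F-bundle is that of projective space, whose atoms are Tate type (all $P_{\boldsymbol\alpha}(t)$ are Laurent monomials with integer coefficients, carrying no genus-$\mathsf g$ abelian-variety Hodge structure), so the corresponding atom count is $0$ while the fixed-locus curve-count terms are also $0$ for a linearizable $\mathbb{P}^3$. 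Thus the invariant equals some nonzero integer for $\widetilde X$ and $0$ for every $\mathbb{Z}/3$-linearizable $\mathbb{P}^3$, proving non-$\mathbb{Z}/3$-birationality.

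The main obstacle I anticipate is the compactification step, not the invariant bookkeeping: I must ensure the chosen smooth $\mathbb{Z}/3$-equivariant compactification $\widetilde X$ does not acquire additional fixed components (at infinity, or along the exceptional loci of the resolution) whose genus equals $\mathsf g$ or which are birational to $C\times\mathbb{P}^1$, since any such extra component would shift the first two terms of the invariant and could in principle cancel the atomic contribution. Controlling the fixed locus of the action on the boundary divisors added during compactification, and confirming that the atom with polynomial $\mathsf g t^{-1}+2+\mathsf g t$ genuinely arises with trivial $\mathbb{Z}/3$-action (so it is counted), is where the argument requires the most care; here I would lean on the explicit toric structure of the ambient space to enumerate boundary strata and their stabilizers, and on the equivariant blowup formula to track how each stratum contributes to the atomic decomposition.
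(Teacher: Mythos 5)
Your overall strategy is the paper's: the same compactification of $X$ inside $(\mathbb{P}^1)^3\times\mathbb{P}^1$ (resolved torically away from the fixed curve), the same Riemann--Hurwitz computation giving $\mathsf{g}(C)=3k-2$ for the superelliptic curve $x^3=P(t)$ with normal weights $1,2$, and the same plan of evaluating the invariant of Theorem~\ref{thm:invariant} and comparing with the value $0$ for any linearizable $\mathbb{P}^3$.

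However, there is a genuine gap in your third paragraph: you never actually determine the atomic term of the invariant \emph{for $\widetilde{X}$ itself}. You correctly get $-1$ from the curve component and $0$ from the $C\times\mathbb{P}^1$ term, and you correctly argue the atom count is $0$ for $\mathbb{P}^3$, but you then conclude the invariant of $\widetilde X$ is ``some nonzero integer'' without ruling out that $\widetilde X$ carries a $\mathbb{Z}/3$-equivariant atom with Hodge polynomial $\mathsf{g}t^{-1}+2+\mathsf{g}t$ and trivial action --- if it did, the total would be $-1+0+1=0$ and the argument would collapse. Your sentence about blowups of genus-$\mathsf{g}$ curves producing such atoms describes how the invariant is \emph{designed} to be blowup-stable; it says nothing about the atomic content of the particular variety $\widetilde X$. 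The paper closes this with one observation you are missing: forgetting the group action, $\widetilde X$ is rational, so $\mathrm{H}^{p,q}(\widetilde X)=0$ whenever $p\neq q$; since the polynomial $\mathsf{g}t^{-1}+2+\mathsf{g}t$ has nonzero coefficients at $t^{\pm1}$, no atom of $\widetilde X$ can have this Hodge polynomial, and the third term is exactly $0$. Hence the invariant is $-1\neq 0$. A secondary point: you invoke ``the blowup formula for quantum Chen--Ruan cohomology,'' which is one of the unproven Conjectures of Section~\ref{sec:CR}; this example does not need it --- Theorem~\ref{thm:invariant} suffices --- so you should not make the argument conditional on that input. Your worry about extra fixed components appearing at infinity is legitimate and is resolved in the paper by checking that $C$ lies entirely in the smooth affine chart, disjoint from the boundary strata where the toric resolution is performed, so the fixed locus of $\widetilde X$ is exactly $C$.
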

Theorem \ref{Mtheorem:examples-higher-genus} extends the case $d=3$, which is treated in Proposition 6.5 in \cite{cheltsov2025equivariantgeometrysingularcubic}.

A family of other examples can be straightforwardly derived from this technology (Theorem \ref{Mtheorem:product=with-line} below). From classical methods, they can be recovered via the results in \cite{Bogomolov2013}. See also the discussion in Section 6 in \cite{kresch2025intermediatejacobiansburnsideinvariants}.
\begin{Mtheorem}[=Example \ref{ex:product=with-line}]\label{Mtheorem:product=with-line}
     Let $p$ be a prime number. Let $S$ be a smooth rational surface endowed with a nontrivial action of $\mathbb{Z}/p$ such that its fixed locus $S^{\mathbb{Z}/p}$ contains a curve of genus $\mathsf{g} \ge 2$. Let $X = S \times \mathbb{P}^1$ be endowed with the diagonal $\mathbb{Z}/p$-action, for any given $\mathbb{Z}/p$-action on $\mathbb{P}^1$. Then the variety $X$ is not $\mathbb{Z}/p$-birationally equivalent to the projective space $\mathbb{P}^3$ endowed with a linearizable $\mathbb{Z}/p$-action. 
\end{Mtheorem}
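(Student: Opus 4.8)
The plan is to apply the $\mathbb{Z}/p$-birational invariant of Theorem \ref{thm:invariant} directly: I will evaluate it on $X=S\times\mathbb{P}^1$ and on an arbitrary $\mathbb{P}^3$ carrying a linearizable $\mathbb{Z}/p$-action, and show the two values never coincide. First I would record that the diagonal action on $X$ is generically free, so that the invariant is defined: since $p$ is prime and the action on $S$ is nontrivial, its generic stabilizer is a proper subgroup of $\mathbb{Z}/p$, hence trivial, and the same persists after multiplying by $\mathbb{P}^1$. Note also that $X=S\times\mathbb{P}^1$ is a smooth projective irreducible threefold, as required. Denote the invariant by $I$ and fix the genus $\mathsf{g}\ge 2$ of a component of $S^{\mathbb{Z}/p}$ supplied by hypothesis.

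Next I would compute $I(\mathbb{P}^3)=0$ for \emph{every} linearizable action. Such an action is induced by a $4$-dimensional representation $V$ of $\mathbb{Z}/p$, and the fixed locus of the induced action on $\mathbb{P}(V)=\mathbb{P}^3$ is the disjoint union of the projectivized isotypic pieces $\mathbb{P}(V_\chi)$. Each is a projective space, hence rational; in particular none is a curve of genus $\mathsf{g}\ge 2$ and none is a surface birational to a product of a genus-$\mathsf{g}$ curve and $\mathbb{P}^1$, so the two fixed-locus terms vanish. Since $\mathrm{H}^\ast(\mathbb{P}^3,\mathbb{Q})$ is of Hodge--Tate type, every generalized $\boldsymbol\kappa$-eigenspace decomposes into Tate-type $\mathsf{MT}$-representations, and the Hodge polynomial of any equivariant atom is supported in the Tate (diagonal) part; it therefore cannot equal $\mathsf{g}t^{-1}+2+\mathsf{g}t$, which records the transcendental part $h^{1,0}=\mathsf{g}\ge 2>0$ of a genus-$\mathsf{g}$ curve. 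Hence the atomic term vanishes too, giving $I(\mathbb{P}^3)=0$.

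It remains to compute $I(X)$. The atomic term vanishes again: because $S$ is a rational surface we have $\mathrm{H}^1(S)=0$ and $\mathrm{H}^2(S)$ purely of type $(1,1)$, so $\mathrm{H}^\ast(S,\mathbb{Q})$ is Hodge--Tate, and therefore so is $\mathrm{H}^\ast(X,\mathbb{Q})=\mathrm{H}^\ast(S)\otimes \mathrm{H}^\ast(\mathbb{P}^1)$; as above, no equivariant atom of $X$ can realize the off-diagonal polynomial $\mathsf{g}t^{-1}+2+\mathsf{g}t$. For the fixed-locus terms I would split into the two possibilities for the action on the $\mathbb{P}^1$-factor. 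If it is trivial, then $X^{\mathbb{Z}/p}=S^{\mathbb{Z}/p}\times\mathbb{P}^1$; its $1$-dimensional components are the $\mathbb{P}^1$'s coming from isolated fixed points of $S$, all of genus $0$, so the first term vanishes, while each genus-$\mathsf{g}$ component $C\subset S^{\mathbb{Z}/p}$ gives the surface $C\times\mathbb{P}^1$, birational to a genus-$\mathsf{g}$ curve times $\mathbb{P}^1$, contributing $-2$ through the second term. If the action is nontrivial, then, $p$ being prime, it fixes exactly two points $0,\infty\in\mathbb{P}^1$, so $X^{\mathbb{Z}/p}=S^{\mathbb{Z}/p}\times\{0,\infty\}$ is at most $1$-dimensional (whence the second term vanishes), and each $C$ yields two genus-$\mathsf{g}$ curves $C\times\{0\}$ and $C\times\{\infty\}$, contributing $-2$ through the first term. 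Writing $N\ge 1$ for the number of genus-$\mathsf{g}$ components of $S^{\mathbb{Z}/p}$, in either case $I(X)=-2N$.

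Finally, since $I(X)=-2N\le -2<0=I(\mathbb{P}^3)$ and $I$ is a $\mathbb{Z}/p$-birational invariant, $X$ cannot be $\mathbb{Z}/p$-birational to $\mathbb{P}^3$ with any linearizable action. The step I expect to require the most care is the vanishing of the atomic term for $X$: the entire argument hinges on the fact that the higher-genus curve is ``trapped'' in the fixed locus and never migrates to the A-model side, i.e. that the Hodge--Tate property of $\mathrm{H}^\ast(X)$ forces every generalized eigenspace for $\boldsymbol\kappa$ to decompose into Tate-type $\mathsf{MT}$-representations. Making this precise amounts to confirming that passing to the equivariant refinement does not enlarge the underlying Hodge structures of the atoms beyond subquotients of $\mathrm{H}^\ast(X)$ --- exactly the mechanism isolated in Example \ref{ex:fixed-higher-genus-curve}, of which the present statement is the ``product with a line'' instance.
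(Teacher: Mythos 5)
Your proposal is correct and follows essentially the same route as the paper's Example \ref{ex:product=with-line}: evaluate the invariant of Theorem \ref{thm:invariant}, use the Hodge--Tate property of $\mathrm{H}^{\ast}(S\times\mathbb{P}^1)$ to kill the atomic term, and observe that the genus-$\mathsf{g}$ components of $S^{\mathbb{Z}/p}$ contribute either surfaces birational to $C\times\mathbb{P}^1$ or two copies of $C$, so $I(X)<0=I(\mathbb{P}^3)$. Your closing worry about the atomic term is unnecessary: the Hodge polynomial of an atom is by definition read off from the generalized eigenspaces inside $\mathrm{H}^{p,q}(X)$ itself, so the Hodge--Tate property of $X$ settles it immediately, exactly as in the paper.
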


Initially, we found it challenging to apply atom theory to recover Theorems \ref{Mtheorem:examples-higher-genus} and \ref{Mtheorem:product=with-line}. However, this can be more or less straightforwardly obtained from a very nontrivial combination of atom theory with classical monodromy-type results:

\begin{Mtheorem}[=Theorem \ref{thm:classical-monodromy}]\label{Mtheorem:monodromy}
    Let $(X_t)_{t\in B}$ be an algebraic family of compact algebraic 3-dimensional varieties endowed with a generically free action of a finite group $G$, over a connected base $B$. Assume that 
\begin{enumerate}
    \item the action of $\mathsf{Hod}$ on $\mathrm H^\ast(X_t,\Q)$ is trivial for all $t\in B$, in other words, all rational cohomology classes of $X_t$ are Hodge classes,
    \item  the local system $\mathrm H^\ast(X_t,\Q)^G$ on $B$ is trivial,
    \item there exists an irreducible representation $\rho$ of the local system $\mathrm H^\ast(X_t,\overline \Q)$ of dimension strictly larger than the cardinality $|G|$ of $G$.
\end{enumerate}
    Then, for any $t\in B$, the $G$-variety $X_t$ is not $G$-equivariantly birationally equivalent to $\mathbb P^3$ endowed with a generically free linearizable $G$-action.
\end{Mtheorem}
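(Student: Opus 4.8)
The plan is to argue by contradiction: suppose that some fiber $X_{t_0}$ is $G$-equivariantly birational to $\PP^3$ equipped with a generically free linearizable action, and extract from the family a contradiction with hypothesis (3). Because $G$-birationality to the \emph{fixed} linearized $\PP^3$ spreads out and specializes along the connected base (Appendix \ref{ap:specialization}), it is enough to work over a dense open $U\subseteq B$ on which $X_t\sim_G\PP^3$ for every $t$, and to exploit the monodromy action on $\rH^\ast(X_t,\overline\Q)$; note that hypothesis (3) persists because $\pi_1(U)\twoheadrightarrow\pi_1(B)$, so the monodromy group $\Gamma:=\mathrm{im}\bigl(\pi_1(B)\to \GL(\rH^\ast(X_t,\overline\Q))\bigr)$ is unchanged and still carries an irreducible constituent of rank $>|G|$.

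The first step is to draw two consequences from hypothesis (1). Since all of $\rH^\ast(X_t,\Q)$ is of Tate type we get $\rH^{\mathrm{odd}}(X_t)=0$, and the variation of Hodge structure over $U$ is constant; as the polarization is a monodromy-invariant class whose primitive orthogonal complement is definite (Hodge index), $\Gamma$ preserves a definite integral lattice and is therefore \emph{finite}. Next, because genus-zero Gromov--Witten invariants are deformation invariants, the quantum product $\star$, the Euler field $\mathsf{Eu}$, and hence $\boldsymbol\kappa:=\mathsf{Eu}\star$ are flat over $U$; the eigenvalues of $\boldsymbol\kappa$ are constant and its generalized eigenspaces form sub-local-systems. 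Thus $\Gamma$ preserves each $G$-equivariant atom, and $\rH^\ast(X_t,\overline\Q)=\bigoplus_{\boldsymbol\alpha}\boldsymbol\alpha$ is a decomposition into $\Gamma\times G$-modules, with no further $\mathsf{MT}$-refinement (again by (1)).

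The crucial step is to identify this $\Gamma\times G$-module as a permutation module. By the birational invariance of $G$-equivariant atoms (Theorem \ref{thm:Katzarkov-Kontsevich-Pantev-Yu}), the assumed equivariant Chen--Ruan blowup formula, and the fact that linearizable $\PP^3$ has semisimple quantum cohomology with four rank-one atoms on which both $G$ and $\mathsf{MT}$ act trivially, the $G$-equivariant weak factorization (Theorem \ref{thm:weak}) connecting $X_t$ to $\PP^3$ builds $\rH^\ast(X_t,\overline\Q)$ — which is entirely even, by the vanishing above — out of the even cohomology of the smooth $G$-invariant centers, i.e.\ points and curves in a threefold. On $\rH^0$ and $\rH^2$ of such a center $G$ permutes the connected components and acts trivially on the cohomology of each $\PP^1$ or point factor (as $\mathrm{Aut}(\PP^1)$ and $\mathrm{Aut}(\mathrm{pt})$ are connected), and the pullbacks from $\PP^3$ give $G$-fixed lines; each piece is thus a $G$-\emph{permutation} module, so $\rH^\ast(X_t,\overline\Q)=\bigoplus_{c\in S}\overline\Q\cdot e_c$ for a finite set $S$ of geometric cycles carrying commuting permutation actions of $G$ and $\Gamma$. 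The hard part will be precisely this reduction: one must check that, across the blow-downs in the zig-zag, all positive-genus (odd, non-permutation) contributions cancel so that a genuine $G$-permutation module survives, and it is here that the vanishing $\rH^{\mathrm{odd}}=0$ from (1) and the equivariant blowup formula are used in full.

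Granting the permutation structure, the conclusion is purely representation-theoretic. The invariants are $\rH^\ast(X_t,\overline\Q)^G=\bigoplus_{O}\overline\Q\cdot\bigl(\sum_{c\in O}e_c\bigr)$, one line per $G$-orbit $O\subseteq S$, and $\Gamma$ acts there by permuting the $G$-orbits; hypothesis (2) forces this action to be trivial, so $\Gamma$ fixes every $G$-orbit setwise. Hence $\Gamma$ preserves each summand $\overline\Q[O]$, whose dimension $|O|$ divides $|G|$, and every irreducible $\Gamma$-subrepresentation of $\rH^\ast(X_t,\overline\Q)$ has dimension at most $|G|$. This contradicts hypothesis (3), which furnishes an irreducible monodromy constituent of rank strictly greater than $|G|$. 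Therefore no fiber $X_t$ can be $G$-equivariantly birational to $\PP^3$ with a generically free linearizable action.
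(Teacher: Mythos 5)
Your proposal diverges from the paper's argument at what you yourself call the ``crucial step,'' and that step is a genuine gap, not a deferrable technicality. You need $\rH^\ast(X_t,\overline{\Q})$ to be a permutation module for $G\times\Gamma$ acting on a \emph{common} finite set $S$ of geometric cycles. Even granting that the even cohomology of each blowup center is a $G$-permutation module, the weak factorization (Theorem \ref{thm:weak}) is a zig-zag attached to a \emph{single fiber} $X_{t_0}$: the intermediate varieties and their centers do not form families over $B$, so there is no mechanism forcing the monodromy group $\Gamma$ to permute the basis $\{e_c\}_{c\in S}$, let alone to do so compatibly with the $G$-action. (The zig-zag also produces only a \emph{virtual} difference of permutation modules in $K_0(\mathrm{Rep}\,G)$, and the spreading-out of $G$-birationality over a dense open $U\subseteq B$ that you invoke is not what Appendix \ref{ap:specialization} provides.) Since your entire contradiction with hypothesis (3) rests on $\Gamma$ stabilizing the summands $\overline{\Q}[O]$ of dimension $\le |G|$, the argument does not close.

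The fix is to abandon the permutation-module structure on all of cohomology and localize hypothesis (3) into a single atom, which is the paper's route. Hypotheses (1) and (2) make the locus $B_{X_t}^{G\times\mathsf{MT}}$ independent of $t$, and deformation invariance of genus-zero Gromov--Witten invariants (which you correctly identified) makes $\boldsymbol{\kappa}_b$ covariantly constant; hence its generalized eigenspaces are sub-local-systems, and the irreducible constituent of rank $>|G|$ from (3) must inject into a \emph{single} generalized eigenspace. This produces, for every $t$, a $G$-equivariant atom of dimension $>|G|$ on which $\mathsf{MT}$ acts trivially by (1). By Lemma \ref{lem:atom-generators}, every element of $\mathsf{HAtoms}^{G}_{\dim\le 1}$ with trivial $\mathsf{MT}$-action is (a summand of) a permutation module $\overline{\Q}[G/H]$ of dimension $\le |G|$ (the genus $\ge 1$ curve atoms are excluded precisely because $\mathsf{MT}$ acts nontrivially on $\rH^1$), and the atoms of a linearized $\PP^3$ are one-dimensional. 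So the large atom obstructs $G$-birationality to $\PP^3$ directly, with no need for your finiteness-of-$\Gamma$ remark, the spreading-out step, or the global cycle decomposition. Your opening reductions (odd cohomology vanishes, atoms are $\Gamma$-stable) are correct and are exactly the ingredients needed; the detour through weak factorization is what breaks.
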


Theorem \ref{Mtheorem:monodromy} is just the first of many possible deep results coming from the combination of this A-side B-side\footnote{Although not advanced in this manner, atom theory is developed from insights from mirror symmetry, see, e.g., Section \ref{sec:ideal-legal}} Hodge theory. Other results shall appear elsewhere \cite{CKK:atom_theory}.

\ 

In Section \ref{sec:CR}, we develop the theory of atoms enhanced with information from Chen-Ruan cohomology  \cite{Chen2004}. Namely, each $G$-variety $X$ defines an orbifold $[X/G]$. We can use the atoms coming from the cohomology of the \emph{twisted sectors}, which in the case of $G=\mathbb Z/p$ coincide with the fixed points set for the $G$-action, to produce finer invariants of $G$-equivariant atoms. We assume the blowup formula for the Chen-Ruan quantum cohomology is valid; see Conjectures \ref{conj:1}-\ref{conj:3}. Applications include a new treatment of all of the already considered examples, as well as some totally new unknown results, which we briefly mention in what follows. 

\begin{Ctheorem}[=Corollary \ref{cor:rough}]\label{MTheorem:all-dimensions}
      For a $d$-dimensional smooth projective variety $X$ endowed with $G$-action, if 
$$\mathrm{Coeff}_{t^{d-2}}P_{X^g}> \mathrm{Coeff}_{t^{d-2}}P_{X}$$ 
 then $X$ is not $G$-equivariantly birational to a projective space with a $G$-linearizable action, where for each variety $Y$ the polynomial $P_Y(t)$ is the Hodge polynomial
 \[P_Y:=\sum_{p,q\ge 0}h^{p,q}(Y)t^{p-q}\]
 and for each $g\in G$, $X^g$ stands to the fixed locus under the action of $g$.
\end{Ctheorem}

Theorem \ref{MTheorem:all-dimensions} allows us to present the first use of atom theory to obstruct $G$-equivariant birationality with projective spaces with linearizable actions without dimensional constraints. A concrete application is:

\begin{Ctheorem}[=Example \ref{ex:family-higher}]
    For any $d\ge 3$, let $X$ to be the $d$-dimensional affine variety
    $$x_1 x_2 x_3=P(x_4,\dots,x_{d+1})$$
   where $P(x_3,\dots, ,x_{d+1})$ is a  polynomial of degree $3k$ where $2k\ge d-1$, defining a smooth hypersurface in the projective space $\mathbb P^{d-2}$. Let $\Z/3$ act on $X$ by cyclic permutations of $x_1,x_2,x_3$.

Then,
\begin{enumerate}
 \item there is a smooth compactification $\widetilde X$ of $X$ for which the $\mathbb Z/3$ action extends
\item the fixed locus is a smooth compactification of the affine variety given by the equation
 $$x^3=P(x_4,\dots,x_{d+1})$$
\item $\widetilde X$ with this $\mathbb Z/3$-action is not birationally equivalent to $\mathbb P^d$ with any $\mathbb Z/3$-linearizable action.
\end{enumerate}
\end{Ctheorem}

A result about smooth complete intersection of quadrics with involutions is also obtained (Theorem \ref{thm:complete-intersection} below). Notice that a criterion for the linearizability of $G$-actions on three-dimensional complete intersections of two quadrics in $\mathbb P^5$ appears in Theorem 24 in \cite{Hassett2022}. See also the discussion below the statement.

\begin{Ctheorem}[=Example \ref{ex:complete-intersection-odd}]\label{thm:complete-intersection}
    There exists a $\mathbb Z/2$-action on the three-dimensional smooth complete intersections of two quadrics in $\mathbb P^5$ making it non $\mathbb Z/2$-birationaly equivalent to $\mathbb P^3$ with any $\mathbb Z/2$-linearizable action.
\end{Ctheorem}

\ 

It is also noteworthy that the theory should be extrapolated to answer questions regarding birational types of Deligne-Mumford stacks, as in \cites{kresch2019birational, kresch2023birational}. An eventual combination of these ``Chen-Ruan atoms'' and other types of symbol groups should also be achievable. Aside from that, it will be interesting to see if Ruan’s crepant resolution
conjecture \cite{ bryan2007crepantresolutionconjecture} could be proved with this technology -- recall, for instance, that a classical result of Batyrev \cite[Corollary 6.29]{Batyrev:1997hj} about Hodge numbers of birational Calabi-Yau manifolds was reproved in \cite{katzarkovpantevyu} using atom theory instead of, e.g., motivic integration.

\ 

The last part of this paper (Section \ref{sec:geometric}) is disconnected from the atom theory. Noticing that the modular symbols invariant only keep track of birational types of connected components $F_\alpha$ of the locus of fixed points, it is natural to try to exploit further data, like actual isomorphism classes of $F_\alpha$, together with the various weight components of the normal bundle. We obtain the following results, which also serve as a showcase of the strength of atom theory, as they do not produce obstructions in many reasonable cases (for further details, see the discussion at the beginning of Section \ref{sec:geometric}). More concretely, the invariant in Theorem \ref{ithm:exotic-invariant} does not recover Theorem \ref{Mtheorem:examples-higher-genus}.

\begin{Mtheorem}[=Theorem \ref{thm:pure-geometry}]\label{ithm:exotic-invariant}
Let $X$ be a smooth projective threefold with a generically free $\mathbb Z/3$‐action.  Then the following quantity is invariant under \(\mathbb{Z}/3\)-equivariant blowups:
\begin{equation}
  J = 
    \sum_{[\lp\lp\lm],\; [\lp\lm\lm]} 1
    + 
    \sum_{(C,[\lp\lp]),\; (C,[\lm\lm])} \left(1 - \mathsf{g} + d\right)
    +
    \sum_{(C,[\lp\lm])} \left(2 - 2\mathsf{g} + d\right)
    +
    \sum_{S} \left(3 - 3\mathsf{g} - K_S\cdot \mathcal N\right)
\end{equation}
where the summation runs over fixed point set components $X^{\mathbb Z/3}$. Here:
\begin{itemize}
  \item 
   $C$ is a curve with genus \(\mathsf{g}=\mathsf{g}(C) \geq 0\).
  \item 
    \(d \in \mathbb{Z}\) is the degree of \(\wedge^{2}(\mathcal{N}_XC)\), where \(\mathcal{N}_XC\) is the normal bundle of $C$ in $X$.
  \item $S$ is birational to a ruled surface whose base of the ruling is a curve with genus 
    \(\mathsf{g} \geq 0\) (slightly abusing the notation, we write $\mathsf g=\mathsf g(S)$).
  \item 
    \(K_{S}\) is the canonical bundle of the surface \(S\).
  \item 
    \(\mathcal{N}=\mathcal N_X S\) is the normal bundle of \(S\) in $X$.
\end{itemize}
The signs decorations are the non-zero characters for \(\mathbb Z/3 = (\mathbb Z/3)^{\vee}\), i.e.,
\[
  \lp,\,\lm \;=\; 1,\,2 \pmod{3}.
\]
so that
\begin{itemize}
    \item $[\lp\lp\lm], [\lp\lm\lm]$ are the normal weights for the $\mathbb Z/3$-normal bundle representation at fixed points
    \item for symbols like $(C,[\lp\lp]), (C,[\lp\lm]), (C,[\lm\lm])$ the terms $[\lp\lp], [\lm\lm], [\lp\lm]$ stand to the normal weights for the $\mathbb Z/3$-normal bundle representation at the fixed curve $C$.
\end{itemize}\end{Mtheorem}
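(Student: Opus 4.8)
The plan is to deduce the statement from the equivariant Weak Factorization Theorem (Theorem~\ref{thm:weak}): since any two $\Z/3$-equivariantly birational smooth projective threefolds are connected by a chain of blowups and blowdowns along smooth $\Z/3$-invariant centres, it suffices to prove that $J$ is unchanged under a single blowup $\sigma\colon\tilde X\to X$ with smooth invariant centre $Z$ of codimension $\ge 2$, so that $Z$ is either a point or a curve. As $\sigma$ is an isomorphism over $X\setminus Z$ and $J$ is a sum of local contributions attached to the components of $X^{\Z/3}$ together with their normal weights, genera and the numbers $d=\deg\wedge^2\cN$ and $K_S\cdot\cN$, it is enough to compare the fixed loci of $X$ and $\tilde X$ in a neighbourhood of $Z$. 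Near any fixed point the action linearises, so every computation reduces to the toric description of blowups of $\mathbb{C}^3$ written in $\Z/3$-eigencoordinates; the argument is thus a finite case check organised by $\dim Z$ and by the weights of the normal/tangent data, where throughout one uses $\lp,\lm=1,2\bmod 3$ and $-\lp=\lm$.

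First I would dispose of the centres carrying a free action (a free orbit of points, or a curve on which $\Z/3$ acts freely): here $X^{\Z/3}$ is untouched and $\sigma^{-1}(Z)$ inherits a free action, so no new fixed locus appears and $J$ is preserved. The three cases in which $Z$ lies inside $X^{\Z/3}$ are the heart of the matter. If $Z$ is an isolated fixed point with nonzero weights $(w_1,w_2,w_3)$, the exceptional $\PP^2=\PP(T_ZX)$ is either entirely fixed (weights $[\lp\lp\lp]$ or $[\lm\lm\lm]$), giving a new surface $S=\PP^2$ with $\cN=\cO(-1)$ whose contribution $3-3\cdot 0-K_{\PP^2}\cdot\cO(-1)=0$ matches the discarded point, or it produces a fixed $\PP^1$ of type $[\lp\lp]$ (resp. $[\lm\lm]$), contributing $1$, together with one fixed point of type $[\lm\lm\lm]$ (resp. $[\lp\lp\lp]$), contributing $0$, again matching the value $1$ of the blown-up point of type $[\lp\lp\lm]$ (resp. $[\lp\lm\lm]$). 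If $Z$ is a fixed curve of genus $\mathsf g$ with normal weights $[\lp\lm]$, the exceptional $\PP^1$-bundle has exactly two fixed sections $\cong Z$, of types $[\lp\lp]$ and $[\lm\lm]$ and degrees $d_\lp,d_\lm$ with $d_\lp+d_\lm=d$, so that $2-2\mathsf g+d=(1-\mathsf g+d_\lp)+(1-\mathsf g+d_\lm)$. If instead the normal weights are $[\lp\lp]$ or $[\lm\lm]$, the whole exceptional divisor becomes a fixed ruled surface $S=\PP(\cN_XZ)$, and using $K_E=\pi^*(K_X|_Z)-2\xi$, the projective-bundle relation $\xi^2=-d$ (for $\xi=c_1(\cO_E(1))$), and adjunction $K_X\cdot Z=2\mathsf g-2-d$, a short computation gives $3-3\mathsf g-K_S\cdot\cN=1-\mathsf g+d$, the contribution of the curve it replaces.

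The remaining centres meet $X^{\Z/3}$ in lower dimension or move under the action, and I would treat them by the same local recipe, now also tracking how $d$ and $K_S\cdot\cN$ of an affected fixed curve or surface change. The representative phenomena are: blowing up an invariant non-fixed curve through an isolated fixed point replaces that point by fixed data whose weight types reshuffle $[\lp\lp\lm]\leftrightarrow[\lp\lm\lm]$ (or produce a single $[\lp\lm]$-curve of contribution~$1$), leaving the count unchanged; blowing up a fixed point on a fixed curve $C$ lowers $d(\tilde C)$ by $2$ while creating two fixed points of types $[\lp\lp\lm]$ and $[\lp\lm\lm]$, for a net change $-2+2=0$; blowing up a fixed point on a fixed surface $S$ raises $K_S\cdot\cN$ by $1$ (because $\cN_{\tilde X}\tilde S=\pi^*\cN\otimes\cO(-\mathcal E)$ with $\mathcal E^2=-1$) while creating one fixed point of type $[\lp\lp\lm]$ or $[\lp\lm\lm]$; and blowing up a curve $Z\subset S$ contained in a fixed surface replaces $S$ by $\tilde S\cong S$ with $\cN$ twisted by $-Z$ and creates a new fixed curve of type $[\lp\lm]$, the adjunction identity $K_S\cdot Z=2\mathsf g-2-Z^2$ forcing the two contributions to cancel.

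The hard part will be twofold. First, the case analysis must be shown to be exhaustive: every smooth $\Z/3$-invariant centre, and every position of that centre relative to the stratified fixed locus $X^{\Z/3}$ (isolated point, point on a fixed curve, point on a fixed surface, transverse or tangent intersections), has to be accounted for. Second, and more delicate, is the precise book-keeping of the transformation of the numerical invariants $d=\deg\wedge^2\cN$ and $K_S\cdot\cN$ under a blowup that meets, is contained in, or is transverse to a fixed curve or surface; these transformations require computing the normal bundles of strict transforms and repeatedly invoking adjunction and the projective-bundle relations. It is exactly the coefficients $1,\ 1-\mathsf g+d,\ 2-2\mathsf g+d,\ 3-3\mathsf g-K_S\cdot\cN$ appearing in $J$ that are pinned down by the requirement that all of these local changes cancel, so verifying each cancellation is the crux of the proof.
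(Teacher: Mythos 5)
Your proposal follows essentially the same route as the paper's proof: reduce to a single blowup via the equivariant Weak Factorization Theorem, then verify $\Delta J=0$ case by case over all smooth invariant centres and all positions of the centre relative to the stratified fixed locus, using normal-bundle, adjunction and projective-bundle computations. The key cancellations you identify (the exceptional $\mathbb{P}^2$ and ruled-surface contributions, the splitting $d=d_\lp+d_\lm$ for a $[\lp\lm]$-curve, the drop of $d$ by $2$ under a point blowup on a fixed curve, the shift of $K_S\cdot\mathcal N$ by $1$ under a point blowup on a fixed surface, and the adjunction identity for a curve inside a fixed surface) are exactly those carried out in the paper's Cases 1--8.
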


Although we do not present this explicitly here, Theorem \ref{ithm:exotic-invariant} is obtained by constructing a new $\mathbb Z$-module in the spirit of the construction of Kontsevich, Pestun, and Tschinkel, but keeping track not only of the characters but of every discrete parameter related to the geometry of the fixed point set on smooth projective threefolds with generically free regular $\mathbb Z/3$-actions. We prove it directly by verifying the invariance of $J$ under all the possible blowups in smooth invariant centers. 

In Section \ref{sec:dimension-3-2}, we provide the following invariant for $\mathbb Z/2$-generically free actions on smooth projective threefolds. It can be seen as a complementary result to Theorem \ref{ithm:exotic-invariant} in the sense that it is derived through the same reasoning.

\begin{Mtheorem}[=Theorem \ref{thm:pure-geometry-2d}]
       Let $X$ be a smooth projective threefold with a generically free $\mathbb Z/2$‐action.  Then the following quantity is invariant under \(\mathbb{Z}/2\)-equivariant blowups:
\begin{equation}
  K = 
    \#\text{fixed points}
    + 
    \sum_{C} \left(2 - 2\mathsf{g} + d\right)
    +
    \sum_{S} \left(4 - 4\mathsf{g} - K_S\cdot \mathcal N\right)
\end{equation}
where the summation runs over fixed point set components $X^{\mathbb Z/2}$. Here:
\begin{itemize}
  \item 
   $C$ is a curve with genus \(\mathsf{g}=\mathsf{g}(C) \geq 0\).
  \item 
    \(d \in \mathbb{Z}\) is the degree of \(\wedge^{2}(\mathcal{N}_XC)\), where \(\mathcal{N}_XC\) is the normal bundle of $C$ in $X$.
  \item $S$ is birational to a ruled surface whose base of the ruling is a curve with genus 
    \(\mathsf{g} \geq 0\) 
  \item 
    \(K_{S}\) is the canonical bundle of the surface \(S\).
  \item 
    \(\mathcal{N}\) is the normal bundle of \(S\) in $X$.
\end{itemize}
\end{Mtheorem}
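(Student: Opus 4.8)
The plan is to reduce the $\mathbb{Z}/2$-birational invariance of $K$ to invariance under a single equivariant blowup, following verbatim the strategy used for the $\mathbb{Z}/3$-invariant $J$ of Theorem~\ref{thm:pure-geometry}. By the equivariant Weak Factorization Theorem~\ref{thm:weak}, any $\mathbb{Z}/2$-equivariant birational map of smooth projective $\mathbb{Z}/2$-threefolds is a composition of equivariant blowups and blowdowns in smooth $\mathbb{Z}/2$-invariant centers, so it suffices to prove $K(\Bl_Z X)=K(X)$ for every such center $Z$. Since a smooth divisor is never an admissible (codimension $\geq 2$) center, $Z$ is an orbit of points or a smooth $\mathbb{Z}/2$-invariant curve. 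Writing $\sigma$ for the generator, I would first record the local structure of the fixed locus: at a fixed point the weights of $\sigma$ on $T_zX\cong\mathbb{C}^3$ are one of $(+,+,-)$, $(+,-,-)$, $(-,-,-)$, giving respectively a fixed surface, a fixed curve, or an isolated fixed point, and $X^{\mathbb{Z}/2}$ is a disjoint union of such smooth components. In particular a fixed surface carries normal weight $-$ and a fixed curve normal weights $(-,-)$, so all discrete normal data is forced---this is why, in contrast with the $\mathbb{Z}/3$ case, no character decorations occur in $K$.

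The second step is the case-by-case verification, organized by $\dim Z$ and by the weight decomposition of $T_zX$ along $Z$. When $\sigma$ acts freely on $Z$ (in particular for any swapped orbit, or any center disjoint from $X^{\sigma}$) the exceptional divisor $E=\mathbb{P}(\mathcal{N}_ZX)$ has no fixed points, so $K$ is trivially unchanged. For $Z=P$ an isolated fixed point we have $\sigma=-\mathrm{Id}$ on $T_PX$, hence $E\cong\mathbb{P}^2$ is pointwise fixed with $\mathcal{N}_{\Bl_PX}E=\mathcal{O}(-1)$, and its surface term $4-0-K_{\mathbb{P}^2}\!\cdot\mathcal{O}(-1)=4-3=1$ exactly replaces the destroyed point. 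For $Z=C$ a fixed curve that is an isolated component of $X^{\sigma}$, the normal weights $(-,-)$ make $E=\mathbb{P}(\mathcal{N}_CX)\to C$ a pointwise-fixed ruled surface; using the relative Euler sequence, the Grothendieck relation $\zeta^2=-d$ on the $\mathbb{P}^1$-bundle (with $\zeta=c_1(\mathcal{O}_E(1))$), and the pullback $\pi^{*}K_C=(2\mathsf{g}-2)F$, one computes $4-4\mathsf{g}-K_E\!\cdot\mathcal{N}_E=2-2\mathsf{g}+d$, matching the curve term.

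The remaining cases, in which $Z$ meets $X^{\sigma}$ without being pointwise fixed, are by far the most delicate, and I expect the bookkeeping of created and destroyed fixed components to be the main obstacle. The representative phenomena are: blowing up a point on a fixed surface (type $(+,+,-)$) replaces $S$ by $\Bl_PS$ and creates one new isolated fixed point, the balance resting on $\mathcal{N}_{\tilde S}\cong\pi^{*}\mathcal{N}_S(-e_P)$ and $e_P^2=-1$; blowing up a point on a fixed curve (type $(+,-,-)$) yields $\tilde C\cong C$ with $\deg\det\mathcal{N}$ dropping by $2$ (since $K_{\tilde X}=\pi^{*}K_X+2E$), together with a new fixed line $\mathbb{P}^1\subset E$ of normal determinant degree $0$; and blowing up a curve $C$ on which $\sigma$ acts nontrivially, which over each point of $C^{\sigma}$ produces---according as $\mathcal{N}_{C,z}$ is $(+,+)$, $(+,-)$, or $(-,-)$---a fixed $\mathbb{P}^1$ absorbed into the proper transform of a fixed surface, a new isolated fixed point paired with a unit drop in a fixed curve's degree, or a new fixed $\mathbb{P}^1$ of normal determinant degree $-1$ replacing a destroyed isolated point. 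The principal trap here is keeping the normal-bundle twists straight: whether the proper transform of a fixed surface picks up a factor $\mathcal{O}(-E)$ depends on whether the center meets $S$ in a divisor or transversally, and misreading this shifts each surface term by $\pm1$. In every row the required cancellation reduces to adjunction on a surface ($C^2=2\mathsf{g}(C)-2-K_S\!\cdot C$) or to a Chern-class computation on a ruled surface or on $\mathbb{P}^2$, and the content of the theorem is precisely that the coefficients $1$, $2-2\mathsf{g}+d$, and $4-4\mathsf{g}-K_S\!\cdot\mathcal{N}$ are the unique ones making all of these local exchanges cancel simultaneously. I would therefore present the argument as a finite table of local models indexed by the weight types above, verifying the balance of $K$ in each, exactly as was done for $J$ in Theorem~\ref{thm:pure-geometry}.
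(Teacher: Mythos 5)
Your proposal is correct and follows essentially the same route as the paper, which proves Theorem \ref{thm:pure-geometry-2d} by declaring it analogous to the case-by-case blowup analysis of Theorem \ref{thm:pure-geometry}; your enumeration of centers by the weight types \((+,+,-)\), \((+,-,-)\), \((-,-,-)\) and your local computations (the \(\mathbb{P}^2\) term \(4-3=1\), the ruled-surface identity \(K_E\cdot\mathcal{N}_E=2-2\mathsf{g}-d\), the drops \(d\mapsto d-2\) and \(d\mapsto d-1\), and the adjunction balance for curves inside fixed surfaces) all check out and reproduce exactly the exchanges the paper verifies for \(J\) with the coefficients adjusted to \(1\), \(2-2\mathsf{g}+d\), \(4-4\mathsf{g}-K_S\cdot\mathcal{N}\).
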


\ 

In Section \ref{sec:dimension-2}, we present an essentially geometric/topological invariant in dimension two. It arises from the combination of the same detailed geometry of fixed point sets for generically free regular $\mathbb Z/p$-actions on surfaces, while also keeping track of atoms. The surprise comes because it has no atomic information at all.

\begin{Mtheorem}[=Theorem \ref{thm:degrees}]
Let \( X \) be a smooth projective surface with a regular generically free \( \mathbb{Z}/2 \)-action. Then the following quantity is a \( \mathbb{Z}/2 \)-birational invariant:
\[
I:=\chi(X^{\mathbb{Z}/2}) + \sum_{\alpha:\dim F_\alpha=1} \deg\big( \mathcal{N}_X F_\alpha \big),
\]
where the sum runs over all 1-dimensional irreducible components \( F_\alpha \subset X^{\mathbb{Z}/2} \) of dimension $1$, and \( \mathcal{N}_X F_\alpha \) denotes the normal bundle of curve \( F_\alpha \) in \( X \).
\end{Mtheorem}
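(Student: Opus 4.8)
The plan is to invoke the $\mathbb{Z}/2$-equivariant Weak Factorization Theorem (Theorem \ref{thm:weak}) to reduce the birational invariance of $I$ to its invariance under a single $\mathbb{Z}/2$-equivariant blowup in a smooth $\mathbb{Z}/2$-invariant center $Z \subset X$. Since $\dim X = 2$ and the center must have codimension at least $2$, the center $Z$ is necessarily $0$-dimensional; being smooth and invariant, it is a disjoint union of $\mathbb{Z}/2$-orbits of points, so by additivity of $I$ over connected components of the fixed locus it suffices to treat the blowup of a single orbit. This leaves exactly two geometric situations: a free orbit $\{p, \sigma(p)\}$ with $\sigma(p) \neq p$, and a single fixed point $p \in X^{\mathbb{Z}/2}$, where $\sigma$ denotes the generator of $\mathbb{Z}/2$. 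First I would record the standard characteristic-zero facts that $X^{\mathbb{Z}/2}$ is smooth and that the action is linearizable near each fixed point, so that at a fixed point $p$ the representation on $T_p X$ has eigenvalues in $\{\pm 1\}$; generic freeness rules out the trivial representation, leaving precisely the two types $(-1,-1)$ and $(+1,-1)$.

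The free-orbit case should be immediate: neither $p$ nor $\sigma(p)$ lies on a fixed curve (such a point would be fixed), the exceptional curves are swapped by $\sigma$ and hence contain no fixed points, and the fixed locus together with all normal bundles of its one-dimensional components is unchanged, so $I$ is preserved. The heart of the argument is the blowup at a fixed point $p$, where I would split according to the tangent representation. If $\sigma$ acts on $T_p X$ as $-\mathrm{id}$, then $p$ is isolated in $X^{\mathbb{Z}/2}$ and $\sigma$ acts trivially on $E = \mathbb{P}(T_p X)$, so $E \cong \mathbb{P}^1$ becomes a new fixed curve with $E^2 = -1$; the change is $-1$ (removal of the isolated point) $+\,2$ (Euler characteristic of the new $\mathbb{P}^1$) in the $\chi$-term and $-1$ in the normal-degree term, giving $\Delta I = 0$. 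If instead $\sigma$ acts with eigenvalues $+1$ and $-1$, then $p$ lies on a unique smooth fixed curve $F$ (with tangent direction the $+1$-eigenline), $E$ is no longer fixed but carries exactly two fixed points, one of which is $\tilde F \cap E$ lying on the strict transform and the other an isolated fixed point of local type $(-1,-1)$; here $\tilde F \cong F$ contributes no change to $\chi$, the new isolated point contributes $+1$, and the normal-degree term changes by $\deg \mathcal{N}_{\tilde X}\tilde F - \deg \mathcal{N}_X F = \tilde F^2 - F^2 = -1$, once again yielding $\Delta I = 0$.

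The main computational inputs are the two classical surface identities $E^2 = -1$ for the exceptional curve and $\tilde F^2 = F^2 - 1$ for the strict transform of a curve through the blown-up point, together with the identification $\deg \mathcal{N}_X C = C^2$ for a smooth curve $C$ in a smooth surface. The step I expect to require the most care is the local analysis of the fixed locus on the blowup in the $(+1,-1)$ case: one must verify in explicit blowup charts that $E$ acquires exactly two fixed points and that the one off the strict transform is a genuine isolated fixed point of type $(-1,-1)$ rather than a spurious one-dimensional contribution, and dually that in the $(-1,-1)$ case all of $E$ is fixed. Once these local pictures are pinned down, collecting the contributions shows $\Delta I = 0$ in every case, and Weak Factorization then upgrades this to full $\mathbb{Z}/2$-birational invariance of $I$.
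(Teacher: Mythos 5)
Your proposal is correct and follows essentially the same route as the paper: reduce via equivariant Weak Factorization to blowups of a free orbit, an isolated fixed point, and a point on a fixed curve, then verify $\Delta I=0$ in each case using $E^2=-1$ and $\tilde F^2=F^2-1$. The extra local analysis you supply (the $(-1,-1)$ versus $(+1,-1)$ tangent types and the fixed-point structure on $E$) is exactly what the paper's terser case checks implicitly rely on, and your computations agree with theirs.
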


\ 

\subsection*{Acknowledgments}
L.~F.~Cavenaghi and L.~Katzarkov are supported by the Simons Foundation, grant SFI-MPS-T-Institutes-00007697, and the Ministry of Education and Science of the Republic of Bulgaria, grant DO1-239/10.12.2024. 

When this work was started, L. F. Cavenaghi was receiving support from The São Paulo Research Foundation FAPESP, grants 2022/09603-9 and 2023/14316-1.

L.~Katzarkov is supported by the Simons Investigators Award (no. 003136), the Simons Collaboration on Homological Mirror Symmetry (award no. 003093), and by the NSF FRG grant DMS-2245099.

The authors thank Lino Grama for his help with the paper. He played an essential role in the paper's beginning.

We are deeply indebted to Yuri Tschinkel and Ivan Cheltsov for their constant attention and encouragement to the paper.

The authors thank Tony Pantev for critical reading of this manuscript which ensured better clarity on the exposition.

In addition, Ludmil Katzarkov and Maxim Kontsevich are grateful to Jeffrey Fuqua for his continuous financial support of mathematical research throughout the years, and for his support and leadership in establishing the excellent creative environment at IMSA, University of Miami.

\

\

\section{The Theory of Atoms}
\label{sec:theory-of-atoms}

The \emph{theory of atoms}, recently introduced by Katzarkov, Kontsevich, Pantev, and Yu~\cite{katzarkovpantevyu}, provides new birational invariants of smooth projective varieties. The theory emerges from a synthesis of classical Hodge theory with Gromov-Witten theory, utilizing the former to refine and decompose the latter. While its conceptual roots lie in insights from Homological Mirror Symmetry (HMS,~\cite{Kontsevich-HMS}), it has been systematically developed without direct recourse to mirror symmetry methods. Instead, it leverages motivic structures inherent in both the algebraic and symplectic geometry of a single projective variety.

The result is a framework for constructing invariants, here termed the \emph{atom formalism}\footnote{It is important to stress that the ``theory of atoms'' refers to a framework for constructing invariants, rather than to a specific set of results or conjectures.}, which manifests in part as a \emph{non-archimedean $\mathbbm{k}$-analytic} refinement of quantum cohomology. In classical settings, quantum products~\cite{Kontsevich1994} are defined by formal power series or series valued in a Novikov field, giving rise to A-model Frobenius manifolds or variations of A-model noncommutative Hodge structures~\cites{KatzarkovKontsevichPantev2008, CSabbah}. These structures may be formal or complex-analytic, depending on notoriously subtle convergence properties.

The theory of atoms circumvents these analytic challenges through a change of setting. Instead of working over $\C$, the theory operates in the category of non-archimedean $\mathbbm{k}$-analytic supermanifolds over the algebraically closed valued field
\[
\mathbbm{k} := \bigcup_{n \in \mathbb{N}} \overline{\mathbb{Q}}(\!(\mathbf{y}^{1/n})\!),
\]
the field of Puiseux series in an auxiliary variable $\mathbf{y}$. This shift is not merely a technical convenience; it is the foundation that enables the very definition of atoms. The canonical spectral decomposition (Section \ref{sec:local-hodge-atoms}) at the heart of the theory is ill-defined over $\C$ due to analytic obstructions such as the Stokes phenomenon~\cite{KatzarkovKontsevichPantev2008}.

\ 

In what follows, we will recall the basic structure of Gromov-Witten theory for a smooth projective variety $X/\C$ and explore its interaction with the symmetries of Hodge theory. We begin by reviewing the construction of genus-zero Gromov-Witten invariants and their organization via the Gromov-Witten potential. We recall the construction of a non-archimedean analytic F-bundle---called the \emph{maximal A-model F-bundle}---which encodes the deformation data of quantum cohomology. In Section \ref{sec:Hodge-Z_p}, we describe the action of the Mumford-Tate group on this F-bundle, leading to the definition of \emph{Hodge atoms} in Section~\ref{sec:local-hodge-atoms}. This is meant to be a short recap for the sake of self-containment from part of the material in \cite{katzarkovpantevyu}. 

The remaining subsections present many new contributions: in Sections \ref{sec:equivariant-atoms}-\ref{sec:filtration} one introduces $G$-equivariant Hodge atoms, in Section \ref{sec:ideal-legal} we explain the connection between atom theory and categories, in Section \ref{sec:examples-atoms} we present some applications computing explicit examples, and in Section \ref{sec:monodrony} we merge atom theory with classical monodromy.

\ 

\subsection{Non-archimedean A-model F-bundles}
\label{sec:A-model}

Fix $X/\C$ a smooth projective variety and set $\mathrm{H}:= \mathrm{H}^\ast(X,\Q)$, the Betti cohomology of $X$ with rational coefficients. Let $\mathsf{CH}_1(X)$ be the group of 1-cycles on $X$. The group of 1-cycles modulo homological equivalence is denoted
\[
\mathsf{N}_1(X,\mathbb Z):=\mathsf{CH}_1(X)/\!\sim_{\mathrm{hom}}.
\]

For every $n\ge 0$ and $\beta\in\mathsf{N}_1(X,\mathbb Z)$, the expected (virtual) dimension of the moduli stack $\overline{\mathcal M}_{0,n}(X,\beta)$ of stable genus-zero $n$-marked maps of class $\beta$ is
\[
\boldsymbol{\delta}(n,\beta)
= n + (\dim_{\mathbb C} X - 3) + \langle c_1(T_X),\beta\rangle,
\]
where $\langle-,-\rangle$ denotes the natural pairing between cohomology and curve classes. The stack $\overline{\mathcal M}_{0,n}(X,\beta)$ parameterizes data $(C,p_1,\dots,p_n,\varphi)$ where $C$ is a connected nodal curve of genus $0$, the $p_i$ are distinct smooth marked points on $C$, and $\varphi: C\to X$ is a morphism with $\varphi_*[C]=\beta$, satisfying a stability condition (each contracted irreducible component must carry at least three special points, whether marked or nodal).

The stack $\overline{\mathcal M}_{0,n}(X,\beta)$ is a proper Deligne-Mumford stack and carries a virtual fundamental class \cites{behrend-fantechi1997,li-tian1998}
\[
[\overline{\mathcal M}_{0,n}(X,\beta)]_{\mathrm{vir}} \in \mathsf{CH}_{\boldsymbol{\delta}(n,\beta)}(\overline{\mathcal M}_{0,n}(X,\beta)).
\]
which further maps to a virtual fundamental class modulo homological equivalence
\[
[\overline{\mathcal M}_{0,n}(X,\beta)]_{\mathrm{vir}}^{\mathrm{hom}} \in \mathsf{CH}_{\boldsymbol{\delta}(n,\beta)}^{\mathrm{hom}} (\overline{\mathcal M}_{0,n}(X,\beta)).
\]

The evaluation morphism
\[
\mathrm{ev}:\overline{\mathcal M}_{0,n}(X,\beta)\longrightarrow X^{\times n},\qquad
(C,p_1,\dots,p_n,\varphi)\longmapsto (\varphi(p_1),\dots,\varphi(p_n)),
\]
allows the definition of the Gromov-Witten cycle class by pushforward of the virtual class:
\[
I_{n,\beta}(X)\;:=\;\mathrm{ev}_*\big([\overline{\mathcal M}_{0,n}(X,\beta)]_{\mathrm{vir}}^{\mathrm{hom}} \big)
\;\in\; \mathsf{CH}^{\mathrm{hom}}_{\boldsymbol{\delta}(n,\beta)}(X^{\times n})\otimes \mathbb Q^.
\]

Let $\mathsf{NE}(X,\mathbb Z)\subset\mathsf{N}_1(X,\mathbb Z)$ stand to the monoid of effective curve classes. For $\beta\in \mathsf{NE}(X,\mathbb Z)$ one can pair $I_{n,\beta}(X)$ with arbitrary cohomology classes $\gamma_1,\dots,\gamma_n\in\mathrm{H}$, yielding the Gromov-Witten correlators:
\[
\langle \gamma_1\cdots\gamma_n\rangle_\beta
:=\int_{I_{n,\beta}(X)} \gamma_1\boxtimes\cdots\boxtimes\gamma_n
=\big(I_{n,\beta}(X),\; \mathrm{pr}_1^{\ast}\gamma_1\otimes\cdots\otimes\mathrm{pr}_n^{\ast}\gamma_n\big)
\in\mathbb Q.
\]
where $(\cdot,\cdot)$ means the Poincar\'e pairing in $X^{\times n}$ and $\mathrm{pr}_i:X^{\times n}\rightarrow X$ are the projections of the i-th factor.

\ 

Consider the monoid algebra $\Q[\mathsf{NE}(X,\mathbb Z)]$ where, for each $\beta\in\mathsf{NE}(X,\mathbb Z)$, one denotes by $q^\beta$ be the corresponding basis element. The formal Novikov ring $\Q[[q]]$ is the completion of $\Q[\mathsf{NE}(X,\mathbb Z)]$ at the ideal generated by $\{q^\beta:\beta\neq 0\}$. Choose a homogeneous $\Q$-basis $\{T_0,\dots,T_{r}\}$ of $\mathrm{H}$ with $T_0=1\in\mathrm{H}^0(X,\Q)$. The dual linear coordinates on $\mathrm{H}$ are denoted $t_0,\dots,t_r$. The variables $t_i$ are treated as super-variables with parity $\deg(T_i)\bmod 2$. Consequently, the algebra of $t$-functions is the completed symmetric algebra in the even variables tensored with the exterior algebra in the odd variables. 

\begin{definition}
The genus-zero Gromov-Witten potential of $X$ is the formal series with $\mathbb Q[[q]]$-coefficients in the set of super variables $t_i$
\[
\Phi(q;t)
:=\sum_{\beta\in\mathsf{NE}(X)}\sum_{n\ge 0}\frac{q^\beta}{n!}\sum_{i_1,\dots,i_n}
\langle T_{i_1}\cdots T_{i_n}\rangle_\beta \; t_{i_1}\cdots t_{i_n}
\;\in\; \mathbb Q[[q]][[t_0,\dots,t_r]].
\]
\end{definition}
This ring of formal power series, $\Q[[q]][[t_0,\dots,t_r]]$, is the coordinatized form of the \emph{big Novikov ring} of $X$, denoted $\mathsf{Nov}_X$.

We now describe the domain on which this formal potential acquires genuine non-archimedean analyticity. We work over the non-archimedean field $\mathbbm{k}$ of Puiseux series, equipped with its standard valuation $\mathrm{val}.$ 

\ 

Let $\mathsf{NS}(X)_{\mathrm{tf}}$ be the torsion-free part of the N\'eron-Severi group of $X$ with rank $\rho:=\rank\mathsf{NS}(X)_{\mathrm{tf}}$ and choose a $\Z$-basis $\{e_1,\dots,e_\rho\}$ of $\mathsf{NS}(X)_{\mathrm{tf}}$. Form the algebraic torus $\mathcal{T}_{\mathbbm{k}}:=\mathsf{NS}(X)_{\mathrm{tf}}\otimes_{\Z}\mathbb{G}_{m,\mathbbm{k}} \cong (\mathbb{G}_{m,\mathbbm{k}})^\rho$, and denote its non-archimedean analytic realization (in the sense of Berkovich) by $\mathcal{T}_{\mathbbm{k}}^{\mathrm{an}}$. The canonical valuation map is
\[
\operatorname{val}:\mathcal{T}_{\mathbbm{k}}^{\mathrm{an}}\longrightarrow \mathsf{NS}(X)_{\R}
:=\mathsf{NS}(X)_{\mathrm{tf}}\otimes_{\Z}\R,
\qquad
(z_1,\dots,z_\rho)\longmapsto \sum_{i=1}^\rho \operatorname{val}(z_i)\, e_i.
\]
Let $\mathsf{Amp}(X)\subset\mathsf{NS}(X)_{\R}$ be the ample (K\"ahler) cone and set $B_{X,q}:=\mathrm{val}^{-1}\big(\mathsf{Amp}(X)\big)\subset\mathcal{T}_{\mathbbm{k}}^{\mathrm{an}}$. 

For the cohomology directions, we define analytic domains $\mathcal B^{\mathrm{ev}}_{X,t}$ and $B^{\mathrm{ev}}_{X,t}$. The former, $\mathcal B^{\mathrm{ev}}_{X,t}$, is the product of an affine line (coordinate $t_0$) with the open unit polydisk $\{ t_i \in \mathbbm{k}: |t_i| < 1 \}$ in the even-degree cohomology coordinates (excluding $t_0$). The latter, $B^{\mathrm{ev}}_{X,t}$, is defined analogously but with the degree-2 coordinates restricted to the transcendental subspace $\mathrm{H}^2_{\mathrm{trans}}(X,\Q)\otimes\mathbbm{k}$. Let $B_X^{\mathrm{odd}}$ be the purely odd super-analytic variety associated with $\mathrm{H}^{\mathrm{odd}}(X,\Q)\otimes\mathbbm{k}$.

We define the analytic parameter spaces:
\[
\mathcal B_X:= B_{X,q}\times\mathcal B^{\mathrm{ev}}_{X,t}\times B_X^{\mathrm{odd}},
\qquad
B_X:= B_{X,q}\times B^{\mathrm{ev}}_{X,t}\times B_X^{\mathrm{odd}}.
\]
By construction, $\mathcal B_X$ is an open subvariety in the super-analytic variety $\mathcal{T}_{\mathbbm{k}}^{\mathrm{an}}\times (\mathrm{H}\otimes_{\Q}\mathbbm{k})^{\mathrm{an}}$. The space $B_X$ is the analogue where degree-2 directions are restricted to the transcendental part. The inclusion $\mathrm{H}^2_{\mathrm{trans}}\hookrightarrow \mathrm{H}^2$ induces a closed analytic embedding $B_X\hookrightarrow\mathcal B_X$. The space $\mathcal B_X$ is called the \emph{overmaximal} base, while $B_X$ is the \emph{maximal} base. Following \cite[Lemma 3.29]{katzarkovpantevyu}, $\Phi$ defines a $\mathbbm k$-valued analytic function on $\mathcal B_X$.

Let $\mathbb D$ be the germ at $0$ of an analytic disk with coordinate $u$. We define the trivial (super)analytic vector bundle $\mathcal H := \mathrm{H}\otimes_{\Q}\mathcal O^{\mathrm{an}}_{\mathcal B_X\times\mathbb D}$. The Poincar\'e pairing on $\mathrm{H}$ induces a constant, non-degenerate bilinear form on $\mathcal H$. The third derivatives of the convergent potential $\Phi$ define the analytic quantum product $\star$ on the fibers of $\mathcal H$. By the WDVV identities, this product is associative and (graded) commutative, \cite{Kontsevich1994}.

There is a flat meromorphic connection $\nabla$ on $\mathcal H$ over $\mathcal B_X\times\mathbb D$ with poles at most at $u=0$. To describe it in coordinates, let $L_1, \dots, L_m \in \mathrm{Pic}(X)$ be ample line bundles such that their first Chern classes form a basis for the $\mathbb Q$-N\'eron-Severi space of $X$. Write $\omega_i := \mathsf{cl}(c_1(L_i))$ for the image of $c_1(L_i)$ under the cycle class map and consider the open simplicial cone
$$
\sigma := \left\{ \sum_{i=1}^m a_i \omega_i \;\middle|\; a_i \in \mathbb{R}_{>0} \right\} \subset \mathbb{R}\omega_1 \oplus \dots \oplus \mathbb{R}\omega_m.
$$
Let $B_{\sigma,q} \subset \mathsf{NS}(X, \mathbbm k)$ be the preimage of $\sigma$ by the valuation map, and let $\mathcal B_{\sigma} := B_{\sigma,q} \times \mathcal B^\mathrm{ev}_{X,t} \times  B^\mathrm{odd}_{X}$.
Note that by definition $\mathcal B_{\sigma} \subset \mathcal B_X$ is an open analytic subvariety and $(\mathcal B_{\sigma},(\{q_j\},\{t_i\}))$ work as a coordinate chart where we can define
\begin{align}
\nabla_{u\partial_u} &= u\partial_u + u^{-1}\big(\mathsf{Eu}\star(\cdot)\big) + \tfrac12\big(\mathsf{Deg}-\dim_{\mathbb C} X\cdot\mathrm{id}\big),\\
\nabla_{q_j\partial_{q_j}} &= q_j\partial_{q_j} + u^{-1}\big(\omega_j\star(\cdot)\big),\\
\nabla_{\partial_{t_i}} &= \partial_{t_i} + u^{-1}\big(T_i\star(\cdot)\big).
\end{align}
Here, $\mathsf{Deg}:=\oplus_{a=0}^{2\dim_{\C}X}a\cdot \mathrm{id}_{\rH^a(X)}$ is the degree operator, and $\mathsf{Eu}$ is the Euler vector field expressed as 
\begin{equation}\label{eq:Euler}
\mathsf{Eu}_b \;=\; c_1(T_X) \;+\; \left(\frac{\mathsf{Deg} - 2\cdot\mathrm{id}}{2}\right)(b)
\;\in\; T_b\mathcal B_X \;\subset\; \mathcal H_b
\end{equation}
for every $b\in \mathcal B_{\sigma}$.

The pair $(\mathcal H, \nabla)$ over the overmaximal base $\mathcal{B}_X$ is called the \emph{overmaximal A-model F-bundle}. Its restriction to the maximal subvariety $B_X\subset \mathcal{B}_X$ defines the following main concept:

\begin{definition}\label{def:Amodel-revised}
The bundle $(\mathcal{H}, \nabla) / B_X$ is called the \emph{non-archimedean maximal A-model F-bundle} associated to $X$. The base $B_X$ is called an \emph{F-manifold}. The triple $(B_X, \mathsf{Eu}, \star)$ is called the \emph{A-model triple}.
\end{definition}

Next, we describe how the \emph{noncommutative Mumford-Tate group} $\mathsf{Hod}$ acts on the F-manifold $B_X$.

\ 

\subsection{Action of the Mumford-Tate group}
\label{sec:Hodge-Z_p}

Recall that a (pure) $\mathbb Q$-Hodge structure of weight $w\in \mathbb Z$ is a $\mathbb Q$-vector space $V$ endowed with the descoposition of its complexification $V\otimes_{\mathbb Q}\mathbb C=\oplus_{p\in \mathbb Z}V^{p,w-p}$ such that $V^{p,w-p}=\overline{V^{w-p,p}}$.
The \emph{Hodge-Tate structure of weight \(2n\)}, denoted \(\mathbb{Q}^{\mathrm{H}}(-n)\), is the 1-dimensional space $V\simeq \mathbb Q$ with Hodge decomposition $V\otimes\mathbb C=V^{n,n}$.
The symmetric twists \((n,n)\) (\emph{Tate twists}) are \(V \otimes \mathbb{Q}^{\mathrm{H}}(-n)\) for any \(\mathbb{Q}\)-Hodge structure \(\mathrm{H}\) of weight \(w \in \mathbb{Z}\). We denote them as \(V(-n)\).
When $w$ is even, the elements of $V\cap V^{w/2,w/2}$ are called \emph{Hodge classes}.

A Hodge structure $(V,(V^{p,w-p})_{p\in\Z})$ is called \emph{polarizable} if there exists a non-degenerate bilinear pairing
$(\,,\,):V\otimes V\to \mathbb Q^{\mathrm H}(-w)$ inducing a positive-definite Hermitian form $v\mapsto(-1)^p (v,\overline v)$ on each summand $V^{p,w-p}$. The category $\mathcal{HS}$ of finite direct sums of pure Hodge structures of various weights is a semisimple tensor category endowed with a faithful functor to the category of vector spaces over $\mathbb Q$. By Tannakian formalism, the category $\mathcal{HS}$ is identified with the category of representations of a pro-reductive group over $\mathbb Q$, which we denote by $\mathsf{MT}$.
For any concrete object $\mathbf{V}=(V,(V^{p,q})_{p,q\in\Z})\in \mathcal{HS}$, the image of $\mathsf{MT}$ in $\mathrm{Aut}(V)$ is the reductive algebraic subgroup defined over $\Q$ whose $\overline{\Q}$-points are automorphisms of $V\otimes \overline\Q$ fixing all Hodge classes of weight $(0,0)$ in all Hodge structures $\mathbf{V}^{m_1}\otimes (\mathbf{V}^*)^{m_2}$ for all $m_1,m_2\ge 0$.

The pro-reductive group $\mathsf{MT}$ over $\Q$ is endowed with two homomorphisms
$$\mathbb G_{m,\Q}\stackrel{f_1}{\hookrightarrow}\mathsf{MT}\stackrel{f_2}{\twoheadrightarrow} \mathbb G_{m,\Q}\,.$$
The monomorphism $f_1$ is characterized by the property that $f_1(\mathbb G_{m,\Q})$
acts by character $w\in \Z=(\mathbb G_m)^\vee$ in each polarizable pure Hodge structure of weight $w$. The epimorphism $f_2$ is the action of $\mathsf{MT}$ on $\Q(-1)$. The composition $f_2\circ f_1$ is $\lambda\mapsto \lambda^2$.
Let us denote by $\mathsf{Hod}$ the kernel of
$f_2$. This is again a pro-reductive group over $\Q$ endowed with a \emph{central} element $\epsilon=f_1(-1)$ of order $2$. The group $\mathsf{Hod}$ acts on $\Z/2$-folded polarizable pure Hodge structure, the element $\epsilon$ acts by $(-1)^{\mathrm{wt}}$. This group is natural from the point of view of noncommutative geometry; see \cite{KatzarkovKontsevichPantev2008}.

\ 

For a smooth projective complex variety $X/\C$, the $B$-model noncommutative Hodge structure associated with the $\Z$-graded category $\mathrm{Perf}(X)$ is essentially a \emph{super-representation} of $\mathsf{Hod}$, where $\epsilon$ acts as $(-1)^{\mathrm{parity}}$. The invariant space
\[
\mathrm{H}^{\ast}(X,\Q)^{\mathsf{Hod}}
\]
coincides with the space of Hodge classes
\[
\bigoplus_{p}\bigl(\mathrm{H}^{2p}(X,\Q)\cap \mathrm{H}^{p,p}\bigr).
\]
Since the genus-zero Gromov-Witten invariants are defined by algebraic cycles, the group $\mathsf{Hod}$ acts on $B_X$: the action on the cone part is trivial, while on the complement of the image of $\mathsf{NS}(X)$ in $\mathrm{H}^{\ast}_B(X,\mathbbm{k})$ it is given by the natural $\mathsf{Hod}$-action after extension of scalars. We denote the fixed locus by $B_X^{\mathsf{Hod}}$.

\ 

\subsection{The Spectral Decomposition and Local Hodge Atoms}
\label{sec:local-hodge-atoms}

The F-bundle $(\mathcal H,\nabla)/B_X$ we built in Section \ref{sec:A-model} equips \cite[Section 3.4]{katzarkovpantevyu} the tangent bundle $TB_X$ with the structure of a Frobenius manifold. At any point $b \in B_X$, the tangent space $T_b B_X$ is identified with a commutative superalgebra whose product is the quantum product $\star_b$. This structure can be encoded in a map
    \[\mu_b: T_b B_X \rightarrow \mathrm{End}(\mathcal H_{(b,0)})\]
defined on the basis vectors by
    \[\mu_b(\partial_{t_i}) := T_i \star_b (\cdot).\]
Moreover, for any geometric point $b \in B_X$ there exists an {even cyclic vector} $h\in \mathcal H_{(b,0)}$ such that the map $\mathrm{ev}_h\circ \mu_b:T_bB_X\rightarrow \mathcal H_{(b,0)}$ is an isomorphism. The Euler vector field (Equation \eqref{eq:Euler}) is the unique even vector field $\mathsf{Eu}$ on $B_X$ which under the map $\mu$ maps to the residual endomorphism $\boldsymbol\kappa:=\nabla_{u^2\tfrac{\partial}{\partial u}}\Big|_{\mathcal H|_{u=0}}$, i.e., $\mu(\mathsf{Eu})=\boldsymbol\kappa$.

Let \( B^{\mathsf{Hod}}_X \subset B_X \) denote the fixed-point locus of the Mumford-Tate group \( \mathsf{Hod} \) acting on the non-archimedean super \( \mathbbm{k} \)-analytic variety \( B_X \). Consider the ramified cover \( \widetilde{B} \to B^{\mathsf{Hod}}_X \) parametrizing the eigenvalues of the operator \( \boldsymbol{\kappa} \). Let \( U_X \subset B^{\mathsf{Hod}}_X \) be the open subset over which the number of eigenvalues of \( \boldsymbol{\kappa} \) is maximal, and define
\[
\widetilde{U}_X := \widetilde{B}_{\mathrm{red}} \times_{B_X} U_X.
\]
\begin{definition}[Local Hodge Atoms]
    The connected components of \( \widetilde{U}_X \), denoted \( \pi_0(\widetilde{U}_X) \), are called the \emph{local Hodge atoms} associated to \( X \). For each \( \alpha \in \pi_0(\widetilde{U}_X) \), the \emph{multiplicity} of \( \alpha \) is defined as the degree of the connected component \( \widetilde{U}_{X, \alpha} \subset \widetilde{U}_X \) considered as a covering of \( U_X \).
\end{definition}

\ 

\subsection{The Global Set of Hodge Atoms}
\label{sec:global-hodge-atoms}

The behavior of quantum cohomology under birational transformations, such as blowups, motivates the definition of a global, birationally invariant set of atoms.

\subsubsection{Motivation from Birational Geometry}
\label{sec:atoms-and-birational-geometry}

Recall that any birational map between two smooth projective varieties (in characteristic zero) can be decomposed as the composition of blowups at smooth centers and inverses to such blowups (see, e.g., \cites{AbramovichKaruMatsukiWlodarczyk2002, Wlodarczyk2000}):
\begin{theorem}[Weak Factorization]\label{thm:weak}
    Let $\phi: X\dashrightarrow Y$ be a birational map between smooth complete varieties over an algebraically closed field of characteristic 0. Let $U\subset X$ be an open set where $\phi$ is an isomorphism. Then $\phi$ can be factorized as
    \[X=X_0\stackrel{f_0}{\dashrightarrow}X_1\stackrel{f_1}{\dashrightarrow}\ldots\stackrel{f_{n-1}}{\dashrightarrow}X_n=Y.\]
    Each $X_i$ is a smooth variety, and $f_i$ is a blowup or blowdown at a smooth center disjoint from $U$. Moreover, if $X$ and $Y$ are projective, each $X_i$ is projective.
\end{theorem}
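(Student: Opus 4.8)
The plan is to follow the birational cobordism method of W\l{}odarczyk and of Abramovich--Karu--Matsuki--W\l{}odarczyk: reduce an arbitrary birational map to a finite chain of \emph{elementary transformations} governed by a $\mathbb{G}_m$-action, and then recognize each such transformation as a blowup or blowdown at a smooth center by toric methods. The first step is to reduce to factoring a projective birational \emph{morphism}. By equivariant (functorial) resolution of indeterminacy I can find a smooth complete variety $W$ together with projective birational morphisms $p\colon W\to X$ and $q\colon W\to Y$, both isomorphisms over $U$; a factorization of each of $p$ and $q$ into blowups and blowdowns at smooth centers (disjoint from the preimage of $U$) yields, upon composing $p^{-1}$ with $q$, the desired factorization of $\phi=q\circ p^{-1}$. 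Hence it suffices to treat a single projective birational morphism $f\colon W\to X$ of smooth complete varieties, away from the locus where $f$ is an isomorphism.

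The heart of the argument is the construction of a \emph{birational cobordism}: a normal variety $B$ carrying a $\mathbb{G}_m$-action, with open ``lower'' and ``upper'' loci $B_-,B_+$ (the sets of points whose orbit closures acquire a limit at $0$, resp.\ at $\infty$) whose geometric quotients recover $W$ and $X$. This generalizes Morelli's toric cobordism. After an equivariant canonical resolution I may assume $B$ is smooth; its $\mathbb{G}_m$-fixed locus breaks into connected components $F_1,\dots,F_k$, which I order by the value of a character playing the role of a moment map. As the chamber parameter crosses a single $F_i$, the variation-of-GIT picture of Thaddeus and of Dolgachev--Hu shows that the quotient undergoes exactly one elementary modification $V_i^-\dashrightarrow V_i^+$, and the composite of these modifications realizes $f$.

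It then remains to identify each elementary modification with a blowup/blowdown at a smooth center. Near a connected fixed component the action is locally linearizable, so by the Bia\l{}ynicki--Birula description the modification is a toroidal birational transformation between the quotients of the positive- and negative-weight normal directions. The toric weak factorization theorem---Morelli's $\pi$-desingularization and cobordism theory, as completed by Abramovich--Matsuki--Rashid and W\l{}odarczyk---then factors it into smooth toric blowups and blowdowns, and a further $\pi$-desingularization step promotes these to honest blowups and blowdowns at smooth centers. Carrying out every resolution functorially guarantees compatibility with the open set $U$ (the centers stay disjoint from it) and, in the projective case, that each intermediate variety is again projective.

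The step I expect to be the main obstacle is precisely this local-to-global passage: proving that the elementary transformation attached to a fixed component factors into smooth blowups and blowdowns. It rests on the full strength of the toric weak factorization---itself a substantial theorem, relying on Morelli's cobordism combinatorics---together with the delicate bookkeeping needed to keep all centers smooth, to order the fixed components consistently along the cobordism, and to preserve both projectivity and the isomorphism locus $U$ at every intermediate stage.
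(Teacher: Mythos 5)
The paper does not prove this statement: Theorem \ref{thm:weak} is quoted there as a classical result, with the proof delegated entirely to the cited literature (Abramovich--Karu--Matsuki--W\l{}odarczyk and W\l{}odarczyk). So there is no in-paper argument to compare yours against; the only meaningful comparison is with the published proofs, and your outline does faithfully reproduce their architecture --- reduction via resolution of indeterminacy to a projective birational morphism, the birational cobordism with its $\mathbb{G}_m$-action and Bia\l{}ynicki--Birula strata, the variation-of-GIT wall crossings indexed by fixed components, and the toric/toroidal weak factorization with $\pi$-desingularization to convert each elementary modification into smooth blowups and blowdowns, with functoriality used to keep the centers away from $U$ and to preserve projectivity.

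That said, you should be clear that what you have written is a roadmap, not a proof. Every genuinely hard step is outsourced: the existence of a \emph{collapsible} cobordism (so that the fixed components admit a consistent total ordering --- this requires a projectivity or ampleness argument you do not supply), the local linearization and the identification of each wall crossing as a \emph{toroidal} birational map, and above all the toric weak factorization theorem itself, which is the bulk of the work in Morelli, Abramovich--Matsuki--Rashid, and W\l{}odarczyk. You correctly flag the last of these as the main obstacle, but the first two are also nontrivial and are stated here as if they were routine. As a blind reconstruction of how the theorem is proved, the proposal is accurate and well organized; as a self-contained proof it establishes nothing beyond what the cited references already contain, which is exactly the status the paper itself assigns to this theorem.
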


Let \( Z \subset X \) be a closed smooth subvariety of codimension \( r \geq 2 \). For simplicity, we will assume that \( Z \) is irreducible. The following is a classical result (see, e.g. \cite[Theorem 7.31]{Voisin_2002}):

\begin{proposition}\label{thm:voisin}
    Let \( X \) be a complex smooth projective variety, and let \( Z \) be an irreducible closed subvariety of codimension \( r \geq 2 \). Denote by \( \mathrm{Bl}_Z X \) the blowup of \( X \) in the smooth center \( Z \). Then, for each non-negative integer \( k \), there is a canonical isomorphism of Hodge structures
    \[
    \mathrm{H}^k (X, \mathbb{Q}) \oplus \bigoplus_{i=0}^{r-2} \mathrm{H}^{k-2i-2}(Z, \mathbb{Q}) \cong \mathrm{H}^k (\mathrm{Bl}_Z X, \mathbb{Q}).
    \]
    The Hodge structure on the components \( \mathrm{H}^{k-2i-2}(Z) \) corresponds to that of \( Z \) but is shifted by \( (i+1, i+1) \) in bidegree, resulting in a Hodge structure of weight \( k \).
\end{proposition}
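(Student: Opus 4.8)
The plan is to exhibit an explicit map into $\mathrm H^k(\Bl_Z X,\mathbb{Q})$ and prove it is an isomorphism of Hodge structures carrying the stated Tate twists. Write $\pi\colon \widetilde X:=\Bl_Z X\to X$ for the blowdown, $E=\pi^{-1}(Z)$ for the exceptional divisor, $j_E\colon E\hookrightarrow\widetilde X$ and $\iota_Z\colon Z\hookrightarrow X$ for the inclusions, and $p:=\pi|_E\colon E\to Z$. The structural input is that $E=\mathbb{P}(\mathcal N_{Z/X})$ is a $\mathbb{P}^{r-1}$-bundle over $Z$ with tautological class $\xi:=c_1(\mathcal O_E(1))\in\mathrm H^2(E,\mathbb{Q})$, that $\mathcal O_{\widetilde X}(E)|_E=\mathcal O_E(-1)$, and that $\pi$ restricts to an isomorphism $\widetilde X\setminus E\xrightarrow{\ \sim\ }X\setminus Z$. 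First I would define
\[
\Theta\colon \mathrm H^k(X,\mathbb{Q})\oplus\bigoplus_{i=0}^{r-2}\mathrm H^{k-2i-2}(Z,\mathbb{Q})\longrightarrow\mathrm H^k(\widetilde X,\mathbb{Q}),\qquad \Theta\bigl(\gamma,(\alpha_i)\bigr)=\pi^*\gamma+\sum_{i=0}^{r-2}(j_E)_*\bigl(\xi^{i}\cup p^*\alpha_i\bigr).
\]

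Next I would set up the two Gysin (localization) long exact sequences, one for the divisor $E\subset\widetilde X$ and one for the codimension-$r$ center $Z\subset X$, and identify their open-complement terms under $\widetilde X\setminus E\cong X\setminus Z$. Leray--Hirsch for the projective bundle $p\colon E\to Z$ gives the decomposition $\mathrm H^*(E,\mathbb{Q})=\bigoplus_{i=0}^{r-1}\xi^{i}\cup p^*\mathrm H^*(Z,\mathbb{Q})$, while the self-intersection formula yields $j_E^*(j_E)_*(\beta)=c_1(\mathcal N_{E/\widetilde X})\cup\beta=-\xi\cup\beta$. Injectivity of $\pi^*$ is immediate from $\pi_*\pi^*=\mathrm{id}$ (projection formula, $\deg\pi=1$), and a diagram chase comparing the two Gysin sequences—using the Leray--Hirsch basis of $\mathrm H^*(E)$ to control $\operatorname{im}(j_E)_*$ and $\operatorname{ker}(j_E)_*$—shows that $\Theta$ is bijective: the copies of $\mathrm H^{k-2i-2}(Z)$ for $i=0,\dots,r-2$ surject onto the cokernel of $\pi^*$, and the count matches term by term.

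For the Hodge statement I would simply note that each constituent of $\Theta$ is a morphism of pure $\mathbb{Q}$-Hodge structures with a controlled twist: $\pi^*$ is weight-preserving, $p^*$ preserves type, cup with $\xi\in\mathrm H^{1,1}(E)$ shifts type by $(1,1)$ so $\xi^{i}$ shifts by $(i,i)$, and the Gysin map $(j_E)_*$ of the smooth divisor $E$ shifts by $(1,1)$. Hence the summand $\mathrm H^{k-2i-2}(Z)$ enters with total twist $(i+1,i+1)$ into weight $k$, exactly as asserted; since $\Theta$ is already a $\mathbb{Q}$-linear isomorphism respecting these bidegrees, it is an isomorphism of Hodge structures. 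The multiplicative (``ring'') compatibility follows from $\pi^*$ being a ring homomorphism together with the projection-formula identity $\pi^*\gamma\cup(j_E)_*\beta=(j_E)_*\bigl(j_E^*\pi^*\gamma\cup\beta\bigr)$.

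The step I expect to be the main obstacle is verifying that $\Theta$ is bijective, and in particular pinning down why the $\xi$-power range stops at $r-2$ rather than $r-1$, even though Leray--Hirsch supplies powers $\xi^0,\dots,\xi^{r-1}$. The point is that the would-be $i=r-1$ term is not independent of $\pi^*\mathrm H^k(X)$: using $p_*\xi^{r-1}=1$ one computes $\pi_*(j_E)_*\bigl(\xi^{r-1}\cup p^*\alpha\bigr)=(\iota_Z)_*\alpha$, so that class is absorbed by the image of the codimension-$r$ Gysin map $(\iota_Z)_*$ appearing in the sequence for $Z\subset X$. Keeping the index ranges and the two connecting homomorphisms genuinely synchronized under the identification $\widetilde X\setminus E\cong X\setminus Z$ is where the real care lies. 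Alternatively, this entire bijectivity argument can be bypassed by invoking the decomposition theorem, which gives $R\pi_*\mathbb{Q}_{\widetilde X}\cong\mathbb{Q}_X\oplus\bigoplus_{i=1}^{r-1}(\iota_Z)_*\mathbb{Q}_Z(-i)[-2i]$ and yields both the additive isomorphism and the Hodge/Tate twists simultaneously upon taking hypercohomology.
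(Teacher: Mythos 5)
Your proof is correct and is essentially the standard argument: the paper itself offers no proof of this proposition, merely citing it as classical (Voisin, \emph{Hodge Theory and Complex Algebraic Geometry I}, Theorem 7.31), and the cited proof proceeds exactly as you do --- via the explicit map $\pi^*\gamma+\sum_i (j_E)_*(\xi^i\cup p^*\alpha_i)$, the comparison of the two Gysin/pair sequences under $\widetilde X\setminus E\cong X\setminus Z$, Leray--Hirsch on $E=\mathbb{P}(\mathcal N_{Z/X})$, and the $(1,1)$-shifts of $(j_E)_*$ and cup with $\xi$. Your identification of why the index range stops at $i=r-2$ (the $\xi^{r-1}$ term is absorbed into $\pi^*\mathrm{H}^*(X)$ since $p_*\xi^{r-1}=1$) is the right key point; the only looseness is that the ``ring isomorphism'' phrasing of the statement would require specifying the transported product on the source, but that imprecision is already present in the paper's own formulation.
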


Herein, we denote \(\mathrm{Bl}_Z X\) as \(\widetilde{X}\). Proposition \ref{thm:voisin} teaches us that if we consider the non-archimedean $\mathbbm{k}$-analytic supermanifold \(B_{\widetilde{X}}\), constructed mutatis mutandis to that of \(B_X\) (as in Section \ref{sec:A-model}), it should be possible to relate the A-model triples \((B_{\widetilde{X}}, \widetilde{\star}, \widetilde{\mathsf{Eu}})\) and \((B_X, \star, \mathsf{Eu})\).

Let \(\omega \in \mathrm{H}^2_B(X, \mathbb{Z}) \subset \mathrm{H}\) be an ample class on \(X\), and pick a point \(\widetilde{b} \in B_{\widetilde X}^{\text{even}}\) corresponding to an ample class on \(\widetilde{X}\) that is sufficiently close in \(\mathrm{H}^{\ast}(\widetilde{X}, \mathbb{Q}) \otimes \mathbbm{k}\) to the semi-ample class \((\widetilde{X} \rightarrow X)^{\ast} \omega\). 

Consider the spectrum of \(\widetilde{\mathsf{Eu}} \widetilde{\star} (\cdot) |_{T_{\widetilde{b}} B_{\widetilde{X}}}\). A straightforward computation shows that the corresponding eigenvalues group together in \(r\)-clusters, where each cluster is contained within a small analytic disk in \(\mathbbm{k}\) as follows:
\begin{itemize}
\item The disks are disjoint, one of which is centered at $0\in \mathbbm{k}$, while the rest are centered at the $(r-1)$-roots of $1$, rescaled by $r-1$
\item The sum of the generalized eigenspaces corresponding to the eigenvalues that are close to $0$ gives a super-vector space isomorphic to $\mathrm{H}^{\ast}(X,\mathbbm{k})$
\item The sum of the generalized eigenspaces corresponding to the eigenvalues close to a rescaled $(r-1)$-st root of $1$ gives a super vector space isomorphic to $\mathrm{H}^{\ast}(Z,\mathbbm{k})$
\end{itemize}
In a picture, if $r=\mathrm{codim}(Z\subset X)=4$, we might see an illustration similar to Figure \ref{pic:satellite}.
\begin{center}
\begin{figure}[H]
\begin{tikzpicture}[scale=1, every node/.style={font=\tiny}]
    \definecolor{satellite}{RGB}{211,211,211} 
    \definecolor{main}{RGB}{173,216,230}      

    \fill[main] (0,0) circle (0.8cm); 
    \foreach \x/\y in {-0.4/0.4, 0.4/0.4, -0.4/-0.4, 0.4/-0.4, 0/0.5, 0/-0.5, -0.5/0, 0.5/0} {
        \node[black] at (\x,\y) {\textbullet}; 
    }
    \node[anchor=north] at (0,-0.9) {$\mathrm{H}^{\ast}(X,\mathbbm{k})$}; 

    \begin{scope}[shift={(-2,1.5)}, rotate=143.13]
        \fill[satellite] (0,0) circle (0.6cm); 
        \foreach \x/\y in {-0.2/0.2, 0.2/0.2, -0.2/-0.2, 0.2/-0.2} {
            \node[black] at (\x,\y) {\textbullet};
        }
    \end{scope}
    \node[anchor=south] at (-2,2.1) {$\mathrm{H}^{\ast}(Z,\mathbbm{k})$}; 

    \begin{scope}[shift={(2,1.5)}, rotate=36.87]
        \fill[satellite] (0,0) circle (0.6cm);
        \foreach \x/\y in {-0.2/0.2, 0.2/0.2, -0.2/-0.2, 0.2/-0.2} {
            \node[black] at (\x,\y) {\textbullet};
        }
    \end{scope}
    \node[anchor=south] at (2,2.1) {$\mathrm{H}^{\ast}(Z,\mathbbm{k})$}; 

    \begin{scope}[shift={(0,-2)}, rotate=270]
        \fill[satellite] (0,0) circle (0.6cm);
        \foreach \x/\y in {-0.2/0.2, 0.2/0.2, -0.2/-0.2, 0.2/-0.2} {
            \node[black] at (\x,\y) {\textbullet};
        }
    \end{scope}
    \node[anchor=north] at (0,-2.6) {$\mathrm{H}^{\ast}(Z,\mathbbm{k})$}; 

\end{tikzpicture}
\caption{Clusters}
\label{pic:satellite}
\end{figure}
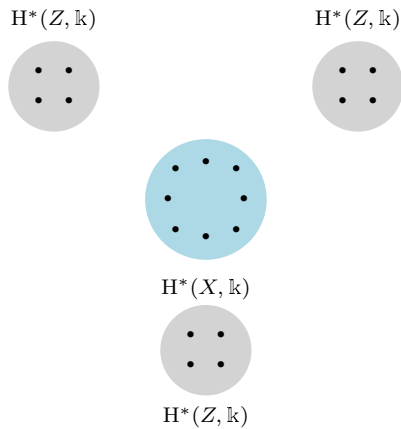
\end{center}

Theorem 4.1 in \cite{katzarkovpantevyu} implies that near the point $\widetilde b$, the F-manifold $B_{\widetilde X}$ is locally isomorphic to the product of $r$ distinct F-manifolds, each having an Euler field. Among these, one manifold has the same dimension as $B_X$, while the remaining $(r-1)$ factors have the same dimension as $B_Z$. Furthermore, an adaptation of the proof of Theorem 5.14 in \cite{iritani2023quantumcohomologyblowups} (Theorem 4.5 in \cite{katzarkovpantevyu}) shows that the factors in the decomposition surrounding $\widetilde b \in (B_{\widetilde X}, \widetilde \star, \widetilde{\mathsf{Eu}})$ are isomorphic to an open domain in $(B_X, \star, \mathsf{Eu})$, along with $(r-1)$ copies of an open domain in $(B_Z, \star_Z, \mathsf{Eu}_Z)$. Under this isomorphism, the Euler vector field $\widetilde{\mathsf{Eu}}$ is a direct sum of the pullbacks of the Euler vector fields $\mathsf{Eu}$ on $B_X$ and $\mathsf{Eu}_Z$ on various copies of $B_Z$. Similarly, the quantum product $\widetilde\star$ on the F-manifold $B_{\widetilde X}$ is block-diagonal. This provides a block-diagonal decomposition of the operator $\widetilde{\mathsf{Eu}}\widetilde{\star}$ as a direct sum of pullbacks of the operators $\mathsf{Eu}\star,~\mathsf{Eu}_Z\star_Z$ from the factors. 

\subsubsection{Definition of Global Hodge Atoms}
\label{subsec:global-hodge-atoms-def}

One is naturally led to impose equivalence relations on the set
\[
\bigsqcup_{\substack{
      [X]\text{\, iso. classes of smooth}\\
      \text{complex projective varieties}
    }}
    \pi_{0}(\widetilde U_{X}) \Big/ \mathrm{Aut}(X),
\]
yielding a quotient set we denote by \(\mathsf{HAtoms}\). As detailed in \cite{katzarkovpantevyu}:
\[
\mathsf{HAtoms}
  = \Big(\bigsqcup_{\substack{
      [X]\text{\, iso. classes of smooth}\\
      \text{complex projective varieties}
    }}
    \pi_{0}(\widetilde U_{X}) \Big/ \mathrm{Aut}(X) \Big)\Big/ \sim,
\]
where \(\sim\) is the equivalence relation generated by:
\begin{enumerate}[label=(\roman*)]
  \item \textbf{disjoint unions} (see \cite[Section 5.2.3]{katzarkovpantevyu}):  If $X = X_1 \sqcup X_2$, the local atoms of $X$ are naturally identified with the disjoint union of the local atoms of $X_1$ and $X_2$. A local atom of $X_1$ is declared equivalent to its image in the set of local atoms of $X$.
  \item \textbf{blowups in smooth centers} (see \cite[Section 5.2.4]{katzarkovpantevyu}). Let $\widetilde X= \mathrm{Bl}_Z(X)$ be the blowup of $X$ in a smooth subvariety $Z$ of codimension $c$. Let $X' = X \sqcup Z \sqcup \ldots \sqcup Z$ (with $c-1$ copies of $Z$). Iritani's theorem provides a canonical isomorphism of F-bundles over suitable analytic domains, which in turn induces a canonical bijection between the local atoms of $\widetilde X$ and the local atoms of $X'$. A local atom of $\widetilde X$ is declared equivalent to its corresponding atom in $X'$ under this bijection.
  \item  \textbf{projectivizations of vector bundles} (``quantum Leray-Hirsch decomposition'', see \cite[Section 5.2.5]{katzarkovpantevyu}). If $\mathbb P(E) \to X$ is a projectivization of a vector bundle of rank $r\geq 2$ over $X$, a similar (though less subtle) decomposition theorem relates the F-bundle of $\mathbb P(E)$ to the direct sum of $r$ copies of the F-bundle of $X$. This induces an equivalence between the local atoms of $\mathbb P(E)$ and those of $X \sqcup \ldots \sqcup X$ ($r$ copies).
\end{enumerate}
It is worth mentioning that the identifications implicit in relations $\mathrm{(ii),(iii)}$ are defined using the fundamental results of H.~Iritani \cite{iritani2023quantumcohomologyblowups}, and H.~Iritani and Y. Koto \cite{iritani2024quantumcohomologyprojectivebundles}.

For each $\boldsymbol{\alpha} \in \mathsf{HAtoms}$ there exists a smooth projective variety \(X\) such that the equivalence class \(\boldsymbol{\alpha}\) is represented by a local atom of $X$, i.e., by a choice of a connected component \(U_{\alpha}\) of the unramified part \({\widetilde U}_X\) of the reduced spectral cover of the operator
\[
\boldsymbol\kappa = \mathsf{Eu} \star (-) : \mathcal{H}_0|_{{B}^{\mathsf{Hod}}_X} \to \mathcal{H}_0|_{{B}^{\mathsf{Hod}}_X}
\]
for the \(\mathbbm k\)-analytic A-model maximal F-bundle \((\mathcal{H}, \nabla) / {B}_X\). The subbundle 
\[
\mathcal{E}^{\alpha} \subset \mathcal{H}_0|_{U_X}
\]
of generalized eigenspaces of \(\mathsf{Eu} \star (-)\) corresponding to the eigenvalues parametrized by \({\widetilde U}_{X,\alpha}\) is invariant by the (extended scalar) action of $\mathsf{Hod}$. By Lemma~5.25 in \cite{katzarkovpantevyu}, for any rigid point \( b \in U_X \), the representation of \( \mathsf{Hod} \) on the fiber \( \mathcal{E}^{\alpha}_b \) is induced by extension of scalars from some finite-dimensional \( \overline{\mathbb{Q}} \)-linear representation \( E^{\boldsymbol{\alpha}} \) of \( \mathsf{Hod} \), whose isomorphism class is independent of \( b \). As indicated by the notation, the isomorphism class of the \( \mathsf{Hod} \)-module \( E^{\boldsymbol{\alpha}} \) depends only on the atom \( \boldsymbol\alpha \), and not on the particular local atom representing it. This follows immediately from the fact that the elementary equivalences used to define Hodge atoms all respect the action of \( \mathsf{Hod} \). Thus, to every Hodge atom \( \boldsymbol\alpha \), we have assigned a finite-dimensional \( \overline{\mathbb{Q}} \)-linear representation \( E^{\boldsymbol{\alpha}} \) of \( \mathsf{Hod} \), well defined up to isomorphism. With this notation, we now have the following invariants of Hodge atoms:
\begin{equation}
\text{
\fbox{\parbox{0.85\textwidth}{
\begin{itemize}
    \item The dimension \(\rho_{\boldsymbol\alpha}\) of the generalized \(\boldsymbol\alpha\)-eigenspace of \(\boldsymbol{\kappa}\) acting on the space of Hodge classes \(\mathrm{H}(X)^{\mathsf{Hod}} \otimes_{\mathbb{Q}} \mathbbm{k}\).
    \item The Hodge polynomial \(P_{\boldsymbol \alpha}(t) \in \mathbb{Z}[t, t^{-1}]\), where the coefficient at \(t^k\) corresponds to the dimension of the generalized \(\boldsymbol\alpha\)-eigenspace in \(\bigoplus_{p,q:p-q=k} \mathrm{H}^{p,q}(X)\).
\end{itemize}
}
}}\label{invariants-box}
\end{equation}
The above two invariants of Hodge atoms come from a finer one: the isomorphism class of a  $\mathbbm{k}$-linear representation (typically reducible)
of the Mumford-Tate group of  $\Z/2$-weighted  polarizable pure Hodge structures.

Since birational equivalences between smooth projective varieties are generated by blowups with smooth centers of codimension \( \geq 2\), if a \(d\)-dimensional variety \(X\) (with \(d \geq 2\)) contains at least one atom in its atomic content that does not appear in the atomic contents of varieties of dimension \(\leq d - 2\), then \(X\) cannot be rational. 
We conclude this section by summarizing all the discussion in the following:
\begin{theorem}[Katzarkov-Kontsevich-Pantev-Yu, \cite{katzarkovpantevyu}] \label{thm:Katzarkov-Kontsevich-Pantev-Yu} Let $X$ be a smooth projective variety, and $\mathrm{H}$ be the de Betti cohomology of $X$ equipped with the A-model $\mathbbm{k}$-non-archimedean $\mathbbm{k}$-analytic $\mathsf{nc}$-Hodge structure at a general point $s$ in the algebraic locus of the Frobenius manifold $B_X^{\mathsf{Hod}}$.
  \begin{itemize}
  \item For each point $b\in B_X^{\mathsf{Hod}}(\mathbbm{k})$ such that the number of eigenvalues of $\boldsymbol{\kappa}_s$ is maximal,   there is a canonical decomposition of $\mathrm{H}=\mathrm{H}^{\ast}(X,\mathbbm{k})$ into
    $A$-model $\mathsf{nc}$-Hodge substructures $\{\mathrm{H}_{\lambda_i}\}$ labeled by
    these eigenvalues.
  \item The above decomposition is also compatible with the
ordinary (B-model) $\mathbb Z/2$-folded polarizable pure Hodge structure on
$\mathrm{H}$. Hence, each summand $\mathrm{H}_{\lambda_i}$ carries a $\mathbbm{k}$-linear representation of the Mumford-Tate group. The collection of isomorphism classes of these representations up to reordering is independent
of the choice of the generic point $s$.
  \item If projective varieties are birational, they have the same collection of isomorphism classes of representations, modulo those coming
from subvarieties of codimension-two and higher.
 \end{itemize}
\end{theorem}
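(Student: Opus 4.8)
The plan is to assemble the three assertions from the structural results recalled above, treating each bullet in turn and reducing everything to the spectral analysis of $\boldsymbol{\kappa}=\mathsf{Eu}\star(-)$ together with the $\mathsf{MT}$-equivariance furnished by the algebraicity of the genus-zero Gromov-Witten invariants.

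For the first bullet, I would start from the meromorphic connection $\nabla$ on $(\mathcal{H},\nabla)/B_X$ built in Section \ref{sec:A-model} and recall that its residue at $u=0$ is precisely $\boldsymbol{\kappa}$. At a point $b\in B_X^{\mathsf{MT}}(\mathbbm{k})$ where the number of eigenvalues of $\boldsymbol{\kappa}_b$ is maximal, the generalized-eigenspace projectors are polynomials in $\boldsymbol{\kappa}_b$, hence commute with every operator commuting with $\boldsymbol{\kappa}_b$; in particular they are compatible with the constant Poincar\'e pairing and with the flat transport in the remaining directions of $B_X$. This produces the canonical decomposition $\mathrm{H}=\bigoplus_i \mathrm{H}_{\lambda_i}$ indexed by the distinct eigenvalues $\lambda_i$, and the compatibility of the projectors with the full F-bundle data upgrades each summand $\mathrm{H}_{\lambda_i}$ to an A-model $\mathsf{nc}$-Hodge substructure. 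These are exactly the pieces whose connected components define the local Hodge atoms of Section \ref{sec:local-hodge-atoms}.

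For the second bullet, the key point is that the whole construction is $\mathsf{MT}$-equivariant at fixed points. Since $\mathsf{Eu}$ is built from $c_1(T_X)$ and the degree operator, and since $\star$ is defined by algebraic cycle classes (the Gromov-Witten cycles $I_{n,\beta}$), the operator $\boldsymbol{\kappa}_b$ commutes with the $\mathsf{MT}$-action for every $b\in B_X^{\mathsf{MT}}$. Consequently each generalized eigenspace $\mathrm{H}_{\lambda_i}$ is an $\mathsf{MT}$-submodule, which endows it with a $\mathbbm{k}$-linear $\mathsf{MT}$-representation. To see that the collection of isomorphism classes is independent of the generic point $s$, I would let $s$ vary over the connected maximal locus $U_X\subset B_X^{\mathsf{Hod}}$: the eigenvalue cover $\widetilde{U}_X\to U_X$ and the eigenbundles $\mathcal{E}^{\alpha}$ form flat families, and by the cited Lemma~5.25 of \cite{katzarkovpantevyu} each fiber $\mathcal{E}^{\alpha}_b$ descends to a $\overline{\mathbb{Q}}$-representation $E^{\boldsymbol{\alpha}}$ whose isomorphism class is locally constant; connectedness of the components of $\widetilde{U}_X$ then forces it to be globally constant.

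For the third bullet, I would invoke Weak Factorization (Theorem \ref{thm:weak}) to reduce birational invariance to a single blowup $\widetilde{X}=\mathrm{Bl}_Z X$ in a smooth irreducible center $Z$ of codimension $r\geq 2$, together with its inverse. The F-bundle decomposition recalled in Section \ref{sec:global-hodge-atoms}---Iritani's theorem and its adaptation in \cite{katzarkovpantevyu}---yields a canonical identification of the atomic content of $\widetilde{X}$ with that of $X\sqcup Z^{\sqcup(r-1)}$, compatibly with the $\mathsf{MT}$-action. Since $\dim Z=\dim X-r\leq \dim X-2$, every atom introduced by the blowup originates from a variety of codimension at least two; modding out by such atoms, the collection of isomorphism classes of $\mathsf{MT}$-representations is preserved. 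The main obstacle, and the step I would treat most carefully, is the compatibility asserted in the first bullet: proving that the spectral projectors of $\boldsymbol{\kappa}$ genuinely respect the $\mathsf{nc}$-Hodge structure, i.e.\ that the eigenspace decomposition interacts correctly with the $u$-connection. It is precisely here that the passage to the non-archimedean Puiseux field $\mathbbm{k}$ is essential, since over $\mathbb{C}$ the Stokes phenomenon obstructs a canonical splitting; I would rely on the $\mathsf{nc}$-Hodge formalism of \cite{KatzarkovKontsevichPantev2008} to guarantee that the splitting is well defined in the non-archimedean category.
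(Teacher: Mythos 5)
Your proposal is correct and follows essentially the same route as the paper: the theorem is presented there as a summary of Sections \ref{sec:A-model}--\ref{sec:global-hodge-atoms}, and your assembly — spectral projectors of $\boldsymbol{\kappa}$ over the non-archimedean field $\mathbbm{k}$ (avoiding Stokes phenomena), $\mathsf{MT}$-equivariance from the algebraicity of the Gromov--Witten cycles, Lemma~5.25 of \cite{katzarkovpantevyu} plus connectedness of the components of $\widetilde{U}_X$ for independence of the generic point, and Weak Factorization together with Iritani's blowup decomposition for the last bullet — is exactly the argument the paper relies on. No gaps to report.
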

\begin{remark}
    We have reasons to believe that passing to isomorphism classes of the Mumford-Tate group with coefficients in $\mathbbm{k}$ is not necessary; the isomorphism classes of representations will be defined over $\mathbb Q\subset \mathbbm{k}$, i.e., it will be the usual $\mathbb
Z/2$-folded polarized Hodge structures. See, e.g., the discussion in Section \ref{sec:ideal-legal}.\label{rem:Q-not-K}
\end{remark}

\ 

\subsection{$G$-equivariant atoms}\label{sec:equivariant-atoms}

Let us fix a finite group $G$ and consider a smooth projective variety $X$ with a $G$-action (not necessarily free at the generic point).
Then $G\times \mathsf{Hod}$ acts on $\mathrm{H}^{\ast}(X,\Q)$. It also acts on $B_X$, with the action over the tube domain in $\mathsf{NS}(X)\otimes\mathbbm{k}^\times$ coming from the natural action of $G$ on $\mathsf{NS}(X)$. One can define local $G$-equivariant atoms
by repeating the definition of local Hodge atoms verbatim, restricting to the larger group's fixed locus $B_X^{G\times \mathsf{Hod}}$. It is known that the analog of the Weak Factorization Theorem also holds in the equivariant setting (see, e.g., Proposition 2.6 in \cite{Kresch2022}). Hence, we get a theory of $G$-equivariant atoms. Explicitly, we define 
$$  \mathsf{HAtoms}^G:=\Big(\bigsqcup_{\substack{
      [X]\text{\, iso. classes of smooth}\\
      \text{complex projective $G$-varieties}
    }}
    \text{Local  }G\text{-equivariant atoms of }X \Big/ \mathrm{Aut}(X)\Big) \Big/ \sim,$$
where $\sim$ is the equivalence relation generated by
\begin{enumerate}[label=(\roman*)]
  \item \textbf{disjoint unions}: any local $G$-equivariant atom of $X=X_1\sqcup X_2$ are identified with a local atom of $X_1$ or of $X_2$.
  \item \textbf{blowups in $G$-invariant smooth centers}: for $G$-invariant smooth $Z\subset X$ of codimension $c\ge 2$ any local $G$-equivariant atom of $\widetilde X=\mathrm{Bl}_Z(X)$ is identified with a local $G$-equivariant atom of $X$ or of $Z$.
  \item  \textbf{projectivizations of $G$-equivariant vector bundles} If $E\to X$ is $G$-equivarinat vector bundle of rank $r\ge 2$, then any local $G$-equivariant atom of $\mathbb P(E)$ is identified with a local $G$-equivariant atom of $X$.
\end{enumerate}
The identifications implicit in relations (ii), (iii) come from the fact that the identification of analytic F-bundles in Iritani's \cite{iritani2023quantumcohomologyblowups} (resp. Iritani--Koto's \cite{iritani2024quantumcohomologyprojectivebundles}) theorem is natural, i.e., commutes with automorphisms. It can hence be restricted to the fixed locus of $G$ (and eventually of $G\times\mathsf{Hod}$).

\begin{remark}
In principle, if $G$ acts on $X$ and only \textit{projectively} on the vector bundle $E\to X$, Iritani-Koto's result leads to a certain relation between local $G$-equivariant atoms of $\mathbb P(E)$ and local $G$-equivariant atoms of $X$. We do not impose identification by this relation, as it is not compatible with the categorical picture; see Section \ref{sec:ideal-legal}. The category $\mathrm{Perf}(\mathbb P(E))$ has a $G$-equivariant semi-orthogonal decomposition with terms which are equivalent to $\mathrm{Perf}(X)$ endowed with a \textit{twisted} $G$-action.
\end{remark}

We will describe in Section \ref{sec:examples-atoms} several examples of applications of $G$-equivariant atoms to $G$-equivariant birational geometry, analogous to the applications of the theory of atoms for non-algebraically closed fields in \cite{katzarkovpantevyu}. As invariants of $G$-equivariant atoms, we will use the dimension of Hodge classes and Hodge polynomial as in \eqref{invariants-box} and one extra invariant, the dimension of $G$-invariant Hodge classes:
\begin{equation}
\text{
\fbox{\parbox{0.85\textwidth}{
\begin{itemize}
    \item The dimension \(\rho_{\boldsymbol\alpha}^G\) of $G\times \mathsf{Hod}$-invariants in the generalized \(\boldsymbol\alpha\)-eigenspace of \(\boldsymbol{\kappa}\) acting on $\mathrm{H}^{*}(X,\mathbbm{k})$.
\end{itemize}
}
}}\label{invariants3-box}
\end{equation}

In a sense, the $G$-action is very similar to the Galois symmetry.

\subsection{Filtration by dimension}
\label{sec:filtration}
Expanding upon the discussion in Section~\ref{sec:global-hodge-atoms}, the set of Hodge atoms $\mathsf{HAtoms}$ admits a natural increasing filtration (\cite[Section 5.2.6]{katzarkovpantevyu} for the non-equivariant version):
\[
\mathsf{HAtoms}_{\dim \leq 0}^G
\subset
\mathsf{HAtoms}_{\dim \leq 1}^G
\subset \cdots, 
\qquad
\mathsf{HAtoms}^G =
\bigcup_{d \geq 0} \mathsf{HAtoms}_{\dim \leq d}^G.
\]
An atom $\boldsymbol\alpha$ belongs to $\mathsf{HAtoms}_{\dim \leq d}^G$ if it arises in the atomic decomposition of some smooth projective $G$-variety of complex dimension at most $d$. Then we have the following criterion:

\ 

\noindent\fbox{%
\parbox{0.97\textwidth}{%
If $G$-variety $X$ of dimension $d\ge 2$ has a $G$-equivariant atom which does not belong to $\mathsf{HAtoms}_{\dim \leq {d-2}}^G$, then $X$ is not $G$-birationally equivalent to $\mathbb P^d$ endowed with a linearizable $G$-action.}%
}

\

\  


In what follows, we will be mostly interested in varieties of dimension $3$. Therefore, we should study the truncated set 
\[
\mathsf{HAtoms}^{G}_{\dim \leq 1}
\]
as these are the only atoms that can appear/disappear under blowups or blowdowns in dimension $3$.
\medskip

\begin{lemma}\label{lem:atom-generators} Let us assume that for any subgroup $H<G$ any action of $H$ on $\mathbb P^1$ is linearizable.
The following varieties with $G$-action have each only one local equivariant Hodge atom, and in this way we obtain \textit{all} elements of \(\mathsf{HAtoms}^{G}_{\dim \leq 1}\):
\begin{enumerate}
    \item an orbit $G/H$ of a point where $H<G$ is the stabilizer subgroup,
    \item  the product $G\times_H C$ where $C$ is a smooth connected curve of genus $\mathsf g\ge 1$, endowed with an action of a subgroup $H<G$.
\end{enumerate}
In all cases, we have the invariant $\rho^G$ equal to $1$ or $2$. Moreover, if the Hodge polynomial is constant, then $\rho^G=1$.
\end{lemma}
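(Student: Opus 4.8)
The plan is to reduce an arbitrary smooth projective $G$-variety $X$ of dimension $\le 1$ to its connected components, group these into $G$-orbits, and then run the spectral analysis of $\boldsymbol\kappa=\mathsf{Eu}\star$ on a single connected curve. Since a variety of dimension $\le 1$ contains no $G$-invariant smooth center of codimension $\ge 2$, relation (ii) is vacuous in this range, and only the disjoint-union relation (i) and the projectivization relation (iii) can identify atoms; completeness of the list will therefore be automatic once the building blocks are understood. First I would write $X=\bigsqcup_\beta Y_\beta$, where each $Y_\beta$ is a single $G$-orbit of connected components; choosing a component $C_\beta$ with stabilizer $H_\beta<G$ gives $Y_\beta\cong G\times_{H_\beta}C_\beta$. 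By relation (i) the local $G$-atoms of $X$ are the disjoint union of those of the $Y_\beta$, so it suffices to analyze a single $Y=G\times_H C$.

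\textbf{Reduction to one induced piece.} Because the $[G:H]$ components of $Y$ are pairwise disjoint, the quantum product, the Euler field, and hence $\boldsymbol\kappa$ are block diagonal, with $G$ permuting the blocks transitively and each block being the data of one copy of $(C,H)$; moreover $\mathrm{H}^\ast(Y,\mathbb{Q})\cong\mathrm{Ind}_H^G\,\mathrm{H}^\ast(C,\mathbb{Q})$ as $G\times\mathsf{MT}$-modules. Consequently a $G$-orbit of connected components of the spectral cover of $Y$ is exactly the datum of one $H$-atom of $C$, and the problem reduces to classifying the local $H$-atoms of a connected smooth projective curve $C$.

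\textbf{The connected curve.} If $\dim C=0$ then $C$ is a point, $\boldsymbol\kappa$ acts on $\mathrm{H}^0\cong\mathbb{Q}$ by the scalar $-t_0$, there is a single atom, and inducing up recovers the orbit $G/H$ of case (1). If $\mathsf g(C)\ge 1$, there are no nonconstant genus-zero stable maps into $C$, so every Gromov--Witten invariant with $\beta\ne 0$ vanishes and the quantum product is the cup product; using $\mathsf{Eu}=c_1(T_C)+\tfrac12(\mathsf{Deg}-2)(b)$ one finds $\boldsymbol\kappa=-t_0\cdot\mathrm{id}+N$ with $N$ nilpotent (it sends $1\mapsto(2-2\mathsf g)[\mathrm{pt}]$ and annihilates $\mathrm{H}^{\ge 1}$), so $\boldsymbol\kappa$ has the single eigenvalue $-t_0$ on all of $\mathrm{H}^\ast(C)$ and $C$ carries exactly one atom; it is genuinely new, as its generalized eigenspace contains the weight-one structure $\mathrm{H}^1(C)$. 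This is case (2). For $\mathsf g(C)=0$, i.e.\ $C\cong\mathbb{P}^1$, the relation $h\star h=q$ makes $\boldsymbol\kappa$ have two distinct eigenvalues and hence a priori two atoms; here the hypothesis enters, since a linearizable $H$-action presents $\mathbb{P}^1=\mathbb{P}(V)$ as the projectivization of a rank-$2$ $H$-representation $V$ over a point, and relation (iii) then identifies both atoms with copies of the point-atom. Thus under the hypothesis genus $0$ produces nothing new, while for a non-linearizable action relation (iii) no longer applies and one records the two new atoms of case (3).

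\textbf{Completeness, the invariant $\rho^G$, and the main obstacle.} Completeness follows because every smooth projective $G$-variety of dimension $\le 1$ is a disjoint union of pieces $G\times_H C$, whose atoms are exhausted by (1)--(3). For $\rho^G$ one computes the dimension of the $G\times\mathsf{MT}$-invariants inside each generalized eigenspace; for the orbit this is the line of constant functions on $G/H$, giving $\rho^G=1$, while the rational case splits $\mathbb{Q}^2$ into two one-dimensional eigenspaces, again $\rho^G=1$ per atom, and the genus-$\ge 1$ case reduces to the $H$-invariant Hodge classes of $C$. I expect the principal difficulty to lie not in the spectral computation---which is essentially classical once Gromov--Witten vanishing is invoked---but in the precise, $H$-equivariant application of the projectivization relation (iii): one must check that the Iritani--Koto identification descends to the $H$-fixed locus, and that the linearizability hypothesis is exactly what licenses presenting $C\cong\mathbb{P}^1$ as $\mathbb{P}(V)$ with an honest (rather than merely projective, hence twisted) $H$-linearization, since only then does (iii) collapse the two atoms to point-atoms.
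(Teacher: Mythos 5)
Your proposal is correct and follows essentially the same route as the paper: reduce to a transitive $G$-orbit of connected components $G\times_H C$, observe that a point or a curve of genus $\mathsf g\ge 1$ (nef canonical class, hence no quantum corrections) carries a single atom, and handle $C\cong\mathbb P^1$ by splitting the two eigenvalues of $\boldsymbol\kappa$ and invoking the projectivization relation exactly when the $H$-action is linearizable. Your explicit spectral computation and the remark that naturality of the Iritani--Koto identification is what allows restriction to the $H$-fixed locus are both points the paper makes (the latter in the paragraph defining $\mathsf{HAtoms}^G$), just stated more tersely there.
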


\begin{proof} By definition, all elements of  \(\mathsf{HAtoms}^{G}_{\dim \leq 1}\) come from atomic decompositions of varieties (possibly disconnected) of dimension $\le 1$ endowed with a $G$-action. Obviously, it suffices to consider only the case when the group acts transitively on the set of connected components. If the dimension is $0$, we get case $(i)$, and here there is only one local equivariant Hodge atom. In the case of dimension $1$, we have an orbit of a connected curve which is invariant under a subgroup $H<G$.
 Suppose the genus of the connected component is $\mathsf g\ge 1$. In that case, we have only one local atom, as follows from the general properties of Gromov-Witten invariants of varieties with nef canonical class (see Example \ref{ex:nef}). Finally, if the Hodge polynomial is constant, then we deal with points or curves of genus zero, and we have $\rho^G=1$ in this case. Otherwise, we deal with curves of positive genus and $\rho^G=2$.
\end{proof}

We revisit Lemma \ref{lem:atom-generators} for the case when $G=\mathbb Z/p$ in Lemmas \ref{lem:atom-generators-2} and \ref{lem:invariant-for-generators}.



\subsection{Why atoms? Hypothetical atomic semi-orthogonal decomposition}
\label{sec:ideal-legal}
This section compares the ideal versus the ``legal''\footnote{By ``legal'', we mean the very restricted amount of tools distinguishing atoms that we can use at the moment. The structure is very rich, and we expect new tools to be added to the toolbox in the future, with new non-rationality results following.} perspectives in atomic theory. Specifically, we highlight the conjectural aspects, explaining how to approach atoms ideally before addressing them more formally. The ideal perspective offers a convenient way to gain insight into concrete examples, laying the groundwork for more detailed analysis later. 

The most optimistic conjecture is that atomic decomposition reflects a \emph{semi-orthogonal decomposition} of the bounded derived category $$\mathrm{Perf}(X)=\langle \mathcal C_1,\ldots,\mathcal C_m\rangle $$
where $m$ is the number of atoms, and the decomposition is defined up to the usual action of the braid group $B_m$ (see \cite{Bondal1990}).
Moreover, for each category $\mathcal C_i$, we should have a connected component of the space of Bridgeland stability conditions (in particular, the Hochschild homology/periodic cyclic homology space is non-trivial; hence, we can not have phantoms \cite{Bhning2015} in this decomposition).

This picture is partially justified by the Mirror symmetry for Fano varieties, where $\mathrm{Perf}(X)$ is expected to be equivalent to the Fukaya category of the mirror Landau-Ginzburg model. The relation to stability structures was also discussed in \cite{Blum2021}.

The categories ${\mathcal C}_i$ which appear in an SOD of $\mathrm{Perf}(X)$ are smooth and proper in the categorical sense (\cites{Kontsevich2008, Tabuada2015-tu}). For any smooth and proper category, which is \emph{geometric}. i.e., appears in SOD as above, there is a well-defined $\Z/2$-folded pure Hodge structure, i.e., a finite-dimensional super-representation \emph{with coefficients in $\Q$} of $\mathsf{Hod}$, where $\epsilon$ acts as the parity.   This explains our expectation in Remark \ref{rem:Q-not-K} that the equivalence of $\mathbbm{k}$-linear representation of $\mathsf{Hod}$ should be defined over $\Q$.    

Looking from the A-side, one is tempted to conjecture that the geometric categories $\mathcal{C}_i$ should have associated ``Fukaya categories'' depending on non-archimedean parameters, which are essentially local factors in the atomic decomposition of the Frobenius manifold. These categories are, in general, $\Z/2$-graded and Calabi-Yau of parity equal to $\dim X\pmod 2$. The structure of an F-manifold should arise from the variation of noncommutative Hodge structures for these categories, see, e.g., \cite{KatzarkovKontsevichPantev2008}.

When we have the action of a finite group $G$ in $X$, there is an associated categorical action of $G$ in $\mathrm{Perf}(X)$. One can consider general smooth proper categories with $G$ action and $G$-invariant semi-orthogonal decompositions. In this way, we get an equivariant version of the atomic decomposition. After the first public appearance of this paper, we learnt about \cite{ElaginSchneiderShinder}, where authors explore this in the case of surfaces.

\begin{remark}  
      There is a subtle point in the picture above that we do not fully understand, even regarding Fano varieties. A semi-orthogonal decomposition of $\Perf(X)$, identified with the Fukaya-Seidel category of the mirror Landau-Ginzburg model, should exist at \emph{any}  point of the even part of the Frobenius manifold, not necessarily on the Hodge locus. It remains unclear whether Hodge atoms can split further when we move away from the Hodge locus. This seems to contradict the property of any SOD that the group $\mathsf{Hod}$ preserves the induced decomposition of cohomology.  
      
      One can ask the question whether \emph{always} there is no further splitting. The positive answer would imply the invariance of the number of atoms under any deformation of the complex structure on $X$. Indeed, the absence of further splitting means that the number of atoms coincides with the number of distinct eigenvalues of $\mathsf{Eu}\star (\cdot)$ at the generic point of the even part of $B_X$. The latter is an invariant of $X$ considered as a \emph{real symplectic manifold} up to deformation of the real symplectic structure. The whole story generalizes to the  $G$-equivariant case.
  \label{rem:question}     
\end{remark}

\ 

\begin{remark}\label{rem:noncompact}
    The whole theory can be extended to a class of \textit{non-compact} quasi-projective varieties $X$ (possibly endowed with a finite group action) which are admissible at infinity in the  following sense: for any compact set $K\subset X$ and any homology class $\beta\in \rH_2(X,\Z)$ there exists a larger compact $K'\subset X$ such that for any stable map $\phi:C\to X$ of degree $\beta$ and genus $0$, if $\phi(C)\cap K\ne \emptyset$ then $\phi(C)\subset K'$. This property is inherited by smooth closed subvarieties and blowups at smooth closed centers. There are many examples of such varieties, e.g., those which admit proper maps to affine schemes, as is common for GIT quotients. 
 There is no Gromov-Witten potential for admissible at infinity varieties, but still the quantum product and F-bundle exist. Taylor coefficients of the quantum product give a structure of certain \textit{operad} on the cohomology space $\rH^*(X)$.
  The blow-up formula and projective bundle formula should hold in the more general non-compact setting, and given by certain universal formulas which are summations over trees, as it is the only general type of universal expressions for algebras over operads.

  A potential application of the non-compact theory will be \textit{non-compact birational geometry}, i.e., the study of equivalence classes of smooth quasi-projective varieties under blow-ups in smooth closed centers. Non-compact atoms should again correspond to terms in certain SOD decomposition of $\mathrm{Perf}(X)$ well-defined up to the usual braid group action.
\end{remark}

\ 

\subsection{Some examples and first applications}
\label{sec:examples-atoms}
We conclude this section by presenting some explicit atomic computation (without group action) and providing immediate obstructions regarding two smooth projective varieties being $G$-birationally equivalent for some cyclic groups $G=\Z/m$.


\begin{example}
\label{ex:nef}
    Assume that \( X \) is a connected smooth projective variety over \( \mathbb{C} \), and that its canonical class \( K_X \) is numerically effective (nef). Then the Hodge atomic decomposition of \( X \) consists of a single Hodge atom, denoted \( \boldsymbol\alpha(X) \).

To see this, let \( (T_i) \) be a homogeneous basis of \(\mathrm{H}^\ast(X, \mathbb{Q}) \), let \( (t_i) \) be the associated formal coordinates, and let \( (T^r) \) be the dual basis with respect to the Poincaré pairing. The big quantum product is defined via the Gromov-Witten potential $\Phi$:
\[
T_i \star T_j = \sum_r \frac{\partial^3 \Phi}{\partial t_i \partial t_j \partial t_r} \, T^r.
\]
Since \( K_X \) is nef, the virtual dimension formula for the moduli space of curves imposes a strong constraint on the Gromov-Witten invariants. A key consequence is that if the class \( T^r \) appears with a non-zero coefficient in the quantum product \( T_i \star T_j \), its degree is bounded from below:
\[
\deg T^r \geq \deg T_i + \deg T_j.
\]
Thus, the quantum product is filtered and does not decrease the sum of degrees.

The operator whose spectrum defines the atoms is $\boldsymbol{\kappa} = \mathsf{Eu} \star (\cdot)$. Because the Euler field $\mathsf{Eu}$ is a non-trivial cohomology class of degree at least 2, the degree constraint implies that the action of $\boldsymbol{\kappa}$ strictly increases the degree of any class it acts upon (except for the action on $\mathrm{H}^0(X)$).
\end{example}

\ 

 \begin{example}[Complete intersections in projective spaces]
\label{ex:complete-intersection}

Let $X$ be a smooth complete intersection in $\mathbb P^{N-1}$ of hypersurfaces of degress $d_1,\dots,d_k$ (we assume all $d_i\ge 2$). The dimension of $X$ is $N-1-k$.
By Lefschetz theorem, the cohomology of $X$ splits into $(N-k)$-dimensional subspace spanned by $(c_1(\mathcal O(1))^i,\,i=0,\dots N-1-k$ (the image of the restriction map $\rH^{\ast}(\mathbb P^{N-1})\to\rH^{\ast}(X)$), and the primitive  cohomology $\mathrm{H}^{N-1-k}_{\mathrm{prim}}(X)$.

 The canonical class of $X$ is given by
 $$K_X=\mathcal O(\sum_i d_i-N)\,.$$
In the Calabi-Yau case $\sum_i d_i=N$ or general type $\sum_i d_i>N$, the class $K_X$ is nef. Hence, we have only one atom by Example \ref{ex:nef}.

In the Fano case $\sum_i d_i<N$, a classical calculation by Givental \cite{Givental} gives the spectrum of $\boldsymbol{\kappa}_b$ at the point $b$ of $B_X$ corresponding to the ample class  $c_1(\mathcal O(1))\in \rH^2(X)$ (the corresponding product is called \emph{small quantum product}). The result is (up to scaling) roots of $1$ of order $r$ with multiplicity $1$, and $0\in \mathbbm{k}$ with a huge multiplicity\footnote{Strictly speaking, Givental calculated the action of quantum multiplication by $c_1(\mathcal O(1))$ only on the image of $\rH^{\ast}(\mathbb P^{N-1})$ in $\rH^{\ast}(X)$. It follows from the degree reasons  that the action on the remaining primitive part $\rH^{N-1-k}_{\mathrm{prim}}(X)$ has eigenvalue $0$}, where 
$$r=r_X:=N-\sum_i d_i$$ is called  the \textit{index} of Fano variety $X$.

For Fano complete intersection $X$ as before, there is a well-known semi-orthogonal decomposition matching with the spectral decomposition above
$$\mathrm{Perf}(X)=\langle\mathcal O(0),\mathcal O(1),\dots \mathcal O(r-1),\mathcal C\rangle\,.$$
Here  the full 
subcategory $\mathcal C\subset \mathrm{Perf}(X)$ is the right orthogonal to $(\mathcal O(i))_{i=0,\dots ,r-1}$. This category is a fractional Calabi-Yau category (for the hypersurface case, see, e.g., \cite{Kuznetsov2017}), and it is known under the name \emph{Kuznetsov component} of $X$.

 We continue with two further explicit examples.

The first is the threefold complete intersection of two quadrics in $\mathbb P^5$. So that $r_X=2, d_1=d_2=2$. The full Hodge diamond of $X$ is given by
\[
\begin{array}{ccccccc}
 &  &  & 1 &  &  & \\
 &  & 0 &  & 0 &  & \\
 & 0 &  & 1 &  & 0 & \\
0 &  & 2 &  & 2 &  & 0\\
 & 0 &  & 1 &  & 0 & \\
 &  & 0 &  & 0 &  & \\
 &  &  & 1 &  &  & \\
\end{array}
\]
and it is the direct sum of the restiction of $\rH^{\ast}(\mathbb P^5)$ (which coincides with  $\rH_{\mathrm{Hodge}}(X)$), and the purely transcendental part which is $\rH^3_{\mathrm{prim}}(X)$. The eigenvalues of $\boldsymbol{\kappa}_b$ are zero with multiplicity six and (up to scale) $\pm 1$ (roots of $1$ of order $r_X=2$) with multiplicity $1$. One can show using the general result concerning Witt algebra action (see \cite[Remark 3.14, p. 22]{katzarkovpantevyu}) that moving point $b$, we do not get further splitting of the spectrum, i.e., $X$ has exactly $3$ atoms. In this case, the Kuznetsov component is $\mathrm{Perf}(C)$ where $C$ is a genus $2$ curve. The variety $X$ is known to be rational; see, e.g., \cite{reid1972}.

For the second example, take $X$ as a fourfold complete intersection of two quadrics in $\mathbb P^6$. So that $r_X=3, d_1=d_2=2$. The full Hodge Diamond of \(X\) is given as
\[
\begin{array}{ccccccccc}
 &  &  &  & 1 &  &  &  & \\
 &  &  & 0 &  & 0 &  &  & \\
 &  & 0 &  & 1 &  & 0 &  & \\
 & 0 &  & 0 &  & 0 &  & 0 & \\
0 &  & 0 &  & 8 &  & 0 &  & 0\\
 & 0 &  & 0 &  & 0 &  & 0 & \\
 &  & 0 &  & 1 &  & 0 &  & \\
 &  &  & 0 &  & 0 &  &  & \\
 &  &  &  & 1 &  &  &  & \\
\end{array}
\]
There are $4$ eigenvalues of $\boldsymbol{\kappa}_b$ at the point $b$ corresponding to an ample class in $\rH^2(X)$: $3$ roots of order $r_X$ (up to scale) with multiplicity $1$, and  eigenvalue $0$ with multiplicity $9$. The key difference with the previous example is that \emph{all} cohomology classes of $X$ are algebraic, hence $B_X=B_X^{\mathsf{Hod}}(X)$. The point $b$ (small quantum product) is not general enough. We expect that at the generic point of $B_X^{\mathsf{Hod}}(X)$, the spectrum will consist of $12$ points with multiplicity $1$. This corresponds to the fact that $\mathrm{Perf}(X)$ has a full exceptional collection. The variety $X$ is rational because it contains a line \cite{reid1972}.
    \end{example}

\

Moving to $G$-equivariant atoms, we will prove some new non-rationality results in the equivariant setting.

\begin{example}\label{ex:cubic-surface}
    Let $X$ be a smooth cubic surface with a finite group $G$ action by regular automorphisms. Assume that $\mathrm{rank}~(\mathrm{Pic}(X))^{G}=1$, so the only (up to  scalar) $G$-invariant ample class is $-K_X\simeq \mathcal O(1)$. The spectral decomposition at this specific point we know by Givental's theorem \cite{Givental}, and get two eigenvalues: a multiple of $1$ (the only root of $1$ of order $r_X=1$) with multiplicity $1$ (corresponding to the exceptional object $\mathcal O_X$, and $0$ with multiplicity $8$\footnote{The cohomology of $X$ has rank $9$, recall that $X$ can be realized as the blowup of $\mathbb P^2$ in $6$ general points.}.

   We claim that there will be no further splitting as we move along the $G$-fixed locus in $B_X$. 
   Indeed, here is the  picture of the Hodge diamond, where we colored in red the $G$-invariant Hodge classes:

    \[
\begin{array}{ccccc}
 & & \color{red}{1} & & \\
 & 0 & & 0 & \\
 0 & & 6+\color{red}{1} & & 0 \\
 & 0 & & 0 & \\
 & & \color{red}{1} & &
\end{array}
\]
   A Witt Lie algebra argument, as in \cite[Remark 3.14, p. 22]{katzarkovpantevyu}, shows that the spectrum does not split further. I.e., for generic $s$ in $B_X^{G\times \mathsf{Hod}}$ we have only two eigenvalues for $\boldsymbol{\kappa}_s$.

We see that the atom corresponding to the zero eigenvalue  has 
$$\rho=8,\quad \rho^G=2\,.$$
This atom can not be an atom of zero-dimensional variety with $G$-action, as the latter corresponds to a permutation module (an orbit $G/H$ of a point) and always has $\rho^G=1$. Therefore,  $X$ is not  $G$-equivariantly birational to $\mathbb{P}^2$ with a linearizable action.

    Now, consider instead $X\times \PP^1$ with a linearizable  $G$-action (possibly trivial) on the $\PP^1$-factor. The atomic content of $X\times \mathbb{P}^1$ consists of two copies of the atoms of $X$. Thus, we have two atoms with $(\rho,\rho^G)=(8,2)$. An atom with $\rho^G=2$ cannot belong to \(\mathsf{HAtoms}^{G}_{\dim \leq 1}\), see Lemma \ref{lem:atom-generators}.  We conclude that $X\times \mathbb{P}^1$ cannot be $G$-equivariantly birational to $\mathbb{P}^3$ with a linearizable $G$-action.

 \end{example}   

\begin{example}
\label{ex:X(1,1,1,1)}
First, notice that any smooth hypersurface $X(1,1,1,1)$ of multi-degree $(1,1,1,1)$ in
\((\mathbb{P}^1)^4\) is \emph{rational}. Indeed, the projection to the first three factors identifies it with the blowup of  \((\mathbb{P}^1)^3\) in an elliptic curve. 

\begin{theorem}\label{thm:X(1,1,1,1)-withZ_2}
Assume that \( X(1,1,1,1) \) is invariant under the \(\mathbb Z/2\) action induced by swapping the first and second \(\mathbb{P}^1\) factors and the third and fourth \(\mathbb{P}^1\) factors. Then, there is no \(\mathbb Z/2\)-linearizable action on \(\mathbb{P}^3\) making it \(\mathbb Z/2\)-equivariantly birational to \( X(1,1,1,1) \).
\end{theorem}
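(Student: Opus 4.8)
The plan is to invoke the dimension-filtration criterion of Section~\ref{sec:filtration}: to show that $X:=X(1,1,1,1)$ is not $\mathbb{Z}/2$-equivariantly birational to $\mathbb{P}^3$ with any linearizable action, it suffices to exhibit a single $\mathbb{Z}/2$-atom of $X$ that does not lie in $\mathsf{HAtoms}^{\mathbb{Z}/2}_{\dim\le 1}$. By \Cref{lem:atom-generators}, \emph{every} element of $\mathsf{HAtoms}^{\mathbb{Z}/2}_{\dim\le 1}$ has $\rho^{\mathbb{Z}/2}=1$ (these are the permutation atoms of point-orbits $G/H$, the higher-genus curve atoms, and the two atoms attached to a non-linearizable action on $\mathbb{P}^1$). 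Hence it is enough to produce one atom $\boldsymbol\alpha$ of $X$ with
\[
\rho^{\mathbb{Z}/2}_{\boldsymbol\alpha}\;\ge\;2,
\]
in exact analogy with the cubic surface computation (\Cref{ex:cubic-surface}). I emphasise at the outset that this must be a genuinely \emph{equivariant} phenomenon: non-equivariantly $X$ is rational, so all of its atoms already occur in dimension $\le 1$, and in particular the elliptic atom supplied by the blow-up center cannot obstruct on its own, since its $(\mathbb{Z}/2\times\mathsf{MT})$-representation coincides with that of an elliptic curve acted on by a $2$-torsion translation, an object already present in dimension $1$.

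I would first record the cohomology and the involution. Writing $X=\mathrm{Bl}_{E}(\mathbb{P}^1)^3$ for the projection to the first three factors, with $E$ the elliptic complete intersection of two $(1,1,1)$-divisors, \Cref{thm:voisin} gives $\mathrm{H}^{\ast}(X)=\mathrm{H}^{\ast}((\mathbb{P}^1)^3)\oplus \mathrm{H}^{\ast-2}(E)$. Thus the even part is $10$-dimensional and entirely of Hodge type, while the only transcendental piece is $\mathrm{H}^3(X)\cong \mathrm{H}^1(E)$. The involution $\sigma$ exchanges $h_1\leftrightarrow h_2$ and $h_3\leftrightarrow h_4$ on $\mathrm{H}^2(X)=\langle h_1,h_2,h_3,h_4\rangle$, so the $\sigma$-invariant Hodge classes have rank $6$ (one in each of $\mathrm{H}^0,\mathrm{H}^6$ and two in each of $\mathrm{H}^2,\mathrm{H}^4$). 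The fixed locus $X^{\sigma}=\{z_1=z_2,\ z_3=z_4\}\cap X$ is a smooth $(2,2)$-curve in $\mathbb{P}^1\times\mathbb{P}^1$, hence of genus $1$ and Euler characteristic $0$; the topological Lefschetz fixed-point formula, together with $\tr(\sigma\mid\mathrm{H}^2)=\tr(\sigma\mid\mathrm{H}^4)=0$, then forces $\tr(\sigma\mid\mathrm{H}^3)=2$, so $\sigma$ acts as the identity on the transcendental part $\mathrm{H}^1(E)$.

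The heart of the argument is the $\sigma$-equivariant spectral decomposition of $\boldsymbol\kappa=\mathsf{Eu}\star(\cdot)$ at a general point of $B_X^{\mathbb{Z}/2\times\mathsf{MT}}$. The crucial structural subtlety is that the blow-up model above is \emph{not} $\sigma$-equivariant---no three-element subset of the four factors is $\sigma$-stable---so the atoms cannot be read off directly from the non-equivariant Iritani decomposition; one must diagonalise $\boldsymbol\kappa$ intrinsically on the $\sigma$-fixed locus. There $\sigma$-invariance forces the Novikov parameters of the two swapped pairs of factors to coincide, which degenerates the spectrum and merges eigenspaces (for instance the exceptional class $[\mathcal{E}]=h_1+h_2+h_3-h_4$ is not $\sigma$-invariant, and its symmetrisation drags a Tate class into the distinguished eigenspace). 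I would compute the symmetric quantum product explicitly, taking the $\sigma$-invariant class $-K_X=h_1+h_2+h_3+h_4$ as base point, and then run the Witt-algebra non-splitting argument of \cite[Remark 3.14]{katzarkovpantevyu}, exactly as in \Cref{ex:cubic-surface}, to propagate the degeneration to the generic point of $B_X^{\mathbb{Z}/2\times\mathsf{MT}}$. The outcome is an atom whose generalized eigenspace carries at least two independent $\sigma$-invariant Hodge classes, i.e. $\rho^{\mathbb{Z}/2}_{\boldsymbol\alpha}\ge 2$; the criterion of Section~\ref{sec:filtration} then finishes the proof.

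I expect the main obstacle to be precisely this spectral computation: controlling how the six $\sigma$-invariant Hodge classes distribute among the generalized eigenspaces once the symmetry-forced degeneration is imposed, and checking via the Witt-algebra argument that no further splitting separates them into distinct $\rho^{\mathbb{Z}/2}=1$ atoms. Should the clean inequality $\rho^{\mathbb{Z}/2}\ge 2$ prove delicate, the natural fallback is to track the finer invariant---the isomorphism class of the $(\mathbb{Z}/2\times\mathsf{MT})$-representation attached to the distinguished eigenspace---and to show it is neither a permutation module nor of the form $\mathrm{H}^{\ast}(C)$ for a curve $C$, which again excludes it from $\mathsf{HAtoms}^{\mathbb{Z}/2}_{\dim\le 1}$.
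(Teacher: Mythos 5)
Your proposal does not follow the paper's route, and as written it has two genuine gaps. First, the reduction ``find one atom with $\rho^{\mathbb{Z}/2}\ge 2$'' is not a valid criterion. You cite \Cref{lem:atom-generators} for the claim that every element of $\mathsf{HAtoms}^{\mathbb{Z}/2}_{\dim\le 1}$ has $\rho^G=1$, but this fails for a genus~$\ge 1$ curve with trivial action: its single atom is all of $\mathrm{H}^{\ast}(C)$, whose $G\times\mathsf{MT}$-invariants are $\mathrm{H}^0\oplus\mathrm{H}^2$, so $\rho^G=2$. The paper's own proof of this theorem tabulates the elliptic-curve atom as $(\rho,\rho^G)=(2,2)$ and treats it as an \emph{allowed} dimension-$\le 1$ atom --- indeed one that is forced to occur in $X$ because of $h^{2,1}(X)=1$. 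You even acknowledge in your first paragraph that the elliptic atom ``cannot obstruct on its own,'' yet your stated sufficient condition $\rho^G\ge 2$ is satisfied precisely by that atom. The criterion you need is something like $\rho_{\boldsymbol\alpha}\ge 3$ (every dimension-$\le 1$ atom and every atom of $\mathbb{P}^3$ has at most $2$ Hodge classes), or the finer representation-theoretic comparison you mention only as a fallback.

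Second, your argument hinges on a non-splitting statement --- that the Witt-algebra argument propagates a degenerate spectrum to the generic point of $B_X^{\mathbb{Z}/2\times\mathsf{MT}}$ and produces a single large atom --- which you flag as the ``main obstacle'' but do not carry out, and which the paper deliberately avoids needing. The paper's proof instead reads off from the Landau--Ginzburg mirror that the nonzero critical values account for atoms with $(\rho,\rho^G)=(1,1)$ and $(4,2)$, leaving a $\lambda=0$ cluster with $(\rho,\rho^G)=(5,3)$ and Hodge polynomial $t+3+t^{-1}$, and then runs a bookkeeping argument that is robust to any further splitting of that cluster: the transcendental part forces an elliptic sub-atom contributing $(2,2)$, and the remainder $(3,1)$ cannot be written as $a(1,1)+b(2,1)$ with $a,b\ge 0$, since the unique integral solution is $a=-1$, $b=2$. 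This arithmetic obstruction --- rather than non-splitting of the spectrum --- is the actual mechanism of the proof, and it is absent from your proposal. Your preliminary computations (the Hodge diamond, the $6$ invariant Hodge classes, the genus-$1$ fixed curve and the Lefschetz computation showing $\sigma$ acts trivially on $\mathrm{H}^3$, and the observation that the blow-up model is not $\sigma$-equivariant) are all correct and compatible with the paper, but they do not by themselves close either gap.
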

\begin{proof}
Herein, we set $X:=X(1,1,1,1)$ for ease of notation. As for the first observation, notice that by the Lefschetz hyperplane section theorem, the algebraic classes in $X$ have dimensions $1, 4, 4, 1$. Its full Hodge diamond is given by:
 \[
\begin{array}{ccccccc}
  &&& 1 &&& \\
  && 0 && 0 && \\
  & 0 && 4 && 0 &\\
  0 && 1 && 1 && 0\\
  & 0 && 4 && 0 &\\
  && 0 && 0 && \\
  &&& 1 &&&
\end{array}
\]

The action of $G=\mathbb{Z}/2$ is generated by $\sigma$ swapping the factors $(1\leftrightarrow 2)$ and $(3\leftrightarrow 4)$. This action permutes the generators of $\mathrm{H}^{1,1}((\mathbb P^1)^4)$, $\{h_1,h_2,h_3,h_4\}$. A basis for the $G$-invariant subspace of $\mathrm{H}^{1,1}((\mathbb P^1)^4)$ is $\{h_1+h_2,\,h_3+h_4\}$. By the Lefschetz Theorem, the restriction to $X$ is an isomorphism in $\mathrm{H}^2$. The total number of $G$-invariant Hodge classes is
\[
{\rho^G}^{\mathrm{total}}=h^{0,0,G}+h^{1,1,G}+h^{2,2,G}+h^{3,3,G}=1+2+2+1=6
\]
which are colored in \textcolor{red}{red} in the picture of the full Hodge diamond:
\begin{equation}
    \begin{array}{ccccccc}
  &&& \textcolor{red}{1} &&& \\ 
  && 0 && 0 && \\
  & 0 && 2+\textcolor{red}{2} && 0 &\\
  0 && 1 && 1 && 0\\
  & 0 && 2+\textcolor{red}{2} && 0 &\\
  && 0 && 0 && \\
  &&& \textcolor{red}{1} &&&
\end{array}
\end{equation}

The atomic decomposition of $X$ can be inferred from its Landau–Ginzburg mirror. While a full derivation is beyond our scope, the mirror model for $X$ can be given as a map on the hypersurface in affine space 
\[
z_1 + z_2 + z_3 + z_4 = 1
\] 
to \(\mathbb{A}^1\) given by
\[
\boldsymbol{w} = \frac{1}{z_1} + \frac{1}{z_2} + \frac{1}{z_3} + \frac{1}{z_4},
\]
which is manifestly \(\mathbb Z/2\)-equivariant.  We obtain two atoms corresponding to non-zero critical values (with conjectural categorical descriptions):
\begin{itemize}
    \item one is one-dimensional $(\lambda =16)$ with the trivial representation (sheaf \(\mathcal{O}_X\)), 
    \item another is 4-dimensional $(\lambda=4)$ with the permutation representations (\(\mathcal{O}(1)_i,\mathcal O(1)_j\), \(i=1,2,~j=3,4\)).
\end{itemize}

They together occupy a 5-dimensional subspace in all cohomology, and just a 3-dimensional subspace in invariant algebraic classes. Hence, in the rest we have \(3\)-dimensional invariant algebraic classes, and 5-dimensional classes, as well as a \((p-q)=\pm 1\) piece like \(\mathrm{H}^1(\text{elliptic curve})\). Recalling the invariants provided in Equations \eqref{invariants-box}, \eqref{invariants3-box}, we can build the following table labeling the missing atom from the computation $\lambda=0$:

\begin{table}[H]
    \centering
    \begin{tabular}{|c|c|c|c|}
     \hline $\lambda$  & $\rho$  & $\rho^G$ & $P$\\ \hline \hline
      $16$   & $1$ & $1$ & 1 \\ \hline
      $4$   &  $4$ & $2$ &4\\ \hline
    $0$  & $5$ & $3$ &  $t+3+t^{-1}$\\ \hline
    \end{tabular}
    \label{tab:invariants}
    \end{table}

Now assume, for contradiction, that $X$ is $G$-equivariantly birational to $\mathbb{P}^3$ with some linearizable action of $G$. By the Weak Equivariant Factorization Theorem, the atomic content of $X$ must be a sum of the atoms of $\mathbb{P}^3$ and of atoms of $G$-invariant subvarieties of codimension $\geq 2$ (points and curves). The atoms of $\mathbb{P}^3$ are all one-dimensional and consist only of Hodge classes, with $\rho^G=1$. The atoms of low-dimensional $G$-varieties are listed next (check, for instance, Lemma \ref{lem:atom-generators}):



\begin{table}[h!]
    \centering
    \begin{tabular}{|c|c|c|c|}
     \hline $\Z/2$-atom  & $\rho$  & $\rho^G$ & $P$ \\ \hline \hline
      point   & $1$ & $1$ & $1$ \\ \hline
      two points   &  $2$ & $1$ & $2$  \\ \hline
    elliptic curve  & $2$ & $2$ & $t+2+t^{-1}$ \\ \hline
atom of $X_0$ & $1$ & $1$ & $1$ \\ \hline
    \end{tabular}
    \end{table}

The presence of non-Hodge cohomology in $X$ (the term $h^{2,1}=1$) implies that the atom of an elliptic curve must appear in the decomposition of $X$. This atom contributes the vector $(\rho,\rho_G)=(2,2)$. Subtracting this from the $\lambda=0$ atom of $X$, the remaining atomic content to be explained is:
\[
(\rho,\rho_G)_{\mathrm{remaining}}=(5,3)-(2,2)=(3,1).
\]

This remaining vector must be a nonnegative integral linear combination of the basic point atoms: $(1,1)$ and $(2,1)$. However, it is impossible to write
\[
(3,1)=a(1,1)+b(2,1), \qquad a,b\in \mathbb{Z}_{\geq 0}.
\]
The only integer solution is $a=-1,b=2$, which is not allowed. This contradiction proves the theorem. \qedhere
\end{proof}

The former result provides an interesting dichotomy:
\begin{corollary}\label{thm:X(1,1,1,1)}
For any subgroup $G<\mathcal S_4$-action on $X(1,1,1,1)$ induced from a transitive action in $\{1, 2, 3, 4\}$, either
\begin{itemize}
\item[{\parbox[t]{2.5em}{(a)}}]%
\hspace{0.5em}the $G$-action on the set $\{1,2,3,4\}$ has a fixed point, and in this case $X(1,1,1,1)$ is $G$-birationally equivalent to 
$\mathbb P^3$ with a $G$-action;
\item[{\parbox[t]{2.5em}{(b)}}]%
\hspace{0.5em}the $G$-action on the set $\{1,2,3,4\}$ has no fixed points, and in this case $X(1,1,1,1)$ is not $G$-birationally equivalent to $\mathbb P^3$ with a $G$-action.
\end{itemize}
\end{corollary}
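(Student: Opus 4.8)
The plan is to treat the two alternatives separately: I would establish the positive statement $(a)$ by an explicit $G$-equivariant birational construction, and the negative statement $(b)$ by reduction to \Cref{thm:X(1,1,1,1)-withZ_2}. Throughout, I regard $G<\mathcal S_4$ as acting on $(\mathbb P^1)^4$ by permuting the four factors, with $X:=X(1,1,1,1)$ a smooth $G$-invariant multilinear hypersurface; the two cases are distinguished precisely by whether the induced permutation action of $G$ on the index set $\{1,2,3,4\}$ fixes an index.

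For $(a)$, suppose $G$ fixes an index, say $4$, so that $G$ embeds in $\mathcal S_3$ acting on $\{1,2,3\}$. First I would exploit that the defining equation of $X$ is linear in the fourth pair of homogeneous coordinates: the projection $\pi\colon X\dashrightarrow(\mathbb P^1)^3$ forgetting the fourth factor is birational, since for a generic triple of points in the first three factors the equation determines the fourth point uniquely, giving the rational inverse. Because $G$ fixes the fourth factor and only permutes the first three, $\pi$ is $G$-equivariant, so $X$ is $G$-birational to $(\mathbb P^1)^3$ with $G\le\mathcal S_3$ permuting factors. It then remains to linearize the latter. I would use the $G$-invariant dense open chart $(\mathbb A^1)^3\subset(\mathbb P^1)^3$, which as a $G$-variety is $\mathbb C^3$ with the permutation representation of $G$. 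Embedding $\mathbb C^3\hookrightarrow\mathbb P^3=\mathbb P(\mathbb C\oplus\mathbb C^3)$ as the chart $\{[1:x_1:x_2:x_3]\}$, with $G$ acting trivially on the first summand and by permutation on $\mathbb C^3$, realizes the same $\mathbb C^3$ as a $G$-invariant dense open subset of $\mathbb P^3$ carrying a linear $G$-action. Identifying the two copies of $\mathbb C^3$ exhibits $X$ as $G$-birational to $\mathbb P^3$ with a linear action, proving $(a)$.

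For $(b)$, suppose $G$ has no fixed index. The key observation is that every fixed-point-free subgroup of $\mathcal S_4$ contains a double transposition, i.e.\ an element of cycle type $(2,2)$. Indeed, all $G$-orbits on $\{1,2,3,4\}$ have size $\ge 2$, so the orbit partition is either $(4)$ or $(2,2)$. In the $(2,2)$ case the two orbits are $G$-invariant and each carries a transitive $G$-action, which forces $G$ to contain the product of the two transpositions acting within the orbits; in the transitive $(4)$ case one checks directly that each of the five transitive subgroups ($\mathbb Z/4$, the normal Klein four-group, $D_4$, $A_4$, $\mathcal S_4$) contains a $(2,2)$-element, e.g.\ $(1234)^2=(13)(24)$ in the cyclic case. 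Fix such a $\tau\in G$. Relabeling the four factors conjugates $\tau$ to $(12)(34)$ and carries $X$ to an isomorphic smooth hypersurface $X'(1,1,1,1)$ invariant under the swap of the first two and the last two factors, so that the pair $(X,\langle\tau\rangle)$ is isomorphic to the one treated in \Cref{thm:X(1,1,1,1)-withZ_2}. Now assume for contradiction that $X$ is $G$-birational to $\mathbb P^3$ with a linearizable $G$-action. Restricting both the $G$-equivariant birational map and the linear $G$-action to $\langle\tau\rangle\cong\mathbb Z/2$ produces a $\mathbb Z/2$-equivariant birational map between $X$ and $\mathbb P^3$ with a linear, hence linearizable, $\mathbb Z/2$-action, contradicting \Cref{thm:X(1,1,1,1)-withZ_2}.

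The main obstacle I anticipate is in the reduction step of $(b)$: one must verify that the atomic computation underlying \Cref{thm:X(1,1,1,1)-withZ_2} applies after relabeling to an \emph{arbitrary} double transposition and to an \emph{arbitrary} smooth $X'(1,1,1,1)$, not merely to one fixed hypersurface. This rests on the fact that the Hodge diamond and the atomic table of invariants $(\lambda,\rho,\rho^G,P)$ are constant over the deformation family of smooth $(1,1,1,1)$-hypersurfaces and depend only on the conjugacy class of the involution—which is exactly the cycle type $(2,2)$. Granting this, the elementary restriction principle for $G$-birational equivalences (a $G$-equivariant birational map is $H$-equivariant for any $H\le G$, and a linear $G$-action restricts to a linear $H$-action) completes the argument, while the construction in $(a)$ is routine.
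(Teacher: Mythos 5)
Your proposal is correct and follows essentially the same route as the paper: case $(a)$ by the equivariant projection away from the fixed factor followed by linearizing the permutation action on the chart $(\mathbb A^1)^3\subset\mathbb P^3$, and case $(b)$ by locating a double transposition in any fixed-point-free subgroup of $\mathcal S_4$ and restricting to \Cref{thm:X(1,1,1,1)-withZ_2}. The extra verifications you flag (that every fixed-point-free subgroup contains a $(2,2)$-element, and that the theorem applies to any smooth invariant $(1,1,1,1)$-hypersurface after relabeling) are worthwhile to spell out but pose no real obstacle, since \Cref{thm:X(1,1,1,1)-withZ_2} is stated for an arbitrary smooth invariant hypersurface.
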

\begin{proof} 
Suppose we are in case $(a)$. Then, by projecting from a fixed factor, we get a $G$-equivariant birational map between $X(1,1,1,1)$ and $(\PP^1)^3$ with $G$-action permuting three factors. The latter space contains Zariski open part $(\mathbb A^1)^3$ where the $G$ action is by permutation of coordinates, and hence it extends to the action on $\mathbb P^3$. In the case $(b)$, the group $G$ contains (up to conjugation) the subgroup $\Z/2$ acting by the permutation $1\leftrightarrow 2, 3\leftrightarrow 4$, and we can refer to the previous result. \qedhere

\end{proof}

\begin{corollary}\label{cor:Kuznetsov}
    Let $X$ be a singular cubic threefold with four ordinary double points. Assume that $\mathcal S_4$ acts on $X$ exchanging these ordinary double points. Then $X$ is not birationally equivalent to $\mathbb P^3$ with \emph{any} $\mathcal S_4$-linearizable action.
\end{corollary}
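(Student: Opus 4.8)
The plan is to reduce the statement to the dichotomy of Corollary~\ref{thm:X(1,1,1,1)} by producing an $\mathcal S_4$-equivariant birational equivalence between $X$ and $X(1,1,1,1)$ with matching group actions, and then invoking the transitivity of $\mathcal S_4$-birational equivalence.

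Since $X$ is singular, I would first pass to a smooth projective $\mathcal S_4$-model. The four ordinary double points form a single $\mathcal S_4$-orbit, hence the set of nodes is an $\mathcal S_4$-invariant center; blowing it up (each $A_1$-point being replaced by a smooth quadric $\mathbb P^1\times\mathbb P^1$) yields a smooth projective threefold $\widetilde X$ carrying a regular $\mathcal S_4$-action that is $\mathcal S_4$-equivariantly birational to $X$. Any other smooth projective $\mathcal S_4$-model of the same function field is $\mathcal S_4$-birational to $\widetilde X$, so we may work with $\widetilde X$ and apply Weak Factorization (Theorem~\ref{thm:weak}) and its equivariant counterpart.

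The heart of the argument, supplied by Kuznetsov--Prokhorov~\cite{Kuznetsov_Prokhorov_2024}, is the $\mathcal S_4$-equivariant identification of $\widetilde X$ with $X(1,1,1,1)$. This is geometrically natural and consistent with Hodge theory: a small resolution of a four-nodal cubic threefold has $h^{2,1}=1$, matching $h^{2,1}\big(X(1,1,1,1)\big)=h^{1,0}(E)=1$ for the elliptic curve $E$ blown up inside $(\mathbb P^1)^3$, with both intermediate Jacobians being elliptic curves. The essential equivariant point is that the four nodes, permuted by $\mathcal S_4$, are carried to the four $\mathbb P^1$-factors of $(\mathbb P^1)^4$, so that the $\mathcal S_4$-action on $X$ corresponds to an action on $X(1,1,1,1)$ induced from a transitive action on $(\mathbb P^1)^4$ whose induced permutation of $\{1,2,3,4\}$ is the standard symmetric one.

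With this in hand I would conclude by applying Corollary~\ref{thm:X(1,1,1,1)}. The $\mathcal S_4$-action on $\{1,2,3,4\}$ is transitive and so has no global fixed point, placing us in case~(b): $X(1,1,1,1)$ is not $\mathcal S_4$-equivariantly birational to $\mathbb P^3$ endowed with any $\mathcal S_4$-action, in particular with any linearizable one. Since $\mathcal S_4$-birational equivalence is transitive and $X$ is $\mathcal S_4$-birational to $\widetilde X$, hence to $X(1,1,1,1)$, the same conclusion holds for $X$, which is the assertion. The main obstacle is precisely the third step: establishing the $\mathcal S_4$-equivariant birational identification of the resolved four-nodal cubic with $X(1,1,1,1)$ together with the matching of the node-permutation action to the factor-permutation action (ruling out that $\mathcal S_4$ acts with a fixed factor), which is exactly the input we import from~\cite{Kuznetsov_Prokhorov_2024}; granting that, the reduction to case~(b) of the dichotomy is immediate.
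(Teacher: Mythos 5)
Your proposal is correct and follows essentially the same route as the paper: the key input is the Kuznetsov--Prokhorov identification (a slight modification of their Proposition 4.5) showing that the four-nodal cubic is $\mathcal S_4$-equivariantly birational to $X(1,1,1,1)$ with the factor-permuting action, after which case (b) of Corollary \ref{thm:X(1,1,1,1)} (equivalently, Theorem \ref{thm:X(1,1,1,1)-withZ_2} applied to the subgroup $\langle(12)(34)\rangle$) finishes the argument. The only difference is that you spell out the equivariant resolution of the nodes and a Hodge-theoretic sanity check, details the paper leaves implicit.
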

\begin{proof}
The proof is accomplished when we observe that a slight modification of the proof of Proposition 4.5 in \cite{Kuznetsov_Prokhorov_2024} proves that such a cubic is $\mathcal S_4$-equivariantly birational to \(X(1,1,1,1)\) for the \(\mathcal S_4\)-action induced from the swapping factors \(\mathcal S_4\)-action in \((\PP^1)^4\).
\end{proof}
\end{example}

\ 

\begin{example}
\label{ex:multi-degree-times}
Let $X=X(1,1,1,1)$ and $G=\langle (12)(34)\rangle \cong \mathbb{Z}/2$, as in Theorem~\ref{thm:X(1,1,1,1)-withZ_2}. The action on $X\times \mathbb{P}^1$ is given by the action of $G$ on $X$ and the trivial action on $\mathbb{P}^1$. We claim that this $\mathbb Z/2$-action on $X\times \mathbb P^1$ is not linearizable.

The atomic content of $X\times \mathbb{P}^1$ consists of two copies of the atoms of $X$. Thus, we have two atoms with $(\rho,\rho^G)=(5,3)$. They contain the non-Hodge part of $X$, coming from $\mathrm{H}^{2,1}(X)$. Thus, each such atom must contain the atom of an elliptic curve, which contributes $(2,2)$. The remainder of the atom has invariants $(3,1)$ that can not correspond to a $G$-equivariant atom in $\mathsf{HAtoms}^{\mathbb Z/2}_{\dim \leq 1}$ (there are too many algebraic cycles). It cannot be a $G$-equivariant atom of a surface, as per \cite{katzarkovpantevyu}, if a surface atom has at least three Hodge classes, it must have at least three invariant Hodge classes\footnote{see the computation of Hodge atoms for surfaces, for instance, in the Introduction}. 
\end{example}

\

\subsection{Use of classical monodromy} 
\label{sec:monodrony}

Here, we combine B-side monodromy with Gromov-Witten theory to obtain a refined framework for rationality obstructions via atom theory, exemplified in Theorem \ref{thm:classical-monodromy} below. A concrete application is given in Example \ref{ex:product=with-line}.

\begin{theorem}\label{thm:classical-monodromy} Let $(X_t)_{t\in B}$ be an algebraic family of compact algebraic 3-dimensional varieties endowed with a generically free action of a finite group $G$, over a connected base $B$. Assume that 
\begin{enumerate}
    \item the action of $\mathsf{Hod}$ on $\mathrm H^\ast(X_t,\Q)$ is trivial for all $t\in B$, in other words, all rational cohomology classes of $X_t$ are Hodge classes,
    \item  the local system $\mathrm H^\ast(X_t,\Q)^G$ on $B$ is trivial,
    \item there exists an irreducible representation $\rho$ of the local system $\mathrm H^\ast(X_t,\overline \Q)$ of dimension strictly larger than the cardinality $|G|$ of $G$.
\end{enumerate}
    Then, for any $t\in B$, the $G$-variety $X_t$ is not $G$-equivariantly birationally equivalent to $\mathbb P^3$ endowed with a generically free linearizable $G$-action.
\end{theorem}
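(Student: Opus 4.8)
The plan is to argue by contradiction, combining the dimension-filtration criterion of Section~\ref{sec:filtration} with the observation that the atomic decomposition organizes the cohomology \emph{local system} of the family into monodromy-compatible blocks, each of rank at most $|G|$. Fix $t_0\in B$ and suppose, for contradiction, that $X_{t_0}$ is $G$-equivariantly birational to $\mathbb P^3$ endowed with a generically free linearizable $G$-action. By the boxed criterion of Section~\ref{sec:filtration} (case $d=3$), every $G$-atom of $X_{t_0}$ then belongs to $\mathsf{HAtoms}^{G}_{\dim\leq 1}$, and by Lemma~\ref{lem:atom-generators} each such atom $\boldsymbol\alpha$ has $\rho^{G}_{\boldsymbol\alpha}=1$.

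First I would bound the rank of every atom by $|G|$. Hypothesis~(1) forces $\mathsf{MT}$ to act trivially on $\mathrm H^{\ast}(X_{t_0},\mathbb Q)$, so each generalized eigenspace $\mathcal E^{\boldsymbol\alpha}\subset \mathrm H^{\ast}(X_{t_0})$ is a trivial $\mathsf{MT}$-representation. This excludes the curve atoms $G\times_H C$ with $\mathsf g(C)\geq 1$ of Lemma~\ref{lem:atom-generators}, since those carry the nontrivial weight-one Hodge structure $\mathrm H^{1}(C)$ on which the central involution $\epsilon$ acts by $-1$. Hence each atom of $X_{t_0}$ is a point-orbit atom $G/H$ or a genus-zero curve-orbit atom; in either case $\dim_{\overline{\mathbb Q}}\mathcal E^{\boldsymbol\alpha}=[G:H]\leq |G|$, where I use~(1) once more to identify the Hodge-class dimension $\rho_{\boldsymbol\alpha}$ with the full rank of $\mathcal E^{\boldsymbol\alpha}$.

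The crux is to upgrade the fibrewise decomposition $\mathrm H^{\ast}(X_{t_0},\overline{\mathbb Q})=\bigoplus_{\boldsymbol\alpha}\mathcal E^{\boldsymbol\alpha}$ to a decomposition of $\pi_1(B,t_0)$-representations. Because genus-zero Gromov--Witten invariants are deformation invariant, the operator $\boldsymbol\kappa=\mathsf{Eu}\star(\cdot)$, its spectral cover $\widetilde U_{X_t}$, and the equivariant atoms all vary flatly over $B$; by naturality, monodromy then acts on the finite set $\pi_0(\widetilde U_{X_t})$, permuting the blocks $\mathcal E^{\boldsymbol\alpha}$ among atoms with equal invariants. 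Here hypothesis~(2) is decisive: each atom contributes a one-dimensional space of $G\times\mathsf{MT}$-invariants, which by~(1) is just $(\mathcal E^{\boldsymbol\alpha})^{G}$, so that $\mathrm H^{\ast}(X_t,\mathbb Q)^{G}=\bigoplus_{\boldsymbol\alpha}(\mathcal E^{\boldsymbol\alpha})^{G}$ is a sum of lines in bijection with the atoms. Since this local system is trivial by~(2), monodromy fixes each line $(\mathcal E^{\boldsymbol\alpha})^{G}$; as distinct atoms give distinct (independent) lines, the induced permutation of atoms is the identity, and monodromy preserves every $\mathcal E^{\boldsymbol\alpha}$ individually.

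With this in hand the contradiction is immediate: any irreducible monodromy subrepresentation of $\mathrm H^{\ast}(X_{t_0},\overline{\mathbb Q})$ is contained in a single block $\mathcal E^{\boldsymbol\alpha}$ and hence has rank at most $[G:H]\leq |G|$, contradicting the irreducible constituent of rank $>|G|$ furnished by hypothesis~(3). As $t_0\in B$ was arbitrary, this gives the statement for all $t\in B$. I expect the genuine obstacle to lie in the third paragraph: the fibrewise dimension counts are routine, but rigorously establishing that the atomic decomposition is a decomposition of local systems---that is, controlling the interplay between the monodromy of $B$, the choice of a general base point in the F-manifold $B_{X_t}$, and the naturality of the spectral construction---is the delicate point, and it is precisely hypothesis~(2) that rules out a nontrivial monodromy permutation of the atoms.
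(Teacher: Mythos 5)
Your argument is essentially the paper's: both rest on the deformation invariance of genus-zero Gromov--Witten theory making $\boldsymbol{\kappa}_b$ covariantly constant over $B$ (for a base point $b$ that hypotheses (1)+(2) render independent of $t$), so that hypothesis (3) forces an $\mathsf{MT}$-trivial $G$-atom of dimension $>|G|$, which Lemma~\ref{lem:atom-generators} excludes from $\mathsf{HAtoms}^{G}_{\dim\leq 1}$. The only structural difference is your second use of hypothesis (2): once $b$ is fixed, monodromy commutes with the covariantly constant operator $\boldsymbol{\kappa}_b$ and acts trivially on its eigenvalues in $\mathbbm{k}$, so it already preserves each generalized eigenspace and no permutation of atoms can occur --- your extra step killing the permutation via the lines $(\mathcal{E}^{\boldsymbol{\alpha}})^{G}$ is valid but redundant, and the real work of (1)+(2) is exactly the point you flagged as delicate, namely making the universal base and hence the covariantly constant operator well defined.
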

\begin{proof} Conditions $(i),(ii)$ imply that the fixed locus $B_{X_t}^{G\times \mathsf{Hod}}$ does not depend on $t$. Therefore, we have just one rigid analytic space, which we can denote $B_{univ}$.

Now we use the basic property of Gromov-Witten invariants: \textit{deformation invariance}, which, e.g., follows from the fact that these invariants can be defined in terms of $C^\infty$ symplectic geometry. This implies that for any $\mathbbm{k}$-point $b$ of $B_{univ}$,  the  operator $\boldsymbol{\kappa}_b$ acts  on  $\mathrm H^\ast(X_t,\mathbbm{k})$ in a covariantly constant way. The condition $(iii)$ implies that we have an eigenspace of  $\boldsymbol{\kappa}_b$ of dimension larger than $|G|$. Therefore, for each $t\in B$ we have a $G$-equivariant atom of dimension $>|G|$ on which $\mathsf{Hod}$ acts trivially. This atom can not come from $G$-varieties of dimension $0$ and $1$, see, e.g., Lemma \ref{lem:atom-generators}. 
\end{proof}

In fact, condition $(iii)$ can be weakened: it suffices to assume that there exists an irreducible representation $\rho$ of $G\times \pi_1(B,t_0)$ in $\mathrm H^\ast(X_{t_0},\overline\Q)$ which when considered as the $\overline\Q[G]$-module is not isomorphic to the direct summand of the permutation module, i.e., representation of $G$ induced from the trivial representation of a subgroup $H< G$.

\

\

\section{Atoms meet symbols: A new $\mathbb Z$-module}
\label{sec:atomes-meet-symbols}

While the theory of \(G\)-equivariant atoms offers powerful birational invariants derived from the interplay of Gromov-Witten theory and Hodge structures, a natural question arises: is this global perspective sufficient to resolve finer questions in \(G\)-equivariant birational geometry? Complementing the ``atomic'' view is the theory of modular symbols, which precisely captures the local geometry of the group action through the characters of \(G\)-representations on normal bundles to the fixed locus \(X^G\). The central philosophy throughout this section is that these two perspectives are not independent. 

Since a fundamental operation like a \(G\)-equivariant blowup alters both the global atomic content and the local fixed-locus geometry, a complete invariant must track both. We therefore construct a unified algebraic object, a combinatorial \(\mathbb{Z}\)-module  \(\mathcal{B}_3(\mathbb Z/p)^{\mathrm{comb}}\), whose generators are formal unions of atoms and symbols and whose relations are defined by their transformations under blowups. This combined module is designed to produce invariants that are fundamentally stronger than those derived from either the atomic or symbolic theory in isolation. From this \(\mathbb{Z}\)-module, we derive two \(\mathbb{Z}\)-valued \(\mathbb Z/p\)-birational invariants (Theorems \ref{thm:invariant}, \ref{thm:invariant-fine}), and provide applications (Examples \ref{ex:product=with-line} and \ref{ex:fixed-higher-genus-curve}).

\ 

Unless stated otherwise, we focus on irreducible smooth complex projective varieties \(X\) of dimension \(3\) that possess \(\mathbb Z/p\)-generically free actions by regular automorphisms. The choice for $\mathbb Z/p$ is merely for ease of notation and easier exposition. The entire story can be generalized to other finite groups, including nonabelian ones, once the necessary changes are made everywhere, for instance, following \cites{Tschinkel2022, Kresch-Tschinke-Structure-and-Operations, Kresch2022}.

\ 

\subsection{A recap of modular symbols}
\label{sec:maximal-stabilizers}

In this section, we briefly recall the construction of a \(\mathbb{Z}\)-module (denoted by $\mathcal B_n(G)$) introduced in \cite{kontsevich2021equivariant}, which assigns to smooth projective varieties $X$ with generically free regular actions by finite abelian groups $G$ certain symbolic equivariant birational invariants $\beta(X)\in \mathcal B_n(G)$.

Let \(G\) be a finite abelian group, and let its dual group be denoted by \(G^{\vee} = \mathrm{Hom}(G, \mathbb{C}^{\times})\). For any \(n \geq 1\), we define the \(\mathbb{Z}\)-module \(\mathcal{B}_n(G)\), generated by symbols \([a_1, \ldots, a_n]\), where \(\{a_i \in G^{\vee}\}_{i=1}^n\) satisfy \(\sum_{i=1}^n \mathbb{Z} a_i = G^{\vee}\), subject to the following relations:
\begin{center}\small
\noindent\fbox{%
    \parbox{0.9\linewidth}{%
        \begin{itemize}
            \item[(O)] \([a_1, \ldots, a_n] = [a_{\sigma(1)}, \ldots, a_{\sigma(n)}]\) for any permutation \(\sigma \in \mathcal{S}_n\).
            \item[(B)] For all \(2 \leq k \leq n\), any \(a_1, \ldots, a_k \in G^{\vee}\), and any \(b_1, \ldots, b_{n-k} \in G^{\vee}\) such that
            \[
            \sum_{i=1}^k \mathbb{Z} a_i + \sum_{j=1}^{n-k} \mathbb{Z} b_j = G^{\vee},
            \]
            we impose the relation:
            \[
           [a_1, \ldots, a_k, b_1, \ldots, b_{n-k}] = \sum_{1 \leq i \leq k, a_i \neq a_{i'}\,\forall i'<i} [a_1 - a_i, \ldots, a_i, \ldots, a_k - a_i, b_1, \ldots, b_{n-k}].
            \]
        \end{itemize}
    }%
}
\end{center}

The simplest identification of $\mathcal B_n(G)$ via an isomorphism with a ``more recognizable'' $\mathbb Z$-module happens for $n=1$. We have:
\[
\mathcal{B}_1(G) = 
\begin{cases}
    \mathbb{Z}^{\phi(m)} & \text{if } G = \mathbb{Z}/m, \\
    0 & \text{otherwise},
\end{cases}
\]
where \(\phi\) denotes Euler’s totient function,
\[\phi(m)=m\prod_{\text{distinct}~p|m}\left(1-\tfrac{1}{p}\right)\]

For $n=2$ the $\mathbb Z$-module $\mathcal B_2(\mathbb Z/m)$ is generated by symbols \([a_1, a_2]\) with \(a_1, a_2 \in \mathbb{Z}/m\) such that \(\gcd(a_1, a_2, m) = 1\), subject to:
\begin{center}\small
\noindent\fbox{%
    \parbox{0.9\linewidth}{%
\begin{enumerate}
    \item[(O)] \([a_1, a_2] = [a_2, a_1]\),
    \item[(B)] \([a_1, a_2] = [a_1 - a_2, a_2] + [a_1, a_2 - a_1]\) if \(a_1 \neq a_2\), and \([a, a] = [a, 0]\) whenever \(\gcd(a, m) = 1\).
\end{enumerate}
 }%
}
\end{center}
Table~\ref{table:B_2(H)} lists examples of \(\mathcal{B}_2(G)\) for various abelian groups \(G\), adapted from \cite{Tschinkel2024}.
\begin{table}[h!]
    \centering
    \small
    \begin{tabular}{|c|c|}
        \hline 
        \(m\) & \(\mathcal{B}_2(\mathbb{Z}/m)\) \\ 
        \hline\hline
        2 & 0 \\
        3 & \(\mathbb{Z}\) \\
        4 & \(\mathbb{Z}\) \\
        5 & \(\mathbb{Z}^2\) \\
        6 & \(\mathbb{Z}^2 \oplus \mathbb{Z}/2\) \\
        7 & \(\mathbb{Z}^3 \oplus \mathbb{Z}/2\) \\
        8 & \(\mathbb{Z}^3 \oplus \mathbb{Z}/4\) \\
        9 & \(\mathbb{Z}^5 \oplus \mathbb{Z}/3\) \\
        10 & \(\mathbb{Z}^4 \oplus (\mathbb{Z}/2)^2 \oplus \mathbb{Z}/6\) \\
        11 & \(\mathbb{Z}^6 \oplus \mathbb{Z}/5\) \\
        12 & \(\mathbb{Z}^7 \oplus \mathbb{Z}/8\) \\
        13 & \(\mathbb{Z}^8 \oplus \mathbb{Z}/7\) \\
        14 & \(\mathbb{Z}^7 \oplus (\mathbb{Z}/2)^4 \oplus \mathbb{Z}/12\) \\
        15 & \(\mathbb{Z}^{13} \oplus \mathbb{Z}/8\) \\
        16 & \(\mathbb{Z}^{10} \oplus (\mathbb{Z}/2)^2 \oplus \mathbb{Z}/16\) \\
        \hline 
    \end{tabular}
    \vspace{5pt}  
    \caption{Structure of \(\mathcal{B}_2(G)\) for selected finite abelian groups \(G\)}
    \label{table:B_2(H)}
\end{table}

\medskip

Let \(X\) be a complex smooth irreducible projective variety of dimension \(n \geq 2\), equipped with a birational generically free action of a finite abelian group \(G\).\footnote{The construction extends to varieties over any field of characteristic zero, but we restrict to the complex case for clarity.} After performing a \(G\)-equivariant resolution of singularities, we may assume that the \(G\)-action is through regular automorphisms.

Given a variety \(X\), we associate an element \(\beta(X) \in \mathcal{B}_n(G)\) as follows. Let
\[
X^G = \bigsqcup_{\alpha \in A} F_\alpha
\]
denote the fixed-point locus of \(G\), written as a disjoint union of smooth irreducible subvarieties \(F_\alpha \subset X\). Choose a point \(x_\alpha \in F_\alpha\). The action of \(G\) on the tangent space \(T_{x_\alpha}X\) yields a homomorphism \(\rho: G \to \mathrm{Aut}(T_{x_\alpha}X)\simeq\mathrm{GL}(n,\C)\). Since \(G\) is abelian, this representation decomposes as:
\[
\rho = \bigoplus_{i=1}^n a_{i,\alpha},
\]
where \(a_{i,\alpha} \in G^{\vee}\) are characters (well-defined up to permutation).
Since the \(G\)-action is generically free, we have:
\[
\sum_{\alpha} \mathbb{Z} a_{i,\alpha} = G^{\vee},
\]
independently of the choice of points \(x_\alpha\). The number of vanishing characters \(a_{i,\alpha} = 0\) corresponds to the dimension \(n_\alpha = \dim F_\alpha\). Thus, to each \(\alpha\), we associate a symbol:
\[
[a_{1,\alpha}, \ldots, a_{n,\alpha}] \in \mathcal{B}_n(G),
\]
and define:
\[
\beta(X) := \sum_\alpha [a_{1,\alpha}, \ldots, a_{n,\alpha}].
\]

\begin{theorem}[\cite{kontsevich2021equivariant}]\label{thm:kontsevich-pestun-tschinkel}
The class \(\beta(X) \in \mathcal{B}_n(G)\) is a \(G\)-birational invariant.
\end{theorem}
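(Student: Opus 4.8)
The plan is to reduce the statement to a purely local computation on tangent representations and then to recognize the outcome of that computation as exactly the defining relation (B). By the $G$-equivariant Weak Factorization Theorem (the equivariant analogue of \Cref{thm:weak}; see Proposition 2.6 in \cite{Kresch2022} and the original argument in \cite{kontsevich2021equivariant}), any $G$-equivariant birational map between smooth projective $G$-varieties factors as a finite chain of $G$-equivariant blowups and blowdowns with smooth $G$-invariant centers. Since a blowdown is the inverse of a blowup, it suffices to prove that $\beta(X) = \beta(\widetilde{X})$ whenever $\widetilde{X} = \mathrm{Bl}_Z X$ is the blowup along a smooth $G$-invariant center $Z \subset X$ of codimension $r \geq 2$.

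First I would record two preliminary observations. The character multiset of $T_{x_\alpha} X$ is locally constant along each connected component $F_\alpha$, so $\beta$ is well-defined, and at every fixed point generic freeness forces the tangent characters to generate $G^{\vee}$: otherwise the common kernel $\bigcap_i \ker(a_i)$ would be a nontrivial subgroup acting trivially on a linearized neighborhood of $x_\alpha$, contradicting generic freeness. Consequently every symbol appearing is a legitimate generator, and, crucially, the generation hypothesis required to apply relation (B) holds automatically at each fixed point lying on $Z$.

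The geometric core is the comparison of fixed loci. Components of $X^G$ disjoint from $Z$ are untouched, and a generic point of a component meeting $Z$ lies where $\pi\colon \widetilde{X} \to X$ is an isomorphism, so their symbols are unchanged; all change is concentrated over $Z$ inside $E = \mathbb{P}(N_Z X)$. For a $G$-fixed point $x \in Z$ write $T_x X = N_{Z,x} \oplus T_x Z$ with normal characters $a_1, \ldots, a_r$ and tangential characters $b_1, \ldots, b_{n-r}$. Decomposing $N_{Z,x}$ into $G$-eigenspaces shows the $G$-fixed points of $\mathbb{P}(N_{Z,x})$ are the projectivized eigenspaces; at the one indexed by the value $a_i$ the tangent space of $\widetilde{X}$ acquires the $\mathcal{O}(-1)$-direction of character $a_i$, the fibre directions of characters $a_j - a_i$ with $j \neq i$, and the unchanged tangential characters $b_1, \ldots, b_{n-r}$. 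The resulting symbol is $[a_1 - a_i, \ldots, a_i, \ldots, a_r - a_i, b_1, \ldots, b_{n-r}]$, and summing over the distinct eigenvalues reproduces precisely the right-hand side of relation (B) with $k = r$, whose left-hand side is the symbol $[a_1, \ldots, a_r, b_1, \ldots, b_{n-r}]$ attached to $x$ on $X$; relation (O) makes this matching independent of all orderings.

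Finally I would globalize this pointwise identity over the fixed components $W \subseteq Z^G$ by applying it fibrewise to the eigenbundle decomposition $N_Z X|_W = \bigoplus_\chi N_\chi$. The step I expect to be the main obstacle is exactly this bookkeeping in the non-generic cases: positive-dimensional fixed components contribute extra zero characters both along the base $W$ and along the trivially-acted fibre directions $\mathbb{P}(N_\chi)$; repeated normal characters are precisely why (B) restricts the sum to indices with $a_i \neq a_{i'}$ for $i' < i$, so that one term stands for the whole projective subbundle $\mathbb{P}(N_\chi)$ rather than for individual coordinate points (note that the entries $a_j - a_i = 0$ occurring when $a_j = a_i$ correctly encode the $\dim \mathbb{P}(N_\chi)$ trivial fibre directions); and components of $X^G$ that meet $Z$ without lying in it must be shown to feed into the over-$Z$ analysis without being double-counted against their untouched strict transforms. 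Once the eigenbundle decomposition is verified to globalize the single-point computation verbatim, the equality $\beta(\widetilde{X}) = \beta(X)$ follows, and with it the $G$-birational invariance of $\beta$.
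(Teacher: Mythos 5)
Your proposal is correct and follows essentially the same route the paper indicates: reduce to $G$-equivariant blowups via equivariant Weak Factorization and then check that the fixed-point symbols of the exceptional divisor reproduce the right-hand side of relation (B), which is exactly what the relations of $\mathcal{B}_n(G)$ were designed to encode. The paper itself only sketches this (deferring the local computation to \cite{kontsevich2021equivariant}), and your fibrewise eigenbundle analysis over the fixed components of $Z$, including the role of the repeated-character restriction in (B) for positive-dimensional fixed loci, fills in the details in the intended way.
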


Theorem \ref{thm:kontsevich-pestun-tschinkel} follows from the \(G\)-equivariant Weak Factorization Theorem (see, e.g., Proposition 2.6 in \cite{Kresch2022}), which reduces the proof to verifying invariance under \(G\)-equivariant blowups. The relations defining $\mathcal B_n(G)$ are chosen conveniently to produce a birational invariant.

Tschinkel and Kresch, in a series of works \cites{kresch2019birational, Kresch2022, Kresch2022-Representation, kresch2023birational, kresch2022burnside, Kresch-Tschinke-Structure-and-Operations}, have produced finer symbol invariants that take into account the birational types of components of fixed loci. Moreover, they extend the theory to nonabelian groups, quasi-projective varieties, and algebraic orbifolds. One expects that in the same spirit of this paper, a construction merging atoms and their invariants can be obtained. The atom theory in the Deligne-Mumford stack setting is under development by the authors and shall appear elsewhere, see Section \ref{sec:CR} for preliminary progress.


\

\subsection{The definition of $\mathcal{B}_3(\mathbb Z/p)^{\mathrm{comb}}$}
\label{sec:B_3-comb}
Let $p$ be a prime number. We define the combinatorial $\Z$-module 
\[
\mathcal{B}_3(\Z/p)^{\mathrm{comb}}
\]
in an abstract manner, using generators derived from \emph{symbols} (S) and \emph{atoms} (A). The superscript $comb$ indicates that this is a \emph{combined} structure. Throughout, let $X$ be a smooth projective threefold equipped with a generically free action of $G = \Z/p$ by regular automorphisms. The \emph{generators} and \emph{relations} of $\mathcal{B}_3(G)^{\mathrm{comb}}$ are motivated by two sources:
\begin{enumerate}
    \item the geometry of the fixed-point set of the $G$-action, and
    \item $G$-equivariant atoms from \( \mathsf{HAtoms}^{\Z/p}_{\dim \leq 1} \) as these are the only atoms that can appear/disappear under blowups or blowdowns in dimension $3$.
\end{enumerate}

\medskip

Below, we provide a complete description of the truncated set 
\(\mathsf{HAtoms}^{\Z/p}_{\dim \leq 1}\).

\begin{lemma}\label{lem:atom-generators-2}
The following varieties with $\Z/p$-action have each only one local equivariant Hodge atom, and in this way we obtain \textit{all} elements of \(\mathsf{HAtoms}^{\Z/p}_{\dim \leq 1}\):
\begin{enumerate}
    \item a point with trivial $\Z/p$-action,
    \item  a free $\Z/p$-orbit of a point,
    \item  a curve $C$ of genus $\mathsf{g} \geq 1$ with trivial $\Z/p$-action,
    \item a curve $C$ of genus $\mathsf{g} \geq 1$ with a nontrivial $\Z/p$-action,
    \item $C\times$ (a free $\mathbb Z/p$ orbit of a point) with $\mathsf{g}(C)\geq 1$ and $\mathbb Z/p$ acting
trivially on $C$.
\end{enumerate}
\end{lemma}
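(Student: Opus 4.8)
The plan is to run the proof of Lemma~\ref{lem:atom-generators} in the special case $G=\Z/p$ with $p$ prime, exploiting that the subgroup lattice collapses to $\{e\}$ and $\Z/p$, and to verify that the standing hypothesis of that lemma — linearizability of every subgroup action on $\mathbb{P}^1$ — is automatic here. As there, every element of $\mathsf{HAtoms}^{\Z/p}_{\dim\leq 1}$ comes from the atomic decomposition of a (possibly disconnected) $\Z/p$-variety of dimension $\leq 1$, and by the disjoint-union relation it suffices to treat the case where $\Z/p$ permutes the connected components transitively. A transitive $\Z/p$-set is $\Z/p\big/H$ for a subgroup $H<\Z/p$, so primality leaves only $H=\Z/p$ (one component carrying a $\Z/p$-action) and $H=\{e\}$ (a free orbit of $p$ disjoint copies of one component). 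This finite case split is the skeleton of the argument.

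First I would dispose of the zero-dimensional case: the two options above give a single fixed point with trivial action (case (i)) and a free orbit of $p$ points (case (ii)). On each, $\boldsymbol\kappa=\mathsf{Eu}\star(\cdot)$ vanishes, so there is a single eigenvalue and hence exactly one local equivariant atom, recording the relevant permutation $\Z/p$-module. For the one-dimensional case the decisive dichotomy is the genus of the connected curve $C$ carrying the (sub)action. When $\mathsf{g}(C)\geq 1$ the canonical class is nef, so by Example~\ref{ex:nef} the operator $\boldsymbol\kappa$ is nilpotent with the single eigenvalue $0$; concretely, every positive-degree genus-zero stable map to a curve of positive genus is constant, so $\star$ is the classical cup product and $\mathsf{Eu}\cup(\cdot)$ strictly raises degree. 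Restricting the spectral cover to the fixed locus $B_C^{\,\Z/p\times\mathsf{MT}}$ cannot manufacture new eigenvalues, so there is exactly one connected component — one local equivariant atom — equipped with the $\Z/p\times\mathsf{MT}$-module structure on $\mathrm{H}^{\ast}(C)$ (respectively on $\bigoplus_p \mathrm{H}^{\ast}(C)$ in the free case). Running through $H=\Z/p$ acting trivially, $H=\Z/p$ acting nontrivially, and $H=\{e\}$ yields cases (iii), (iv), (v) respectively.

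The genus-zero case is the step I expect to be the only genuine obstacle, and it is precisely where primality is used. A genus-zero curve is $\mathbb{P}^1$, and any action of a cyclic group on $\mathbb{P}^1$ is linearizable: a finite cyclic subgroup of $\mathrm{PGL}_2(\C)$ has two fixed points and diagonalizes, so it lifts to a torus in $\mathrm{GL}_2(\C)$. Consequently $\mathbb{P}^1$ with its $H$-action is the $H$-equivariant projectivization $\mathbb{P}(V)$ of a two-dimensional representation $V$ of $H$, and the projectivization relation (item (iii) in the definition of $\mathsf{HAtoms}^{\Z/p}$) identifies its two atoms — the two eigenvalues $\pm\sqrt{q}$ of $\boldsymbol\kappa$ on $\mathrm{H}^{\ast}(\mathbb{P}^1)$ — with two copies of the point atom, already recorded in (i)--(ii). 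Hence no genus-zero curve contributes a new element, which is exactly why only curves of genus $\geq 1$ appear in the statement. Assembling the zero-dimensional atoms, the genus-$\geq 1$ curve atoms, and the collapse of the genus-zero atoms exhausts $\mathsf{HAtoms}^{\Z/p}_{\dim\leq 1}$ and completes the classification.
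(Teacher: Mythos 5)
Your proof is correct and follows essentially the same route as the paper, which simply invokes Lemma \ref{lem:atom-generators} together with the fact that every $\Z/p$-action on $\mathbb{P}^1$ is linearizable. You have merely unpacked that one-line argument — the collapse of the subgroup lattice for $p$ prime, the nef-canonical-class argument for genus $\geq 1$, and the projectivization relation disposing of the genus-zero case — all of which match the paper's reasoning.
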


\begin{proof} Use Lemma \ref{lem:atom-generators} and the fact that any action of $\Z/p$ on $\mathbb P^1$ is linearizable.
\end{proof}

\ 

Potentially, some of the atoms listed above give the same element of  \(\mathsf{HAtoms}^{\Z/p}_{\dim \leq 1}\). In order to distinguish them, we can use our main invariant, the isomorphism class of a $\overline\Q$-linear representation of $G\times\mathsf{Hod}$.

\begin{lemma}\label{lem:invariant-for-generators} For $\mathsf{g}\ge 2$, Hodge atoms $\boldsymbol{\alpha}\in \mathsf{HAtoms}^{\Z/p}_{\dim \leq 1}$ with Hodge polynomial 
$P_{\boldsymbol{\alpha}}(t)=\mathsf{g}\,t^{-1}+2+\mathsf{g}\,t$ and 
  such that $\Z/p$ acts trivially on the corresponding eigenspace can \textit{only} happen in the case $(iii)$ of Lemma \ref{lem:atom-generators-2}, for the curve $C$ of genus $\mathsf g$ endowed with the trivial $\Z/p$-action.
\end{lemma}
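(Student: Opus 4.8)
The plan is to enumerate the five types of generators of $\mathsf{HAtoms}^{\Z/p}_{\dim\le 1}$ listed in Lemma \ref{lem:atom-generators-2} and to test each against the two constraints of the statement: the prescribed Hodge polynomial $P_{\boldsymbol\alpha}(t)=\mathsf g\,t^{-1}+2+\mathsf g\,t$ and the triviality of the $\Z/p$-action on the corresponding eigenspace. The key structural input is Example \ref{ex:nef}: for a connected curve $C$ of genus $\mathsf g''\ge 1$ the canonical class $K_C$ has nonnegative degree, hence is nef, so $C$ carries a single local atom whose generalized eigenspace is the whole of $\mathrm H^\ast(C,\mathbbm k)$. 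Consequently the Hodge polynomial of that atom is simply $P_C(t)=\mathsf g''\,t^{-1}+2+\mathsf g''\,t$, and the $\Z/p$-representation on its eigenspace is exactly the induced representation on $\mathrm H^\ast(C)$. This would reduce the whole lemma to elementary bookkeeping of Hodge polynomials together with one faithfulness statement.

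First I would dispose of the three cases in which the Hodge polynomial cannot match. In cases $(i)$ and $(ii)$ the underlying variety is a (possibly multiple) point, so all cohomology sits in bidegree $(0,0)$; the atom's Hodge polynomial is then a nonzero constant with no $t^{\pm 1}$ terms, whereas the target has coefficient $\mathsf g\ge 2>0$ at $t^{\pm 1}$. In case $(v)$, a free $\Z/p$-orbit of a genus-$\mathsf g''$ curve is the disjoint union of $p$ copies of $C$ cyclically permuted by $G$; the resulting single $G$-atom has eigenspace $\mathrm H^\ast(C,\mathbbm k)^{\oplus p}$ and hence Hodge polynomial $p\cdot P_C(t)$, whose constant coefficient is $2p\ge 4$ because $p$ is prime. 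Since the target has constant coefficient $2$, this case is excluded as well.

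The crux is to separate the two remaining cases $(iii)$ and $(iv)$, both of which concern a single curve $C$ whose Hodge polynomial matches the target precisely when $\mathsf g''=\mathsf g$; as $\mathsf g\ge 2$ by hypothesis, the relevant curve then has genus $\ge 2$. The two cases are distinguished solely by the $\Z/p$-action, so everything rests on the following classical fact: for a smooth projective curve of genus $\ge 2$ the natural representation $\mathrm{Aut}(C)\to \mathrm{GL}\bigl(\mathrm H^0(C,\Omega^1_C)\bigr)=\mathrm{GL}\bigl(\mathrm H^{1,0}(C)\bigr)$ is faithful. In case $(iv)$ the generator of $\Z/p$ acts nontrivially on $C$, hence nontrivially on $\mathrm H^{1,0}(C)$, and therefore nontrivially on $\mathrm H^1(C,\Q)=\mathrm H^{1,0}\oplus \overline{\mathrm H^{1,0}}$; thus $\Z/p$ does not act trivially on the eigenspace $\mathrm H^\ast(C)$, and case $(iv)$ is ruled out by the triviality hypothesis. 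Case $(iii)$, the trivial action on a genus-$\mathsf g$ curve, manifestly satisfies both constraints, and by the above it is the only generator that does.

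I expect the faithfulness step to be the only nontrivial point of the argument. It is worth recording that the hypothesis $\mathsf g\ge 2$ is used \emph{exactly} here (for $\mathsf g=1$ a translation would act trivially on $\mathrm H^{1,0}$, so the faithfulness fails), and that in our situation genus $1$ is in any case excluded upstream, since matching the target Hodge polynomial already forces the genus to equal $\mathsf g\ge 2$.
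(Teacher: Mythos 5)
Your proof is correct and follows essentially the same route as the paper's: cases (i), (ii), (v) are excluded by the mismatch of Hodge polynomials, and case (iv) is excluded because a nontrivial automorphism of a curve of genus $\ge 2$ acts nontrivially on cohomology (the paper states this directly; you supply the standard justification via faithfulness of $\mathrm{Aut}(C)\to\mathrm{GL}(\mathrm H^{1,0}(C))$). No substantive difference.
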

\begin{proof}
    The cases \((i),(ii)\) are excluded because they give wrong Hodge polynomials. The case \((v)\) is ruled out by the fact that the $\mathbb Z/p$-action is not trivial on the corresponding eigenspace -- the eigenspace is isomorphic to $2\mathsf{g}+2$-copies of the regular representation of $\mathbb Z/p$. The case \((iv)\) is excluded because any nontrivial $\Z/p$-action on a curve of genus  $\mathsf{g}\ge 2$ induces a nontrivial action on its cohomology.
\end{proof}

\begin{remark}\label{rem:distinct}
    The isomorphism class of a $\overline\Q$-linear representation of $G\times\mathsf{Hod}$ does not distinguish some elements of $\mathsf{HAtoms}^{\Z/p}_{\dim \leq 1}$ which we expect to be different. For example, if $X$ is an elliptic curve, then we can not distinguish by this invariant the trivial $\Z/p$-action, and the one by shifts by $p$-torsion points. Similarly, even in the case of the trivial action, we can not distinguish two isogenous elliptic curves, or more generally, two curves with isogenous Jacobians. However, the proposed enhancement in Section \ref{sec:CR} does indeed differentiate between such atoms.
\end{remark}

Now we are ready to give the definition of \(\mathcal{B}_3(\mathbb{Z}/p)^{\mathrm{comb}}\), which is quite heavy, and it is not used explicitly in the rest of the paper. The role of this definition is to provide a template that can be extended to other situations, e.g., including further geometric information related to the fixed loci, global cohomology of varieties with $\Z/p$-actions, e.g., of the type considered in Section   \ref{sec:geometric}.

The reader can skip the description of  \(\mathcal{B}_3(\mathbb{Z}/p)^{\mathrm{comb}}\) and go directly to Section \ref{sec:Z-invariants}, and see explicitly a self-contained example of atoms non-trivially interacting with symbols.

\begin{definition} \label{def:atoms-meet-symbols}
The $\mathbb{Z}$-module 
\[
\mathcal{B}_3(\mathbb{Z}/p)^{\mathrm{comb}}
\]
is generated by the following elements:

\noindent\textbf{(S) Symbols:}
\begin{defenum}
    \item \label{item:S1} $[a,b,c]$, where $a,b,c \in (\mathbb{Z}/p)^{\vee}$ are nonzero, with relations 
    \[
    [a,b,c] = [b,a,c] = [c,b,a].
    \]
    Geometrically, these correspond to symbols associated with the $\mathbb{Z}/p$-action on the tangent space of $X$ at isolated fixed points: the characters $a,b,c$ represent the weights of the $\mathbb{Z}/p$-normal bundle representation at a fixed point.

    \item $(\mathbb{P}^1,[a,b])$ with $a,b \in (\mathbb{Z}/p)^{\vee}$ nonzero and $[a,b]=[b,a]$.  
    Geometrically, these correspond to irreducible components of $X^{\mathbb{Z}/p}$ isomorphic to $\mathbb{P}^1$, with weights $(a,b)$ coming from the $\mathbb{Z}/p$-representation on the normal bundle of $\mathbb{P}^1$ in $X^{\mathbb{Z}/p}$.

    \item $(C,[a,b])$ with $a,b \in (\mathbb{Z}/p)^{\vee}$ nonzero and $[a,b]=[b,a]$, where $C$ is a curve of genus $\mathsf{g} \geq 1$.  
    Geometrically, these correspond to irreducible components of $X^{\mathbb{Z}/p}$ isomorphic to $C$, with weights $(a,b)$ arising from the $\mathbb{Z}/p$-representation on the normal bundle of $C$ in $X^{\mathbb{Z}/p}$.

    \item $([S],a)$ with $a \in (\mathbb{Z}/p)^{\vee}$ nonzero, where $[S]$ denotes the birational class of a surface $S$.  
    Geometrically, these correspond to birational types of irreducible components of $X^{\mathbb{Z}/p}$ that are surfaces, with weight $a$ coming from the $\mathbb{Z}/p$-representation on the normal bundle of $S$ in $X^{\mathbb{Z}/p}$.
\end{defenum}

\noindent\textbf{(pre-A) pre-atoms:} isomorphism classes of varieties of dimension $\le 1$ with $\Z/p$-action, of the following types:
\begin{enumerate}
    \item $(\mathrm{pt}\stackrel{\mathrm{trivial}}{\curvearrowleft} \Z/p )$
    \item \( (\Z/p \stackrel{\mathrm{free}}{\curvearrowleft} \Z/p )\)
    \item \( (C\stackrel{\mathrm{trivial}}{\curvearrowleft} \Z/p )\)
    \item \( (C\stackrel{\mathrm{nontrivial}}{\curvearrowleft} \Z/p )\)
    \item \( (C\times \Z/p \stackrel{\mathrm{free}}{\curvearrowleft} \Z/p )\)
\end{enumerate}
where $C$ is a curve of genus $\mathsf g\ge 1$.

\medskip

The generators are subject to the following relations:

 \ 
 
\noindent\textbf{(A) Atomic equivalence:}\footnote{This relation is not explicit as we do not know which pre-atoms give the same atom, see Remark \ref{rem:distinct}.} two pre-atoms are equivalent iff the corresponding elements of \(\mathsf{HAtoms}^{\Z/p}_{\dim \leq 1}\) coincide with each other.

\

\noindent\textbf{(Bl0) Blowup in points:}
\begin{defsubenum}
    \item[(Bl0-a)] \emph{Blowup in free $\mathbb{Z}/p$-orbits:}
    \[ 2(\Z/p \stackrel{\mathrm{free}}{\curvearrowleft} \Z/p ).
    \]

    \item[(Bl0-b)] \emph{Blowup in fixed points with normal weights $a,b,c$:}
    \begin{align}
    &2(\mathrm{pt}\stackrel{\mathrm{trivial}}{\curvearrowleft} \Z/p ) \\
    &\quad + \begin{cases}
    [a,b-a,c-a] + [a-b,b,c-b] + [a-c,b-c,c] - [a,b,c] & \text{if } a \neq b \neq c, \\
    (\mathbb{P}^1,[a,c-a]) + [a-c, a-c,c] - [a, a,c] & \text{if } a = b \neq c, \\
    ([\mathbb{P}^2],a) - [a, a, a] & \text{otherwise}.
    \end{cases}
    \end{align}
\end{defsubenum}

\noindent\textbf{(Bl1) Blowup in a fixed curve $C$ which is itself a fixed-point component with normal weights $a,b\,$:}
\begin{defsubenum}
    \item[(Bl1-0)] \emph{$C$ has genus $\mathsf g=0$.}
    \[
    \begin{cases}2(\mathrm{pt}\stackrel{\mathrm{trivial}}{\curvearrowleft} \Z/p )
    + (C,[a,b-a]) + (C,[a-b,b]) - (C,[a,b]), & \text{if } a \neq b, \\2(\mathrm{pt}\stackrel{\mathrm{trivial}}{\curvearrowleft} \Z/p )
    + ([C\times\mathbb{P}^1],a) - (C,[a,a]), & \text{if } a = b.
    \end{cases}
    \]
    \item[(Bl1-1)] \emph{$C$ has genus $\mathsf g\ge 1$.}
    \[
    \begin{cases}
    (C\stackrel{\mathrm{trivial}}{\curvearrowleft}\mathbb{Z}/p)
    + (C,[a,b-a]) + (C,[a-b,b]) - (C,[a,b]), & \text{if } a \neq b, \\
    (C\stackrel{\mathrm{trivial}}{\curvearrowleft}\mathbb{Z}/p)
    + ([C\times\mathbb{P}^1],a) - (C,[a,a]), & \text{if } a = b.
    \end{cases}
    \]
\end{defsubenum}

\noindent\textbf{(Bl2) Blowup in a fixed curve $C$  contained in a surface $S$ which is a fixed-point component with the normal weight $a$:}
\begin{defsubenum}
    \item[(Bl2-0)] \emph{$C$ has genus $\mathsf g=0$.}
    \[2(\mathrm{pt}\stackrel{\mathrm{trivial}}{\curvearrowleft}\mathbb{Z}/p) 
    + (\mathbb{P}^1,[a,-a])\]
\item[(Bl2-1)] \emph{$C$ has genus $\mathsf g\ge 1$.}
    \[   (C\stackrel{\mathrm{trivial}}{\curvearrowleft}\mathbb{Z}/p) + (C,[a,-a])
    \]
\end{defsubenum}

\noindent\textbf{(Bl3) Blowup in a curve that is not fixed:}
\begin{defsubenum}
    \item[(Bl3-a)] \emph{Blowup in $p$-components cyclically permuted.}
    \[
    \begin{cases}2 (\Z/p \stackrel{\mathrm{free}}{\curvearrowleft} \Z/p ), & \text{if } \mathsf{g}(C)=0\\
    (C\times \Z/p \stackrel{\mathrm{free}}{\curvearrowleft} \Z/p ), & \text{if } \mathsf{g}(C)\geq 1.
    \end{cases}
    \]
    
    \item[(Bl2-b)] \emph{Blowup in a connected curve $C$ with some fixed points along it.}
    \begin{align}
    \sum_{\substack{\text{fixed pts} \\ a \neq 0}} 
    \begin{cases}
    (\mathbb{P}^1,[a,b]) - [a,b,b], & \text{if } b=c\neq 0, \\
    [a,b-c,c] + [a,b,c-b] - [a,b,c], & \text{if } 0 \neq b \neq c \neq 0, \\
    [a,c,-c], & \text{if } b=0, c \neq 0, \\
    \text{nothing}, & \text{if } b=c=0.
    \end{cases}\\
    +\begin{cases}2(\mathrm{pt}\stackrel{\mathrm{trivial}}{\curvearrowleft}\mathbb{Z}/p) & \text{if } \mathsf g(C)=0\\
    (C\stackrel{\mathrm{nontrivial}}{\curvearrowleft} \Z/p ) & \text{if } \mathsf g(C)\ge 1.
    \end{cases}
    \end{align}
  Here $(a,b,c)$ are normal weights at the fixed point $p\in C$, and $a\ne 0$ is the weight of the $\Z/p$-action on $T_p C$.  
    
\end{defsubenum}
\end{definition}

\ 

\subsection{$\mathbb Z$-valued combined invariant}
\label{sec:Z-invariants}
In this section, we prove the following:
\begin{theorem}\label{thm:invariant}
Let \(X\) be an irreducible smooth projective threefold equipped with a regular generically free \(\mathbb{Z}/p\)-action, where \(p\) is prime. Fix an in integer \(\mathsf{g} \geq 2\), and denote by \(X^{\mathbb{Z}/p}\) the fixed-point locus of the action. Then the following quantity defines a \(\mathbb{Z}/p\)-birational invariant ($C$ denotes an arbitrary smooth projective curve of genus \(\mathsf g\)):
\begin{equation}
\begin{split}
  &-\,\#\bigl\{\text{components of }X^{\Z/p}\cong C\bigr\}
   \;-\;2\,\#\bigl\{\text{components of $X^{\Z/p}$ birational to }C\times\PP^1\bigr\}\\
  &\quad+\;\#\Bigl\{\Z/p\text{-equivariant atoms }\boldsymbol{\alpha} \;\Bigm|\;
       P_{\boldsymbol{\alpha}}(t)=\mathsf{g}\,t^{-1}+2+\mathsf{g}\,t,\\
  &\qquad\qquad\text{and }\Z/p\text{ acts trivially on the corresponding eigenspace}
    \Bigr\}.
\end{split}
\end{equation}
\end{theorem}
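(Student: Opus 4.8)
The plan is to deduce the invariance from the $\Z/p$-equivariant Weak Factorization Theorem (the equivariant analogue of Theorem \ref{thm:weak}, cf. Proposition 2.6 in \cite{Kresch2022}), which reduces the claim to showing that the displayed quantity is unchanged under a single $\Z/p$-equivariant blowup $\widetilde{X}=\mathrm{Bl}_Z X$ along a smooth $\Z/p$-invariant center $Z$ of codimension $\geq 2$. In a threefold such a center is either a point (codimension $3$) or a curve (codimension $2$); a surface is a divisor and is never a legitimate center. Writing $N_1,N_2,N_3$ for the three counts appearing with coefficients $-1,-2,+1$, it suffices to verify that $\Delta(-N_1-2N_2+N_3)=0$ in each case. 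Equivalently, and this is how I would organize the bookkeeping, the quantity is the value of the $\Z$-linear functional $\ell$ on $\mathcal{B}_3(\Z/p)^{\mathrm{comb}}$ that sends a genus-$\mathsf{g}$ curve symbol $(C,[a,b])$ to $-1$, a surface symbol $([S],a)$ with $S$ birational to $C\times\PP^1$ to $-2$, the pre-atom $(C\stackrel{\mathrm{trivial}}{\curvearrowleft}\Z/p)$ of genus $\mathsf{g}$ to $+1$, and every other generator to $0$; the theorem then amounts to checking that $\ell$ annihilates each defining relation of Definition \ref{def:atoms-meet-symbols}.

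The two inputs I would isolate first are: (a) the classification of $\mathsf{HAtoms}^{\Z/p}_{\dim\leq 1}$ in Lemma \ref{lem:atom-generators-2} together with its refinement, which guarantees that the only local atom of dimension $\leq 1$ with Hodge polynomial $\mathsf{g}\,t^{-1}+2+\mathsf{g}\,t$ and trivial $\Z/p$-action is the atom of a trivially-acted genus-$\mathsf{g}$ curve; and (b) the equivariant blowup formula for atoms (relation (ii) in the definition of $\mathsf{HAtoms}^{\Z/p}$, via Iritani's theorem \cite{iritani2023quantumcohomologyblowups}), which says the atomic content of $\widetilde{X}$ is that of $X$ together with $(c-1)$ copies of that of $Z$, where $c=\mathrm{codim}\,Z$. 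Point (a) ensures that $\Delta N_3$ can only increase by $1$, and only when $Z$ is a fixed genus-$\mathsf{g}$ curve carrying the trivial action; a free orbit of curves or a curve with nontrivial action contributes an atom on whose eigenspace $\Z/p$ acts nontrivially, hence with $\ell$-value $0$.

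For the case analysis I would proceed center by center. For a point center (fixed point or free orbit) the center's atom is the trivial $1$-dimensional atom of Hodge polynomial $1$, so $\Delta N_3=0$; the new fixed locus inside the exceptional $\PP^2$ (or $\PP(N)$) consists of points, a $\PP^1$, or a $\PP^2$, none of which is a genus-$\mathsf{g}$ curve or a $C\times\PP^1$ surface, so $\Delta N_1=\Delta N_2=0$, matching $\ell$ killing (Bl0-a) and (Bl0-b). The decisive computations are the fixed-curve cases. If $Z=C$ is a fixed genus-$\mathsf{g}$ curve not lying in a fixed surface, with nonzero normal weights $a,b$: when $a\neq b$ the exceptional $\PP^1$-bundle $\PP(N_X C)$ has exactly two fixed sections, each $\cong C$, so the original $C$ is removed and two copies appear ($\Delta N_1=+1$) while the atom of $C$ appears ($\Delta N_3=+1$), giving $\Delta(-N_1-2N_2+N_3)=-(+1)+(+1)=0$; when $a=b$ the fibrewise $\Z/p$-action on $\PP(N_X C)$ is trivial, so the entire ruled surface $\cong C\times\PP^1$ becomes fixed ($\Delta N_2=+1$), $C$ is removed ($\Delta N_1=-1$), and the atom appears ($\Delta N_3=+1$), giving $-(-1)-2(+1)+(+1)=0$. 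If instead $C\subset S$ lies in a fixed surface (weight $0$ along $S$, weight $a$ transverse), $C$ is not separately counted before the blowup; afterwards one new isolated fixed section $(C,[a,-a])$ appears ($\Delta N_1=+1$, while the strict transform of $S$ stays in its birational class so $\Delta N_2=0$) together with the atom ($\Delta N_3=+1$), again giving $0$. These reproduce exactly $\ell$ vanishing on (Bl1-0), (Bl1-1), (Bl2-0), (Bl2-1), and a non-fixed curve (Bl3) contributes only nontrivial-action atoms and isolated points or $\PP^1$'s, all of $\ell$-value $0$.

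The step I expect to be the main obstacle is the geometric analysis of the exceptional divisor's fixed locus under the $\Z/p$-action: proving that the equal-weight blowup $a=b$ produces a fixed surface birational to $C\times\PP^1$ (rather than two isolated sections) and correctly reading off the induced normal weights on each new fixed component, since it is precisely this phenomenon that forces the coefficient $-2$ and makes the cancellation $-(-1)-2+1=0$ succeed. A secondary subtlety is ruling out ``exotic'' atoms: one must invoke the $\overline{\Q}$-linear $G\times\mathsf{MT}$-representation invariant attached to each atom (Lemma \ref{lem:atom-generators-2} and its sequel) to be certain that no atom other than the genuine genus-$\mathsf{g}$ curve atom can carry the Hodge data $\mathsf{g}\,t^{-1}+2+\mathsf{g}\,t$ with trivial action, so that $N_3$ truly tracks only the fixed-curve blowups. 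Once these two points are secured, the remaining verifications are the purely combinatorial checks that $\ell$ annihilates each relation, immediate from the tabulated values.
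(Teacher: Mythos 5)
Your proposal is correct and follows essentially the same route as the paper: reduction via the $\Z/p$-equivariant Weak Factorization Theorem to a case-by-case check of all blowup centers, with the same decisive cancellations in the fixed genus-$\mathsf{g}$ curve cases (distinct versus equal normal weights) and the case of a curve inside a fixed surface, and the same appeal to the classification of $\mathsf{HAtoms}^{\Z/p}_{\dim\leq 1}$ to ensure the atom count only detects trivially-acted genus-$\mathsf{g}$ curves. Your packaging of the computation as a linear functional annihilating the relations of $\mathcal{B}_3(\Z/p)^{\mathrm{comb}}$ is only a cosmetic reorganization of the paper's direct verification.
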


We also have a companion result, providing a finer invariant that depends on the isogeny class of an abelian variety.

\begin{theorem}\label{thm:invariant-fine} Under the assumptions of the previous theorem, let us also fix a genus $\mathsf g\geq 2$ curve $C_0$. Then the following expression is a \(\mathbb{Z}/p\)-birational invariant.
\begin{equation}
\begin{split}
  &-\;\#\bigl\{
    \text{components of } X^{\mathbb{Z}/p} \cong C  
  \bigr\} 
   -\; 2\,\#\bigl\{
    \text{components of $X^{\mathbb Z/p}$ birational to } C \times \mathbb{P}^1  
  \bigr\} \\
  &\quad +\; \#\Bigl\{
    \mathbb{Z}/p\text{-equivariant atoms } \boldsymbol{\alpha} 
    \;\Bigm|\; 
    \mathbb{Z}/p \text{ acts trivially on the corresponding eigenspace,} \\
  &\qquad\quad 
    \text{and the isomorphism class of the } \overline{\mathbb{Q}}\text{-linear } 
    \mathsf{Hod}\text{-representation associated to } \boldsymbol{\alpha} \\
  &\qquad\quad 
    \text{is equivalent to that on } \mathrm{H}^{\ast}(C_0)
  \Bigr\}.
\end{split}
\end{equation}
Here $C$ denotes arbitrary curve of genus $\mathsf g$ such  that $\operatorname{Jac}(C)$ is isogenous to $\operatorname{Jac}(C_0)$.
\end{theorem}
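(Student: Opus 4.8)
The plan is to realize the displayed quantity as the value of a single $\mathbb{Z}$-linear functional $\Psi$ on the combined module $\mathcal{B}_3(\mathbb{Z}/p)^{\mathrm{comb}}$, evaluated on the combined class $\beta^{\mathrm{comb}}(X)\in\mathcal{B}_3(\mathbb{Z}/p)^{\mathrm{comb}}$ assembled from the symbols of the fixed locus $X^{\mathbb{Z}/p}$ together with the atoms of $X$ lying in $\mathsf{HAtoms}^{\mathbb{Z}/p}_{\dim\le 1}$. Since the defining relations of the module are, by construction, exactly the changes produced by $\mathbb{Z}/p$-equivariant blow-ups, the class $\beta^{\mathrm{comb}}(X)$ is automatically a $\mathbb{Z}/p$-birational invariant once we know the relations are correct. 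Thus the entire argument reduces, via the equivariant Weak Factorization Theorem (Proposition 2.6 of \cite{Kresch2022}, cf.\ \cref{thm:weak}), to checking that $\Psi$ descends to the quotient, i.e.\ that it annihilates every relation (Bl0)--(Bl3); in a threefold the admissible smooth invariant centers of codimension $\ge 2$ are exactly points and curves, so these relations exhaust all the blow-ups that can occur.

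Concretely, fix $C_0$ and let $\Psi$ be the functional that vanishes on every generator except: $\Psi\bigl((C,[a,b])\bigr)=-1$ whenever $C$ has genus $\mathsf{g}$ with $\operatorname{Jac}(C)$ isogenous to $\operatorname{Jac}(C_0)$; $\Psi\bigl(([C\times\mathbb{P}^1],a)\bigr)=-2$ for such $C$; and $\Psi\bigl((C\stackrel{\mathrm{trivial}}{\curvearrowleft}\mathbb{Z}/p)\bigr)=+1$ when the associated $\overline{\mathbb{Q}}$-linear $\mathsf{MT}$-representation is isomorphic to $\mathrm{H}^{\ast}(C_0)$. The first point to verify is that $\Psi$ is compatible with the atomic equivalence (A): two equivalent pre-atoms represent the same element of $\mathsf{HAtoms}^{\mathbb{Z}/p}_{\dim\le 1}$ and hence carry the same $\mathsf{MT}$-representation, so $\Psi$ assigns them equal values, and in particular the isogeny label is well defined.

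The bulk of the work is then a case-by-case evaluation of $\Psi$ on the relations. Every relation supported on genus-$0$ centers, on isolated fixed points, on free orbits, or on non-fixed curves---that is (Bl0-a), (Bl0-b), (Bl1-0), (Bl2-0), (Bl3-a) and (Bl3-b)---is annihilated at once, because each generator occurring there is a point- or $\mathbb{P}^1$-symbol, a $[\mathbb{P}^2]$- or rational-surface class, or a curve carrying a free or nontrivial $\mathbb{Z}/p$-action, on all of which $\Psi$ vanishes. The only genuinely interactive checks are (Bl1-1) and (Bl2-1). For (Bl1-1) with $a\ne b$ one computes $\Psi=(+1)+(-1)+(-1)-(-1)=0$, and with $a=b$ one gets $\Psi=(+1)+(-2)-(-1)=0$; for (Bl2-1) one gets $\Psi=(+1)+(-1)=0$. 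This is precisely the point where ``atoms meet symbols'': the $+1$ contributed by the newly created atom of the fixed curve $C$ cancels the $-1$ (respectively $-2$) change in the symbolic count of $C$-components (respectively $C\times\mathbb{P}^1$-components), isogeny-class by isogeny-class.

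The step I expect to be the crux is justifying the shape of (Bl1-1) and (Bl2-1) themselves: namely, that the equivariant blow-up formula for atoms produces, from a fixed curve $C$ of genus $\mathsf{g}\ge 2$ with trivial action, exactly one new atom whose associated $\mathsf{MT}$-representation is $\mathrm{H}^{\ast}(C)$, with the isogeny class of $\operatorname{Jac}(C)$ faithfully recorded and no spurious atoms appearing. This requires the equivariant version of Iritani's decomposition (\cref{sec:global-hodge-atoms}, \cite{iritani2023quantumcohomologyblowups}) together with \cref{lem:atom-generators-2} to guarantee that the $\mathsf{MT}$-label of the created atom coincides with that of the symbol $(C,[a,b])$. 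Granting this, each relation maps to $0$ under $\Psi$, so $\Psi$ descends to $\mathcal{B}_3(\mathbb{Z}/p)^{\mathrm{comb}}$ and $\Psi(\beta^{\mathrm{comb}}(X))$ is the asserted $\mathbb{Z}/p$-birational invariant. The coarse statement (\cref{thm:invariant}) follows by the identical computation, replacing the isogeny condition on the created atom by the weaker requirement $P_{\boldsymbol{\alpha}}(t)=\mathsf{g}\,t^{-1}+2+\mathsf{g}\,t$ with trivial $\mathbb{Z}/p$-action.
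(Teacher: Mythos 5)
Your proposal is correct and is essentially the paper's own argument: the paper proves Theorem \ref{thm:invariant-fine} by rerunning the ten-case blowup analysis from Theorem \ref{thm:invariant} and noting that only the cases of blowing up a fixed curve $C$ (Case 7) and a curve inside a fixed surface (Case 10) change the atomic count, with the coefficients $-1,-2,+1$ arranged so that $\Delta I=0$ — exactly your checks of (Bl1-1) and (Bl2-1). Your repackaging as a functional $\Psi$ on $\mathcal{B}_3(\mathbb{Z}/p)^{\mathrm{comb}}$ is only cosmetic (the paper explicitly treats that module as a template and does not use it in the proof), and the point you flag as the crux — that the equivariant Iritani decomposition creates exactly one new atom carrying the $\mathsf{MT}$-representation $\mathrm{H}^{\ast}(C)$, matched to the symbol's isogeny class — is precisely the input the paper also relies on.
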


\

We can slightly modify the statements of Theorem \ref{thm:invariant} by specifying that the atoms under consideration belong to \(\mathsf{HAtoms}^{\Z/p}_{\dim \leq 1}\), in order to be consistent with our approach to the definition of $\mathcal{B}_3(\mathbb{Z}/p)^{\mathrm{comb}}$. The statement remains true; the change does not affect the proofs.

\

\begin{proof}[Proof of Theorem \ref{thm:invariant}]
Let
$$I(X) = I_1(X)+2I_2(X)-I_3(X),\quad\text{where}$$
\begin{align}
I_1=&\#\left\{ \text{components of } X^{\mathbb{Z}/p} \cong C \right\} \\
I_2=&\#\left\{ \text{components birational to } C \times \mathbb{P}^1 \right\} \\
I_3=& \#\Bigl\{\Z/p\text{-equivariant atoms }\boldsymbol{\alpha} \;\Bigm|\;
       P_{\boldsymbol{\alpha}}(t)=\mathsf{g}\,t^{-1}+2+\mathsf{g}\,t,\\
  &\qquad\qquad\text{and }\Z/p\text{ acts trivially on the corresponding eigenspace}\Bigr\}.
\end{align}

We prove that \( I(X) \) is preserved under all \( \mathbb{Z}/p \)-equivariant blowups by considering all possible cases for the center of the blowup.

\medskip

\noindent\textbf{Case 1: Blowing up a free orbit \( Z \subset X \setminus X^{\mathbb{Z}/p} \) consisting of $p$ points.} \\
None of summands $I_i(X),\,i=1,2,3$ will change.

\medskip
\noindent\textbf{Case 2: Blowing up in an isolated fixed point with normal weights $a_1,a_2,a_3\in \Z/p^\vee,\,a_1,a_2,a_3\ne 0$.}
There are three subcases, depending on whether all $a_i$ are distinct, two coincide with each other, or all coincide. None of the summands $I_i(X),\,i=1,2,3$ will change in any of the three subcases.

\medskip

\noindent\textbf{Case 3: Blowing up in a nonconnected curve  \( Z \subset X \setminus X^{\mathbb{Z}/p} \) consisting of $p$ components cyclically permuted by $\mathbb Z/p$.} \\
Clearly the summands $I_i(X),\,i=1,2$ will not change.
  \begin{itemize}
      \item if components of $Z$ are of genus $0$, clearly the atomic invariant $I_3$ does not change.
      \item if components of $Z$ are of genus $>0$, we have only one new equivariant atom, and the action of $G$ on cohomology $\mathrm{H}^{\ast}(Z)$ is not trivial, hence $I_3(X)$ does not change.
  \end{itemize}
\medskip
\noindent\textbf{Case 4: Blowing up in  a connected $\Z/p$-invariant curve \( C\) such that $\Z/p$ acts on it non-trivially}. 
The curve $C$ has finitely many intersection points with the fixed locus $X^{\Z/p}$, and $I_i(X),i=1,2$ do not change. 
\begin{itemize}
      \item if the genus of  $C$ is $0$,  the atomic invariant $I_3$ does not change as we add two atoms of $\PP^1$,
      \item if the genus of $C$ is $1$, the added atom does not contribute to $I_3$,
      \item if the genus of $C$ is $>1$, then the action of $\Z/p$ on $\mathrm{H}^1(C)$ is nontrivial, so the added atom does not contribute to $I_3$.
  \end{itemize}
\medskip

\noindent\textbf{Case 5: Blowing up in a component $F_\alpha$ of the fixed locus, which is a curve \( C \subset X^{\mathbb{Z}/p} \) of genus $0$ with normal weights $a_1,a_2,\in \Z/p^\vee,\,a_1,a_2\ne 0$.}
There are two subcases, depending on whether $a_i$ are distinct or coincide with each other. In both subcases, none of the summands $I_i(X),\,i=1,2,3$ will change.

\medskip

\noindent\textbf{Case 6: Blowing up in a component $F_\alpha$ of the fixed locus, which is a curve \( C \subset X^{\mathbb{Z}/p} \) of genus $\ne 0,\mathsf g$ with normal weights $a_1,a_2,\in \Z/p^\vee,\,a_1,a_2\ne 0$.}
There are two subcases, depending on whether $a_i$ are distinct or coincide with each other. In both subcases, none of the summands $I_i(X),\,i=1,2,3$ will change.

\medskip
\noindent\textbf{Case 7: Blowing up in a component $F_\alpha$ of the fixed locus, which is a curve \( C \subset X^{\mathbb{Z}/p} \) of genus $\mathsf g$ with normal weights $a_1,a_2,\in \Z/p^\vee,\,a_1,a_2\ne 0$.}
\begin{itemize}
    \item If \( a_1 \neq a_2 \), the exceptional divisor \( E \cong \mathbb{P}(\mathcal{N}_{X}C) \) is birational to \( C \times \mathbb{P}^1 \), and the fixed locus inside \( E \) consists of two disjoint sections \( C_1 \) and \( C_2 \), each isomorphic to \( C \). In this process, an additional atom corresponding to curve $C$ with the trivial action appears. We have the following changes:
    $$\Delta I_1=2-1=1,\Delta I_2=0,\Delta I_3=1\implies \Delta I=(1)+2\cdot(0)-(1)=0$$
    \item If \( a_1 = a_2 \), the exceptional divisor is again birational to \( C \times \mathbb{P}^1 \), but the original curve \( C \) is replaced by a component birational to \( C \times \mathbb{P}^1 \). An additional atom corresponding to curve $C$ with the trivial action is created. We have:
    \[
    \Delta I_1=-1,\Delta I_2=+1,\Delta I_3=1\implies \Delta I=(-1)+2\cdot (1)-(-1)=0
    \]
\end{itemize}

\medskip
\noindent\textbf{Case 8: Blowing up in a point lying on a component \(F_\alpha\subset X^{\mathbb{Z}/p} \)  which is a curve.} There are two subcases depending in whether normal weights  $a_1,a_2$ of $F_\alpha$ coincide or not. None of the summands $I_i(X),\,i=1,2,3$ will change in both cases.

\medskip

\noindent\textbf{Case 9: Blowing up in a point lying on a component \( F_\alpha=S \subset X^{\mathbb{Z}/p} \) which is a surface.} Again, none of summands $I_i(X),\,i=1,2,3$ will change.

\medskip
\noindent\textbf{Case 10: Blowing up in  a  curve \( C\) lying inside a fixed surface \( S \subset X^{\mathbb{Z}/p}\).} \\
\begin{itemize}
    \item the genus of $C$ is not equal to $\mathsf g$. Clearly $\Delta I_i(X)=0, i=1,2,3$.
    \item the genus of $C$ is equal to $\mathsf g$. The normal bundle of \( C \) inside \( X \) splits as \(\mathcal{N}_{S}C \oplus \mathcal{O}_C\), with weights $a_1=0,a_2\ne 0$. A new component of fixed locus appears, isomorphic to $C$; also, an atom corresponding to curve $C$ with the trivial action is created. The change is:
    $$\Delta I_1=1,\Delta I_2=0,\Delta I_3=1\implies \Delta I=(1)+2\cdot(0)-(1)=0$$
\end{itemize}

Since every \( \mathbb{Z}/p \)-equivariant birational map can be decomposed into such blowups by the Weak Factorization Theorem, and \( I(X) \) remains unchanged under each step, it follows that \( I(X) \) is a \(\mathbb{Z}/p\)-birational invariant.
\end{proof}

\begin{proof}[Proof of Theorem \ref{thm:invariant-fine}]
One can repeat the analysis of various cases as in the previous proof. Only in Cases 7 and 10 does the summand $I_3$ change; we have arranged the definition so that the total sum remains constant.
\end{proof}
\begin{remark}
    The use of Jacobians \textit{up to isogeny} is an artifact of the modern state of affairs, as we have at the moment the isomorphism class of the $\bar{\mathbb{Q}}$-linear representation of $\mathsf{Hod}$ as a rigorously defined invariant of an atom.
\end{remark}

\

We conclude this section with examples that highlight the strength of the developed theory. Both present families of smooth projective threefolds with $G$-actions whose non-linearizability can not be captured by the means of symbols. It is worth noting, however, that the monodromy-type result related to atom theory (Theorem \ref{thm:classical-monodromy}) indeed succeeds in verifying the non-linearizability. However, the ``atoms meet symbols'' invariants given in Theorems \ref{thm:invariant},~\ref{thm:invariant-fine} are finer in the sense they distinguish $ G$-equivariant birationality for distinct members in the presented families.


\begin{example}\label{ex:product=with-line}
Let $S$ be a smooth rational surface,  endowed with a nontrivial $\Z/p$-action and such that the fixed locus $S^{\Z/p}$ contains at least one curve $C$ of genus $\mathsf g\ge 2$. Also, let us fix an action of $\Z/p$ on $\mathbb P^1$, possibly trivial. We define
 $$X:=S\times \mathbb P^1,\text{ with the diagonal }\Z/p\text{-action}$$
 Then the invariant of $X$ defined in Theorem \ref{thm:invariant} is nonzero, hence $X$ is not $\Z/p$-birational to $\mathbb P^3$ with a linearizable action. 

 Indeed, the condition on cohomology of $S$ implies that the Hodge polynomial of $X$ is constant, equal to $2\chi(S)$, and it immediately implies that there are no $\Z/p$-equivariant atoms of $X=S\times\mathbb P^1$ with Hodge polynomial $\mathsf g t^{-1}+2+\mathsf g t$. The fixed locus $X^{\Z/p}$ contains either product surfaces $C\times\mathbb P^1$, or two copies of $C$, depending on whether the action of $\Z/p$ on the factor $\mathbb P^1$ is trivial or not.

 There are many examples of surfaces $S$ satisfying our condition, at least in the case $p=2$. One can take, e.g., the double cover of $\mathbb P^2$ ramified at a smooth quartic curve, or the surface   whose open affine part is given by the equation 
 $$x_1 x_2=Q(x_3),\quad \text{with the involution }x_1\leftrightarrow x_2$$
 where $Q$ is any polynomial of degree $2k\ge 4$ with simple roots.

 Also, in the case $p=2$, dealing \textit{solely} with symbols (and keeping track of birational types of connected components $F_\alpha$ of the fixed locus $X^{\Z/2}$), we can not detect $\Z/2$-equivariant non-rationality. Indeed, if $X$ is a threefold with a $\Z/2$ action and if $F_\alpha$ is an
irreducible component of the fixed point locus, then this component
contributes to $\beta(X)$ the summand $[1,1,0]$ when $\dim F_\alpha = 1$,
and the summand $[1,0,0]$ when $\dim F_\alpha = 2$. However, both $[1,1,0]$
and $[1,0,0]$ are equal to zero in ${\mathcal B}_3(\Z/2)$. Indeed, let $Y$ be any smooth
threefold with a $\Z/2$ action, such that the fixed point locus contains
a surface $S \subset Y$. Let $C \subset S$ be a smooth curve. Then $\Z/2$
acts with weights 0 and 1 on $\mathcal{N}_YC$. Consider the blowup $\mathrm{Bl}_C(Y)$ with
exceptional divisor $\mathbb{P}(\mathcal{N}_YC)$. The components of $(\mathrm{Bl}_C(Y))^{\Z/2}$ are
the components of $(Y - S)^{\Z/2}$, plus the strict transform of $S$, plus
one extra curve component, which is the section of $\mathbb{P}(\mathcal{N}_YC)$
corresponding to the character 1. The additional component has normal
bundle on which $\Z/2$ acts with characters $(1,1)$, and hence
$\beta(\mathrm{Bl}_C(Y)) = \beta(Y) + [1,1,0]$. Since $\beta$ is a $\Z/2$-birational
invariant, this shows that $[1,1,0] = 0$ in $B_3(\Z/2)$. Similarly, let $Y$
be a smooth threefold with a $\Z/2$ action, such that the fixed point
locus contains a curve component $C \subset Y$. Then, in $\mathrm{Bl}_C(Y)^{\Z/2}$
we have all components of $(Y-C)^{\Z/2}$ plus one extra surface
component, namely the exceptional divisor $E = \mathbb{P}(\mathcal{N}_YC)$. Thus
$\beta(\mathrm{Bl}_C(Y)) = \beta(Y) - [1,1,0] + [1,0,0]$. Since $[1,1,0] = 0$ in
${\mathcal B}_3(\Z/2)$ and $\beta$ is a birational invariant we conclude that $[1,0,0] = 0$
in ${\mathcal B}_3(\Z/2)$ as well.

 Returning to atoms, let us see how one can use Theorem \ref{thm:classical-monodromy} to verify the non-linearizability of the examples of the form $S\times \mathbb P^1$ with $\mathbb Z/2$-actions discussed above. We provide a detailed analysis of the case when $S$ has an open affine part given by the equation $x_1x_2=Q(x_3)$. The other examples can be treated analogously. 

It is immediate to check that $\chi(S)=4+2k$ and $\chi(S/\mathbb Z/2)=4$, and that we have the following Hodge decompositions for cohomologies of $S$ and $S\times \mathbb P^1$, where we colored in \textcolor{red}{red} the invariant classes:
\[
\begin{array}{c}
    \textcolor{red}{1} \\
    \textcolor{red}{2} + 2k \\
    \textcolor{red}{1}
\end{array} \quad \quad 
\begin{array}{c}
    \textcolor{red}{1} \\
    \textcolor{red}{3} + 2k \\
    \textcolor{red}{3} + 2k \\
    \textcolor{red}{1}
\end{array}
\]

A natural family for deformation is that given by the moduli space $B$ of the polynomials $Q$ of degree $2k$ with no multiple roots up to scaling and translation of the variable. Over $B$, we have a universal family of surfaces $\pi_S: S\to B$. Consider the family $\pi: \mathcal X \to B$ by setting $\mathcal{X} = S \times_B (B \times \mathbb P^1)$.

It is straightforward from the above Hodge diamonds that all rational cohomology classes on the fiber $X_t$ are Hodge classes. Also, the non-invariant part of the cohomology of the fibers concentrates the monodromy action: the fundamental group $\pi_1(B)$-action defines a monodromy representation of the symmetric group $\mathcal S_{2k}$ by permutting the roots of $Q$. Thus, the local system on the $G$-invariant cohomology is trivial, verifying the first and second items in Theorem \ref{thm:classical-monodromy}.

 The standard representation of the symmetric group $\mathcal S_m$ on a vector space of dimension $m-1$ is known to be irreducible for $m \ge 2$. In our case, $m=2k$. Since we assume $2k \ge 4$, we have an irreducible representation of $\mathcal S_{2k}$ of dimension $2k-1>2=|\mathbb Z/2|$ for $k\geq 2$.


\end{example}

\

\begin{example}\label{ex:fixed-higher-genus-curve}
Fix $k\ge 2$. Let $P\in\mathbb{C}[x]$ be a monic polynomial of degree $3k$ with only simple roots. Consider the affine threefold
\[
X:\qquad x_1x_2x_3 = P(x_4)\subset \mathbb{C}^4,
\]
equipped with the cyclic action of $\mathbb{Z}/3$ that permutes the coordinates $x_1,x_2,x_3$ and fixes $x_4$. 

Embed $X$ into the product $((\mathbb{P}^1)_{[X_1:Y_1]}\times(\mathbb{P}^1)_{[X_2:Y_2]}\times(\mathbb{P}^1)_{[X_3:Y_3]})\times(\mathbb{P}^1)_{[X_4:Z_4]}$ by the standard affine charts
\[
x_i=\frac{X_i}{Y_i}\;(i=1,2,3),\qquad x_4=\frac{X_4}{Z_4}.
\]
Clearing denominators in the equation $\prod_{i=1}^3(X_i/Y_i)=P(X_4/Z_4)$ yields a bihomogeneous equation
\[
X_1X_2X_3\,Z_4^{3k} \;=\; Q(X_4, Z_4)\,Y_1Y_2Y_3,
\]
where $Q(X_4, Z_4) = Z_4^{3k}P(X_4/Z_4)$ is homogeneous of degree $3k$. This defines a hypersurface
\[
\overline X\;\subset\; (\mathbb{P}^1)^3\times\mathbb{P}^1
\]
defined by a single bihomogeneous polynomial of bidegree $((1,1,1),3k)$. The $\mathbb{Z}/3$-action extends regularly to $\overline X$ by cyclically permuting the factors $(X_i:Y_i)$ for $i=1,2,3$ and fixing $(X_4:Z_4)$.

In the affine chart $X_4\neq0$ (set $X_4=1$) we introduce
\[
y_4:=\frac{Z_4}{X_4}=\frac{1}{x_4},\qquad y_i:=\frac{X_i}{Y_i\,x_4^{\,k}}\;(i=1,2,3),
\]
so that $x_i=y_i x_4^{k}$. Substituting into $x_1x_2x_3=P(x_4)$ and dividing by $x_4^{3k}$ gives 
\begin{equation}\label{eq:chart-infty}
 y_1y_2y_3 \;=\; \frac{P(x_4)}{x_4^{3k}} \;=\; P\bigl(1/y_4\bigr)\,y_4^{3k}.
\end{equation}
Since $P$ is monic of degree $3k$, the right-hand side tends to $1$ as $y_4\to0$. Hence, to leading order near $x_4=\infty$, the hypersurface is approximated by the affine surface
\[
y_1y_2y_3=1,
\]
which is independent of the parameter $x_4$ to leading order.

The $\mathbb{Z}/3$-action permutes the coordinates $(y_1,y_2,y_3)$ in this chart. The fixed locus of the permutation action inside the locus $y_1y_2y_3=1$ is cut out by $y_1=y_2=y_3$, so one must solve
\[
y_1^3 = 1.
\]
Thus, over the point $x_4=\infty$, there are three fixed points given by the three cube roots of unity. Tracking these fixed points as $x_4$ varies gives a compact curve
\[
C=\{(x,t)\in\mathbb{C}^2:\; x^3 = P(t)\}\subset\overline X,
\]
where we use $x$ for the common value of $x_1=x_2=x_3$ on the fixed locus and $t$ for the coordinate $x_4$.

Consider the natural projection
\[
\pi:\; C\longrightarrow \mathbb{P}^1_t,\qquad (x,t)\mapsto t,
\]
where $C$ is the (projective) normalization of the affine curve $x^3=P(t)$. This is a degree $3$ cover. For each simple root $t_0$ of $P$ the local equation is $x^3=a(t-t_0)$ with $a\neq0$, hence there is a single point on $C$ above $t_0$ with ramification index $e=3$ (contribution $e-1=2$). There are $3k$ such simple branch points. At $t=\infty$, the polynomial $P$ is monic of degree $3k$, so the cover is unramified at infinity (the three sheets behave like $x\sim t^k$ and are distinct). The Riemann-Hurwitz formula, therefore, gives
\[
2\mathsf{g}(C)-2 \;=\; 3\cdot(-2) + \sum_{p\in C}(e_p-1) \;=\; -6 + 3k\cdot 2,
\]
whence
\[
\mathsf{g}(C)=3k-2\ge 4.
\]

At a point of the fixed curve $C$ the tangent space of the ambient threefold splits as the tangent to $C$ (the diagonal direction in the $x_1,x_2,x_3$-coordinates together with the $t$-direction along $C$) plus the normal plane which is the two-dimensional subspace of variations of $(x_1,x_2,x_3)$ orthogonal to the diagonal. The permutation action of $\mathbb{Z}/3$ on this two-dimensional subspace has no trivial summand. It splits over $\mathbb{C}$ as the sum of the two nontrivial characters of $\mathbb{Z}/3$ (which we record as weights $1$ and $2$ modulo $3$). Concretely, the representation of a generator $\zeta=e^{2\pi i/3}$ on the normal lines has eigenvalues $\zeta$ and $\zeta^2$. Thus, the normal weights are $1$ and $2$.

The hypersurface $\overline X\subset(\mathbb{P}^1)^3\times\mathbb{P}^1$ may be singular along coordinate strata where several of the homogeneous coordinates $Y_i$ or $Z_4$ vanish simultaneously. Observe, however, that the fixed curve $C$ constructed above is contained in the affine chart where $Y_1Y_2Y_3\neq0$ and $X_4\neq0$ (equivalently, the affine chart in Equation \eqref{eq:chart-infty}). Indeed, the fixed equation forces the $y_i$ coordinates solving $y_1^3=1$ to be nonzero, so $Y_i\neq0$ at points of $C$. Consequently, the curve $C$ lies in the smooth locus of the ambient product and is disjoint from those boundary strata where singularities can occur.

It follows that one can choose an explicit $\mathbb{Z}/3$-equivariant resolution of singularities obtained by blowing up toric centers supported in the complement of the chart containing $C$. Such blowups do not meet $C$ and therefore introduce no new fixed points along $C$. Performing these blowups (and normalizations) produces a smooth projective $\mathbb{Z}/3$-variety $\widetilde X$ whose fixed locus contains the curve $C$ (with the same genus and the same normal weights) and no additional curves.

Under the explicit compactification and equivariant resolution given above, the fixed locus of the $\mathbb{Z}/3$-action on $\widetilde X$ consists of the smooth curve $C$ of genus $3k-2$, with normal weights $1$ and $2$. To compute the invariant in Theorem \ref{thm:invariant}, one notices the following: 
\begin{enumerate}
    \item The term related to symbolic generator is \((C, [1, 2])\) contributes $-1$
    \item There are no terms related to atoms. Indeed, without the group action, the variety is rational and its cohomology contains only Hodge classes (\(\mathrm{H}^{p,q}(X) = 0\) whenever \(p-q \neq 0\)). 
\end{enumerate}
Thus, we get \(-1\) as value for this \(\mathbb{Z}\)-invariant. 

Finally, notice that regardless of the chosen \(\mathbb{Z}/3\)-linearizable action, the invariant for \(\mathbb{P}^3\) will always be \(0\). Consequently, the compactification \(\widetilde{X}\) is not \(\mathbb{Z}/3\)-birationally equivalent to \(\mathbb{P}^3\) with any linear \(\mathbb{Z}/3\)-action.

\

Notice that working with symbols only, we will not be able to deduce the non-\( G \)-linearizability. Indeed, the fixed locus is the curve $C$ with normal weights $1,2$. 
After the blowup with center at $C$, we obtain \textit{two} copies of $C$, again with normal weights $1,2$. Hence, any symbolic invariant is equal to twice itself, therefore vanishes\footnote{Alternatively, similar to what we discussed at the end of Example \ref{ex:product=with-line}, we can blow up some $\Z/3$-variety $X$ in a curve $C$ contained in a 2-dimensional component of the fixed locus $X^{\Z/3}$. The resulting variety has one extra connected component of the fixed locus, a copy of $C$ with the normal weights 1,2. This argument also shows the vanishing of the purely symbolic contribution of $C$ with the normal weights 1,2.}. However, arguing similarly to what is done at the end of Example \ref{ex:product=with-line}, one can apply Theorem \ref{thm:classical-monodromy} to conclude the non-linearizability of the $\mathbb Z/3$-action in the presented example using only atom theory. One of the important aspects of the invariant computed here lies in the fact that for different degrees $k$ (and different polynomials) our varieties could be $\mathbb Z/3$-birationally equivalent to each other. However, the invariant in Theorems \ref{thm:invariant}, \ref{thm:invariant-fine} distinguishes different varieties, at least for different $k$, or for the same $k$ and non-isogenous Jacobians.
\end{example}

\ 

\ 

\section{Atomic decomposition for cohomology of fixed loci, via Chen-Ruan theory}
\label{sec:CR}

Here, we will present another potential extension of the atom theory, directly relating atoms to the cohomology of fixed loci. This approach is conditional on the not-yet-proven blow-up formula for orbifolds (smooth projective  Deligne-Mumford stacks).

\ 

\subsection{Reminder on Chen-Ruan cohomology for global quotients}

Let $G$ be a finite group acting on a smooth projective variety $X$. The Chen-Ruan, or stringy cohomology of the global quotient stack $[X/G]$, is defined as the cohomology of the \text{inertia stack} $\mathcal I_{[X/G]}$, that is, the quotient of 
$$\{(g,x)\in G\times X| \,gx=x\}    $$
by the action of $G$:
$$h\cdot (g,x):=(hgh^{-1},hx).$$
Categorically, in de Rham realization, it is the periodic cyclic homology of the category of perfect complexes on $[X/G]$, or equivalently, the category of $G$-equivariant perfect complexes on $X$ \cite{baranovsky2003orbifold, lupercio2004inertia}.

Explicitly, the Chen-Ruan cohomology is
\begin{equation}\mathrm{H}^\ast_{CR}([X/G]):=\left(\bigoplus_{g\in G}\rH^\ast(X^g)\right)^G\simeq \bigoplus_{g_i}\rH^\ast(X^{g_i})^{C(g_i)}\label{formulaCR}\end{equation}
where $\{g_i\}\subset G$ is a set of representatives for the conjugacy classes in $G$, and $C(g_i)\subset G$ is the centralizer of $g_i$. The summand corresponding to the identity element $g=1$ is $\rH^\ast(X)^G$, known as the \textit{untwisted sector}. Gromov-Witten invariants in this context are constructed from moduli spaces of G-equivariant stable maps.

The domain of such a map is an \emph{admissible G-cover} \cite[Def 2.1]{Jarvis2005}, which is a finite morphism $\pi: E \to C$. Here, $C$ is an $n$-pointed, genus-$\mathsf{g}$, semistable curve with marked points, and the cover must satisfy several conditions:
\begin{itemize}
    \item $E$ is a (potentially disconnected) nodal curve with a $G$-action, and its nodes map to the nodes of $C$.
    \item The map $\pi$ is a principal $G$-bundle over the smooth, unmarked part of $C$. Ramification is permitted over the marked points and nodes.
    \item The action of a stabilizer subgroup $G_q \subseteq G$ at a node $q$ of $E$ must not exchange two branches at $q$, and be \emph{balanced}, meaning the eigenvalues of the group action on the two tangent branches at the node are multiplicative inverses of each other.
\end{itemize}

The map itself is a $G$-equivariant morphism $f: E \to X$. The final stability condition requires that the induced map on the quotient stacks, $\bar{f}: [E/G] \to [X/G]$, is an \emph{orbifold stable map}. 

The moduli space of such objects is denoted by $\overline{\mathcal{M}}_{\mathsf g,n}^{G}(X)$. For a given homology class $\beta \in \rH_2(X/G, \mathbb{Z})$, the substack of maps whose image corresponds to $\beta$ is denoted by $\overline{\mathcal{M}}_{\mathsf g,n}^{G}(X, \beta)$. This is a proper Deligne-Mumford stack, endowed with a natural virtual fundamental class.

\ 

Each marked point $p_i,\,i=1,\dots,n$ of $C$ gives rise to a \textit{canonical} point $(\tilde p_i,g_i)\in \mathcal I_{[E/G]}$ of the inertia stack of $E$. Namely, pick a point $\tilde p_i$ in the preimage of $p_i$. The stabilizer of $\tilde p_i$ is a cyclic group with a \textit{canonical} generator $g_i$, which acts on the tangent space $T_{\tilde p_i}E$ by multiplication by $\exp(2\pi\mathsf i/n_i)$ where $n_i$ is the order of the stabilizer. Therefore, the evaluation maps give a morphism
$$\overline{\mathcal{M}}_{\mathsf g,n}^{G}(X, \beta)\to (\mathcal I_{[X/G]})^{n}\,.$$
Taking the evaluations of the virtual fundamental class of $\overline{\mathcal{M}}_{\mathsf g,n}^{G}(X, \beta)$ at the marked points, we obtain equivariant Gromov-Witten invariants of $[X/G]$.

In order to define quantum product we use the non-degenerate pairing on $\mathrm{H}^\ast_{CR}([X/G])$ coming from Poincar\'{e} duality on the disjoint union $X^g$ of smooth projective varieties (possibly of different dimension), and pairing in \eqref{formulaCR} summands $\rH^\ast(X^g)$ with $\rH^\ast(X^{g^{-1}})$. In the definition of the  Euler vector field one modifies the grading operator by taking into account the so called \textit{age}: the locally constant $\mathbb Q_{\ge 0}$-valued function on $\mathcal I_{[X/G]}$ whose value at $(g,x)\in G\times X,\,gx=g$ is defined as  $\sum_j \theta_j/2\pi$ where $0\le \theta_j<2\pi$ are the principal arguments of the eigenvalues of the action of $g$ on $T_x X$, see e.g. \cite{Jarvis2005}.

In this way, we obtain a version of the A-model maximal F-bundle for global quotient stacks\footnote{This theory generalizes to arbitrary smooth projective Deligne-Mumford stacks which are not necessarily global quotients.}.
Finally, we have a natural action of $\mathsf{Hod}$ preserving the whole structure. The reason is that all virtual fundamental classes of stable orbifold maps are algebraic, and hence Hodge.

\subsection{Untwisted sector}

Define the \textit{untwisted sector} to be the summand $\rH^\ast(X)^G$ of $\rH_{CR}^\ast(X)$ corresponding to $g=1$ in Equation \eqref{formulaCR}.

It follows from the definition that genus $0$ correlators for $[X/G]$ restricted to the untwisted sector $\rH^\ast(X)^G\subset \rH_{CR}^\ast([X/G])$ \textit{coincide} with the correlators for $X$ restricted to $\rH^\ast(X)^G\subset \rH^\ast(X)$. The corresponding moduli spaces of stable maps to $X$ and to $[X/G]$ are the same in this situation. Indeed, one can check that the a priori ramified covering $\pi:E\to C$ in the definition of orbifold stable map, is in our situation automatically non-ramified, i.e. $E=G\times C$.

This implies that the $G$-invariant part $B_X^G$ of the base $B_X$ of the F-bundle for A-model on $X$, is naturally embedded in the base $B_{[X/G]}$ of the F-bundle for the A-model on $[X/G]$:
\begin{equation}
    i_X:B_X^G\hookrightarrow B_{[X/G]}
\end{equation}
Moreover, the inclusion $i_X$ commutes with the action of $\mathsf{Hod}$.

\ 

We will also use the following lemma:
\begin{lemma}
    For any $g_1,\dots,g_n\in G$
    and any elements
    $$\gamma_i\in \rH^\ast(X^{g_i})^{C(g_i)}\subset \rH^\ast_{CR}([X/G]),\quad i=1,\dots, n$$
   and $\beta\in \rH_2(X/G,\Z)$, the correlator 
    $$\langle \gamma_1\dots\gamma_n\rangle_\beta$$
    is zero unless there exist elements
     $g_1',\dots, g_n'$ conjugate to $g_1,\dots, g_n$ such that
      $$g_1'\dots g_n'=1\,.$$
    \label{lmm:product_constraint}
\end{lemma}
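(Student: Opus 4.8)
The plan is to reduce the vanishing statement to a purely topological constraint on the monodromy of the domain cover, and then to extract that constraint from the genus-zero relation in the fundamental group of the punctured base curve. Throughout I work with the genus-zero correlators that enter the quantum product, so that the target relation is the sharp equality $g_1'\cdots g_n'=1$ (in positive genus the dual graph would carry independent cycles and one would only obtain a product of commutators, which is precisely why genus $0$ is the relevant case here).

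First I would write the correlator as the integral
$$\langle\gamma_1\cdots\gamma_n\rangle_\beta=\int_{[\overline{\mathcal M}^{G}_{0,n}(X,\beta)]^{\mathrm{vir}}}\mathrm{ev}_1^\ast\gamma_1\cup\cdots\cup\mathrm{ev}_n^\ast\gamma_n,$$
and observe that each insertion $\gamma_i\in\rH^\ast(X^{g_i})^{C(g_i)}$ lives in the sector of $\mathcal I_{[X/G]}$ indexed by the conjugacy class $[g_i]$. Since $\mathrm{ev}_i$ sends a stable map to the point of the inertia stack determined by its monodromy at the $i$-th marked point, the class $\mathrm{ev}_i^\ast\gamma_i$ is supported on the open-and-closed locus where that monodromy lies in $[g_i]$. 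Hence the integral vanishes automatically unless $\overline{\mathcal M}^{G}_{0,n}(X,\beta)$ has a connected component parametrizing orbifold stable maps whose monodromy at the $i$-th marked point lies in $[g_i]$ for every $i$. It therefore suffices to show that the existence of one such map forces the desired relation.

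Next I would analyze the domain of such a map. By definition it is an admissible $G$-cover $\pi\colon E\to C$ with $C$ a connected $n$-pointed semistable curve of genus $0$; over the smooth unmarked locus $C^{\circ}$ the map $\pi$ is a principal $G$-bundle, hence classified by a conjugacy class of homomorphisms $\rho\colon\pi_1(C^{\circ})\to G$, and the monodromy $\rho(\ell_i)$ of a small loop $\ell_i$ around the $i$-th marked point is exactly the canonical generator recorded by $\mathrm{ev}_i$, so $g_i':=\rho(\ell_i)\in[g_i]$. When $C\cong\PP^1$ is irreducible one is done immediately, since $\pi_1(C^{\circ})=\langle \ell_1,\dots,\ell_n\mid \ell_1\cdots\ell_n=1\rangle$ gives $g_1'\cdots g_n'=1$. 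For a general genus-$0$ nodal $C$, a tree of $\PP^1$'s, I would induct on the number of irreducible components along the dual tree: on each component the product of the monodromies over its special points (marked points and nodes) is trivial, while the balanced condition at each node forces the two branch monodromies to be mutually inverse, so contracting the tree one node at a time cancels the node contributions in pairs and leaves $\prod_i g_i'=1$, up to an overall conjugation that can be absorbed into the choice of representatives within the classes $[g_i]$.

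The main obstacle is the bookkeeping in this last, nodal step: to make the cancellation precise one must fix compatible base points and connecting paths in $C^{\circ}$ so that the per-component relations can be multiplied coherently, and verify that the conjugation ambiguity of $\rho$ does not obstruct the inverse-pairing of node monodromies supplied by the balanced condition. Genus $0$ is exactly what makes this go through cleanly: the dual graph of $C$ is a tree, carrying no independent loops, so there are no handle contributions and the telescoping leaves precisely the identity rather than a nontrivial product of commutators. All remaining ingredients—the sector decomposition of $\mathcal I_{[X/G]}$, the compatibility of $\mathrm{ev}_i$ with monodromy, and the balanced gluing at nodes—are built into the definitions recalled above.
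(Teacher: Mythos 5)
Your proof is correct and follows essentially the same route as the paper's: in the smooth case the admissible $G$-cover gives a homomorphism from the fundamental group of the $n$-punctured sphere to $G$, yielding $n$ elements conjugate to the $g_i$ with product $1$, and in the nodal case the tree structure of the genus-zero domain together with the balancing condition at the nodes lets the node monodromies cancel in pairs. The paper states this very tersely; your write-up simply spells out the same bookkeeping in more detail.
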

\begin{proof} In the case when the stable map is from a smooth curve $C$ of genus $0$ with $n$ marked points $p_1,\dots,p_n$, we have a $G$-covering $E$ of $C-\{p_1,\dots,p_n\}$. This covering gives a homomorphism from the fundamental group of the sphere minus $n$ points to $G$, i.e., $n$ elements in $G$ whose product is $1$, and conjugated to $g_i$. In the more general case of a nodal curve, we get a tree of rational curves. In this case, we use the balancing condition at the nodes.
\end{proof}

\begin{corollary} For any $g\in G$,  the quantum multiplication for $[X/G]$ at any point $b$ of submanifold $B_X^G\subset B_{[X/G]}$ equips the summand $\mathrm{H}^\ast(X^g)^{C(g)}$ with the structure of a module over the algebra $({\mathrm H^\ast(X)}^{G},\star_b)$, equivariant with respect to the action of $\mathsf{Hod}$. \label{cor:module}
    \end{corollary}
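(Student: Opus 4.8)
The plan is to deduce everything from Lemma~\ref{lmm:product_constraint} together with the sector structure of the Chen--Ruan Poincar\'e pairing, which couples the summand $\mathrm H^\ast(X^g)^{C(g)}$ only with $\mathrm H^\ast(X^{g^{-1}})^{C(g^{-1})}$. The essential observation is that the image $i_X(B_X^G)\subset B_{[X/G]}$ parametrizes deformations lying entirely in the untwisted sector $\mathrm H^\ast(X)^G$, so that every insertion coming from the deformation parameter $b$ sits in the sector of the identity.

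First I would fix $\alpha\in\mathrm H^\ast(X)^G$ and $\gamma\in\mathrm H^\ast(X^g)^{C(g)}$, and test $\alpha\star_b\gamma$ against an arbitrary class $\delta\in\mathrm H^\ast(X^h)^{C(h)}$ for a conjugacy-class representative $h$, using the defining expansion
\[
\langle \alpha\star_b\gamma,\ \delta\rangle
=\sum_{n\ge0,\ \beta}\frac{q^\beta}{n!}\,
\langle \alpha,\gamma,\delta,\underbrace{b,\dots,b}_{n}\rangle_\beta .
\]
The insertions lie in the sectors $1,g,h,1,\dots,1$, so Lemma~\ref{lmm:product_constraint} forces this correlator to vanish unless some conjugate $g'$ of $g$ and some conjugate $h'$ of $h$ satisfy $g'h'=1$, i.e.\ unless $h$ is conjugate to $g^{-1}$. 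Since the pairing is non-degenerate and couples only dual sectors, a nonzero component of $\alpha\star_b\gamma$ in a sector $k$ would be detected by pairing against a class in sector $k^{-1}$; the vanishing just established shows that such detection forces $k$ to be conjugate to $g$. Hence $\alpha\star_b\gamma\in\mathrm H^\ast(X^g)^{C(g)}$, so quantum multiplication by an untwisted class preserves each twisted summand.

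Next I would record that the untwisted sector is closed under $\star_b$ --- the same vanishing argument with $\gamma$ replaced by an untwisted class forces $h=1$ --- so that $(\mathrm H^\ast(X)^G,\star_b)$ is an associative, commutative subalgebra with unit $1\in\mathrm H^0(X)^G$. Associativity and commutativity of $\star_b$ (the Chen--Ruan/WDVV relations) then immediately yield the module axioms for the induced bilinear map $\mathrm H^\ast(X)^G\otimes\mathrm H^\ast(X^g)^{C(g)}\to\mathrm H^\ast(X^g)^{C(g)}$, namely $(\alpha_1\star_b\alpha_2)\star_b\gamma=\alpha_1\star_b(\alpha_2\star_b\gamma)$ and $1\star_b\gamma=\gamma$.

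For $\mathsf{MT}$-equivariance I would use that $\mathsf{MT}$ acts on $\mathrm H^\ast_{CR}([X/G])$ through the Hodge structures on the fixed loci $X^g$, commuting with the $G$-action and hence preserving the sector decomposition, and that $i_X$ commutes with $\mathsf{MT}$. Because all virtual fundamental classes of orbifold stable maps are algebraic and therefore Hodge, every structure constant $\langle\cdots\rangle_\beta$ is fixed by $\mathsf{MT}$; consequently the module-action map is a morphism of $\mathsf{MT}$-representations, and the whole family of module structures over $B_X^G$ transforms equivariantly under $\mathsf{MT}$. The only genuinely delicate step is the sector bookkeeping in the second paragraph---in particular, ensuring that on a nodal domain the balancing condition at the nodes is precisely what makes Lemma~\ref{lmm:product_constraint} applicable to every boundary contribution---but this is exactly what the cited lemma provides.
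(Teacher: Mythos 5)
Your argument is correct and follows essentially the same route as the paper: the paper's proof also rests on applying Lemma~\ref{lmm:product_constraint} to correlators in which all insertions except two lie in the untwisted sector, concluding that the remaining two conjugacy classes must be mutually inverse, with $\mathsf{MT}$-equivariance coming from the algebraicity of the virtual fundamental classes. You merely spell out in more detail the duality-of-sectors bookkeeping and the verification of the module axioms, which the paper leaves implicit.
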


\begin{proof}
    Indeed, for the multiplication, we use correlators in the following special case:  
 all observables except 2 belong to the untwisted sector. Then the conjugacy classes associated with the remaining two points are automatically opposite, as follows from the  Lemma \ref{lmm:product_constraint}. The equivariance with respect to $\mathsf{Hod}$-action is obvious.
\end{proof}

\ 

\subsection{Conjectures about blowup formulas for quotient stacks}

Here, we will formulate three conjectures on which all further considerations in this section are based. We expect that these conjectures are not only true but will become essentially trivial if one understands sufficiently well the mechanism of Iritani's proof \cite{iritani2023quantumcohomologyblowups}.

Let $X$ be a smooth projective variety endowed with $G$-action, and $Z\subset X$ be a $G$-invariant submanifold of codimension $c\ge 2$.
 Denote by $\widetilde X= \mathrm{Bl}_Z(X)$  the blowup of $X$ in  $Z$, and  let $X' = X \sqcup Z \sqcup \ldots \sqcup Z$ (with $c-1$ copies of $Z$). There is an obvious $G$-action on $\widetilde X$ and on $X'$. First, we expect an analog of Iritani's theorem for quotients:

 \begin{conjecture}\label{conj:1} There is a canonical (partial) isomorphism $iso_{[\widetilde X/G],[X'/G]}$ of F-bundles restricted to nonempty connected analytic domains in $B_{[\widetilde X/G]}$ and $B_{[X'/G]}$ commuting with $\mathsf{Hod}$-action.
 \end{conjecture}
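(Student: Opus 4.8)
The plan is to reduce the conjecture to Iritani's non-equivariant blowup theorem \cite{iritani2023quantumcohomologyblowups} applied sector by sector in the inertia-stack decomposition \eqref{formulaCR}. The starting point is the geometric observation that, since $Z$ is $G$-invariant, the blowup $\widetilde X=\mathrm{Bl}_Z(X)$ is $G$-equivariant, and for each $g\in G$ the fixed locus $\widetilde X^g$ is computed from the $g$-action on the pair $(X,Z)$. Writing $Z^g=Z\cap X^g$ and decomposing the normal bundle $\mathcal N_X Z|_{Z^g}=\bigoplus_{\chi}\mathcal N_\chi$ into $g$-eigenbundles, one finds
\[
\widetilde X^g \;=\; \mathrm{Bl}_{Z^g}(X^g)\;\sqcup\;\bigsqcup_{\chi\neq 1}\mathbb{P}(\mathcal N_\chi),
\]
the first piece coming from the trivial eigendirection and the remaining projective bundles lying inside the exceptional divisor. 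Combining the ordinary blowup formula on $\mathrm{Bl}_{Z^g}(X^g)$ with Leray--Hirsch for the bundles $\mathbb{P}(\mathcal N_\chi)$ already matches the $g$-sector of $\mathrm{H}^\ast_{CR}([\widetilde X/G])$ with $\mathrm{H}^\ast(X^g)^{C(g)}\oplus(c-1)\,\mathrm{H}^\ast(Z^g)^{C(g)}$, i.e.\ with the $g$-sector of $\mathrm{H}^\ast_{CR}([X'/G])$. The content of the conjecture is to upgrade this vector-space matching to an isomorphism of $F$-bundles near a distinguished point, the stacky analogue of the point $\widetilde b$ used in \cite[Theorem~4.5]{katzarkovpantevyu}.

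For the untwisted sector ($g=1$) the argument is essentially immediate. As recorded in the discussion preceding Lemma \ref{lmm:product_constraint}, genus-zero correlators of $[X/G]$ restricted to $\mathrm{H}^\ast(X)^G$ coincide with the $G$-invariant correlators of $X$, because the admissible cover is automatically unramified, $E=G\times C$. I would therefore apply Iritani's theorem to the $F$-bundle of $\widetilde X$ — whose identification with that of $X'=X\sqcup Z\sqcup\cdots\sqcup Z$ is canonical and, by naturality, commutes with $\mathrm{Aut}(X)$ — and then restrict to the fixed locus $B_X^G$ and pass to $G$-invariants. This is precisely the restriction-to-invariants mechanism already used to construct equivariant atoms in Section \ref{sec:equivariant-atoms}, and it produces the untwisted part of $iso_{[\widetilde X/G],[X'/G]}$ over a connected analytic domain.

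For a twisted sector $g\neq 1$ the key structural input is Corollary \ref{cor:module}: over $B_X^G$ the summand $\mathrm{H}^\ast(X^g)^{C(g)}$ is a module over the untwisted quantum algebra $(\mathrm{H}^\ast(X)^G,\star_b)$, compatibly with $\mathsf{MT}$. The block-diagonalization of the untwisted quantum product supplied by Iritani's theorem then organizes each twisted sector as a module, and the extra projective-bundle components $\mathbb{P}(\mathcal N_\chi)$ are governed by the quantum Leray--Hirsch decomposition of Iritani--Koto \cite{iritani2024quantumcohomologyprojectivebundles}. What remains is to verify that this module-level decomposition is compatible with the full quantum connection, with the grading corrected by the age function; once this is granted, every cluster of eigenvalues of $\boldsymbol\kappa$ on the untwisted sector drags its twisted sectors along, matching the $F$-bundle of $[\widetilde X/G]$ to that of $[X'/G]=[X/G]\sqcup[Z/G]\sqcup\cdots\sqcup[Z/G]$. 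The required $\mathsf{MT}$-equivariance is automatic, since every virtual fundamental class of orbifold stable maps is algebraic, hence Hodge.

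The main obstacle is exactly this twisted-sector step. Iritani's proof in the ordinary case rests on a delicate analysis of the quantum differential system through explicit $I$-functions and Mellin--Barnes integral representations, and its transcription to the Chen--Ruan setting requires the orbifold $I$-function formalism together with the age-shifted grading and a uniform control of all sectors simultaneously. It is this analytic core that must be adapted; the remark motivating the conjecture is precisely that, once the mechanism of \cite{iritani2023quantumcohomologyblowups} is understood in sufficient generality, the sector-by-sector bookkeeping above should render the statement transparent. A complete proof is deferred to future work.
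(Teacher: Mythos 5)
This statement is a \emph{conjecture} in the paper: the authors explicitly introduce it as one of three unproven assumptions on which the rest of Section \ref{sec:CR} is conditional, offer no proof, and only give a one-sentence heuristic (a ``sum over trees'' expectation combined with Lemma \ref{lmm:product_constraint}) in favor of the \emph{other two} conjectures. So there is no proof in the paper to compare yours against; anything you write here is necessarily going beyond the source.

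That said, your sketch is a sensible and more detailed elaboration of the strategy the authors gesture at. The geometric decomposition of $\widetilde X^g$ into $\mathrm{Bl}_{Z^g}(X^g)$ together with the projective bundles $\mathbb{P}(\mathcal N_\chi)$ for the nontrivial eigencharacters is correct (the rank count $(\mathrm{rk}\,\mathcal N_1-1)+\sum_{\chi\neq 1}\mathrm{rk}\,\mathcal N_\chi=c-1$ does reproduce the $c-1$ copies of $\mathrm{H}^\ast(Z^g)$ in the $g$-sector of $[X'/G]$), and your untwisted-sector argument via restriction of Iritani's isomorphism to $B_X^G$ is consistent with the paper's own use of naturality in Section \ref{sec:equivariant-atoms}. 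But you should be clear that this only establishes a state-space (Chen--Ruan cohomology) matching plus a plausible module-theoretic organization of the twisted sectors via Corollary \ref{cor:module}; it does not produce the isomorphism of $F$-bundles, i.e.\ the identification of the full quantum connections with their age-shifted gradings over an analytic domain. That analytic core --- the orbifold analogue of Iritani's $I$-function and Mellin--Barnes analysis, carried out uniformly over all sectors --- is precisely what the conjecture asserts and what neither you nor the paper supplies. You acknowledge this honestly, so the proposal should be read as a reduction of the conjecture to a well-identified open analytic step, not as a proof.
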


 Next, we expect that this isomorphism behaves well with respect to the loci corresponding to the untwisted sector:

 \begin{conjecture}\label{conj:2}
     The isomorphism $iso_{[\widetilde X/G],[X'/G]}$ maps the untwisted sector of $B_{[\widetilde X/G]}$ to the  untwisted sector of $B_{[ X'/G]}$. 
 \end{conjecture}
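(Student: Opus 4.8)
The plan is to reduce the statement to the geometry of the inertia stacks together with the rigidity of F-bundle isomorphisms, exploiting that the untwisted sector is cut out \emph{canonically} by the age grading. First I would record the intrinsic characterization of the untwisted sector. By \eqref{formulaCR} the base $B_{[X/G]}$ decomposes, over the locus coming from $\rH^\ast_{CR}$, according to the conjugacy classes $[g]$, and the summand with $g=1$ is exactly the image of $i_X : B_X^G \hookrightarrow B_{[X/G]}$. The point is that this summand is singled out by the \emph{age}: for $g\ne 1$ the element $g$ acts nontrivially on the normal directions to $X^g$, so all principal arguments $\theta_j$ attached to those normal eigenvalues are strictly positive and the age of the component $X^g$ is $>0$, while the age vanishes identically on the $g=1$ component. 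Hence ``untwisted sector $=$ age-zero component'' is an intrinsic description, and since the age enters the definition of the grading operator $\mathsf{Deg}$ (hence of $\mathsf{Eu}$ and of the connection $\nabla$) of the orbifold F-bundle, the objective is to show that the isomorphism of Conjecture \ref{conj:1} carries the age filtration to the age filtration.

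Second, I would make the geometric decomposition precise sector by sector. Because $Z$ is $G$-invariant, the blow-up $\widetilde X = \Bl_Z X$ carries a $G$-action, and for each $g$ the fixed locus $\widetilde X^{g}$ is built from $X^{g}$ and the $g$-fixed part of the exceptional divisor over $Z$; a direct analysis (the inertia-stack analogue of Proposition \ref{thm:voisin}) should express the $g$-twisted sector of $[\widetilde X/G]$ through the $g$-twisted sector of $[X'/G]$ by a blow-up-type correspondence, with $(X')^{g}=X^{g}\sqcup Z^{g}\sqcup\cdots\sqcup Z^{g}$ playing the role of the decomposed model. The $g=1$ piece recovers Iritani's ordinary relation between $\widetilde X$ and $X'$. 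The expectation underlying Conjecture \ref{conj:1} is precisely that $iso_{[\widetilde X/G],[X'/G]}$ is assembled out of these sectorwise correspondences, so that it respects the conjugacy-class grading; the $g=1$ block is then, by construction, a map from the untwisted sector to the untwisted sector.

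Third, I would pin down the restricted map and confirm its target. On the untwisted sector the genus-zero correlators of $[X/G]$ agree with the $G$-invariant correlators of $X$ (as forced by Lemma \ref{lmm:product_constraint} and used in Corollary \ref{cor:module}); hence the untwisted-sector F-bundle of $[\widetilde X/G]$ (resp. $[X'/G]$) coincides with the $G$-invariant part of the ordinary A-model F-bundle of $\widetilde X$ (resp. $X'$). Since Iritani's ordinary isomorphism $iso_{\widetilde X,X'}$ is natural, it commutes with the $G$-action, restricts to a map $B_{\widetilde X}^{G}\to B_{X'}^{G}$, and commutes with $\mathsf{MT}$. I would then invoke the rigidity of F-bundle isomorphisms (a map on a connected domain preserving $\nabla$, $\star$, $\mathsf{Eu}$ and the $\mathsf{MT}$-action, and fixing the unit, is determined by these data) to identify the $g=1$ block of $iso_{[\widetilde X/G],[X'/G]}$ with $i_{X'}\circ iso_{\widetilde X,X'}^{G}\circ i_{\widetilde X}^{-1}$, which lands in $i_{X'}(B_{X'}^G)$, i.e. in the untwisted sector of $B_{[X'/G]}$.

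The hard part will be the passage in the second step, namely proving that $iso_{[\widetilde X/G],[X'/G]}$ genuinely respects the inertia-stack decomposition and not merely the total F-bundle. The age grading is canonical, but from an abstract F-bundle isomorphism one controls only the eigenvalues of $\boldsymbol\kappa=\mathsf{Eu}\star$ and the total (age-shifted) degree, and age-shifts can make a twisted class share the total degree of an untwisted one; separating the sectors therefore requires that the \emph{full} age filtration, not just the total grading, be natural under the isomorphism. Establishing this is tantamount to understanding the internal mechanism of Iritani's proof in the orbifold setting (the content of Conjecture \ref{conj:1}) well enough to see that the isomorphism is built sectorwise. Once that is in hand, Conjecture \ref{conj:2} follows formally from the age-zero characterization of the untwisted sector together with the rigidity identification above.
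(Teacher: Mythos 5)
The statement you are proving is labelled as a \emph{conjecture} in the paper, and the paper offers no proof of it: the only justification given is a one-sentence heuristic, namely that Iritani's non-linear isomorphism ``in all contexts should be given by a certain sum over trees'' (cf.\ the discussion of operadic/universal formulas in Remark \ref{rem:noncompact}), so that by Lemma \ref{lmm:product_constraint} the tree-summation formulas computed inside $B_{[\widetilde X/G]}$ and inside $B_{\widetilde X}$ literally coincide on the untwisted locus. Note that Lemma \ref{lmm:product_constraint} does the real work in that heuristic: a genus-zero correlator with all insertions untwisted forces the remaining insertion to be untwisted as well, so every tree with untwisted inputs has untwisted internal edges and output, and the restriction of the conjectural universal formula to the untwisted sector closes up automatically. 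This one mechanism is meant to yield Conjectures \ref{conj:2} and \ref{conj:3} simultaneously.

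Your route is genuinely different, and the difference matters. You characterize the untwisted sector intrinsically as the age-zero component and then try to argue that $iso_{[\widetilde X/G],[X'/G]}$ respects the inertia-stack decomposition; but, as you yourself concede in your final paragraph, an abstract isomorphism of F-bundles only controls the spectral data of $\boldsymbol{\kappa}$ and the age-shifted total grading, and these do not separate the sectors (age shifts can place twisted classes in the same total degree as untwisted ones). So your ``second step'' is not a reduction of the conjecture to something more tractable --- it \emph{is} the conjecture, restated. The same applies to your third step: the rigidity identification of the $g=1$ block with $i_{X'}\circ iso_{\widetilde X,X'}^G\circ i_{\widetilde X}^{-1}$ presupposes that the block exists and lands where you want, and is essentially the content of Conjecture \ref{conj:3} rather than a consequence of Conjecture \ref{conj:2}. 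What your write-up does correctly and usefully establish is the preparatory material that the paper also records: the age-zero characterization of the untwisted sector, and the agreement of untwisted-sector correlators of $[X/G]$ with $G$-invariant correlators of $X$ via Lemma \ref{lmm:product_constraint}. But the decisive gap --- showing that the isomorphism of Conjecture \ref{conj:1} is built sectorwise, or equivalently that its (still unknown) universal formula closes on the untwisted sector --- is exactly the point the paper leaves open, and your proposal does not close it either. If you want to pursue a proof, the paper's heuristic suggests the more promising direction: establish the sum-over-trees form of the orbifold blow-up isomorphism and then apply Lemma \ref{lmm:product_constraint} termwise, rather than trying to extract sector-preservation from the age grading a posteriori.
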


Finally, we want to compare the restricted isomorphism with those coming from the plain A-model, forgetting group action:

\begin{conjecture}\label{conj:3}
    The partial isomorphism from the previous conjecture coincides with the restriction of Iritani's isomorphism $iso_{\widetilde X, X'}$
     to the loci of $G$-fixed points.
\end{conjecture}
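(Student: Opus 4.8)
The plan is to reduce Conjecture \ref{conj:3} to a \emph{uniqueness} statement for Iritani's isomorphism, exploiting the fact already recorded in the discussion of the untwisted sector: genus-zero correlators of $[X/G]$ restricted to $\rH^\ast(X)^G$ coincide \emph{on the nose} with the $G$-invariant correlators of $X$. First I would make precise the identification of F-bundles underlying the inclusion $i_X:B_X^G\hookrightarrow B_{[X/G]}$. Since for observables lying in the untwisted sector the relevant admissible $G$-covers $\pi:E\to C$ are forced to be trivial, $E=G\times C$, the quantum product, the Poincar\'e pairing, and the (age-corrected) Euler field of $[X/G]$ restrict along $i_X$ to exactly the quantum product, pairing, and Euler field of the ordinary A-model F-bundle of $X$ after passage to $G$-fixed data. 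The same statement holds verbatim for $\widetilde X$ and for $X'$. Consequently, the untwisted-sector restrictions of the two orbifold F-bundles appearing in Conjecture \ref{conj:2} are \emph{literally} the restrictions to the $G$-fixed loci of the ordinary A-model F-bundles of $\widetilde X$ and of $X'$; that is, $(\mathcal H_{\widetilde X},\nabla)|_{B_{\widetilde X}^G}$ and $(\mathcal H_{X'},\nabla)|_{B_{X'}^G}$.

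Given this identification, both maps to be compared are $\mathrm{Aut}$-equivariant isomorphisms of one and the same pair of F-bundles. The next step is to invoke a characterization of Iritani's isomorphism $iso_{\widetilde X,X'}$: it is the unique isomorphism of F-bundles, over the chosen analytic domains, which intertwines the flat connections and the Euler fields and which induces the cohomological correspondence dictated by Proposition \ref{thm:voisin} (equivalently, the block-diagonal eigenvalue-cluster decomposition of $\widetilde{\mathsf{Eu}}\,\widetilde\star$ described in Section \ref{sec:global-hodge-atoms}). Because this characterization is phrased entirely through structures that are natural in $X$, and hence equivariant for $\mathrm{Aut}(X)\supset G$, the isomorphism $iso_{\widetilde X,X'}$ automatically commutes with $G$ and therefore restricts to an isomorphism of the $G$-fixed F-bundles satisfying the same universal property on the $G$-fixed locus.

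It then remains to verify that the orbifold isomorphism of Conjecture \ref{conj:1}, once restricted to the untwisted sectors as in Conjecture \ref{conj:2}, satisfies \emph{that same} characterizing property; granting this, the two maps coincide by uniqueness and Conjecture \ref{conj:3} follows. This is where the bulk of the work lies and is the main obstacle. One must trace through the orbifold analogue of Iritani's construction and check that, on the untwisted block, the eigenvalue clustering of $\widetilde{\mathsf{Eu}}\,\widetilde\star$ and the resulting decomposition reduce exactly to their non-equivariant counterparts on the $G$-fixed locus. Here Corollary \ref{cor:module} provides the structural mechanism: the untwisted sector acts as a module on each twisted summand, so the cluster decomposition of the full orbifold operator restricts on the untwisted block to the $G$-invariants of the plain decomposition, and the twisted summands cannot interfere with the untwisted normalization. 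I expect the remaining difficulty to be essentially the one flagged by the authors, namely understanding Iritani's proof \cite{iritani2023quantumcohomologyblowups} well enough to see that his Fourier-analytic matching is assembled functorially from virtual fundamental classes, so that restriction of scalars to $G$-invariants commutes with the construction. Once this functoriality is in hand, both isomorphisms are built by the identical recipe from the identical $G$-invariant Gromov-Witten data, and their coincidence is forced.
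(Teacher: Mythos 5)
First, note that the statement you are proving is stated in the paper as a \emph{conjecture}, and the paper offers no proof of it: the authors explicitly say they expect Conjectures \ref{conj:1}--\ref{conj:3} to become clear only once the internal mechanism of Iritani's proof is sufficiently well understood, and they supply only a heuristic. Their heuristic is different from your route: they argue that Iritani's non-linear isomorphism should in all contexts be given by universal sum-over-trees formulas in the genus-zero correlators (cf.\ Remark \ref{rem:noncompact}), and then Lemma \ref{lmm:product_constraint} forces the orbifold correlators restricted to the untwisted sector to coincide with the $G$-invariant correlators of the plain theory, so the \emph{same universal formula evaluated on the same data} must give the same map. Your opening step --- identifying the untwisted-sector restriction of the orbifold F-bundle with the $G$-fixed restriction of the ordinary one via the triviality of the admissible covers $E=G\times C$ --- is correct and is exactly the content of the paper's untwisted-sector discussion; but from there you take a genuinely different path, through a uniqueness/universal-property characterization of $iso_{\widetilde X, X'}$ rather than through the universality of its formula.

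The gap is precisely that characterization. You assert that Iritani's isomorphism is \emph{the unique} isomorphism of F-bundles over the relevant analytic domains intertwining the flat connections and Euler fields and inducing the cohomological correspondence of Proposition \ref{thm:voisin}, and then conclude by "both maps satisfy the same universal property, hence coincide." No such uniqueness statement is established in the paper or, to my knowledge, in \cite{iritani2023quantumcohomologyblowups}; Iritani's map is a specific construction (via shift operators and Fourier-type analysis), not something currently pinned down by an abstract universal property, and an isomorphism of F-bundles compatible with $\nabla$ and $\mathsf{Eu}$ could a priori be modified by automorphisms unless one proves otherwise. Since the entire weight of your argument rests on this uniqueness, and you yourself defer the verification that the orbifold isomorphism of Conjecture \ref{conj:1} satisfies the characterizing property to "the bulk of the work," the proposal does not close the conjecture --- it replaces one unproven input (the universal tree-sum form of Iritani's map) with another (its uniqueness characterization). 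If you want to pursue your route, the concrete task is to extract from Iritani's construction a rigidity statement for F-bundle isomorphisms normalized at the semi-ample limit point $\widetilde b$; if you want to pursue the paper's route, the task is to exhibit the map as a universal operadic expression in the correlators so that Lemma \ref{lmm:product_constraint} applies directly. Either way, the statement remains conjectural.
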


A heuristic argument in favor of Conjectures \ref{conj:2} and \ref{conj:3} is that it looks highly plausible that the Iratini's non-linear isomorphism in all contexts should be given by certain sum over trees (see Remark \ref{rem:noncompact}), and  by Lemma \ref{lmm:product_constraint} we should get the same formulas for isomorphisms for the untwisted sector considered as a submanifold of $B_{\widetilde X}$ (resp. $B_{X'}$) or $B_{[\widetilde X/G]}$ (resp. $B_{[X'/G]}$).

One can also formulate analogous conjectures for projectivizations of vector bundles, akin to \cite{iritani2024quantumcohomologyprojectivebundles}.

\ 

\subsection{Atomic decomposition of cohomology of fixed loci}

Let us assume all conjectures from the previous subsection. Recall by Corollary \ref{cor:module}
that for any given element $g\in G$, restricting to a point $b$ in the sublocus $B_X^{G\times\mathsf{Hod}}$ of the big base $B_{[X/G]}$, we obtain an action of the algebra $({\mathrm H^\ast(X)}^{G\times\mathsf{Hod}},\star_b)$ on $\mathrm{H}^\ast(X^g)^{C(g)}$, commuting with $\mathsf{Hod}$. In fact, one can do more. Namely, for a given $g\in G$ consider a \textit{new} group $g^\Z$, the cyclic group generated by $g$, and apply the general theory to the action of $g^\Z$. We obtain the action of the larger algebra $(\mathrm H^\ast(X)^{g^\Z\times \mathsf{Hod}},\star_b)$ on the larger space  $\mathrm{H}(X^g)$ which commutes by naturality with $C(g) \times\mathsf{Hod}$-action. Restricting to the smaller subalgebra $(\mathrm H^\ast(X)^{G\times \mathsf{Hod}},\star_b)$ we get action of it on $\mathrm H^\ast(X^g)$ commuting with $C(g)\times\mathsf{Hod}$-action.

 Therefore, for each $G$-equivariant atom $\boldsymbol\alpha$ of $X$, we obtain as an invariant an isomorphism class of a $C(g)\times \mathsf{Hod}$-module, which we denote by $\mathsf H_{\boldsymbol\alpha,g}$. In particular, we have the corresponding Hodge polynomial which will be denoted by $P_{\boldsymbol{\alpha},[g]}\in\Z_{\ge 0}[t,t^{-1}]$, here $[g]$ is the conjugacy class of $g$.

 We have the following isomorphism of $C(g)\times \mathsf{Hod}$-modules:
 $$\rH^\ast(X^g)\simeq \oplus_{\boldsymbol\alpha}  \mathsf H_{\boldsymbol\alpha,g}\,$$

  In the case $g=1$, we get the usual atoms and the original invariant $P_{\boldsymbol{\alpha}}$. Non-trivial conjugacy classes give new invariants of atoms, which we will explore in the next subsections.
  
 Notice that it is quite possible that for two elements $g_1,g_2\in G$ generate the same cyclic subgroup, i.e., $g_2=g_1^k$ for some $k\ge 1$ coprime to the order of $g_1$, the atomic decompositions of the same space $\rH^\ast(X^{g_1})=\rH^\ast(X^{g_2})$ are different, when one interprets it as the fixed locus of $g_1$ or of $g_2$. The reason is that the moduli spaces of orbifold stable maps could be different for $g_1$ and $g_2$.

\ 

 \subsection{First applications}

 \begin{lemma}\label{lem:coefficient}
    For any $d\ge 1$, any conjugacy class $[g]$ of $G$, and any atoms $\boldsymbol\alpha$ coming from varieties of dimension $\le d$, we have 
    $$\mathrm{Coeff}_{t^d}P_{\boldsymbol{\alpha},[g]}\le \mathrm{Coeff}_{t^d}P_{\boldsymbol{\alpha}}.$$
\end{lemma}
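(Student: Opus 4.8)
The plan is to reduce the inequality to a single smooth projective $G$-variety $V$ of dimension $\le d$ whose $G$-equivariant atomic decomposition contains $\boldsymbol\alpha$, and then to analyze the extreme Hodge weight. Since the invariants $P_{\boldsymbol\alpha}$ and $P_{\boldsymbol\alpha,[g]}$ depend only on the atom (the former by Lemma~5.25 of \cite{katzarkovpantevyu}, the latter by the invariance of the untwisted-sector module structure under the blow-up equivalences, which is exactly the content of Conjectures \ref{conj:1}--\ref{conj:3}), we may compute $P_{\boldsymbol\alpha}$ as the Hodge polynomial of the generalized $\lambda_{\boldsymbol\alpha}$-eigenspace $\mathcal E^{\boldsymbol\alpha}\subset\rH^\ast(V)$ of $\boldsymbol\kappa=\mathsf{Eu}\star_b$, and $P_{\boldsymbol\alpha,[g]}$ as the Hodge polynomial of the generalized $\lambda_{\boldsymbol\alpha}$-eigenspace $\mathsf H_{\boldsymbol\alpha,g}\subset\rH^\ast(V^g)$ of the module action of $\boldsymbol\kappa$ furnished by Corollary \ref{cor:module}. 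Using the disjoint-union equivalence I would further arrange that $G$ permutes the connected components of $V$ transitively, so that the discussion localizes on a single orbit of components.

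The next step is the observation that the coefficient of $t^d$ extracts the extreme Hodge weight: for any smooth projective $Y$ of dimension $\le d$ one has $\mathrm{Coeff}_{t^d}P_Y=h^{d,0}(Y)=\dim H^0(Y,\Omega^d_Y)$, since $p-q=d$ with $0\le p,q\le\dim Y\le d$ forces $(p,q)=(d,0)$. Consequently $\mathrm{Coeff}_{t^d}P_{\boldsymbol\alpha}$ and $\mathrm{Coeff}_{t^d}P_{\boldsymbol\alpha,[g]}$ are the dimensions of the $(d,0)$-parts of $\mathcal E^{\boldsymbol\alpha}$ and $\mathsf H_{\boldsymbol\alpha,g}$ respectively, and both vanish unless $\dim V=d$. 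Assume therefore $\dim V=d$. Holomorphic $d$-forms on $V^g$ are supported on the $d$-dimensional components of $V^g$; each such component is open and closed in $V$ and fixed pointwise by $g$, hence is a connected component of $V$ on which $g$ acts trivially. This yields a natural inclusion $\iota\colon H^{d,0}(V^g)\hookrightarrow H^{d,0}(V)$ (extension by zero on the components moved or shrunk by $g$), identifying the top holomorphic forms of $V^g$ with those of $V$ carried by the $g$-fixed top-dimensional components. In particular, if $g$ acts nontrivially on every component of $V$ --- which, by connectedness of the components, forces $\dim V^g<d$ --- then $\mathrm{Coeff}_{t^d}P_{\boldsymbol\alpha,[g]}=0$ and the inequality is immediate.

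It remains to check that $\iota$ is compatible with the two eigenvalue decompositions, i.e.\ that $\iota$ carries $(\mathsf H_{\boldsymbol\alpha,g})_{(d,0)}$ into $(\mathcal E^{\boldsymbol\alpha})_{(d,0)}$; since $\iota$ is injective this gives $\dim(\mathsf H_{\boldsymbol\alpha,g})_{(d,0)}\le\dim(\mathcal E^{\boldsymbol\alpha})_{(d,0)}$, which is the claim. On a $g$-fixed top-dimensional component $W\subseteq V^g$ the age of $g$ vanishes (the eigenvalues of $g$ on $T_xV$ are all $1$ for $x\in W$), and the genus-zero orbifold correlators computing the module action of $\boldsymbol\kappa$ on classes supported on $W$ reduce to the ordinary Gromov--Witten correlators of $V$ restricted to $W$, exactly as in the untwisted-sector coincidence used to embed $B_V^G\hookrightarrow B_{[V/G]}$ together with Lemma \ref{lmm:product_constraint}. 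Hence the module operator $\boldsymbol\kappa$ and the algebra operator $\mathsf{Eu}\star_b$ agree along $\iota\bigl(H^{d,0}(V^g)\bigr)$, so the generalized $\lambda_{\boldsymbol\alpha}$-eigenspaces match on these top forms and $\iota$ preserves the atom labelling. The main obstacle is precisely this last compatibility: one must pass rigorously from the equality of correlators on the untwisted/age-zero locus to the statement that a top holomorphic form is assigned to the \emph{same} atom whether computed inside $\rH^\ast(V)$ or inside the $g$-twisted module $\rH^\ast(V^g)$, and it is here that Conjectures \ref{conj:1}--\ref{conj:3} (guaranteeing that $\mathsf H_{\boldsymbol\alpha,g}$ is a genuine atomic invariant, stable under the blow-up identifications) are indispensable.
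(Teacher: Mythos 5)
Your proof is correct and rests on the same trichotomy as the paper's own three-line argument: the $t^d$-coefficient vanishes for both polynomials when the representing variety has dimension $<d$; it vanishes for $P_{\boldsymbol\alpha,[g]}$ when $\dim V=d$ and $g$ acts nontrivially, since then $\dim V^g<d$; and when $g$ acts trivially the two atomic decompositions coincide and one gets equality. Where you go beyond the paper is in the disconnected, mixed case --- $g$ fixing some top-dimensional components pointwise while acting nontrivially elsewhere --- which you treat via the identification $\mathrm{Coeff}_{t^d}P_Y=h^{d,0}(Y)$ and the extension-by-zero inclusion $\iota\colon H^{d,0}(V^g)\hookrightarrow H^{d,0}(V)$; the published proof only distinguishes ``$g^{\Z}$ acts trivially'' from ``nontrivially'' on all of $X$ and does not explicitly cover this intermediate situation, so your version is the more careful one on that point. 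The price is the compatibility of $\iota$ with the two generalized-eigenspace decompositions, which you correctly flag as resting on the age-zero/untwisted-sector coincidence and ultimately on Conjectures \ref{conj:1}--\ref{conj:3}; but the paper's final case (``$g$ trivial, hence $X^g=X$ and the decompositions coincide'') invokes exactly the same fact in its simplest instance, so you are not assuming anything beyond what the section already takes as given.
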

\begin{proof}
    Let $X$ be $d'$-dimensional variety with $G$-action where $d'\le d$. If $d'<d$ then obviously for all atoms $\boldsymbol{\alpha}$ of $X$ the corresponding Hodge polynomials $P_{\boldsymbol{\alpha},[g]},P_{\boldsymbol{\alpha}}$ do not contain the mononial $t^d$. 
     If $X$ is $d$-dimensional and the action of $g^\Z$ is non-trivial, then 
      the dimension of fixed locus $X^g$ is strictly smaller than $d$, and hence  $\mathrm{Coeff}_{t^d}P_{\boldsymbol{\alpha},[g]}=0$.
       Finally, if $\dim X=d$ and the action of $g^
       \Z$ is trivial, then the atomic decomposition of $\mathrm{H}^{\ast}(X)$ coincides with the atomic decomposition of $\mathrm{H}^{\ast}(X^g)$, and for each atom we have an equality. \end{proof}

    In what follows, for any smooth projective variety $X$ we denote by $P_X\in \mathbb Z_{\ge 0}[t,t^{-1}]$ its Hochschild graded Hodge
polynomial:
    $$ P_X:=\sum_{p,q\ge 0}h^{p,q}(X)t^{p-q}.$$

\begin{corollary}\label{cor:rough}
    For a $d$-dimensional smooth projective variety $X$ endowed with $G$-action, if 
$$\mathrm{Coeff}_{t^{d-2}}P_{X^g}> \mathrm{Coeff}_{t^{d-2}}P_{X}$$ 
 then $X$ is not $G$-equivariantly birational to a projective space with a $G$-linearizable action.
\end{corollary}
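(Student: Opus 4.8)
The plan is to argue by contradiction, combining the module decomposition of $\rH^\ast(X^g)$ into atoms with the dimension-filtration criterion. Fix the element $g\in G$ appearing in the hypothesis. Assume, for contradiction, that $X$ is $G$-equivariantly birational to a projective space carrying a $G$-linearizable action; since $G$-birational maps preserve dimension, that projective space is necessarily $\mathbb{P}^d$. By the $G$-equivariant Weak Factorization Theorem and the dimension-filtration criterion of Section~\ref{sec:filtration}, every $G$-equivariant atom $\boldsymbol\alpha$ occurring in the atomic decomposition of $X$ then lies in $\mathsf{HAtoms}^{G}_{\dim \leq d-2}$; that is, each such $\boldsymbol\alpha$ is represented by a smooth projective $G$-variety of dimension at most $d-2$.

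Next I would record the two Hodge-polynomial decompositions furnished by the Chen--Ruan atomic formalism of this section. From the isomorphism of $C(g)\times\mathsf{MT}$-modules
\[
\rH^\ast(X^g)\;\simeq\;\bigoplus_{\boldsymbol\alpha}\mathsf H_{\boldsymbol\alpha,g},
\]
valid under Conjectures~\ref{conj:1}--\ref{conj:3}, passing to Hodge polynomials yields
\[
P_{X^g}=\sum_{\boldsymbol\alpha}P_{\boldsymbol\alpha,[g]},
\qquad
P_{X}=\sum_{\boldsymbol\alpha}P_{\boldsymbol\alpha},
\]
the second being the case $g=1$, where $X^1=X$ and $\mathsf H_{\boldsymbol\alpha,1}$ recovers the ordinary atom with invariant $P_{\boldsymbol\alpha}$. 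Both sums run over the same finite index set of $G$-atoms of $X$.

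With these in hand the contradiction is immediate. Applying Lemma~\ref{lem:coefficient} with its parameter set equal to $d-2$---legitimate precisely because every $\boldsymbol\alpha$ arises from a variety of dimension $\le d-2$---gives, for each atom,
\[
\mathrm{Coeff}_{t^{d-2}}P_{\boldsymbol\alpha,[g]}\;\le\;\mathrm{Coeff}_{t^{d-2}}P_{\boldsymbol\alpha}.
\]
Summing over all $\boldsymbol\alpha$ and invoking the two displayed decompositions produces
\[
\mathrm{Coeff}_{t^{d-2}}P_{X^g}
=\sum_{\boldsymbol\alpha}\mathrm{Coeff}_{t^{d-2}}P_{\boldsymbol\alpha,[g]}
\;\le\;
\sum_{\boldsymbol\alpha}\mathrm{Coeff}_{t^{d-2}}P_{\boldsymbol\alpha}
=\mathrm{Coeff}_{t^{d-2}}P_{X},
\]
which directly contradicts the standing hypothesis $\mathrm{Coeff}_{t^{d-2}}P_{X^g}>\mathrm{Coeff}_{t^{d-2}}P_X$. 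Hence no such $G$-linearizable birational model can exist.

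The step I expect to be the genuine crux is not the inequality-chasing above but the justification that, once $X$ is $G$-birational to $\mathbb{P}^d$, all of its atoms are carried by varieties of dimension $\le d-2$, and that the atom-by-atom Hodge-polynomial bookkeeping is stable along the chain of equivariant blowups and blowdowns supplied by Weak Factorization. This is exactly where the assumed orbifold blowup formulas (Conjectures~\ref{conj:1}--\ref{conj:3}) enter: they guarantee that the decomposition $\rH^\ast(X^g)\simeq\bigoplus_{\boldsymbol\alpha}\mathsf H_{\boldsymbol\alpha,g}$ is compatible with the identifications of atoms under blowups in smooth $G$-invariant centers, so that the two sums above are indexed by one birationally intrinsic collection of atoms. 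Granting this compatibility, the remainder is the elementary termwise summation carried out above.
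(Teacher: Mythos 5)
Your proof is correct and is essentially the paper's own argument: the paper's one-line proof states that Lemma \ref{lem:coefficient} forces some atom of $X$ to lie outside $\mathsf{HAtoms}^{G}_{\dim\le d-2}$, which is exactly the contrapositive of your contradiction argument via the filtration criterion and the termwise coefficient inequality $\mathrm{Coeff}_{t^{d-2}}P_{\boldsymbol\alpha,[g]}\le\mathrm{Coeff}_{t^{d-2}}P_{\boldsymbol\alpha}$. You have merely made explicit the summation over the atomic decompositions $P_{X^g}=\sum_{\boldsymbol\alpha}P_{\boldsymbol\alpha,[g]}$ and $P_X=\sum_{\boldsymbol\alpha}P_{\boldsymbol\alpha}$ that the paper leaves implicit.
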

\begin{proof}
    Lemma \ref{lem:coefficient} implies that under the hypothesis, not all atoms of $X$ can come from dimension $\le d-2$.
\end{proof}

\ 

We get many possible examples, which we detail below. Interestingly enough, \emph{all} the formerly presented examples in this paper can be fully captured by the present technique. 

\ 

\begin{example}\label{ex:family-higher}
Very much in the nature of Example \ref{ex:fixed-higher-genus-curve},
for any $d\ge 3$, let $X$ be the $d$-dimensional affine variety
    $$x_1 x_2 x_3=P(x_4,\dots,x_{d+1})$$
   where $P(x_3,\dots, ,x_{d+1})$ is a  polynomial of degree $3k$ where $2k\ge d-1$, defining a smooth hypersurface in the projective space $\mathbb P^{d-2}$. The group $G:=\Z/3$ acts by cyclic permutations of $x_1,x_2,x_3$.

 The closure $\overline X$ of the image of variety $X$ in $\mathbb P^1\times \mathbb P^1\times \mathbb P^1\times \mathbb P^{d-2}$ is singular, the singularity locus consists of 3 copies of the hypersurface in $\mathbb P^{d-2}$ given by equation $P=0$. Resolving the singularities torically, we get a smooth variety $\widetilde X$. It is easy to see that
 $$\mathrm{Coeff}_{t^{d-2}}P_{\widetilde X^g}>0= \mathrm{Coeff}_{t^{d-2}}P_{\widetilde X}$$
 where $g\in G$ is a generator.
 Indeed, the transcendental part of the cohomology of $\widetilde X$ comes from the cohomology of the $(d-3)$-dimensional hypersurface given by $P=0$, and it can not contribute to  $\mathrm{Coeff}_{t^{d-2}}P_{\widetilde X}$. Similarly, the fixed locus is $(d-2)$-dimensional, and given by the equation
 $$x^3=P(x_4,\dots,x_{d+1}).$$
 Then the  condition $2k\ge d-1$ guarantees that
$${\mathrm{d}x_2\dots \mathrm{d}x_{d+1}\over x^2}$$
is a non-vanishing section of the canonical bundle of $\widetilde X^g$, and therefore $\mathrm{Coeff}_{t^{d-2}}P_{\widetilde X^g}>0$.
\end{example}

\ 

\begin{example}
Let $S_1,~S_2$ be two rational surfaces with involutions as in Example \ref{ex:product=with-line}, and assume for simplicity that the fixed loci for each of the surfaces is just one curve $C_i$ of a positive genus $\mathsf{g}_i,~i=1,2$. The Hodge polynomial of $X$ is constant and equal to $\chi(S_1)\chi(S_2)$. Let $X:=S_1\times S_2$ with the diagonal action. The fixed locus is $C_1\times C_2$.
Using the K\"unneth formula, we see that $\mathrm{Coeff}_{t^2}P_{X^{\mathbb{Z}/2}}=\mathsf{g}_1\mathsf{g}_2>0=\mathrm{Coeff}_{t^2}P_X$.
\end{example}

\ 
\begin{example}\label{ex:complete-intersection-odd}
  Let $X$ be a three-dimensional smooth complete intersection of two quadrics in $\mathbb{P}^5$ as discussed in Example \ref{ex:complete-intersection}. Assume it is invariant under the $\mathbb Z/2$-action on $\mathbb P^5$ induced from the linear representation $\underbrace{\C^2}_+\oplus \underbrace{\C^4}_{-}$, i.e., $\mathbb P^5=\mathbb P(1^{\oplus 2}\oplus (-1)^{\oplus 4})$ for $\mathbb Z/2=\langle 1,-1\rangle$. We claim that $X$ with this $\mathbb Z/2$-action is not $\mathbb Z/2$-birationally equivalent to $\mathbb P^3$ with any $\mathbb Z/2$-linearizable action.

   On the one hand, as we have seen in Example \ref{ex:complete-intersection}, $X$ has three atoms which are automatically $G$-equivariant atoms because the splitting comes from the small quantum multiplication which is $G$-invariant. These atoms are parameterized by eigenvalues of $\boldsymbol{\kappa}_b$ which are zero, with multiplicity six, and (up to scale) $\pm 1$ (roots of $1$ of order $r_X=2$) with multiplicity $1$. For the atom corresponding to $\lambda=0$ we have $\mathrm{Coeff}_{t}P_{\lambda=0}=1$. Thus, we can not use Corollary \ref{cor:rough} to obtain the claim. However, it is straightforward to check that the fixed locus for this action is an elliptic curve, which per Example \ref{ex:nef} has only one atom. Since its Hodge polynomial is different from $P_{\lambda=0}$, one concludes the desired statement.

   It is interesting to understand this example from the categorical point of view, see Section \ref{sec:ideal-legal}. The derived category $\mathrm{Perf}(X)$ endowed with $\Z/2$-action admits an equivariant semi-orthogonal decomposition
$$\mathrm{Perf}(X)=\langle\mathcal O(0),\mathcal O(1),\mathrm{Perf}(C)\rangle$$
where $C$ is a genus $2$ curve. The third term \textit{does not} come from a curve with geometric involution. The action of $\Z/2$ on $\mathrm{Perf}(C)$ is by the tensor product with a non-trivial line bundle on $C$, which gives a 2-torsion point in $\mathrm{Jac}(C)$.
\end{example}

As we have seen, one can use the technique sketched above to deal with questions on $\mathbb Z/2$-birational equivalence for the three-dimensional complete intersection of two quadrics. We do not immediately see how to apply it to analogous questions for the four-dimensional complete intersection of two quadrics in $\mathbb P^6$. It could also be the case that all the $\mathbb Z/2$-actions on them are linearizable.

\subsection{Generalization: diagram of manifolds with F-bundles} 

One can generalize the two inclusions of maximal F-bundles used earlier in this section, which on the level of tangent spaces corresponds to inclusions
$$\rH^*(X)^G\hookrightarrow \rH^*(X),\quad \rH^*(X)^G\hookrightarrow \rH^*_{CR}(X)\,.$$
Namely, for any pair of subgroups $H_1,H_2<G$ such that $H_1$ is a normal subgroup of $H_2$  (one can write $H_1\triangleleft H_2<G$) we have a maximal $F$-bundle with the base
  $$(B_{[X/H_1]})^{H_2}\subset B_{[X/H_1]}$$
  which is the fixed locus of the natural $H_2$-action on $B_{[X/H_1]}$. Whenever
  $$H_1<H_1'<H_2'<H_2<G,\quad H_1\triangleleft H_2,H_1'\triangleleft H_2'$$
  we have an \textit{inclusion} of bases of maximal F-bundles
  $$(B_{[X/H_1]})^{H_2}\hookrightarrow (B_{[X/H_1']})^{H_2'}\,.$$
  Therefore, we have a partially ordered set of pairs $(H_1\triangleleft H_2)$ as above realized by inclusions of bases of maximal F-bundles, the whole picture is equivariant with respect to $G\times\mathsf{Hod}$. There is also the inclusion of an even smaller F-bundle $(B_X)^{G\times \mathsf{Hod}}$
  in the bases of all these F-bundles.

We can formulate the following generalization of Conjectures \ref{conj:2} and \ref{conj:3}:
  \begin{conjecture}
      Inclusions $(B_{[X/H_1]})^{H_2}\hookrightarrow (B_{[X/H_1']})^{H_2'}$ commute with Iritani's partial isomorphisms, and with action of $\mathsf{Hod}$.
  \end{conjecture}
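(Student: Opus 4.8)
The plan is to deduce the statement from Conjectures \ref{conj:1}--\ref{conj:3} together with the operadic tree-sum description of Iritani's isomorphism anticipated in Remark \ref{rem:noncompact}. First I would reduce the assertion to the commutativity of a single square. Fix a chain $H_1 < H_1' < H_2' < H_2 < G$ with $H_1 \triangleleft H_2$ and $H_1' \triangleleft H_2'$, a $G$-invariant smooth center $Z \subset X$ of codimension $c \ge 2$, and set $\widetilde X = \mathrm{Bl}_Z(X)$ and $X' = X \sqcup Z \sqcup \cdots \sqcup Z$. The two horizontal arrows of the square are the partial isomorphisms $iso_{[\widetilde X/H_1],[X'/H_1]}$ and $iso_{[\widetilde X/H_1'],[X'/H_1']}$ of Conjecture \ref{conj:1}, each restricted to the relevant fixed locus; the two vertical arrows are the inclusions of bases $(B_{[\,\cdot\,/H_1]})^{H_2} \hookrightarrow (B_{[\,\cdot\,/H_1']})^{H_2'}$. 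The goal is to show that this square commutes and that each arrow is $\mathsf{MT}$-equivariant.

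The second step identifies the tangent-space picture on the untwisted sectors. The untwisted tangent space to $(B_{[X/H_1]})^{H_2}$ is the $H_2/H_1$-invariant part of $\rH^\ast(X)^{H_1}$, namely $\rH^\ast(X)^{H_2}$, and similarly $(B_{[X/H_1']})^{H_2'}$ corresponds to $\rH^\ast(X)^{H_2'}$. Since $H_2' < H_2$ we have $\rH^\ast(X)^{H_2} \hookrightarrow \rH^\ast(X)^{H_2'}$, which realizes the vertical arrow. By Lemma \ref{lmm:product_constraint} and Corollary \ref{cor:module}, all genus-zero correlators governing the quantum product on these untwisted loci coincide with the ordinary genus-zero correlators of the plain A-model on $X$ restricted to the appropriate invariant subspace. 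Hence the F-bundle structures on all four corners are pullbacks of the single A-model F-bundle of $X$, and the vertical inclusions are strict morphisms of F-bundles, compatible with the connection $\nabla$ and the Poincaré pairing.

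The third step is the heart of the argument: I would invoke the expectation of Remark \ref{rem:noncompact} that Iritani's partial isomorphism is produced by universal formulas given by summations over trees whose vertices are decorated by the quantum product, i.e.\ by the operad structure on $\rH^\ast$. Granting this, $iso_{[\widetilde X/H_1],[X'/H_1]}$ restricted to the untwisted sector is, by Conjectures \ref{conj:2}--\ref{conj:3}, literally the restriction of the plain isomorphism $iso_{\widetilde X, X'}$ to the $H_1$-fixed and then $H_2$-fixed locus, and likewise for $H_1',H_2'$. Because the same universal tree-sum expression computes both horizontal arrows, and the vertical arrows are inclusions of nested invariant subspaces preserving the operadic structure, the square commutes by functoriality of the tree-sum construction under restriction. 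The $\mathsf{MT}$-equivariance is then automatic: every vertex decoration is built from virtual fundamental classes of orbifold stable maps, which are algebraic and hence $\mathsf{MT}$-fixed, so the entire construction takes place over the $\mathsf{MT}$-fixed loci and commutes with the $\mathsf{MT}$-action exactly as in Conjecture \ref{conj:1}.

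The main obstacle is precisely the structural input of the third step: a rigorous, uniform description of Iritani's isomorphism as a tree summation, valid simultaneously for $[\,\cdot\,/H_1]$ and $[\,\cdot\,/H_1']$ and compatible with restriction to fixed loci. This is the very mechanism whose existence is only conjectured here, resting on understanding the proof of \cite{iritani2023quantumcohomologyblowups} well enough to see that the isomorphism is intrinsic to the F-bundle and operad structure rather than to the ambient geometry of $X$. Once such a naturality statement for Iritani's isomorphism under morphisms of F-bundles is secured, the remainder is a formal diagram chase; without it the commutativity cannot be verified directly, since the two horizontal isomorphisms are a priori defined only on distinct analytic domains, which must first be matched under the vertical inclusions.
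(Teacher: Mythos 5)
The statement you are addressing is stated in the paper as a conjecture and is given no proof there: it is explicitly introduced as a generalization of Conjectures \ref{conj:2} and \ref{conj:3}, for which the authors offer only the heuristic that Iritani's isomorphism should in all contexts be a universal summation over trees and should therefore restrict compatibly to the untwisted sector via Lemma \ref{lmm:product_constraint}. Your proposal is, in substance, an elaboration of that same heuristic, and you correctly identify that the entire weight of the argument rests on an unproven naturality property of Iritani's isomorphism under restriction to fixed loci; so what you have written is a plausibility argument of the same kind as the paper's, not a proof, and you are right to flag this in your final paragraph.

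There is, however, a further gap in your step two that would matter even if the tree-sum description were secured. You reduce the verification to the untwisted sector, identifying the tangent space of $(B_{[X/H_1]})^{H_2}$ with $\rH^\ast(X)^{H_2}$ and appealing to Lemma \ref{lmm:product_constraint} and Corollary \ref{cor:module} to conclude that all relevant correlators are plain A-model correlators of $X$. This only treats the case $H_1=\{1\}$. For a general pair $H_1\triangleleft H_2$ with $H_1$ nontrivial, the tangent space of $(B_{[X/H_1]})^{H_2}$ is $\bigl(\bigoplus_{g\in H_1}\rH^\ast(X^g)\bigr)^{H_2}$, which contains twisted sectors, and the inclusion into $(B_{[X/H_1']})^{H_2'}$ carries these into the larger twisted part of $[X/H_1']$. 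Lemma \ref{lmm:product_constraint} and Corollary \ref{cor:module} only control the module structure of a single twisted sector over the untwisted algebra; they say nothing about correlators with three or more twisted insertions, which do enter the quantum product on $(B_{[X/H_1]})^{H_2}$ and hence the F-bundle structure whose compatibility with the horizontal isomorphisms you must check. So the diagram chase of your step three would have to be run on the full Chen--Ruan F-bundles rather than their untwisted loci, and the comparison of the two partial isomorphisms in the twisted directions is an additional conjectural input not reducible to $iso_{\widetilde X,X'}$.
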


  Assuming the above conjecture, we get for each $G$-equivariant atom $\boldsymbol\alpha$ of $X$ a $G$-equivariant diagram of germs of bases of maximal F-bundles labeled by the partial ordered set of pairs  $(H_1\triangleleft H_2)$ as above.

This is a very sensitive invariant, from which one can extract easier-to-calculate invariants, as $ G$-equivariant diagrams of $\mathsf{Hod}$-modules. It looks plausible that these invariants will detect, e.g., non-$G$-linearizability of actions of a sufficiently large finite group $G\subset \mathrm O(6)$ on the standard 4-dimensional quadric in $\mathbb P^5$.

 \

\section{Some new $\mathbb Z$-valued geometric invariants for smooth projective varieties in dimensions two and three}
\label{sec:geometric}

In the theory of symbols, one treats connected components $F_\alpha$ of the locus of fixed points just as birational types.
 It is natural to try to exploit further data, like actual isomorphism classes of $F_\alpha$, together with the various weight components of the normal bundle. As a proof of concept, we show that such finer invariants do indeed exist, for $\Z/2$-actions in dimension two and three, and for $\Z/3$-action in dimension three. Surprisingly, they take constant values in all  cases with \textit{rational} $X$ we can think of:
 \begin{itemize}
     \item invariant $I$ for $\Z/2$-actions in $\dim=2$ (see Section \ref{sec:dimension-2}) takes value $4$,
     \item invariant $J$ for $\Z/3$-actions in $\dim=3$ (see Section \ref{sec:dimension-3}) takes value $6$,
     \item invariant $K$ for $\Z/2$-actions in $\dim=3$ (see Section \ref{sec:dimension-3-2}) takes value $8$.
 \end{itemize}

\ 

As a warm-up, we start with the two-dimensional case:

\subsection{A new $\mathbb Z/2$-birational invariant in dimension two}
\label{sec:dimension-2}

\begin{theorem}\label{thm:degrees}
Let \( X \) be a smooth projective surface with a regular generically free \( \mathbb{Z}/2 \)-action. Then the following quantity is a \( \mathbb{Z}/2 \)-birational invariant:
\[
I:=\chi(X^{\mathbb{Z}/2}) + \sum_{\alpha:\dim F_\alpha=1} \deg\big( \mathcal{N}_X F_\alpha \big),
\]
where the sum runs over all 1-dimensional irreducible components \( F_\alpha \subset X^{\mathbb{Z}/2} \) of dimension $1$, and \( \mathcal{N}_X F_\alpha \) denotes the normal bundle of curve \( F_\alpha \) in \( X \).
\end{theorem}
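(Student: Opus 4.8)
The plan is to invoke the $\mathbb{Z}/2$-equivariant Weak Factorization Theorem (the equivariant analogue of Theorem \ref{thm:weak}, see Proposition 2.6 in \cite{Kresch2022}), which reduces the claim to showing that $I$ is unchanged under a single $\mathbb{Z}/2$-equivariant blowup at a smooth invariant center. Since $\dim X = 2$, every such center has codimension $\geq 2$ and is therefore a finite $\mathbb{Z}/2$-invariant set of points. First I would record the local structure of the action: by the equivariant (holomorphic/Luna) slice theorem the action is linearizable near each fixed point, so $X^{\mathbb{Z}/2}$ is a disjoint union of isolated points, at which $\sigma$ acts on the tangent space as $-\mathrm{id}$ (weights $(-,-)$), together with smooth curves $F_\alpha$, along which the tangent space splits into a weight-$(+)$ direction tangent to $F_\alpha$ and a weight-$(-)$ normal direction. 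Generic freeness forbids a $(+)$ weight in the normal direction, so every fixed curve has normal weight $-1$, and two fixed curves cannot meet (a transverse intersection point would carry weights $(+,+)$, forcing a trivial action nearby).

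With this in hand, the admissible centers fall into exactly three types, and I would compute the change $\Delta I$ for each. \emph{(i) A free orbit} $\{p,\sigma(p)\}$ disjoint from $X^{\mathbb{Z}/2}$: the two exceptional curves are swapped by $\sigma$, no new fixed points appear, and neither $\chi(X^{\mathbb{Z}/2})$ nor any normal bundle of a fixed curve changes, so $\Delta I = 0$. \emph{(ii) An isolated fixed point} $p$ with weights $(-,-)$: since $\sigma$ acts as a scalar on $T_pX$ it acts trivially on $E = \mathbb{P}(T_pX) \cong \mathbb{P}^1$, so $E$ becomes a new fixed curve with $\deg\mathcal{N}_{\widetilde{X}}E = -1$; thus $\chi(X^{\mathbb{Z}/2})$ changes by $-1 + \chi(\mathbb{P}^1) = +1$ while the degree sum changes by $-1$, giving $\Delta I = 0$. \emph{(iii) A point $q$ on a fixed curve} $C$, with weights $(+,-)$: the involution acts nontrivially on $E$ with exactly two fixed points, one lying on the strict transform $\widetilde{C} \cong C$ (still a fixed curve) and the other being a new isolated fixed point; hence $\chi$ increases by $1$, while $\deg\mathcal{N}_{\widetilde{X}}\widetilde{C} = \deg\mathcal{N}_X C - 1$ decreases the degree sum by $1$, so again $\Delta I = 0$.

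The computations in (ii) and (iii) are the crux, and I would carry them out in explicit equivariant local coordinates $(x,y)$ with $\sigma(x,y) = (\pm x, -y)$: blowing up the origin and analyzing the two standard charts $y = xt$ and $x = sy$ pins down which part of the exceptional locus is pointwise fixed, the weight of the induced action there, and the self-intersection drop $\widetilde{C}^2 = C^2 - 1$ that produces the $-1$ in the normal degree. The main obstacle is not any single computation but rather ensuring the case list is genuinely exhaustive—in particular justifying that every fixed point carrying a $(+)$ weight must lie on a fixed curve, so that type (iii) is the only remaining possibility beyond (i) and (ii); this, together with the disjointness of fixed curves, follows from local linearizability and generic freeness. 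Once all three local contributions are shown to vanish, invariance of $I$ under the full factorization is immediate.
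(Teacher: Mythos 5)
Your proposal is correct and follows essentially the same route as the paper: reduce via equivariant weak factorization to the three types of point centers (free orbit, isolated fixed point, point on a fixed curve) and check $\Delta I = 0$ in each, with the $+1$ in Euler characteristic cancelling the $-1$ in the normal-bundle degree in cases (ii) and (iii). Your additional local-coordinate analysis and the justification of exhaustiveness via linearizability and generic freeness are more detailed than the paper's terse case check, but the argument is the same.
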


\begin{proof}[Proof of Theorem \ref{thm:degrees}]
   
We prove that \( I \) is preserved under all \( \mathbb{Z}/2 \)-equivariant blowups by considering all possible cases for the center of the blowup.
\medskip

\noindent\textbf{Case 1: Blowing up a free orbit \( Z \subset X \setminus X^{\mathbb{Z}/2} \) consisting of $2$ points.} \\
Obviously, $I$ does not change.

\medskip
\noindent\textbf{Case 2: Blowing up in an isolated fixed point.} The Euler characteristic increases by $1$, and the degree of the normal bundle of the exceptional divisor is $-1$, hence $I$ does not change.

\medskip
\noindent\textbf{Case 3: Blowing up in a point lying on a component \(F_\alpha\subset X^{\mathbb{Z}/p} \)  which is a curve.} The Euler characteristic increases by $1$, and the degree of the normal bundle of the curve \(F_\alpha\) decreases by $1$, hence $I$ does not change.
\end{proof}

\subsection{A new  $\mathbb Z/3$-birational invariant in dimension three}
\label{sec:dimension-3}

\begin{theorem}\label{thm:pure-geometry}
Let $X$ be a smooth projective threefold with a generically free $\mathbb Z/3$‐action.  Then the following quantity is invariant under \(\mathbb{Z}/3\)-equivariant blowups:
\begin{equation}
  J = 
    \sum_{[\lp\lp\lm],\; [\lp\lm\lm]} 1
    + 
    \sum_{(C,[\lp\lp]),\; (C,[\lm\lm])} \left(1 - \mathsf{g} + d\right)
    +
    \sum_{(C,[\lp\lm])} \left(2 - 2\mathsf{g} + d\right)
    +
    \sum_{S} \left(3 - 3\mathsf{g} - K_S\cdot \mathcal N\right)
\end{equation}
where the summation runs over fixed point set components $X^{\mathbb Z/3}$ where:
\begin{itemize}
  \item 
   $C$ is a curve with genus \(\mathsf{g}=\mathsf{g}(C) \geq 0\).
  \item 
    \(d \in \mathbb{Z}\) is the degree of \(\wedge^{2}(\mathcal{N}_XC)\), where \(\mathcal{N}_XC\) is the normal bundle of $C$ in $X$.
  \item $S$ is birational to a ruled surface whose base of the ruling is a curve with genus 
    \(\mathsf{g} \geq 0\) (slightly abusing the notation, we write $\mathsf g=\mathsf g(S)$).
  \item 
    \(K_{S}\) is the canonical bundle of the surface \(S\).
  \item 
    \(\mathcal{N}=\mathcal N_X S\) is the normal bundle of \(S\) in $X$.
\end{itemize}
The signs decorations are the non-zero characters for \(\mathbb Z/3 = (\mathbb Z/3)^{\vee}\), i.e.,
\[
  \lp,\,\lm \;=\; 1,\,2 \pmod{3}.
\]
so that
\begin{itemize}
    \item $[\lp\lp\lm], [\lp\lm\lm]$ are the normal weights for the $\mathbb Z/3$-normal bundle representation at fixed points
    \item for symbols like $(C,[\lp\lp]), (C,[\lp\lm]), (C,[\lm\lm])$ the terms $[\lp\lp], [\lm\lm], [\lp\lm]$ stand to the normal weights for the $\mathbb Z/3$-normal bundle representation at the fixed curve $C$.
\end{itemize}
 \end{theorem}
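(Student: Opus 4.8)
The plan is to invoke the $\Z/3$‑equivariant Weak Factorization Theorem (the equivariant analogue of Theorem~\ref{thm:weak}, cf.\ Proposition 2.6 in \cite{Kresch2022}), which reduces the assertion to showing that $J$ is unchanged under a single blowup $\pi:\widetilde X=\Bl_Z X\to X$ in an arbitrary smooth $\Z/3$‑invariant center $Z$. Since $\dim X=3$, an admissible center has codimension $\ge 2$, so it is either a finite set of points or a smooth curve. I would first classify centers by the local weight data of the action: at each point of $Z$ the tangent space $T_pX$ splits into characters, and the number of zero weights equals the local dimension of the fixed locus (so fixed components never cross). This yields mutually exclusive cases: (1) a free orbit of three points; (2) an isolated fixed point, of normal type $[\lp\lp\lm],[\lp\lm\lm],[\lp\lp\lp]$ or $[\lm\lm\lm]$; (3) a free orbit of three curves; (4) a connected invariant curve on which $\Z/3$ acts nontrivially; (5)–(7) a fixed curve that is a one‑dimensional component of $X^{\Z/3}$, with normal weights $[\lp\lp],[\lm\lm]$ or $[\lp\lm]$; (8)–(9) a point lying on a fixed curve or on a fixed surface; and (10) a curve contained in a fixed surface. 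In each case one compares the fixed‑locus data of $X$ and $\widetilde X$ and checks $\Delta J=0$.

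The free‑orbit cases (1),(3) are disjoint from $X^{\Z/3}$ and change nothing. For an isolated fixed point the exceptional divisor is $\PP^2=\PP(T_pX)$, and its new fixed locus is dictated by the weights: each of $[\lp\lp\lm],[\lp\lm\lm]$ produces one fixed $\PP^1$ and one fixed point, while $[\lp\lp\lp],[\lm\lm\lm]$ produce a pointwise‑fixed $\PP^2$. Here the verification reduces to weight bookkeeping plus two elementary intersection computations: the exceptional line in $\PP^2$ has normal bundle $\mathcal{O}(1)\oplus\mathcal{O}(-1)$ in $\widetilde X$, so $d=\deg\wedge^2\mathcal{N}=0$ and its curve contribution $1-0+0$ cancels the vanishing point; and for the exceptional $\PP^2$ one has $K_{\PP^2}\cdot\mathcal{O}(-1)=3$, so its surface contribution $3-3\cdot 0-3$ vanishes. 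The branches of Case~(4) in which the fibers of $E=\PP(\mathcal{N}_XC)$ over the fixed points of $C$ produce new isolated fixed points or a fixed $\PP^1$ are handled identically: the weights of the new fixed points are computed directly and matched against the $[\lp\lp\lm]/[\lp\lm\lm]$ count.

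The substantive input is intersection theory on ruled surfaces, needed when a fixed curve is blown up (Cases~(5)–(7)) or created. For a fixed curve $C$ with equal normal weights, $E=\PP(\mathcal{N}_XC)$ becomes a new fixed ruled surface $S$ with $\mathcal{N}_{\widetilde X}S=\mathcal{O}_E(E)=\mathcal{O}(-1)$, and the crux is the identity
\[
K_S\cdot\mathcal{N}_{\widetilde X}S \;=\; 2-2\mathsf{g}-d,\qquad d=\deg\wedge^2\mathcal{N}_XC,
\]
which I would derive from the relative canonical formula $K_{S/C}=-2\zeta-\pi^{\ast}c_1(\mathcal{N}_XC)$ together with $\zeta^2=-d$ and $\zeta\cdot f=1$ in the convention $\mathcal{O}_E(E)=\mathcal{O}(-1)$. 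Granting it, the surface term $3-3\mathsf{g}-K_S\cdot\mathcal{N}_{\widetilde X}S$ collapses exactly to the old curve term $1-\mathsf{g}+d$. For distinct normal weights $[\lp\lm]$, $E$ acquires two fixed sections $\cong C$, one for each eigensubbundle in $\mathcal{N}_XC=L_\lp\oplus L_\lm$; a short normal‑bundle computation gives their degrees $d_\lp=\deg L_\lm$ and $d_\lm=\deg L_\lp$, so $d_\lp+d_\lm=d$ and the two contributions $1-\mathsf{g}+d_\lp$, $1-\mathsf{g}+d_\lm$ sum to the old $2-2\mathsf{g}+d$.

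I expect the main obstacle to be the cases where $Z$ passes through, or lies inside, a pre‑existing fixed component: the branches of Case~(4) in which the nontrivially‑acting curve meets the fixed locus in a fixed curve or a fixed surface, together with Cases~(8)–(10). The subtlety is that $J$ is \emph{not} a linear functional on the coarse symbolic module $\mathcal{B}_3(\Z/p)^{\mathrm{comb}}$: although the birational type of a fixed surface is unchanged by such blowups, its contribution $3-3\mathsf{g}-K_S\cdot\mathcal{N}$ depends on the embedding, and the normal bundle genuinely changes. Here I would use $\pi^{\ast}S=\widetilde S+mE$ with $m=\mathrm{mult}_C(S)\in\{0,1\}$ to express $\mathcal{N}_{\widetilde X}\widetilde S$ as a twist of $\nu^{\ast}\mathcal{N}_XS$, the surface adjunction $C^2=2\mathsf{g}-2-K_S\cdot C$ for a fixed curve inside $S$, and the threefold adjunction $\deg\wedge^2\mathcal{N}_XC=(2\mathsf{g}-2)-K_X\cdot C$ together with $K_{\widetilde X}=\pi^{\ast}K_X+E$ (curve center) or $\pi^{\ast}K_X+2E$ (point center) to track how $d$ shifts under blowup (by $-1$ for a curve through the center, by $-2$ for a point). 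In each branch these shifts in the surface and curve contributions are compensated exactly by the appearance or disappearance of an isolated fixed point of type $[\lp\lp\lm]$ or $[\lp\lm\lm]$, or of a fixed $\PP^1$ of the appropriate weight and degree. Assembling all ten cases then gives $\Delta J=0$, and hence the $\Z/3$‑birational invariance of $J$.
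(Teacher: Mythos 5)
Your proposal is correct and follows essentially the same route as the paper: reduction via the $\Z/3$-equivariant Weak Factorization Theorem to a single blowup, an exhaustive case analysis of smooth invariant centers organized by how they meet the fixed locus, and the key intersection-theoretic identity $K_S\cdot\mathcal N=2-2\mathsf g-d$ for the exceptional ruled surface over a fixed curve (which the paper derives via an explicit intersection matrix in the basis $(C_1,C_{\mathrm{vert}})$ rather than the relative canonical formula, a cosmetic difference). The case list and the cancellation mechanisms you describe match the paper's Cases 1--8.
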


\begin{proof}[Proof of Theorem \ref{thm:pure-geometry}]
To establish the invariance of the quantity $J$, we must show that for any smooth, invariant, closed center of blowup $Z \subset X$ (of codimension $r \geq 2$), the change $\Delta J:= J(\mathrm{Bl}_Z X) - J(X)$ is zero. We proceed by a case-by-case analysis of all possible centers $Z$. The locus of fixed points on the blowup maps to the locus of fixed points on the original variety $X$. We separate contributions by connected components of $X^{\Z/3}$ of dimensions $0,1,2$, and the local behavior of the center $Z$ of the blowup near these components. 

\noindent \textbf{Case 1: Blowup in an isolated fixed point.}

\begin{itemize}
    \item \textbf{Type $[\lp\lp\lp]$} (\textit{and the opposite case  $[\lm\lm\lm]$}): A fixed point of this type contributes $0$ to $J$. The blowup replaces the point with an exceptional divisor $E \cong \mathbb{P}^2$, which is a new surface component in the $\mathbb Z/3$-fixed locus on the blowup. For this new surface $S=E$, we have genus $\mathsf{g}(S)=0$, canonical bundle $K_S = \mathcal{O}_{\mathbb{P}^2}(-3)$, and normal bundle $\mathcal{N}=\mathcal{O}_{\mathbb{P}^2}(-1)$. The contribution of this new surface to $J$ is:
    \begin{align}
        3 - 3\mathsf{g}(S) - K_S \cdot \mathcal{N} = 3 - 3(0) - (-3)(-1) = 3 - 3 = 0.
    \end{align}
    Thus, $\Delta J = \underbrace{0}_{\text{new surface}} - \underbrace{0}_{\text{old point}} = 0$.

    \item \textbf{Type $[\lp\lp\lm]$} (\textit{and the opposite case}): A fixed point of type $[\lp\lp\lm]$ contributes $1$ to $J$. The blowup replaces the point with an exceptional divisor $E$. The $\mathbb Z/3$-fixed locus on $E$ is a curve $C \cong \mathbb{P}^1$ with normal weights $[\lp\lp]$. For this curve, $\mathsf{g}(C) = 0$ and the degree of the second exterior power of its normal bundle is $d = \deg(\wedge^2 \mathcal{N}_{X'}C) = 0$. Its contribution to $J$ is:
    \begin{align}
        1 - \mathsf{g}(C) + d = 1 - 0 + 0 = 1.
    \end{align}
    Thus, $\Delta J = \underbrace{1}_{\text{new curve}} - \underbrace{1}_{\text{old point}} = 0$.
\end{itemize}

\noindent \textbf{Case 2: Blowup in a curve passing through an isolated fixed point.}

\begin{itemize}
    \item \textbf{Types $[\lp\lp\lp]$} (\textit{and the opposite case}):  The initial fixed point contributes $0$. Blowing up in an invariant curve passing through it replaces the point with a fixed curve $C \cong \mathbb{P}^1$ with normal weights $[\lp\lp]$. The degree of the second exterior power of its normal bundle is $d=-1$. The new contribution is:
    \begin{align}
        1 - \mathsf{g}(C) + d = 1 - 0 + (-1) = 0.
    \end{align}
    Hence, $\Delta J = \underbrace{0}_{\text{new curve}} - \underbrace{0}_{\text{old point}} = 0$.

    \item \textbf{Type $[\lp\lp\lm]$ and blowup in the $\lm$ direction}(\textit{and the opposite case}): The initial point contributes $1$. The blowup replaces it with a new fixed curve $C \cong \mathbb{P}^1$ with normal weights $[\lp\lm]$ and total degree $d=d_\lp+d_\lm = 0+(-1)=-1$. The new contribution is:
    \begin{align}
        2 - 2\mathsf{g}(C) + d = 2 - 2(0) + (-1) = 1.
    \end{align}
    Hence, $\Delta J =\underbrace{1} _{\text{new curve}} - \underbrace{1}_{\text{old point}} = 0$.

    \item \textbf{Type $[\lp\lp\lm]$ and blow up in the $\lp$ direction}(\textit{and the opposite case}):
     The initial point contributes $1$. The blowup replaces it by two isolated fixed points of types $[\lp\lp\lp]$ and $[\lp\lm\lm]$. The new contribution is $1$, hence $\Delta J=\underbrace{1}_{\text{new point of type }[\lp\lm\lm]}-\underbrace{1}_{\text{old point}}=0$.
\end{itemize}

\noindent \textbf{Case 3: Blowup in a point on a fixed curve.}

\begin{itemize}
    \item \textbf{The curve of type $[\lp\lp]$}(\textit{and the opposite case}):  Let the original curve be $C$ with contribution $2 - 2\mathsf{g} + d$. Blowing up at a point on $C$ produces in the fixed locus a new component $\mathbb P^1$ with normal weights [\lp\lm] and the degree of the normal bundle equal to $-1+1=0$. The curve $C$ is replaced by its proper transform $C'$, for which the degree of the second exterior power of the normal bundle becomes $d' = d-2$. The total change is:
    \begin{align}
        \Delta J &= \underbrace{2-2\cdot 0+0}_{\text{new curve }\mathbb P^1} + \underbrace{(2 - 2\mathsf{g} + (d-2))}_{\text{new curve}} - \underbrace{(2 - 2\mathsf{g} + d)}_{\text{old curve}} \\
        &= 2 + (2 - 2\mathsf{g} + (d - 2)) - (2 - 2\mathsf{g} + d) = 0.
    \end{align}
    \item \textbf{The curve of type $[\lp\lm]$:} Let the original curve be $C$ with contribution $2 - 2\mathsf{g} + d$. Blowing up a fixed point on $C$ replaces that point with two new isolated fixed points of types $[\lp\lp\lm]$ and $[\lp\lm\lm]$, each contributing $1$. The curve $C$ is replaced by its proper transform $C'$, for which the degree of the second exterior power of the normal bundle becomes $d' = d-2$. The total change is:
    \begin{align}
        \Delta J &= \underbrace{(1 + 1)}_{\text{new points}} + \underbrace{(2 - 2\mathsf{g} + (d-2))}_{\text{new curve}} - \underbrace{(2 - 2\mathsf{g} + d)}_{\text{old curve}} \\
        &= 2 + (2 - 2\mathsf{g} + (d - 2))
        - (2 - 2\mathsf{g} + d) = 0.
    \end{align}
\end{itemize}

\noindent \textbf{Case 4: Blowup in a fixed curve.}
\begin{itemize}
    \item \textbf{The curve of type $[\lp\lp]$} (\textit{and the opposite case}): The curve $C$ contributes $1 - \mathsf{g} + d$. It is replaced by a ruled surface $S = \mathbb{P}(\mathcal{N}_XC)$ fibered over $C$, with $\mathsf{g}(S) = \mathsf{g}(C) = \mathsf{g}$. The new contribution is $3 - 3\mathsf{g} - K_S \cdot \mathcal{N}$ where $\mathcal N=\mathcal N_S \widetilde X$ is the normal line bundle to $S$ in the blowup $\widetilde X=\mathrm{Bl}_C X$. In order to simplify the calculation, we assume that the rank two bundle $\mathcal N_X C$ splits into the sum $\mathcal L_1\oplus \mathcal L_2$ of line bundles of certain degrees $d_1,d_2$ such that $d=d_1+d_2$. The general case of a non-split bundle follows by continuity. To calculate $K_S \cdot \mathcal{N}$, we use the basis of $\mathsf{NS}(S)$ given by the curve $C_1 = \mathbb{P}(\mathcal{L}_1)\subset S$ and the fiber $C_{\text{vert}} \cong \mathbb{P}^1$. The intersection numbers with $K_S$ and $\mathcal{N}$ are known:
    \begin{align}
        K_S \cdot C_1 &= 2\mathsf{g}-2 + d_1-d_2,  & K_S \cdot C_{\text{vert}} &= -2 \\
        \mathcal{N} \cdot C_1 &= d_1, & \mathcal{N} \cdot C_{\text{vert}} &= -1
    \end{align}
    The intersection form on $\mathsf{NS}(S)$ in the basis $(C_1,C_{\mathrm{vert}})$ is given by
    $$\begin{pmatrix}d_2-d_1 & 1\\1& 0\end{pmatrix}$$
    Therefore, the intersection $K_S\cdot \mathcal N$ is given by
    $$ \begin{pmatrix}d_1  &-1\end{pmatrix} \cdot \begin{pmatrix}d_2-d_1 & 1\\1& 0\end{pmatrix}^{-1}\cdot  \begin{pmatrix}2\mathsf{g}-2+d_1-d_2\\-2\end{pmatrix} =2-2\mathsf{g}-d_1-d_2.  $$
    The change in $J$ is therefore:
    \begin{align}
       \Delta J &=\underbrace{ (3 - 3\mathsf{g} - (2 - 2\mathsf{g} - d))}_{\text{new surface}} - \underbrace{(1 - \mathsf{g} + d)}_{\text{old curve}} \\
        &= (1 - \mathsf{g} + d) - (1 - \mathsf{g} + d) = 0.
    \end{align}

    \item \textbf{The curve of type $[\lp\lm]$:} The curve $C$ contributes $2 - 2\mathsf{g} + d$. It is replaced by two curves, $C_1$ (type $[\lp\lp]$) and $C_2$ (type $[\lm\lm]$). Let denote the degrees of $\lp$ and $\lm$ components of $\mathcal N_XC$ by $d_\lp$ and $d_\lm$ respectively, so $d = d_\lp + d_\lm$. The total change is:
    \begin{align}
        \Delta J &= \underbrace{(1 - \mathsf{g} + d_\lp)}_{\text{new curve } C_1} + \underbrace{(1 - \mathsf{g} + d_\lm)}_{\text{new curve } C_2} - \underbrace{(2 - 2\mathsf{g} + d)}_{\text{old curve } C} \\
        &= (2 - 2\mathsf{g} + d_\lp + d_\lm) - (2 - 2\mathsf{g} + d_\lp + d_\lm) = 0.
    \end{align}
\end{itemize}

\noindent \textbf{Case 5: Blowup in a curve transversal to a fixed curve.}
\begin{itemize}
    \item \textbf{The fixed curve of type $[\lp\lp]$} (\textit{and the opposite case}):  The original curve $C$ contributes $1-\mathsf{g}+d$. For each intersection point the blowup adds a new isolated fixed point of type $[\lp\lp\lm]$. The proper transform of $C$, denoted $C'$, is still a component of the fixed locus. The degree $d'$ of the normal bundle of $C'$ is equal to $d'=d-k$, where $k$ is the number of intersection points of $C$ and the center of the blowup. The total change is:
 \begin{align}
    \Delta J &= k (\text{contribution of new points}) \\
             &\quad + (1 - \mathsf g + d - k) (\text{contribution of } C') \\
             &\quad - (1 - \mathsf g + d) (\text{contribution of } C) = 0
\end{align}
\end{itemize}

\noindent \textbf{Case 6: Blowup in a point on a fixed surface.}

\begin{itemize}
    \item \textbf{The fixed surface of type $\lp$} (\textit{and the opposite case}):  The surface $S$ contributes $3 - 3\mathsf{g} - K_S \cdot \mathcal{N}$. Blowing up a point on $S$ creates a new isolated fixed point of type $[\lp\lm\lm]$ and replaces $S$ with its blowup $\widetilde{S}$. For $\widetilde{S}$, we have $K_{\widetilde{S}} = \pi^{\ast}K_S + E$ and $\widetilde{\mathcal{N}} = \pi^{\ast}\mathcal{N} - E$, where $E$ is the exceptional divisor. The new intersection product is $K_{\widetilde{S}} \cdot \widetilde{\mathcal{N}} = (\pi^{\ast}K_S + E) \cdot (\pi^{\ast}\mathcal{N} - E) = K_S \cdot \mathcal{N} - E^2 = K_S \cdot \mathcal{N} + 1$. The change in $J$ is:
    \begin{align}
        \Delta J &= \underbrace{1}_{\text{new point}} + \underbrace{(3 - 3\mathsf{g} - (K_S\cdot\mathcal{N} + 1))}_{\text{new surface}} - \underbrace{(3 - 3\mathsf{g} - K_S\cdot\mathcal{N})}_{\text{old surface}} \\
        &= 1 - 1 = 0.
    \end{align}
\end{itemize}

\noindent \textbf{Case 7: Blowup in a transverse curve to a fixed surface.}
\begin{itemize}
    \item  \textbf{The fixed surface of type $\lp$} (\textit{and the opposite case}): Let $S$ be a fixed surface and $C$ be a $\Z/3$-invariant curve that intersects $S$ transversely at a finite set of points.  The blowup of $X$ in $C$ replaces component $S$ of the fixed locus with its proper transform $\widetilde{S}$, which is isomorphic to the blowup of $S$ at the points $C \cap S$. The canonical and normal bundles transform as $K_{\widetilde{S}} = \pi^{\ast}K_S + \sum E_i$ and $\widetilde{\mathcal{N}} = \pi^{\ast}\mathcal{N}$, where $E_i$ are the exceptional divisors on $\widetilde{S}$. Since $\pi^{\ast}\mathcal{N} \cdot E_i = 0$, the intersection product $K_{\widetilde{S}}\cdot\widetilde{\mathcal{N}} = K_S\cdot\mathcal{N}$ is unchanged. 
    
    The change in $J$ is therefore $$\Delta J = \underbrace{(3-3\mathsf g +K_{\widetilde S}\cdot \widetilde{\mathcal N} )}_{\text{new surface}} -   (\underbrace{3-3\mathsf g +K_{S}\cdot \mathcal N)}_{\text{old surface}}= 0.$$
\end{itemize}

\noindent \textbf{Case 8: Blowup in a curve on a fixed surface.}

\begin{itemize}
    \item  \textbf{The fixed surface of type $\lp$} (\textit{and the opposite case}): Let $C \subset S$ be a closed curve. In the blowup $\mathrm{Bl}_CX$, the component $S$ of the fixed locus is replaced by its proper transform $\widetilde{S}\cong S$, and a new curve $\widetilde C\cong C$, with normal weights $[\lp\lm]$. The change in the surface term's contribution is $K_S \cdot C$.
    \begin{align}
        \Delta(3-3\mathsf{g}-K_S\cdot\mathcal{N}) &= -(K_{\widetilde{S}}\cdot\widetilde{\mathcal{N}}) - (-K_S\cdot\mathcal{N})\\
        &= -K_S\cdot(\mathcal{N}-C) + K_S\cdot\mathcal{N}\\
        &= K_S \cdot C= 2\mathsf{g}(C) - 2 - \mathcal N_S C\cdot C.
    \end{align}
    The last equality is the adjunction formula. Now turn to the new curve component  $\widetilde C$ of the fixed locus. Its normal bundle canonically splits into $\lp$ and $\lm$ parts. The degrees these line bundles are $d_\lp = \mathcal{N}_XS \cdot C$ and $d_\lm = \mathcal N_S C\cdot C - \mathcal{N}_XS \cdot C$ respectively. The contribution of this new curve to $J$ is therefore:
    \begin{align}
        J_{\text{new curve}} &= 2 - 2\mathsf{g}(C') + d_\lp + d_\lm \\
        &= 2 - 2\mathsf{g}(C) + (\mathcal{N}_XS \cdot C) + (\mathcal N_S C\cdot C - \mathcal{N}_XS \cdot C) \\
        &= 2 - 2\mathsf{g}(C) + \mathcal N_S C\cdot C.
    \end{align}
    The total change in $J$ is:
    \begin{align}
        \Delta J &= J_{\widetilde S}-J_S+J_{\text{new curve}}\\
        &= (2\mathsf{g}(C) - 2 -  \mathcal{N}_XS \cdot C) + (2 - 2\mathsf{g}(C) +  \mathcal{N}_XS \cdot C) =0.
    \end{align}
\end{itemize}

Since $\Delta J = 0$ in all cases, the quantity $J$ is invariant under $\mathbb{Z}/3$-equivariant blowups.
\end{proof}

\begin{remark} Notice that the contribution of any curve component $C$ of the fixed locus with normal weights $\lp\lm$ can be succinctly written as
\[-K_X\cdot C\,.\]\label{rem:+-contribution}
\end{remark}

Next, we revisit Example \ref{ex:fixed-higher-genus-curve}, and use the notation introduced there. We will calculate our ``elementary'' invariant $J$ in this case. It turns out that it is equal to $6$ independently of the degree $3k$ of the polynomial $P(x_4)$. The same value $6$ happens for all projective spaces with $\mathbb Z/3$-lineariazable actions, hence it fails to show that $X$ is not equivariantly birational to $\mathbb P^3$ for any generically free $\mathbb Z/3$-linearizable action. Thus, it is much weaker than the combined invariant from Theorem \ref{thm:invariant}.
    \begin{example}[Example \ref{ex:fixed-higher-genus-curve} revisited]\label{ex:fixed-higher-genus-curve-revisited}
Recall that the variety $X$ is a resolution of singularities of a singular hypersurface $Y$ in $(\mathbb P^1)^4$ of degree $(1,1,1,3k)$ given by the following equation in the affine part $\mathbb A^4$:
\[
  x_{1}\,x_{2}\,x_{3} \;=\; P(x_{4}), 
  \qquad
  P(x_{4}) \;=\; x_{4}^{3k} + \cdots,
\]
where $P$ is monic of degree $3k$ with distinct roots.   The fixed locus of $\Z/3$-action  is  the compactification of the affine curve
\(
  x^{3} = P(x_{4}),
\)
and it is a smooth compact curve $C$ with the normal weights $\lp\lm$ and disjoint from the singularities that we have to resolve. The only contribution to $J$ is by Remark \ref{rem:+-contribution} equal to 
\[2-2\mathsf g+d=-K_X\cdot C\,.\]

By the adjunction formula
\begin{align}-K_X\cdot C&=-K_{(\mathbb P^1)^4}\cdot C-Y\cdot C\\&= \mathcal O(2,2,2,2)\cdot C-\mathcal O(1,1,1,3k)\cdot C\\
&=\mathcal O(1,1,1,2-3k)\cdot C\end{align}
The degree of curve $C$ in the ambient space $(\mathbb P^1)^4$ is
   \[(-3k,-3k,-3k,3)\]
   which implies that
   \[J=-K_X\cdot C =-3k-3k-3k+(2-3k)3=6.\]
   One can easily calculate invariant $J$ for $\mathbb P^3$ endowed with any non-trivial linearizable $\Z/3$-action, and get the same value $6$.
\end{example}

\subsection{A new  $\mathbb Z/2$-birational invariant in dimension three}
\label{sec:dimension-3-2}

As our last contribution, we state Theorem \ref{thm:pure-geometry-2d} below, whose proof follows analogously to that of Theorem \ref{thm:pure-geometry}. 

\begin{theorem}\label{thm:pure-geometry-2d}
    Let $X$ be a smooth projective threefold with a generically free $\mathbb Z/2$‐action.  Then the following quantity is invariant under \(\mathbb{Z}/2\)-equivariant blowups:
\begin{equation}
  K = 
    \#\text{fixed points}
    + 
    \sum_{C} \left(2 - 2\mathsf{g} + d\right)
    +
    \sum_{S} \left(4 - 4\mathsf{g} - K_S\cdot \mathcal N\right)
\end{equation}
where the summation runs over fixed point set components $X^{\mathbb Z/2}$. Here:
\begin{itemize}
  \item 
   $C$ is a curve with genus \(\mathsf{g}=\mathsf{g}(C) \geq 0\).
  \item 
    \(d \in \mathbb{Z}\) is the degree of \(\wedge^{2}(\mathcal{N}_XC)\), where \(\mathcal{N}_XC\) is the normal bundle of $C$ in $X$.
  \item $S$ is birational to a ruled surface whose base of the ruling is a curve with genus 
    \(\mathsf{g} \geq 0\) 
  \item 
    \(K_{S}\) is the canonical bundle of the surface \(S\).
  \item 
    \(\mathcal{N}\) is the normal bundle of \(S\) in $X$.
\end{itemize}
\end{theorem}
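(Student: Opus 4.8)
The plan is to follow the blueprint of the proof of Theorem~\ref{thm:pure-geometry} essentially verbatim. By the $\mathbb{Z}/2$-equivariant Weak Factorization Theorem (Theorem~\ref{thm:weak} together with its equivariant version cited there), every $\mathbb{Z}/2$-equivariant birational map factors into blowups and blowdowns at smooth $\mathbb{Z}/2$-invariant centers of codimension $\ge 2$, so it suffices to check that $\Delta K := K(\mathrm{Bl}_Z X) - K(X) = 0$ for every admissible center $Z$. First I would record the decisive simplification relative to the $\mathbb{Z}/3$ situation: since $\mathbb{Z}/2$ has a single nonzero character (weight $1$), each component of $X^{\mathbb{Z}/2}$ carries a unique weight type — an isolated point has normal weights $[1,1,1]$, a fixed curve has normal weights $[1,1]$, and a fixed surface has normal weight $1$. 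This collapses all the weight subcases of the $\mathbb{Z}/3$ proof and explains the coefficients $4-4\mathsf{g}$ and $2-2\mathsf{g}$: because $-1\in\mathbb{Z}/2$ acts trivially on any projective space, blowing up an isolated fixed point produces an exceptional $E\cong\mathbb{P}^2$ that is \emph{entirely} fixed, and blowing up a fixed curve $C$ produces an exceptional ruled surface $\mathbb{P}(\mathcal{N}_X C)$ that is entirely fixed.

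Next I would organize the verification by the dimension of $Z$ and its position relative to the fixed locus, mirroring Cases~1--8 of Theorem~\ref{thm:pure-geometry}. A free orbit of two points or of two curves contributes nothing, giving $\Delta K=0$ immediately. For $Z$ an isolated fixed point, the new fixed surface $E=\mathbb{P}^2$ has $\mathsf{g}=0$, $K_E=\mathcal{O}(-3)$, $\mathcal{N}=\mathcal{O}(-1)$, hence contributes $4-0-3=1$, cancelling the $+1$ lost from the count of fixed points. For $Z$ a fixed curve $C$, the purely geometric computation on $S=\mathbb{P}(\mathcal{N}_X C)$ carried out in Case~4 of Theorem~\ref{thm:pure-geometry}, which is group-independent and yields $K_S\cdot\mathcal{N}=2-2\mathsf{g}-d$, shows the new surface contributes $4-4\mathsf{g}-(2-2\mathsf{g}-d)=2-2\mathsf{g}+d$, exactly the contribution of the old curve. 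The remaining centers are a point of a fixed curve (where $E=\mathbb{P}^2$ carries weights $[0,1,1]$, producing a fixed line of degree $d=0$ and sending the proper transform $d\mapsto d-2$), a point of a fixed surface (producing a new isolated fixed point and replacing $S$ by $\mathrm{Bl}_{\mathrm{pt}}S$), and an invariant curve transverse to a fixed surface (whose fixed fibre is absorbed into the proper transform $\mathrm{Bl}_{C\cap S}S$). In each, $\Delta K=0$ follows from adjunction together with $E^2=-1$, $K_{\widetilde S}=\pi^*K_S+E'$ and $\widetilde{\mathcal{N}}=\pi^*\mathcal{N}-E'$, the only change from Theorem~\ref{thm:pure-geometry} being the replacement of the constants $3-3\mathsf{g},\,1-\mathsf{g}$ by $4-4\mathsf{g},\,2-2\mathsf{g}$.

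The step I expect to be the genuine obstacle is the blowup of a non-fixed $\mathbb{Z}/2$-invariant curve $C$ meeting the fixed locus, since the new fixed components must be read off from the weight decomposition of $\mathcal{N}_X C$ over each fixed point of $C$. I would split this by $T_qX$ at a fixed point $q\in C$: when $q$ is an isolated fixed point ($[1,1,1]$), the normal plane $\mathcal{N}_q C$ has weights $[1,1]$, so the exceptional fibre $\mathbb{P}(\mathcal{N}_q C)\cong\mathbb{P}^1$ is entirely fixed and becomes a new fixed curve of degree $d=0+(-1)=-1$, contributing $2-0-1=1$ and cancelling the lost isolated point; when $q$ lies on a fixed curve ($[0,1,1]$ with $T_qC$ the weight-$1$ summand) the action on the fibre is $\mathrm{diag}(1,-1)$, producing one new isolated fixed point together with a point on the proper transform, matching the perpendicular-curve computation; and when $q$ lies on a fixed surface ($[0,0,1]$) the fibre is fully fixed but coincides with the exceptional curve of $\mathrm{Bl}_{C\cap S}S$, contributing nothing extra. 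Carrying out these three subcomputations, together with the deformation-to-the-split-normal-bundle reduction $\mathcal{N}_X C=\mathcal{L}_1\oplus\mathcal{L}_2$ used in Case~4 of Theorem~\ref{thm:pure-geometry}, completes the verification that $\Delta K=0$ in all cases and hence that $K$ is a $\mathbb{Z}/2$-birational invariant.
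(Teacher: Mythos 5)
Your proposal follows exactly the route the paper intends: the paper gives no separate argument for this theorem, stating only that the proof ``follows analogously to that of Theorem \ref{thm:pure-geometry},'' and your case-by-case verification is that analogous argument. Your structural observation (that $-1$ acts as $-\mathrm{id}$ on normal spaces, so the exceptional divisor over an isolated fixed point or a fixed curve is \emph{entirely} fixed, which forces the constants $4-4\mathsf{g}$ and $2-2\mathsf{g}$) is the right key point, and the individual computations you display are correct, including the delicate one for a non-fixed invariant curve through an isolated fixed point, where the fully fixed fibre has $d=-1$ and contributes $2-0-1=1$.

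The one defect is that your explicit enumeration omits the analogue of Case 8 of Theorem \ref{thm:pure-geometry}: blowing up a pointwise-fixed curve $Z$ that is \emph{contained in} a two-dimensional component $S$ of $X^{\mathbb{Z}/2}$ (as opposed to a curve meeting $S$ transversely, which you do treat). This center is admissible and produces a genuinely new fixed-curve component, namely the section $\mathbb{P}(\mathcal{N}_XS|_Z)$ of the exceptional divisor, while the surface term changes because $\widetilde{\mathcal{N}}=\mathcal{N}-Z$ on $\widetilde{S}\cong S$. The check still succeeds: the surface contribution changes by $K_S\cdot Z=2\mathsf{g}(Z)-2-Z^2$ by adjunction, and the new curve contributes $2-2\mathsf{g}(Z)+Z^2$ since the two normal-direction degrees sum to $\mathcal{N}_SZ\cdot Z=Z^2$, so $\Delta K=0$. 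You should add this case to make the verification complete; nothing else is missing.
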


\ 

It remains to be checked if these geometric invariants can lead to finer invariants when combined with $G$-equivariant atoms enhanced with Chen-Ruan cohomology.

\ 

\ 

\appendix

\section{$G$-equivariant birational geometry versus birational geometry over non-algebraically closed fields}
\label{ap:specialization}

The geometry of finite group actions is closely related to non-algebraically closed fields of positive transcendence degrees. If a finite group $G$ acts birationally and generically free on a variety $X/\C$, then $\C(X)$ is a Galois extension of $\C(X/G)$ with Galois group $G$. In other words, generically free birational $G$-actions on complex varieties are the same as the epimorphisms of the Galois groups of functional fields to $G$.

Let $X_1,X_2,Y$ be complex varieties endowed with generically free $G$-action, and let's assume $\dim X_1=\dim X_2=d\ge 1$. 
Then we have two varieties $\mathcal X_1, \mathcal X_2$ of dimension $d$ over the non-algberaically closed field $\mathcal K=\C(Y)^G=\C(Y/G)$ corresponding to the fibrations 
$$(X_i\times Y)/G\to Y/G.$$
A birational equivalence $\mathcal X_1\dashrightarrow\mathcal X_2$ over $\mathcal K$ is the same as $G$-equivariant birational equivalence between $X_1\times Y$ and $X_2 \times Y$ commuting with the projection to $Y$. Therefore, we have an obvious implication:

\ 

\begin{center}
\noindent\fbox{%
\parbox{0.67\textwidth}{%
$X_1$ is $G$-birational to $X_2$ $\implies$ $\mathcal X_1$ is birational to $\mathcal X_2$}%
}
\end{center}

\ 

We claim that in some cases the \textit{opposite} implication holds as well. This fact must be known to the experts, but since we could not find it in the literature, we provide a
proof for the sake of completeness.

\begin{theorem}
    Let $G$ be a cyclic group $\Z/N$ for $N\ge 2$, the variety $Y$ is $\PP^1$ with the action of generator of $G$ given by $z\mapsto \exp(2\pi \mathsf i/N)z$. Then in the notations above, if the varieties $\mathcal X_i, i=1,2$ are birational over $\C(Y)^G$, then $X_1$ is $G$-birational to $X_2$ over $\C$.
\end{theorem}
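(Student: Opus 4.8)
The plan is to prove the converse implication by a specialization argument at a $G$-fixed point of $Y$, exploiting that $Y\to Y/G$ is totally ramified there. Write $\mathcal{K}=\C(w)$ with $w=z^N$, so that $\C(z)/\mathcal{K}$ is the $\Z/N$-Galois extension $z\mapsto\zeta z$ ($\zeta=\exp(2\pi\mathsf i/N)$), totally ramified over $w=0$ with the single point $z=0$. Put $L_i=\C(X_i)$ with its geometric $G$-action, and $K_i=\C(X_i\times Y)^G$, the function field of $\mathcal X_i$. First I would record the descent identification $K_i\otimes_{\mathcal{K}}\C(z)\xrightarrow{\ \sim\ }\C(X_i\times Y)=L_i(z)$: inside the $G$-Galois field $\C(X_i\times Y)/K_i$ one has $K_i\cap\C(z)=\C(z)^G=\mathcal{K}$, and since $\C(z)/\mathcal{K}$ is Galois, $K_i$ and $\C(z)$ are linearly disjoint over $\mathcal{K}$; a dimension count over $K_i$ (both sides of $K_i$-degree $N=|G|$) upgrades the injective multiplication map to an isomorphism. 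Under it the $G$-action is $\mathrm{id}_{K_i}\otimes(z\mapsto\zeta z)$. Hence a $\mathcal{K}$-isomorphism $\phi\colon K_1\to K_2$ base-changes to $\Phi=\phi\otimes\mathrm{id}\colon L_1(z)\to L_2(z)$, a $G$-equivariant isomorphism fixing $\C(z)$, i.e. a $G$-equivariant birational map $\Psi\colon X_1\times Y\dashrightarrow X_2\times Y$ over $Y$. The crucial structural point is that the fibre of $X_i\times Y\to Y$ over the fixed point $z=0$ is $X_i$ carrying its full $G$-action.

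Next I would take the $G$-invariant closure $\Gamma\subset X_1\times X_2\times Y$ of the graph of $\Psi$, normalized, with $G$-equivariant proper birational projections $p\colon\Gamma\to X_1\times Y$ and $q\colon\Gamma\to X_2\times Y$, both over $Y$. The images of the exceptional loci of $p$ and $q$ have codimension $\ge 2$. In particular the divisor $X_1\times\{0\}$ is not contained in the $p$-exceptional image, so $p$ is an isomorphism over the generic point of $X_1\times\{0\}$; the point of $\Gamma$ above it is the generic point of a unique component $\Gamma_0'$ of the central fibre $\Gamma_0=\Gamma\cap\{z=0\}$, and $\Gamma_0'$ is $G$-invariant and maps birationally and $G$-equivariantly to $X_1$. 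Symmetrically there is a unique $G$-invariant component $\Gamma_0''$ mapping birationally to $X_2$. If $\Gamma_0'=\Gamma_0''$, then this single component is a $G$-equivariant birational correspondence between $X_1$ and $X_2$, and the theorem follows at once.

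The hard part will be exactly the coincidence $\Gamma_0'=\Gamma_0''$, equivalently \emph{good reduction} of $\Psi$ at $z=0$: the divisorial valuations $\mathrm{ord}_{X_1\times\{0\}}$ and $\mathrm{ord}_{X_2\times\{0\}}$ on $\C(\Gamma)=L_1(z)=L_2(z)$ must agree, i.e. the $G$-invariant valuation $\mu:=\mathrm{ord}_{X_2\times\{0\}}\circ\Phi|_{L_1}$ on $\C(X_1)$ must be trivial. I would analyze this through Abhyankar's equality: $\mathrm{ord}_{X_1\times\{0\}}$ has value group $\Z$ and residue field $L_1$, and viewing it on the $X_2$-side it is a $G$-invariant divisorial valuation with residue field $L_1$; were $\mu$ nontrivial, its center would be a proper subvariety $W_2\subsetneq X_2$ and Abhyankar's relation would force $L_1$ to be a rational function field of positive relative dimension over $\C(W_2)$, so that $X_1$—and symmetrically $X_2$—is ruled over a common $(d-1)$-dimensional base with a $G$-equivariant ruling. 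Thus when $X_1$ is non-uniruled the reduction at the fixed point is automatically good and one concludes directly; the remaining uniruled case I would settle by descending along the resulting $G$-equivariant ruling (lowering $d$) or by passing instead to the second fixed point $z=\infty$, which is the delicate bookkeeping step. In every case, specialization at a $G$-fixed point of $Y$ yields the desired $G$-equivariant birational map $X_1\dashrightarrow X_2$ over $\C$, establishing the converse implication.
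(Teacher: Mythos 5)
Your reduction to a good-reduction statement at the fixed point $z=0$ is set up correctly (the descent identification $K_i\otimes_{\mathcal K}\C(z)\cong L_i(z)$, the $G$-equivariant graph closure $\Gamma$ over $Y$, and the two distinguished components $\Gamma_0'$, $\Gamma_0''$ of the central fibre), but the proof has a genuine hole exactly where you flag ``the hard part'': establishing $\Gamma_0'=\Gamma_0''$. Bad reduction is a real phenomenon here, and your proposed remedy does not close the case that matters. The Abhyankar/ruledness dichotomy at best shows that if the two divisorial valuations $\mathrm{ord}_{X_1\times\{0\}}$ and $\mathrm{ord}_{X_2\times\{0\}}$ disagree, then $X_1$ and $X_2$ are ($G$-equivariantly) ruled; but then ``descending along the ruling'' or ``passing to $z=\infty$'' is not an argument --- the same obstruction recurs at $z=\infty$, and no induction on dimension is actually set up (a $G$-equivariant ruling of $X_1$ need not match one of $X_2$, and the base of the ruling need not inherit the hypothesis). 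Note that the uniruled case is precisely the case of interest throughout this paper (linearizability of actions on rational varieties), so the argument is incomplete in its main application.

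The paper avoids this entirely by not trying to prove good reduction. It adapts the Kontsevich--Tschinkel specialization of birational types to the equivariant setting: one introduces the free $\Z$-module $\mathrm{Burn}_{d,G}$ generated by $G'$-birational classes, and to any $G$-equivariant snc model of a family over $Y$ one attaches the alternating sum $\sum_k(-1)^{k-1}[D_{(k)}]$ over strata of the central fibre. Equivariant weak factorization shows this element is independent of the model, so it can be computed on the trivial models $X_i\times Y$, where it equals $[X_i]$; birationality of $\mathcal X_1$ and $\mathcal X_2$ over $\C(Y)^G$ then forces $[X_1]=[X_2]$ in a \emph{free} module, hence $X_1\sim_G X_2$. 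The alternating sum is exactly the device that makes the contributions of the ``extra'' components of a degenerate central fibre cancel, which is the cancellation your single-component argument cannot supply. To repair your proof you would either need to import this Burnside-group invariant (at which point you are reproving the paper's argument) or supply a genuine induction handling the $G$-equivariantly ruled case, which is not present.
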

\begin{proof}
    We will adapt the technique of Burnside rings and the specialization result from \cite{Kontsevich2019} to the $G$-equivariant setting. 

    The corresponding version of the $d$-graded component of the Burnside ring is defined as follows:
     we consider isomorphism classes of smooth $d$-dimensional schemes endowed with $G$-action (here $G$ is arbitrary finite group, not necessarily cyclic one), up to removing  closed subsets of dimension $\le (d-1)$. Such classes form a commutative monoid under disjoint union, and we define $\mathrm{Burn}_{d,G}$ to be the group associate with this monoid. Obviously, it is freely generated by the $G'$-birational equivalence classes in dimension $d$ where $G'\subset G$ runs through the set of conjugacy classes all subgroups of $G$. 
     
    Let us consider a flat $G$-equivariant proper morphism
    $\pi:Z\to Y$ with $N$-dimensional fibers, and such that the total space $Z$ is smooth, and the fiber over the fixed point $z=0\subset Y$ is a divisor $\cup_{i\in I} D_i$ with simple normal crossings. The group $G$ acts on the set $I$ of divisors. We associate with such a map a formal $\Z$-linear combination of birational types of $N$-dimensional varieties endowed with $G$-action. Namely for each non-empty subset $J\subset I$ such that $D_J:=\cap_{i\in I} D_i$ is nonempty, consider $N$-dimensional variety  $\widetilde D_J$ which is $D_i$ if $J=\{i\}$ for some $i\in I$, and the exceptional locus in the blow-up of $Z$ at $D_J$ if $\# J\ge 2$. 
    
    Clearly for each $k=1,\dots,d$ the disjoint union of varieties 
    $$D_{(k)}:=\bigsqcup_{\substack{J\subset I:\\\cap_{i\in J}D_i\ne\emptyset\\ \# J=k}}\widetilde D_J$$
    is endowed with a natural $G$-action.
    Then we define the specialization invariant as 
    $$\sum_{k=1}^d (-1)^{k-1} [D_{(k)}]\in \mathrm{Burn}_{d,G}\,.$$
The arguments from \cite{Kontsevich2019} work equally well in the equivariant case, and by the equivariant Weak Factorization we conclude that the above expression does not change under $G$-equivariant blow-ups of $Z$ at smooth centers intersecting cleanly the divisor $\sum_i D_i$. The invariant of the $G$-equivariant family $X_i\times Y\to Y,\,i=1,2$ for the diagonal $G$-action on $X_i\times Y$ is by definition the $G$-birational class of $X_i$. This implies the conclusion of the theorem.
\end{proof}
One can replace the field of rational functions  $\C(z)=\C(Y)$ by the non-archimedean field of Laurent series $\C(\!(z)\!)$ in the above result.

In general, we expect higher-dimensional versions of our result, and a parallel theory of \textit{specializations of atoms}. As the reader can see, the fixed loci of $G$-action on $Y$ play here a crucial role.

\bibliographystyle{alphabetic} 
\bibliography{main} 

\end{document}